\documentclass[11pt]{amsart}
\usepackage{graphicx} %

\usepackage[margin=1in]{geometry}

\usepackage{amsfonts}
\usepackage{amsthm}
\usepackage{amsmath}
\usepackage{amssymb}
\usepackage{amscd} 
\usepackage{enumitem}
\usepackage{algorithm2e}
\RestyleAlgo{ruled}
\usepackage{mathtools}
\usepackage[normalem]{ulem}
\usepackage[dvipsnames]{xcolor}

\usepackage{blkarray}
\usepackage{graphicx}
\usepackage{xcolor}
\usepackage{hyperref}
\usepackage{tikz}

\usepackage{multirow}

\usepackage{verbatim}
\usepackage{url}

\newcommand{\Q}{\mathbb Q}
\newcommand{\Z}{\mathbb Z}
\newcommand{\R}{\mathbb R}

\newcommand{\vw}{\mathbf w}
\newcommand{\vv}{\mathbf v}
\newcommand{\ve}{\mathbf e}
\newcommand{\vr}{\mathbf r}
\newcommand{\vO}{\mathbf O}

\newcommand{\vC}{\mathbf C}
\newcommand{\vD}{\mathbf D}
\newcommand{\vF}{\mathbf F}
\newcommand{\vG}{\mathbf G}

\newcommand{\J}{\mathrm{Jac}}

\newcommand{\FC}{\mathrm{FC}}
\newcommand{\hZ}{\Z_{\frac{1}{2}}}

\newcommand{\cB}{\mathcal B}
\newcommand{\cC}{\mathcal C}
\newcommand{\cD}{\mathcal D}
\newcommand{\cO}{\mathcal O}
\newcommand{\cQ}{\mathcal Q}
\newcommand{\cR}{\mathcal R}
\newcommand{\cT}{\mathcal T}

\newcommand{\bJ}[1]{[#1]_\mathrm{J}}
\newcommand{\bR}[1]{[#1]_\mathrm{R}}

\newcommand{\pcc}{\langle \pi(\cC)\rangle}

\newtheorem{theorem}{Theorem}[section]
\newtheorem{lemma}[theorem]{Lemma}

\newtheorem{corollary}[theorem]{Corollary}
\newtheorem{proposition}[theorem]{Proposition}

\newtheorem{thmA}{Theorem} %
\newtheorem*{thm*}{Theorem}

\theoremstyle{definition}
\newtheorem{definition}[theorem]{Definition}
\newtheorem{example}[theorem]{Example}

\theoremstyle{remark}
\newtheorem{remark}[theorem]{Remark}

\title[Torsor structures on spanning quasi-trees of ribbon graphs]{The Jacobian of a regular orthogonal matroid and torsor structures on spanning quasi-trees of ribbon graphs}
\author{Matthew Baker}
\email{mbaker@math.gatech.edu}
\address{School of Mathematics, Georgia Institute of Technology, Atlanta, USA}

\author{Changxin Ding}
\email{cding66@gatech.edu}
\address{School of Mathematics, Georgia Institute of Technology, Atlanta, USA}

\author{Donggyu Kim}
\email{donggyu@gatech.edu}
\address{School of Mathematics, Georgia Institute of Technology, Atlanta, USA}

\date{\today}

\begin{document}

\begin{abstract}
Previous work of Chan--Church--Grochow and Baker--Wang shows that the set of spanning trees in a plane graph $G$ is naturally a torsor for the Jacobian group of $G$. Informally, this means that the set of spanning trees of $G$ naturally forms a group, except that there is no distinguished identity element. We generalize this fact to graphs cellularly embedded on orientable surfaces of arbitrary genus, which can be identified with {\em ribbon graphs}. In this generalization, the set of spanning trees of $G$ is replaced by the set of spanning quasi-trees of the ribbon graph, and the Jacobian group of $G$ is replaced by the Jacobian group of the associated regular orthogonal matroid $M$ (which coincides with the critical group of the ribbon graph, in the sense of Merino, Moffatt, and Noble).
Our proof shows, more generally, that the family of ``BBY torsors'' constructed by Backman--Baker--Yuen and later generalized by Ding admit natural generalizations to (regular representations of) regular orthogonal matroids. In addition to shedding light on the role of planarity in the earlier work mentioned above, our results represent one of the first substantial applications of orthogonal matroids (also called ``even delta-matroids'' or ``Lagrangian orthogonal matroids'') to a natural combinatorial problem about graphs. 
\end{abstract}

\maketitle
\section{Introduction}

\subsection{Torsor structures on spanning trees of plane graphs}
\label{sec:torsor structures on spanning trees}
Given a connected finite graph $G$, there is --- in general --- no natural way to make the set $\cT(G)$ of spanning trees of $G$ into a group.

For example, if $G=C_3$ is a 3-cycle, then there is clearly no canonical choice for an ``identity tree''. On the other hand, if $C_3$ is embedded in the plane as an equilateral triangle and we orient the plane counterclockwise, then there is a natural simply transitive action of the group $\Z / 3\Z$ on $\cT(C_3)$ given by counterclockwise rotation. Moreover, the group $\Z/3\Z$ is isomorphic to the {\em Jacobian group}\footnote{Also referred to in the literature as the {\em critical group}, {\em sandpile group}, or {\em Picard group} of $G$.} of $C_3$.

For any connected graph $G$, the cardinality of its Jacobian group $\J(G)$ is equal to the number of spanning trees of $G$ (this is a version of {\em Kirchhoff's Matrix-Tree Theorem}). So a better question is whether there is a natural way to define a simply transitive action of $\J(G)$ on $\cT(G)$.

In general, the answer seems to be no (depending on precisely what one means by ``natural''). However, for graphs embedded in the plane, the answer is {\bf yes} (at least once we fix an orientation of the plane). There are at least three different ways to describe such an action:

\begin{enumerate}
    \item In \cite{CCG15}, Chan, Church, and Grochow construct, for any connected graph $G$, a family of simply transitive actions of $\J(G)$ on $\cT(G)$ using the {\em rotor-routing process}, and they show that for plane graphs the resulting action is independent of all choices (up to isomorphism).
    \item In \cite{BW2018}, the first author and Wang define a different family of simply transitive actions of $\J(G)$ on $\cT(G)$ using the {\em Bernardi process} and show that, for plane graphs, the resulting action is independent of all choices and coincides with the action defined in \cite{CCG15}. We refer collectively to these actions as ``Bernardi torsors''.\footnote{Intuitively, when a group $G$ acts simply and transitively on a set $X$, we can think of $X$ as being akin to a ``group without a distinguished identity element''. This is made precise by the theory of {\em torsors}. More formally, given a group $G$, a {\em $G$-torsor} is a set $X$ on which $G$ acts simply and transitively; for example, $G$ is itself a $G$-torsor via left multiplication. Once we fix a reference point in a $G$-torsor $X$, there is a natural isomorphism of $G$-torsors between $X$ and $G$ itself.}

    \item In \cite{BBY2019}, Backman, the first author, and Yuen define the Jacobian of a regular matroid $M$ and construct a family of simply transitive actions of $\J(M)$ on the set $\cB(M)$ of bases of $M$, which generalizes Yuen's work \cite{Yuen}. We refer collectively to these actions as ``BBY torsors''. When $M = M(G)$ is the cycle matroid associated to a plane graph $G$, using \cite[Theorem 20]{Yuen}, the authors of \cite{BBY2019} show that there is a canonical choice for the action of $\J(M(G))$ on $\cB(M(G)) = \cT(G)$, and that this action coincides with the one defined in \cite{CCG15} and \cite{BW2018}, as described above.

\end{enumerate}

\subsection{An extension to the non-planar case}

For a while after the publication of \cite{CCG15,BW2018,BBY2019}, it seemed that the considerations in \S\ref{sec:torsor structures on spanning trees} might be the end of the story: for a plane graph $G$ there is a natural way to define a simply transitive group action on $\cT(G)$, and for a non-planar graph there is not.
However, when we came across the paper \cite{MMN2023} by Merino, Moffatt, and Noble, we realized that it should be possible to generalize the results of \cite{CCG15,BW2018,BBY2019} to the non-planar case from their new perspective. 

The key idea is to consider connected graphs $G$ embedded\footnote{Technically speaking, we need $G$ to be {\em cellularly embedded}, see \S\ref{subsection: ribbon} for the definition.} on an arbitrary compact and connected oriented surface $\Sigma$, replacing the Jacobian group of $G$ with a group that depends on the embedding and the set of spanning trees of $G$ with a larger set that also depends on the embedding. We refer to such a pair $\vG = (G,\Sigma)$ as a {\em ribbon graph}\footnote{Ribbon graphs are also known in the literature as {\em combinatorial maps} or {\em rotation systems}.}. 
The reason for the name ``ribbon graph'' is that one often works instead with an $\epsilon$-thickening of the graph $G$ in $\Sigma$ for a suitably small $\epsilon > 0$, which is a 2-dimensional manifold-with-boundary that resembles a ribbon. We use the two viewpoints interchangeably in this paper.

When $\Sigma$ is a sphere, the ribbon graph $\vG$ encodes the same data as $G$ together with a plane embedding and an orientation of the plane.
However, {\em every} graph $G$ can be realized as a ribbon graph $\vG = (G,\Sigma)$ for a suitable surface $\Sigma$. 

For a ribbon graph $\vG$, one defines a {\em spanning quasi-tree} to be a connected subgraph $Q$ of $G$ which contains every vertex of $G$, and such that an $\epsilon$-thickening of $Q$ in $\Sigma$ has exactly one boundary component; see Example~\ref{ex:ribbon basis}.
When $\Sigma$ is a sphere, this is the same thing as a spanning tree of $G$, but for surfaces of higher genus the two notions diverge; in general, every spanning tree is also a spanning quasi-tree, but not vice-versa. 

The main result of this paper can be stated as follows:

\begin{thmA} \label{thm:main theorem 1}
If $\vG$ is a ribbon graph, there is a naturally associated group $\J(\vG)$ admitting a natural simply transitive action on the set $\cQ(\vG)$ of spanning quasi-trees of $\vG$.
\end{thmA}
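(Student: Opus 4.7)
The plan is to follow the Backman--Baker--Yuen paradigm, lifted from regular matroids to regular orthogonal matroids. First, I would associate to a ribbon graph $\vG$ a regular orthogonal matroid $M(\vG)$ on a ground set of size $2|E(G)|$, with one pair $\{e, e^{*}\}$ per edge, so that the bases of $M(\vG)$ correspond bijectively to spanning quasi-trees of $\vG$: at each edge $e$ one selects $e$ if $e \in Q$ and $e^{*}$ otherwise. The ribbon structure (an orientation of each edge together with a rotation system at each vertex) simultaneously produces a distinguished regular representation $R = R(\vG)$ of $M(\vG)$: an integer matrix whose rows span the cocircuit lattice and which is totally isotropic with respect to the natural symmetric bilinear form on the doubled ground set. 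We then define $\J(\vG) := \J(M(\vG), R(\vG))$ as the Jacobian of this regular representation, realized as the cokernel of the map from the cocircuit lattice to its dual, or equivalently as the quotient of an ambient integer lattice by the sum of the circuit and cocircuit lattices.

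The heart of the argument is the second step: extending the BBY construction (and the subsequent generalization by Ding) from regular matroids to any regular representation $(M,R)$ of a regular orthogonal matroid. One introduces a notion of $R$-orientation of $M$ modelled on oriented orthogonal matroids, together with fundamental circuit and cocircuit signs relative to a basis $B$. To each basis $B$ one assigns a canonical class in $\J(M,R)$, obtained by projecting a lattice point built from signed fundamental circuits coming from a fixed reference orientation, and this assignment is used to transport the group structure onto $\cB(M)$. The three things to verify are: (i) the associated class is well-defined modulo the cut-flow sublattice; (ii) the assignment is equivariant under a canonical $\J(M,R)$-action realized through cycle/cocycle reversals on bases; (iii) the assignment is a bijection, which reduces to an orthogonal-matroid Matrix--Tree identity $|\cB(M)| = |\J(M,R)|$ to be established earlier in the paper.

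The principal technical obstacle is adapting the circuit/cocircuit reversal machinery to the orthogonal setting. In ordinary matroids, one basis is obtained from another by exchanging a single element along a fundamental circuit, whereas in an orthogonal matroid basis exchange proceeds by symmetric difference with either a singleton or a pair linked by the involution $e \leftrightarrow e^{*}$. Consequently ``fundamental circuits'' come in two shapes and must be signed coherently with respect to the involution; one must then check that the BBY acyclicity arguments and the cyclic flip operations carry through verbatim once these signs are correctly defined, and that the resulting class in $\J$ is independent of the reference orientation. Regularity is indispensable here: it guarantees that every orthogonal circuit has a well-defined $\{0,\pm 1\}$-signing compatible with the involution, which is what allows all of these constructions to be carried out over $\Z$ rather than merely over a field.

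Finally, I would verify naturality. The group $\J(\vG)$ and the action on $\cQ(\vG)$ depend only on $\vG$ because any two regular representations of $M(\vG)$ induced by the ribbon structure differ by signed permutations that act trivially on the Jacobian, and the reference orientation used in the construction can be absorbed into the choice of basepoint of the torsor. Specializing to the sphere case recovers the cycle matroid of $G$ with its usual unimodular representation, and the resulting torsor reduces to the Bernardi/rotor-routing torsor of \cite{CCG15, BW2018, BBY2019}, providing both a sanity check and the sense in which Theorem~\ref{thm:main theorem 1} is a direct generalization of the earlier work.
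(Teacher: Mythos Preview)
Your overall architecture matches the paper's: associate to $\vG$ a regular orthogonal matroid $M(\vG)$ with a distinguished regular representation, define the Jacobian of that representation, build a circuit-reversal system on which it acts simply transitively, and then pull the action back to $\cB(M(\vG)) = \cQ(\vG)$ via a BBY-type bijection. The paper also unifies circuits and cocircuits into a single notion of orthogonal-matroid circuit (via the projection $\pi:\Z^{E\cup E^*}\to\Z^E$), so your separate ``circuit and cocircuit lattices'' language is slightly off but not fatal.

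The genuine gap is in your naturality argument. A BBY bijection requires a choice of \emph{acyclic circuit signature} $\sigma$, and different signatures yield different torsors in general. Your proposal says only that ``the reference orientation used in the construction can be absorbed into the choice of basepoint''; this handles the edge reference orientation (which indeed only reorients the representation and leaves the torsor unchanged), but it does \emph{not} address the choice of $\sigma$. You have produced a family of torsors, not a canonical one.

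The paper's resolution is geometric and is where the ribbon structure really enters. For each point $p \in \Sigma \setminus (E \cup E^*)$ one defines a signature $\sigma_p$ by orienting each circuit $C$ according to the boundary orientation of the component of $\Sigma \setminus C$ containing $p$. Acyclicity of $\sigma_p$ is proved via Stokes' theorem on $\Sigma \setminus \{p\}$ (write the volume form as $d\alpha$ and integrate). The crucial step is then to show that the torsor $\Gamma_p$ is \emph{independent of $p$}: one computes $\beta_p(B) - \beta_q(B)$ explicitly when $p$ and $q$ lie in adjacent regions separated by an edge or coedge, and finds that the difference is $-\delta_e$ modulo $\langle \pi(\cC)\rangle$, uniformly in $B$. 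Since this shift is independent of $B$, the induced torsors coincide. Without this argument (or an equivalent one), the word ``natural'' in the theorem statement is not justified.
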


\subsection{The Jacobian of a ribbon graph and regular orthogonal matroids}

The group $\J(\vG)$ acting simply and transitively on $\cQ(\vG)$ in Theorem~\ref{thm:main theorem 1} is isomorphic to the {\em critical group} of $\vG$ as defined by Merino, Moffatt, and Noble \cite{MMN2023}. We call it the {\em Jacobian group} of the ribbon graph $\vG$.
Among other things, the authors prove in \cite{MMN2023} that the cardinality of $\J(\vG)$ is the number of spanning quasi-trees in $\vG$, and this is one of the key observations which motivated the present paper.

In \cite{Bouchet1987b,Bouchet1987c}, Bouchet showed that there is a natural way to associate to a ribbon graph $\vG$ an {\em orthogonal matroid}\footnote{Bouchet works with the equivalent concept of {\em even delta-matroids}, rather than orthogonal matroids, and many authors (e.g. Vince and White in \cite{VinceWhite01}) call them {\em Lagrangian orthogonal matroids}; we omit the adjective ``Lagrangian'' since we will not consider the non-Lagrangian case. Recall also that we are assuming the ambient surface $\Sigma$ is orientable; without this assumption Bouchet's construction still yields a delta-matroid, but it will no longer be even.} $M(\vG)$. 
Orthogonal matroids are natural generalizations of matroids, and in particular there is a special class of orthogonal matroids, called {\em regular} orthogonal matroids, which generalize regular matroids. Roughly speaking, a regular orthogonal matroid $M$ is one which is representable by a {\em principally unimodular skew-symmetric matrix}, and we call the choice of such a matrix a {\em representation} of $M$.
The work of Bouchet shows that not only is the orthogonal matroid $M(\vG)$ associated to a ribbon graph $\vG$ regular, it also comes equipped with a natural equivalence class of regular representations.

We generalize the results of \cite{MMN2023} by showing that there is a natural way to associate a {\em Jacobian group} $\J(M, [\cC])$ to a pair $(M,[\cC])$ consisting of a regular orthogonal matroid $M$ and an equivalence class $[\cC]$ of regular representations of $M$. 
The cardinality of $\J(M,[\cC])$ is equal to the number of {\em bases}
of $M$.
Moreover, when $\cC$ arises from a ribbon graph $\vG$ via the construction of Bouchet, $\J(M,[\cC])$ coincides with the critical group of Merino et~al.\footnote{The dependence of $\J(M,[\cC])$ on $[\cC]$ clarifies the observation in \cite{MMN2023} that $\J(\vG)$ does {\bf not} depend only on the even delta-matroid associated to $\vG$. When $M$ arises from a regular {\em matroid}, there is a {\em unique} equivalence class of regular representations, but for more general regular orthogonal matroids this is no longer true.}

As we now explain, our construction of a simply transitive action of $\J(\vG)$ on $\cQ(\vG)$ for a ribbon graph $\vG$ is a special case of a more general family of simply transitive actions of $\J(M,[\cC])$ on the set of bases of $M$. 

\subsection{Generalized BBY torsors for regular orthogonal matroids}

In order to explain our generalization of the theory of ``BBY torsors'' from regular matroids to regular orthogonal matroids, we first briefly recall the setup from Backman et~al.~\cite{BBY2019}. We assume in this discussion that the reader is familiar with the basic terminology of matroid theory.

If $M$ is a regular matroid on the ground set $E$ then $M$ is uniquely orientable (up to a certain natural notion of equivalence of orientations). In particular, each circuit $C$ of $M$ has two distinguished orientations $\pm \vec{C}$. A choice of one of these two orientations, call it $\sigma(C)$, for each circuit $C$ of $M$ is called a {\em circuit signature} for $M$. Such a signature is called {\em acyclic} if $\sum_C a_C \sigma(C) = 0$ in $\R^E$ with all $a_C \geq 0$  implies $a_C=0$ for all circuits $C$. Applying the same definitions to the dual matroid yields the notion of (acyclic) {\em cocircuit signature}.

If $\cO,\cO'$ are orientations of $E$ (meaning we choose one of two possible states for each element of $E$), we call $\cO$ and $\cO'$ {\em equivalent}, and write $\cO \sim \cO'$, if we can get from $\cO$ to $\cO'$ by a sequence of circuit and cocircuit reversals (meaning that whenever we have an orientation consistent with a signed circuit or cocircuit $\vec{C}$, we can replace $\vec{C}$ in the orientation by $-\vec{C}$). The set $\cR(M)$ of equivalence classes of orientations of $E$ is called the {\em circuit-cocircuit reversal system} for $M$.\footnote{The notion of circuit-cocircuit reversal system for graphs (resp. regular matroids) is due to Gioan \cite{Gioan07} (resp. \cite{Gioan08}). The circuit-cocircuit reversal system for ribbon graphs (resp. regular orthogonal matroids) is new to this paper.} 

The authors of \cite{BBY2019} prove:

\begin{thm*}
Let $M$ be a regular matroid.
\begin{enumerate}
    \item There is a canonical simply transitive action of $\J(M)$ on $\cR(M)$. 
    \item Let $\sigma$ (resp. $\sigma^*$) be an acyclic circuit (resp. cocircuit) signature of $M$. For each basis $B$ of $M$, orient elements $e \not\in B$ according to the orientation of $e$ in $\sigma(C(B,e))$, and orient elements $e \in B$ according to the orientation of $e$ in $\sigma^*(C^*(B,e))$, where $C(B,e)$ (resp. $C^*(B,e)$) denotes the fundamental circuit (resp. cocircuit) associated to $B$ and $e$. Denote the resulting orientation class of $E$ by $\bar{\beta}(B) := \bar{\beta}_{\sigma,\sigma^*}(B)$. Then $\bar{\beta} : \cB(M) \to \cR(M)$ is a bijection. Combined with (1), this gives a simply transitive action of $\J(M)$ on $\cB(M)$. 
\end{enumerate}
\end{thm*}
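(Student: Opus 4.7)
The strategy is to work with the chain-theoretic description $\J(M) = \Z^E / (\Z(M) + \Z^*(M))$ coming from a regular representation of $M$, where $\Z(M), \Z^*(M) \subset \Z^E$ are the integer cycle and cocycle lattices (which intersect trivially by regularity). An orientation of $E$ embeds as a vector in $\{-1,+1\}^E \subset \Z^E$, and a circuit (resp.\ cocircuit) reversal by a signed circuit $\vec{C}$ contained in $\cO$ modifies $\cO$ by $-2\vec{C} \in 2\Z(M)$ (resp.\ $-2\vec{C}^* \in 2\Z^*(M)$). Fixing a reference orientation $\cO_0$, the map $\cO \mapsto [(\cO - \cO_0)/2]$ sends $\{-1,+1\}^E$ into $\J(M)$ and is invariant under reversals, so it descends to a well-defined map $\Phi \colon \cR(M) \to \J(M)$.

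For part (1), I would show that $\Phi$ is a bijection and then transport the translation action of $\J(M)$ on itself along $\Phi$ to obtain the simply transitive action on $\cR(M)$. Injectivity of $\Phi$ amounts to the following key lemma: \emph{if $\cO, \cO' \in \{-1,+1\}^E$ satisfy $\cO - \cO' \in 2(\Z(M) + \Z^*(M))$, then $\cO \sim \cO'$.} To prove it, write the difference as $2z + 2z^*$ with $z \in \Z(M)$ and $z^* \in \Z^*(M)$, decompose $z$ and $z^*$ conformally as sums of signed circuits and cocircuits supported on $\mathrm{supp}(\cO - \cO')$, and construct an explicit sequence of valid reversals by induction. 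Surjectivity of $\Phi$ then follows from the counting identity $|\cR(M)| = |\cB(M)| = |\J(M)|$, combining Gioan's theorem on the size of the circuit-cocircuit reversal system with the Matrix--Tree theorem for regular matroids. Canonicity of the action (independence of the regular representation and the reference orientation) follows because different choices induce compatible isomorphisms on the relevant chain complexes.

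For part (2), I would first verify that $\bar\beta(B)$ is a bona fide orientation of $E$: each $e \notin B$ lies in a unique fundamental circuit $C(B,e)$ and receives a sign via $\sigma$, and each $e \in B$ lies in a unique fundamental cocircuit $C^*(B,e)$ and receives a sign via $\sigma^*$. To show $\bar\beta \colon \cB(M) \to \cR(M)$ is a bijection, I would argue that $\bar\beta(B)$ is $(\sigma,\sigma^*)$-compatible by construction and that acyclicity of $\sigma$ (resp.\ $\sigma^*$) ensures each equivalence class $[\cO] \in \cR(M)$ contains a unique $(\sigma,\sigma^*)$-compatible representative; this uses the fact that circuit (resp.\ cocircuit) reversals organize each class into a partial order with a unique minimum given by the compatible element. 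The map $B \mapsto \bar\beta(B)$ is therefore injective, and since $|\cB(M)| = |\cR(M)|$, bijective. Composing with the action from part (1) yields the simply transitive action of $\J(M)$ on $\cB(M)$.

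The main obstacle is the key lemma underlying the injectivity of $\Phi$. Although the algebraic condition $\cO - \cO' \in 2(\Z(M)+\Z^*(M))$ is easy to verify, each reversal requires the corresponding signed circuit or cocircuit to actually be contained in the current orientation, so one cannot simply invoke the additive structure of $\Z(M) + \Z^*(M)$. A careful inductive construction --- leveraging the conformal decomposition of integer cycles and cocycles into signed circuits and cocircuits that is available in the regular setting --- is required so that every intermediate orientation admits the next reversal.
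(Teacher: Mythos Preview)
Your approach to Part~(1) is close in spirit to the paper's, with one substantive difference. Your key lemma --- that $\cO - \cO' \in 2(\Z(M)+\Z^*(M))$ forces $\cO \sim \cO'$ --- is precisely Proposition~\ref{prop: R-equivalence} (stated there in the orthogonal-matroid generality), and the paper proves it via the same conformal decomposition you outline (Lemma~\ref{lem: disjoint circuits} together with Lemma~\ref{lem: subtransveral}). For surjectivity of $\Phi$, however, you invoke Gioan's counting $|\cR(M)| = |\cB(M)|$ as an external input, whereas the paper proves surjectivity directly: Lemma~\ref{lem: cube represents all} shows, via a defect-reduction argument driven by Farkas' lemma, that every coset in the half-integer lattice is represented by an orientation. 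Your route is valid for regular matroids since Gioan's result is available independently, but the paper's argument is self-contained and is what enables the extension to orthogonal matroids, where no prior counting result existed; indeed, Corollary~\ref{cor: count the circuit reversal classes} derives the count as a \emph{consequence}.

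Part~(2) has a genuine gap. Write $\beta(B)$ for the orientation itself, so that $\bar\beta(B) = [\beta(B)]$. Even granting that each reversal class contains a unique $(\sigma,\sigma^*)$-compatible orientation and that $\beta(B)$ is always compatible, your argument only yields: if $\bar\beta(B_1) = \bar\beta(B_2)$ then $\beta(B_1) = \beta(B_2)$ as orientations. You still need $\beta(B_1) = \beta(B_2) \Rightarrow B_1 = B_2$, and nothing in your sketch addresses this --- the orientation $\beta(B)$ does not visibly determine $B$. The paper meets this head-on in the proof of Theorem~\ref{thm: triangulating}: when $\beta(B_1) = \beta(B_2)$ with $B_1 \ne B_2$, one forms the fourientation $\vF(B_1,\beta(B_1)) \cap (-\vF(B_2,\beta(B_2)))$, checks that it is \emph{negative} in the sense of Definition~\ref{def: positive fourientation}, and applies the generalized Farkas lemma (Lemma~\ref{lem: Farkas2}) to extract a signed circuit lying in it; this contradicts the ``triangulating'' property derived from acyclicity in Lemma~\ref{lem: acyclic signature2}. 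Your partial-order heuristic does not supply this missing piece, and the original argument in \cite{BBY2019} that your sketch gestures at replacing relies on zonotopal geometry, not on a combinatorial unique-minimum principle.
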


If $G$ is a (connected) plane graph and $M=M(G)$, then $\cB(M) = \cT(G)$ and circuits of $M$ coincide with (simple) cycles of $G$, and there is a canonical choice for $\sigma$ given by orienting all cycles of $G$ counterclockwise. 
Applying the same reasoning to a plane dual of $G$ allows us to similarly define an acyclic cocircuit signature $\sigma^*$. It turns out that different plane duals give rise to the same group actions, from which one deduces that there is in fact a canonical simply transitive action of $\J(G)$ on $\cT(G)$. 
This is the same action discussed in \S\ref{sec:torsor structures on spanning trees} above, which had previously been constructed in different ways in \cite{CCG15} and \cite{BW2018}.

\medskip

We apply similar ideas in this paper to the case of regular orthogonal matroids. In order to explain, we first recall from \cite{JK2023} that an orthogonal matroid $M$ on the ground set $E \cup E^* = \{ 1,\ldots,n,1^*,\ldots,n^* \}$ can be characterized either in terms of its {\em bases}, which are certain ``transversal'' subsets of $E \cup E^*$ not containing any {\em skew pairs} $\{ i,i^* \}$, or in terms of its {\em circuits}, which are also subsets of $E \cup E^*$. Given a matroid $N$ on $E$, there is an associated orthogonal matroid on $E \cup E^*$, called the {\em lift} of $N$, whose set of bases is $\{ B \cup (E-B)^* \; | \; B \in \cB(N) \}$ and whose circuits are either circuits of $N$ or stars of cocircuits of $N$ (i.e., 
$\{ i^* \; | \; i \in D\}$ for some cocircuit $D$ of $N$).
In particular, one can think of the circuit set of an orthogonal matroid as generalizing both circuits and cocircuits of a matroid. 
A matroid is regular if and only if its corresponding orthogonal matroid is regular in the sense of \cite{JK2023}, which coincides with the underlying even delta-matroid being regular in the sense of \cite{BCG1996}.
Given a regular orthogonal matroid $M$ on the ground set $E \cup E^*$ and a regular representation $\cC$ of $M$, we will define an equivalence relation on the set of orientations of $E$ through a suitable notion of {\em circuit reversals}. We will then define a corresponding {\em circuit reversal system} $\cR(M,\cC)$, cf.~Definition~\ref{def:circuit reversal class}.
When $M$ arises from a regular matroid $\tilde{M}$, $\cR(M,\cC)$ coincides with the circuit-cocircuit reversal system of $\tilde{M}$ as defined in \cite{Gioan08}.

We will also define a notion of {\em acyclic circuit signature} for $(M,\cC)$, which also generalizes the corresponding notion from \cite{BBY2019}.
With these concepts in hand, we have:

\begin{thmA} \label{thm:main theorem 2}
Let $M$ be a regular orthogonal matroid on the ground set $E \cup E^*$, and fix a regular representation $\cC$ of $M$.
\begin{enumerate}
    \item There is a canonical simply transitive action of $\J(M,\cC)$ on $\cR(M,\cC)$. 
    \item There is a bijection $\bar{\beta}_\sigma : \cB(M) \to \cR(M,\cC)$ naturally associated to each acyclic circuit signature $\sigma$ of $(M,\cC)$. Combined with (1), this gives a simply transitive action of $\J(M, \cC)$ on $\cB(M)$ for each choice of $\sigma$. 
    \item Both $\J(M,\cC)$ and the action of $\J(M, \cC)$ on $\cB(M)$ are invariant under {\em reorientations} of $\cC$ (cf.~ Definition~\ref{def:reorientation}). %
\end{enumerate}
\end{thmA}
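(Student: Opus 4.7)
The plan is to mirror the strategy of \cite{BBY2019}, with the key conceptual shift being that the circuit set of an orthogonal matroid already encodes both circuits and cocircuits of the underlying matroid in the matroidal case; thus the circuit reversal system $\cR(M,\cC)$ takes the place of the classical circuit--cocircuit reversal system. For part (1), I would first set up a natural action of $\Z^E$ on orientations of $E$, then show that the regular representation $\cC$ produces two sublattices --- one whose cosets correspond to circuit reversal classes, and one arising as the image of a suitable ``boundary'' map --- whose quotient defines $\J(M,\cC)$. From this construction one obtains a well-defined action of $\J(M,\cC)$ on $\cR(M,\cC)$; transitivity then amounts to showing that every orientation differs from a fixed reference by a combination of elementary circuit moves together with lattice shifts; freeness follows from the count $|\cR(M,\cC)|=|\J(M,\cC)|$, which in turn should equal $|\cB(M)|$ via a Matrix--Tree-type determinantal identity for principally unimodular skew-symmetric matrices (an identity presumably already implicit in \cite{MMN2023}).

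For part (2), given an acyclic circuit signature $\sigma$ and a basis $B$, define $\bar\beta_\sigma(B)$ as follows: for each $e \in E$ there is a unique symbol in $(E\cup E^*)\setminus B$ ``conjugate'' to $e$ (either $e$ itself or $e^*$), and hence a fundamental circuit of $M$ on this symbol relative to $B$; orient $e$ according to the orientation that $\sigma$ prescribes on that circuit. Acyclicity of $\sigma$ should guarantee that the resulting orientation, when passed to its reversal class, depends only on $B$ and not on the order in which elements are processed. Bijectivity of $\bar\beta_\sigma\colon \cB(M)\to\cR(M,\cC)$ then follows either by constructing an explicit inverse that uses $\sigma$ to ``peel off'' circuit orientations from a chosen representative of a reversal class until a basis emerges, or by proving injectivity directly and invoking the cardinality identity from part (1).

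For part (3), a reorientation amounts to multiplying rows/columns of the skew-symmetric matrix representing $\cC$ by $\pm 1$, equivalently swapping $i\leftrightarrow i^*$ on selected indices. This induces canonical bijections between the corresponding reversal systems and canonical isomorphisms between the corresponding Jacobians that intertwine the action of (1); tracking $\bar\beta_\sigma$ under this change, together with the induced action on signatures, then yields the asserted invariance of the action on bases.

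The main obstacle, I expect, is establishing simple transitivity in (1): producing the correct pair of sublattices so that $\J(M,\cC)$ and $\cR(M,\cC)$ have matching cardinalities, and proving the determinantal identity $|\J(M,\cC)|=|\cB(M)|$ in the skew-symmetric setting. In \cite{BBY2019} the analogous step is driven by the explicit duality between circuits and cocircuits of a regular matroid; here that duality is ``internalized'' into the single circuit set of the orthogonal matroid, so one must work directly with the Pfaffian and principal-minor structure of the representation. Once part (1) is in hand, parts (2) and (3) should follow by the same fundamental-circuit and naturality arguments that appear in \cite{BBY2019} and its extensions.
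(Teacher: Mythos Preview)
Your overall outline matches the paper's architecture, but parts (1) and (2) have concrete gaps in the proof strategy.

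For part (1), you frame things in terms of ``two sublattices,'' but the paper's key insight is that there is only \emph{one}: the lattice $\langle\pi(\cC)\rangle\subseteq\Z^E$, where $\pi:\Q^{E\cup E^*}\to\Q^E$ sends $\vv$ to $(\vv(e)+\vv(e^*))_{e\in E}$. Both the Jacobian $\J(\cC)=\Z^E/\langle\pi(\cC)\rangle$ and the circuit reversal relation on orientations $\cO=\{\pm\tfrac12\}^E$ are governed by this single lattice. The action is then the coset action of $\Z^E/\langle\pi(\cC)\rangle$ on $\hZ^E/\langle\pi(\cC)\rangle$, which is simply transitive for free once one shows (a) every class in $\hZ^E/\langle\pi(\cC)\rangle$ contains an orientation, and (b) the combinatorial reversal relation $\sim_{\mathrm R}$ coincides with the lattice relation $\sim_{\mathrm J}$ on $\cO$. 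Step (a) uses a new Farkas' lemma for oriented orthogonal matroids. Step (b) is the real technical content: one must show that any integer combination of signed circuits whose $\pi$-image lies in $\{0,\pm1\}^E$ has subtransversal support (so that it lives inside a genuine regular \emph{matroid} obtained by restriction), and then invoke the classical disjoint-circuit decomposition there. Your identification of the ``main obstacle'' as the determinantal count $|\J|=|\cB|$ is misplaced---that step is routine (Corollary~\ref{cor:counting bases}); the equivalence $\sim_{\mathrm R}={\sim_{\mathrm J}}$ is what takes work, and your sketch does not address it.

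For part (2), you propose to mirror \cite{BBY2019}, but the paper explicitly notes that the zonotope-tiling proof of bijectivity from \cite{BBY2019} does not generalize to orthogonal matroids. Instead the paper adapts the combinatorial machinery of \cite{Ding2024}: to each pair $(B,\vO)$ it associates a \emph{fourientation} $\vF(B,\vO)$, calls $\beta$ \emph{triangulating} if $\vF(B_1,\beta(B_1))\cap(-\vF(B_2,\beta(B_2)))$ never contains a signed circuit, and proves (i) triangulating implies $\bar\beta$ is injective (hence bijective by the count from part (1)), via the orthogonal Farkas' lemma applied to the resulting negative fourientation, and (ii) $\beta_\sigma$ is triangulating whenever $\sigma$ is acyclic, via the fundamental-circuit decomposition of an arbitrary signed circuit. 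You have the correct definition of $\beta_\sigma$, and your fallback ``prove injectivity and invoke cardinality'' is indeed the route taken, but you supply no mechanism for injectivity; the fourientation/Farkas argument is the missing idea. (Your remark that acyclicity is needed so that the orientation ``depends only on $B$ and not on the order'' is also off---$\beta_\sigma(B)$ is defined coordinatewise with no order involved; acyclicity enters only to force the triangulating property.) Part (3) is essentially as you describe.
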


If $\vG = (G,\Sigma)$ is a ribbon graph and $(M,\cC)$ is the corresponding pair consisting of a regular orthogonal matroid $M$ and a regular representation $\cC$ 
then $\cB(M) = \cQ(\vG)$, and choosing a generic point $p$ of the surface $\Sigma$ allows us to define an acyclic circuit signature $\sigma_p$ using the orientation of $\Sigma$. We show that different choices of $p$ lead to isomorphic simply transitive group actions, and then Theorem~\ref{thm:main theorem 1} follows from Theorem~\ref{thm:main theorem 2}.

\subsection{A few words on the proof of Theorem~\ref{thm:main theorem 2}}

Although the statement of Theorem~\ref{thm:main theorem 2} is similar to the analogous statement in \cite{BBY2019} (in the case of regular matroids),
there are some complications which arise in the orthogonal case.

First of all, the proof in \cite{BBY2019} that $\bar{\beta} : \cB(M) \to \cR(M)$ is a bijection involves geometric reasoning related to tilings of a certain zonotope. It is not clear how to define the corresponding geometric objects in the more general setting of regular orthogonal matroids, so we employ the machinery introduced in the second author's paper \cite{Ding2024}, in which a purely combinatorial proof of the bijectivity of $\bar{\beta}$ is given. This involves, among other things, a discussion of {\em fourientations} in the sense of \cite{BH17} and
an analogue for orthogonal matroids of {\em Farkas's Lemma} from oriented matroid theory. 

Second, we need to prove that the definition we give for $\J(M,\cC)$ coincides with the definition in \cite{MMN2023} when $(M,\cC)$ comes from a ribbon graph $\vG$. 

Third, the descriptions in the existing literature of the circuits of the orthogonal matroid associated to a ribbon graph $\vG$ are hard to work with in the present context, since they are not very geometric in nature. We provide an alternate description which should be of independent interest. We must also connect this to the oriented circuits of a regular orthogonal matroid and relate our description to the regular representation defined by Bouchet.

\subsection{Bernardi torsors for ribbon graphs}\label{subsec:Bernardi-intro}

As mentioned above, BBY torsors generalize Bernardi torsors in the case of graphs. We show that a similar phenomenon holds in the context of ribbon graphs for our generalized BBY torsors.

The idea is as follows (see \S\ref{subsec:ribbon bernardi} for details). 
Let $\vG = (G,\Sigma)$ be a ribbon graph.
Let $(M,\cC)$ be the corresponding pair consisting of an orthogonal matroid $M$ and an orthogonal representation $\cC$ of $M$, as defined by Bouchet.
Let $\vG^* = (G^*,\Sigma)$ be a dual ribbon graph embedded in $\Sigma$, whose edges we call {\em coedges} of $G$.

Let $Q$ be a spanning quasi-tree of $\vG$, and thicken $Q$ to a ``ribbon'' $N$. 
Starting at an arbitrary point of the boundary $\partial N$ of $N$, travel along $\partial N$ according to the orientation of $\Sigma$ (as seen from the interior of $N$). 
For any edge $e$, you will either traverse $e$ twice, if $e\notin Q$, or traverse $e^*$ twice, if $e\in Q$. The first time you traverse $e$, orient $e$ inward toward $N$, and the first time you traverse $e^*$, orient $e$ in the direction you're traveling.
Since $Q$ has a unique boundary component, you will eventually get back to your starting point, and when you do, you will have oriented each edge $e$ of $G$, cf.~Figure~\ref{fig: bernardi}.
This defines an orientation $\cO$ on the edges of $G$, which has a corresponding circuit-reversal class $[\cO]$ in $\cR(M,\cC)$. 

We show that this procedure gives a {\em bijection} from $\cQ(\vG)$ to $\cR(M,\cC)$. Moreover, while different choices of starting point along $\partial N$ will in general give different bijections, it follows from our results that if $\beta_1,\beta_2$ are any two such bijections, there exists $g \in \J(M,\cC)$ such that $\beta_2(Q) = g \cdot \beta_1(Q)$ for all $Q \in \cQ$. Our results also show that the resulting ``Bernardi torsor'' coincides with the one defined in \cite{BW2018} when $\Sigma$ is a sphere (i.e., when $\vG$ is a plane ribbon graph). In addition (and this is how our proof actually proceeds), we show that this torsor is in fact a BBY torsor with respect to a suitable acyclic circuit signature $\sigma$ of $(M,\cC)$. 
%
%

\subsection{Structure of the paper}

In \S\ref{sec:preliminaries}, we provide background information on orthogonal matroids, oriented and regular orthogonal matroids, and ribbon graphs, and we prove the above-mentioned generalization of Farkas's Lemma.

In \S\ref{sec: cycle reversal}, we define the Jacobian group $\J(\cC)$ of a pair $(M,\cC)$, where $M$ is a regular orthogonal matroid and $\cC$ is a regular representation. We also define the circuit reversal system $\cR(\cC)$ in this context, and prove that there is a canonical group action of $\J(\cC)$ on the set $\cR(\cC)$. Lastly, we prove that the group action is invariant under reorientations of $\cC$. 

In \S\ref{sec: BBY}, we formulate and prove Theorem~\ref{thm:main theorem 2}, generalizing the theory of BBY bijections from regular matroids to regular orthogonal matroids.

In \S\ref{sec:ribbon torsor}, we apply Theorem~\ref{thm:main theorem 2} to the case of ribbon graphs and deduce Theorem~\ref{thm:main theorem 1}. We also relate our generalized BBY bijections to the family of Bernardi torsors associated to a ribbon graph $\vG$.

In \S\ref{sec: complement}, we examine how Theorem~\ref{thm:main theorem 1} interacts with dual ribbon graphs and explain how our definitions and results generalize the corresponding ones for regular matroids.

In Appendix~\ref{subsec:bouchet}, we discuss the compatibility between certain definitions in the existing literature on ribbon graphs and the definitions employed in the main body of this paper.

\section{Preliminaries}
\label{sec:preliminaries}

\subsection{Orthogonal matroids}

Let $E$ be a finite set of cardinality $n$ and let $E^* := \{e^* : e\in E\}$ be a disjoint copy of $E$. 
For $e\in E$, we write $(e^*)^* = e$.

A \emph{skew pair} is a $2$-element subset $\{e,e^*\}$.
A \emph{transversal} is an $n$-element subset of $E\cup E^*$ which does not contain any skew pair.
A \emph{subtransversal}\footnote{Subtransversals are also known in the literature as {\em admissible sets}.} is a subset of a transversal.

\begin{definition}
    An \emph{orthogonal matroid} $M$ is a pair $(E\cup E^*, \cB)$ such that $\cB$ is a nonempty set of transversals satisfying the \emph{symmetric exchange property}:
    \begin{itemize}[label=\rm(B)]
        \item For all $B,B'\in \cB$ and $e\in B\setminus B'$, there is an element $f\ne e$ in $B\setminus B'$ such that $B\triangle\{e,e^*,f,f^*\} \in \cB$.
    \end{itemize}
    
    An element in $\cB$ is called a \emph{basis}. We sometimes write $\cB(M)$ instead of $\cB$ to emphasize the dependence on $M$.
    
    A \emph{circuit} is a minimal subtransversal not contained in any basis.
\end{definition}

Recall that a \emph{matroid} is a pair $(E,\cB)$ such that $\cB$ is a nonempty collection of subsets of $E$ satisfying:
\begin{itemize}[label=\rm(B')]
    \item For all $B,B'\in \cB$ and $e\in B\setminus B'$, there is an element $f\in B'\setminus B$ such that $(B\setminus \{e\}) \cup \{f\} \in \cB$.
\end{itemize}

Orthogonal matroids generalize matroids: given a matroid $N = (E,\cB)$, the pair $\mathrm{lift}(N):=(E\cup E^*, \cB')$ is an orthogonal matroid, where $\cB' = \{B\cup (E\setminus B)^* : B\in \cB\}$.
The set of circuits of $\mathrm{lift}(N)$ is the union of $\{C\subseteq E: \text{$C$ is a circuit of $N$}\}$ and $\{D^*\subseteq E^*: \text{$D$ is a cocircuit of $N$}\}$.

Orthogonal matroids have a ``cryptomorphic'' characterization in terms of circuits which generalizes both the circuit elimination axiom for matroids and the fact that circuits and cocircuits of a matroid are orthogonal.
The following theorem is due to Bouchet~\cite{Bouchet1997,Bouchet2001} and is a special case of a more general characterization of multimatroids in terms of their circuits. In particular, orthogonal matroids coincide with \emph{tight 2-matroids} in~\cite{Bouchet1997,Bouchet2001}. For convenience, we refer the reader to~{\cite[Theorem~12]{BMP2003}}.

\begin{theorem}[\cite{Bouchet1997,Bouchet2001}]
    Let $\cC$ be a set of subtransversals of $E\cup E^*$.
    Then $\cC$ is the set of circuits of an orthogonal matroid if and only if it satisfies the following:
    \begin{enumerate}[label=\rm(C\arabic*)]
        \item $\emptyset\notin \cC$,
        \item $C_1,C_2\in\cC$ and $C_1\subseteq C_2$ imply $C_1 = C_2$,
        \item\label{item:c3} for any $C_1, C_2\in \cC$ and $e\in C_1 \cap C_2$, if $C_1\cup C_2$ is a subtransversal, then there exists $C_3\in \cC$ such that $C_3 \subseteq (C_1 \cup C_2) \setminus \{e\}$,
        \item\label{item:c4} $|C_1\cap C_2^*| \ne 1$ for all $C_1,C_2\in\cC$, and 
        \item\label{item:c5} for every transversal $T$ and every element $e\in T^*$, there exists $C\in\cC$ such that $C\subseteq T\cup\{e\}$.
    \end{enumerate}
\end{theorem}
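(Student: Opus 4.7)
The plan is to establish this cryptomorphic characterization in both directions, following the strategy of Bouchet--Maffray--Pitteloud \cite{BMP2003}.

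For the forward direction, I would assume $M=(E\cup E^*,\cB)$ is an orthogonal matroid with circuit set $\cC$ defined as in the paper. Axioms (C1) and (C2) are immediate from the definition of a circuit as a minimal subtransversal not contained in any basis. For (C5), I would first deduce from the symmetric exchange property that any two bases satisfy $|B\setminus B'|\ne 1$ (otherwise the chosen $e\in B\setminus B'$ would admit no companion $f\ne e$ in $B\setminus B'$), so a single skew-pair swap $T\mapsto T\triangle\{e,e^*\}$ never sends one basis to another. Then, for a transversal $T$ and $e\in T^*$: if $T$ is not a basis, any minimal subtransversal of $T$ not contained in a basis gives a circuit in $T\subseteq T\cup\{e\}$; if $T$ is a basis, then $T':=T\triangle\{e,e^*\}$ is not a basis, and any minimal subtransversal of $T'$ not contained in a basis must contain $e$ (otherwise it would be contained in the basis $T$). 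The proofs of (C3) and (C4) are more delicate; each proceeds by assuming the conclusion fails, extending carefully chosen subtransversals to bases via symmetric exchange, and deriving a contradiction with the minimality of $C_1$ or $C_2$.

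For the backward direction, given $\cC$ satisfying (C1)--(C5), I would define $\cB$ as the collection of transversals $B$ containing no element of $\cC$. Axiom (C5) guarantees that $\cB$ is nonempty and that every maximal $\cC$-free subtransversal has full size $n$. The crucial step is verifying the symmetric exchange property for $\cB$: axiom (C3) is used to eliminate an unwanted circuit after a tentative exchange, while (C4) controls the parity of intersections so that the replacement of the swap $\{e,e^*\}$ by a full skew-pair double-swap $\{e,e^*,f,f^*\}$ is forced. A final check shows that the circuit set of the resulting orthogonal matroid coincides with $\cC$, via (C2) (antichain) and (C5) (fundamental circuits).

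The main obstacle is axiom (C4) in the forward direction: the orthogonality condition $|C_1\cap C_2^*|\ne 1$ has no direct analogue in the standard matroid circuit axioms and requires a dedicated combinatorial argument. The strategy is to suppose for contradiction that $C_1\cap C_2^*=\{e\}$ for some pair of circuits, extend $C_1\setminus\{e\}$ and the complement of $C_2$ to well-chosen bases, and apply the symmetric exchange property repeatedly to produce a subtransversal strictly smaller than $C_2$ that fails to sit inside any basis, thereby contradicting the minimality of $C_2$. This step is where the full strength of Bouchet--Maffray--Pitteloud's analysis enters, and in practice we would simply invoke their theorem.
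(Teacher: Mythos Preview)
The paper does not prove this theorem; it is stated with the citation \cite{BMP2003} and used as background, with no proof supplied. Your proposal is therefore not comparable to a ``paper's own proof'' because none exists here. That said, as a proof outline your sketch is reasonable: (C1), (C2), and (C5) follow as you indicate, and for the converse direction your description of how (C3)--(C5) drive the verification of the symmetric exchange property is in the right spirit. You correctly identify (C4) in the forward direction as the genuinely nontrivial step and explicitly defer it to \cite{BMP2003}, which is exactly what the present paper does implicitly by citing the result wholesale.
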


The \emph{fundamental circuit} of an orthogonal matroid $M$ with respect to a basis $B$ and an element $e\in B^*$ is a circuit contained in $B\triangle \{e,e^*\}$, which exists and is unique by~\ref{item:c3} and~\ref{item:c5}.
We denote it by $\FC(B,e)$. 

We will need the following lemma later, which strengthens \ref{item:c5}.
\begin{lemma}[{\cite[Lemma~3.4]{Kim2025b}}]\label{lem: maximality-stronger}
    If $\cC$ is the set of circuits of an orthogonal matroid, then for every transversal $T$ and every element $e\in T^*$, there exists $C\in\cC$ such that $C\subseteq T\cup\{e\}$ and $C\cap \{e,e^*\} \ne \emptyset$.
\end{lemma}

\subsection{Oriented and regular orthogonal matroids}\label{subsec: regular om}

Wenzel~\cite{Wenzel1993b,Wenzel1996b} introduced oriented orthogonal matroids as a generalization of oriented matroids.
Wenzel defined this concept by generalizing the chirotope axiom for oriented matroids, and later Jin and the third author~\cite{JK2023} presented an equivalent definition in terms of signed circuits.

For a vector $\vC\in \mathbb{Z}^{E\cup E^*}$, let $\vC^*$ be the vector in $\mathbb{Z}^{E\cup E^*}$ such that $\vC^*(e) = \vC(e^*)$ for all $e\in E\cup E^*$.

\begin{definition}\label{def: oriented OM}
    An \emph{oriented orthogonal matroid} $\cC$ on $E\cup E^*$ is a set of $(0,\pm 1)$-vectors in $\mathbb{Z}^{E\cup E^*}$ satisfying the following:
    \begin{enumerate}[label=\rm(\roman*)]
        \item $\cC=-\cC$,
        \item $\{\mathrm{supp}(\vC) : \vC\in \cC\}$ is the set of circuits of an orthogonal matroid $M_{\cC}$ on $E\cup E^*$, and 
        \item for all $\vC,\vD \in \cC$, we have either 
        \begin{itemize}
            \item 
            $\vC(e)\vD^*(e) = 0$ for all $e\in E\cup E^*$, or 
            \item 
            $\vC(e)\vD^*(e) = 1$ and $\vC(e')\vD^*(e') = -1$ for some $e,e' \in E\cup E^*$.
        \end{itemize}
    \end{enumerate}
    We call the elements of $\cC$ \emph{signed circuits}, and we call $M_{\cC}$ the \emph{underlying orthogonal matroid} of $\cC$.
\end{definition}

By~{\cite[Proposition~3.4]{JK2023}}, an oriented matroid is the same thing as an oriented orthogonal matroid whose underlying orthogonal matroid is equal to $\mathrm{lift}(N)$ for some matroid $N$.

\begin{definition}\label{def: regular representation}
    A \emph{regular orthogonal representation} of an orthogonal matroid ${M}$ is an oriented orthogonal matroid $\mathcal{C}$ such that 
    $M = M_\cC$ 
    and $\cC$ satisfies the following stronger version of (iii) in Definition~\ref{def: oriented OM}:
    \begin{enumerate}[label=\rm(\roman*')]
        \setcounter{enumi}{2}
        \item\label{item:ror} 
        $\langle \vC, \vD\rangle := \sum_{e\in E\cup E^*} \vC(e)\vD^*(e) = 0$ in $\Z$ for all $\vC,\vD \in \cC$.
    \end{enumerate}
    An orthogonal matroid is \emph{regular} if it admits a regular orthogonal representation.
\end{definition}

\begin{lemma}\label{lem: sum of fund circuit}
Let $\cC$ be a regular orthogonal representation.
For any $\vC\in\cC$ and any basis B, we have\[\vC = \sum_{e\in \mathrm{supp}(\vC)\setminus B} \vC_e,\]where $\vC_e$ is the signed circuit such that $\mathrm{supp}(\vC_e)=\FC(B,e)$ and $\vC_e(e) = \vC(e)$.
\end{lemma}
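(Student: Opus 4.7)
The plan is to define $\vC' := \sum_{e\in \mathrm{supp}(\vC)\setminus B} \vC_e$ and verify coordinate-by-coordinate that $\vC' = \vC$ as vectors in $\Z^{E\cup E^*}$, using the regularity axiom~\ref{item:ror} to handle the coordinates in $B$.

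First I would verify that each $\vC_e$ is well-defined. Since $B$ is a transversal and $e\notin B$, we must have $e^*\in B$, i.e.\ $e\in B^*$, so $\FC(B,e)$ is defined. Among the signed circuits in $\cC$ with support equal to $\FC(B,e)$ --- which come in $\pm$ pairs by axiom~(i) of Definition~\ref{def: oriented OM} --- the condition $\vC_e(e)=\vC(e)\in\{\pm 1\}$ pins down a unique choice, since $e\in\FC(B,e)$ guarantees $\vC_e(e)\neq 0$. Next I would show that $\vC-\vC'$ vanishes outside $B$: every fundamental circuit satisfies $\FC(B,e)\subseteq(B\setminus\{e^*\})\cup\{e\}$, so for $f\notin B$, the only summand $\vC_e$ that can be nonzero at $f$ is the one with $e=f$. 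Therefore $\vC'(f)=\vC_f(f)=\vC(f)$ if $f\in\mathrm{supp}(\vC)\setminus B$, and $\vC'(f)=0=\vC(f)$ otherwise.

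Finally, I would use orthogonality to handle coordinates $f\in B$. Pick any signed circuit $\vD_f\in\cC$ with $\mathrm{supp}(\vD_f)=\FC(B,f^*)\subseteq(B\setminus\{f\})\cup\{f^*\}$, which exists because $f\in B$ forces $f^*\in B^*$. Axiom~\ref{item:ror} gives $\langle\vC,\vD_f\rangle=0$ and $\langle\vC_e,\vD_f\rangle=0$ for every $e$, hence $\langle\vC-\vC',\vD_f\rangle=0$. Since $\vC-\vC'$ is supported in $B$, this pairing reduces to $\sum_{g\in B}(\vC-\vC')(g)\,\vD_f(g^*)$; and $\vD_f(g^*)\neq 0$ forces $g^*\in(B\setminus\{f\})\cup\{f^*\}$, which together with $g\in B$ (so $g^*\notin B$ by the transversal condition) leaves only $g=f$. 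Thus $(\vC-\vC')(f)\cdot\vD_f(f^*)=0$, and since $f^*\in\mathrm{supp}(\vD_f)$ gives $\vD_f(f^*)\neq 0$, we conclude $(\vC-\vC')(f)=0$. The main subtlety is the transversal bookkeeping that forces the inner product in axiom~\ref{item:ror} to collapse to a single term; once that is set up, the orthogonality provided by regularity does all the work, and no circuit elimination or induction is needed.
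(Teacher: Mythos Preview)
Your proof is correct and genuinely different from the paper's. The paper simply invokes an external result (the characterization of strong $F$-circuit sets in \cite[Definition~3.10]{JK2023} with $F$ the regular partial field) to get the decomposition, and then reads off $\vC_e(e)=\vC(e)$. You instead give a direct, self-contained argument: match the coordinates outside $B$ by the support constraint $\FC(B,e)\subseteq (B\setminus\{e^*\})\cup\{e\}$, and then kill each coordinate inside $B$ by pairing against the fundamental circuit $\vD_f$ through $f^*$ and using the regularity axiom~\ref{item:ror}. The collapse of $\langle\vC-\vC',\vD_f\rangle$ to the single term at $g=f$ is exactly right, and it isolates precisely where regularity (as opposed to mere orientability) is used.

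What each approach buys: the paper's proof is a one-liner but imports machinery from \cite{JK2023}; yours makes the lemma independent of that reference and shows that only the orthogonality condition~\ref{item:ror} is needed, not the full strong-circuit-set formalism. One small remark: when you say the condition $\vC_e(e)=\vC(e)$ ``pins down a unique choice,'' you are implicitly using that signed circuits with a given support are unique up to sign, which is not immediate from Definition~\ref{def: oriented OM} alone (the paper later cites \cite[Lemma~3.7]{JK2023} for this in Proposition~\ref{prop: restriction}). However, your argument in fact works for \emph{any} choice of $\vC_e$ satisfying the two conditions, so uniqueness is not actually needed for the equation to hold --- and indeed falls out as a consequence.
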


\begin{proof}
    By the characterization of strong $F$-circuit sets of orthogonal matroids in~{\cite[Definition~3.10]{JK2023}}, applied with $F$ being the regular partial field,
    $\vC = \sum_{e\in \mathrm{supp}(C)\setminus B} \vC_e$, where each $\vC_e$ is a signed circuit such that $\mathrm{supp}(\vC_e) = \FC(B,e)$.
    Then for each $e\in \mathrm{supp}(\vC)\setminus B$, we have $\vC_e(e) = \vC(e)$.
\end{proof}

Each regular orthogonal representation induces a principally unimodular skew-symmetric matrix as follows.
A square matrix is \emph{principally unimodular} (in short, \emph{PU}) if it is defined over the rational numbers~$\mathbb{Q}$ and all its principal minors are $0$ or $\pm 1$.
Note that any principal minor of a PU skew-symmetric matrix is either $0$ or $1$, 
and a skew-symmetric matrix is PU if and only if every principal submatrix has Pfaffian $0$ or $\pm 1$.
These facts follow from Cayley's theorem that $\mathrm{pf}(A)^2 = \det(A)$ for any skew-symmetric matrix~$A$.

Given a regular orthogonal representation~$\cC$ of an orthogonal matroid~$M$, we fix a basis $B$ of $M$ and
let $\Lambda(\cC,B)$ be the $B \times (E\cup E^*)$ matrix defined by taking its $e$-th row as $\vC_e \in \cC$ such that $\mathrm{supp}(\vC_e) = \FC(B,e^*)$ and $\vC_e(e^*) = 1$.
By rearranging the columns, we have
\begin{align} \label{eq:A-matrix}
    \Lambda(\cC,B) = 
    \begin{blockarray}{ccc}
    \scalebox{0.7}{\text{$B^*$}} & \scalebox{0.7}{\text{$B$}} & \\
    \begin{block}{(cc)c}
      I & A & \scalebox{0.7}{\text{$B$}} \\
    \end{block}
    \end{blockarray}
\end{align}
where $I$ is the identity matrix. %
Then $A$ is skew-symmetric by~\ref{item:ror}.
On the other hand, the regular orthogonal representation $\cC$ induces a Wick function over the regular partial field $\mathbb{U}_0$ by~{\cite[Theorem~1.2]{JK2023}}; that is, a function $\varphi$ from the set of transversals to $\{0,\pm 1\}$ that satisfies the Wick equations; see~{\cite[Definition~3.1]{JK2023}} for details. This function is equivalent, up to normalization so that $\varphi(B^*) = 1$, to the Pfaffian map introduced by Wenzel~\cite{Wenzel1993b}.  Therefore, by Wenzel's result~\cite{Wenzel1993b}, the Wick function $\varphi$ encodes precisely the Pfaffians of $A$, which implies that $A$ is PU.

The $\mathbb{Q}$-span of $\cC$ has dimension at most $n$, since it is an isotropic subspace of the $2n$-dimensional space $\mathbb{Q}^{E\cup E^*}$ with respect to the non-degenerate bilinear form~$\langle \cdot, \cdot \rangle$ in~\ref{item:ror}; see~{\cite[Theorem~6.4]{Lang1987}}. 
The row span of $\Lambda(\cC,B)$ over $\mathbb{Q}$ is equal to the $\mathbb{Q}$-span of $\cC$, as the $n$ rows $\vC_e$'s of $\Lambda(\cC,B)$ are linearly independent.
In fact, the same statement holds for $\mathbb{Z}$-spans:

\begin{lemma}\label{lem: equal row span}
The $\mathbb{Z}$-span of the rows of $\Lambda(\cC,B)$ equals the $\mathbb{Z}$-span of the signed circuits in $\cC$. 
\end{lemma}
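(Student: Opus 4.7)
The plan is to verify the two inclusions separately. The inclusion of the row span of $\Lambda(\cC,B)$ into the $\Z$-span of $\cC$ is immediate by construction: each row of $\Lambda(\cC,B)$ is, by definition, a signed circuit in $\cC$.

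For the reverse inclusion, I would take an arbitrary $\vC \in \cC$ and apply Lemma~\ref{lem: sum of fund circuit} to write $\vC = \sum_{e \in \mathrm{supp}(\vC)\setminus B} \vC_e$, where each $\vC_e$ is the signed circuit with $\mathrm{supp}(\vC_e) = \FC(B,e)$ and $\vC_e(e) = \vC(e)$. The key bookkeeping step is to rewrite each $\vC_e$ as an integer multiple of a row of $\Lambda(\cC,B)$. Since $B$ is a transversal, the complement $(E \cup E^*) \setminus B$ is exactly $\{b^* : b \in B\}$, so every index $e \in \mathrm{supp}(\vC) \setminus B$ is of the form $e = b^*$ for a unique $b \in B$. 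Then $\vC_e$ and the $b$-th row $\Lambda_b$ of $\Lambda(\cC,B)$ have the common support $\FC(B,b^*)$. Since two signed circuits of an oriented orthogonal matroid with the same support agree up to sign (a fact already implicit in the wording of Lemma~\ref{lem: sum of fund circuit}), comparison at $b^*$, where $\Lambda_b(b^*) = 1$ and $\vC_e(b^*) = \vC(b^*) \in \{\pm 1\}$, gives $\vC_e = \vC(b^*)\,\Lambda_b$. Substituting back yields $\vC = \sum_{b \in B,\, b^* \in \mathrm{supp}(\vC)} \vC(b^*)\,\Lambda_b$, which is the desired $\Z$-linear combination of rows.

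The only mildly delicate point is the $\pm$-uniqueness of a signed circuit with a prescribed support; this is standard for oriented orthogonal matroids and is already tacitly invoked in the paper, so I do not foresee any substantive obstacle. The entire argument thus reduces to a short bookkeeping exercise once Lemma~\ref{lem: sum of fund circuit} and the definition of $\Lambda(\cC,B)$ are in hand.
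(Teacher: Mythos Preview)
Your proof is correct and follows essentially the same approach as the paper: one inclusion is by construction, and the other is an immediate application of Lemma~\ref{lem: sum of fund circuit}. The paper's version is terser (it omits the explicit bookkeeping with $e = b^*$ and the $\pm$-uniqueness remark), but the substance is identical.
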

\begin{proof}
    By Lemma~\ref{lem: sum of fund circuit}, the $\Z$-span of the rows of $\Lambda(\cC,B)$ contains all signed circuits in $\cC$, and therefore it is equal to the $\Z$-span of $\cC$.
\end{proof}

The following is a consequence of (4.2) and (4.4) in~\cite{Bouchet1988}.

\begin{theorem}[\cite{Bouchet1988}] 
    Let $\cC$ be a regular orthogonal representation, let $B$ be a basis of the underlying orthogonal matroid $M_\cC$, and let $\Lambda := \Lambda(\cC,B) = \begin{pmatrix}
        I & A 
    \end{pmatrix}$.
    Then for every transversal $T$, the following are equivalent:
    \begin{enumerate}[label=\rm(\roman*)]
        \item $T$ is a basis of $M_{\cC}$. 
        \item $\Lambda[B,T^*]$ is nonsingular.
        \item $A[B \cap T^*, B \cap T^*]$ is nonsingular. 
    \end{enumerate}
\end{theorem}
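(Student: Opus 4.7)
The plan is to prove the three-way equivalence by establishing (ii) $\Leftrightarrow$ (iii) via a direct block-matrix calculation, and then deducing (i) $\Leftrightarrow$ (iii) from Bouchet's Pfaffian characterization of bases of a regular orthogonal matroid.

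For the first step, I would set $S := B \cap T^*$. Since $T$ is a transversal, exactly one of $e$ or $e^*$ lies in $T$ for each $e \in B$, so $T = (B \setminus S) \cup S^*$ and consequently $T^* = (B^* \setminus S^*) \cup S$. The key observation is that each row $\vC_e$ of $\Lambda$ has support contained in $\FC(B, e^*) \subseteq (B \setminus \{e\}) \cup \{e^*\}$ and is normalized so that $\vC_e(e^*) = 1$; therefore its restriction to the columns indexed by $B^*$ is the indicator of $e^*$. Ordering the rows (indexed by $B$) as $(B \setminus S,\, S)$ and the columns of $T^*$ as $(B^* \setminus S^*,\, S)$ then puts $\Lambda[B, T^*]$ into block upper-triangular form with diagonal blocks $I_{B \setminus S}$ and $A[S, S]$. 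This yields $\det \Lambda[B, T^*] = \pm \det A[S, S]$, and hence (ii) $\Leftrightarrow$ (iii).

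For (i) $\Leftrightarrow$ (iii), I would observe that $T \mapsto B \setminus T =: S$ is a bijection between transversals of $E \cup E^*$ and subsets of $B$, with inverse $S \mapsto B \triangle (S \cup S^*)$. Bouchet's theorem --- the substantive input attributed to (4.2) and (4.4) of \cite{Bouchet1988} --- asserts that, with respect to the PU skew-symmetric matrix $A$ determined by the reference basis $B$, a transversal $T = B \triangle (S \cup S^*)$ is a basis of $M_\cC$ if and only if $\det A[S, S] \ne 0$, equivalently $\mathrm{pf}(A[S, S]) = \pm 1$. Combined with the bijection, this gives (i) $\Leftrightarrow$ (iii) directly.

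The main obstacle is clearly the Bouchet step. If one wanted to reprove it from scratch rather than cite it, the strategy would proceed by induction on $|S|$: one expands $\mathrm{pf}(A[S, S])$ along an element $e \in S$ via the skew-symmetric Laplace expansion, and matches the resulting nonvanishing terms against the outputs of the symmetric exchange axiom applied at $e$. The principal-unimodularity hypothesis is crucial here, since it reduces every principal Pfaffian to $0$ or $\pm 1$, which is precisely what is needed for the algebraic exchange on minors of $A$ to mirror the symmetric exchange on $\cB(M_\cC)$; meanwhile, Lemma~\ref{lem: sum of fund circuit} guarantees that the row span of $\Lambda$ recovers all of $\cC$ (and not merely the fundamental circuits with respect to $B$), so that the basis structure deduced from $A$ truly coincides with $\cB(M_\cC)$.
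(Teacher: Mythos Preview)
Your proposal is correct and takes essentially the same approach as the paper: the paper simply attributes the entire three-way equivalence to (4.2) and (4.4) of \cite{Bouchet1988} without further argument, and you likewise defer the substantive step (i) $\Leftrightarrow$ (iii) to Bouchet. Your explicit block-triangular computation for (ii) $\Leftrightarrow$ (iii) is a correct elaboration that the paper leaves implicit in the citation; with your chosen orderings the determinant is in fact exactly $\det A[S,S]$ (no sign), but this does not affect the equivalence.
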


As the determinant of nonsingular $A[B \cap T^*, B \cap T^*]$ equals $1$, we deduce the following corollary.
This result generalizes the Matrix–Quasi-tree Theorem for ribbon graphs {\cite[Theorem~6.3]{MMN2023}}; the connection between orthogonal matroids and ribbon graphs is explained in \S\ref{subsection: ribbon}.

\begin{corollary}
\label{cor:counting bases}
    The number of bases of $M_{\cC}$ is equal to $\det(I+A)$. 
    \qed
\end{corollary}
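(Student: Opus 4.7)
The plan is to count bases by parameterizing them via subsets of the fixed basis $B$, apply the preceding theorem to reduce the count to nonsingular principal submatrices of $A$, and then use the classical expansion of $\det(I+A)$ in terms of principal minors together with the principal-unimodularity of $A$.

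First I would observe that the map $T \mapsto S := B \cap T^*$ is a bijection between transversals $T$ of $E \cup E^*$ and subsets $S \subseteq B$: for any $x \in B$, exactly one of $x, x^*$ lies in $T$, so $S$ records precisely those elements of $B$ whose complement (in the skew pair) lies in $T$, and $T$ can be reconstructed from $S$ by taking $(B \setminus S) \cup (B \cap T^*)^* \cup \text{(the forced choices on pairs disjoint from $B$)}$. Under this bijection, the previous theorem says that $T$ is a basis of $M_\cC$ if and only if the principal submatrix $A[S,S]$ is nonsingular. Hence
\[
|\cB(M_\cC)| \;=\; \#\{S \subseteq B : A[S,S] \text{ is nonsingular}\}.
\]

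Next I would invoke the standard identity expressing $\det(I+A)$ as the sum of all principal minors of $A$, namely
\[
\det(I+A) \;=\; \sum_{S \subseteq B} \det A[S,S],
\]
which follows by expanding the characteristic polynomial $\det(xI+A)$ and specializing at $x=1$ (with the convention $\det A[\emptyset,\emptyset] = 1$). Then, since $A$ is a principally unimodular skew-symmetric matrix by the discussion following equation~\eqref{eq:A-matrix}, each principal minor $\det A[S,S]$ is either $0$ or $1$ (using Cayley's identity $\operatorname{pf}(A[S,S])^2 = \det A[S,S]$ together with the PU hypothesis). Combining these facts yields
\[
\det(I+A) \;=\; \#\{S \subseteq B : \det A[S,S] = 1\} \;=\; \#\{S \subseteq B : A[S,S] \text{ is nonsingular}\} \;=\; |\cB(M_\cC)|.
\]

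I do not anticipate a genuine obstacle here; the only step requiring mild care is the bijection between transversals and subsets of $B$, and verifying the claim that for a PU skew-symmetric matrix every principal minor is in $\{0,1\}$ (rather than $\{0,\pm 1\}$), but the latter is exactly the content of the sentence preceding the theorem. The rest is a direct assembly of the theorem with the principal-minor expansion of $\det(I+A)$.
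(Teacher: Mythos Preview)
Your proposal is correct and follows exactly the argument the paper intends: the paper's proof is the single sentence ``As the determinant of nonsingular $A[B \cap T^*, B \cap T^*]$ equals $1$, we deduce the following corollary,'' and you have spelled out the implicit steps (the bijection $T \leftrightarrow S$, the principal-minor expansion of $\det(I+A)$, and the $\{0,1\}$-valuedness of principal minors). One small cleanup: since $B$ is a transversal, every skew pair meets $B$, so there are no ``forced choices on pairs disjoint from $B$''; the inverse bijection is simply $S \mapsto T = (B\setminus S)\cup S^*$.
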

We remark that the matrix $I+A$ has entries in $\{0,\pm 1\}$ because the supports of the rows of  $\begin{pmatrix}
        I & A
    \end{pmatrix}$ are subtransversals.

Given an oriented orthogonal matroid, there is a way to obtain other oriented orthogonal matroids with the same underlying orthogonal matroid.

\begin{definition} \label{def:reorientation}
    Let $\cC \subseteq \mathbb{Z}^{E\cup E^*}$ be an oriented orthogonal matroid.
    The \emph{reorientation} of $\cC$ on a set $X\subseteq E$ is 
    $\cC^{X} := \{\vC^{X} \in \mathbb{Z}^{E\cup E^*} : \vC\in\cC\}$, where
    \begin{align*}
        \vC^{X}(i) = \begin{cases}
            -\vC(i) & \text{if } i\in X\cup X^*, \\
            \vC(i) & \text{otherwise}.
        \end{cases}
    \end{align*}
\end{definition}

A reorientation $\cC^X$ is an oriented orthogonal matroid.
Furthermore, if $\cC$ is a regular orthogonal representation, then $\cC^{X}$ is also a regular orthogonal representation.

We briefly review the definition of regular matroids and their connection to regular orthogonal matroids.
Regular matroids are usually defined in terms of totally unimodular matrices, but here we recall an equivalent definition in terms of chain group representations in the sense of Tutte~\cite{Tutte1958} or weak $\mathbb{U}_0$-circuit sets in the sense of \cite{BB2019}.

\begin{definition}\label{def:regular matroid}
    A \emph{regular representation} of a matroid $N$ on $E$ is a set $\cC$ of $(0,\pm 1)$-vectors in $\mathbb{Z}^E$ satisfying the following conditions:
    \begin{enumerate}[label=\rm(\roman*)]
        \item $\cC = -\cC$,
        \item $\{\mathrm{supp}(\vC) : \vC \in \cC\}$ is the set of circuits of $N$,  
        \item if two vectors $\vC_1,\vC_2 \in \cC$ have the same support, then $\vC_1 = \pm \vC_2$, and 
        \item for any $\vC_1,\vC_2\in \cC$ and $e\in \mathrm{supp}(\vC_1) \cap \mathrm{supp}(\vC_2)$, if $\vC_1(e) = -\vC_2(e)$ and the rank of $\mathrm{supp}(\vC_1) \cup \mathrm{supp}(\vC_2)$ is $|\mathrm{supp}(\vC_1) \cup \mathrm{supp}(\vC_2)|-2$,
        then there exists $\vC_3\in \cC$ such that $\vC_1 + \vC_2 = \vC_3$.
    \end{enumerate}
    A matroid is \emph{regular} if it admits a regular representation.
\end{definition}

\begin{proposition}
    \label{prop: restriction}
    Let $\cC$ be a regular orthogonal representation on $E\cup E^*$ and let $S$ be a subtransversal in $E\cup E^*$.
    Then the set of circuits $C$ of $M_{\cC}$ with $C\subseteq S$ is the circuit set of a matroid $N$ on $S$, 
    and $\cC |_{S} := \{\vC\in \cC : \mathrm{supp}(\vC) \subseteq S\}$ is a regular representation of $N$. 
\end{proposition}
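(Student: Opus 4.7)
\emph{Plan.} The proof decomposes into two parts: first, verifying that the set $\cC_N := \{C \subseteq S : C \text{ is a circuit of } M_\cC\}$ forms the circuit system of a matroid $N$ on $S$; and second, checking that $\cC|_S$ satisfies the four conditions in Definition~\ref{def:regular matroid} for a regular representation of $N$.

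For the matroid axioms, the avoidance of $\emptyset$ and the antichain property descend directly from axioms (C1) and (C2) for orthogonal matroid circuits. For matroid circuit elimination, if $C_1, C_2 \in \cC_N$ are distinct and $e \in C_1 \cap C_2$, then $C_1 \cup C_2 \subseteq S$ is a subtransversal (because $S$ is one), so orthogonal matroid axiom (C3) yields a circuit $C_3$ of $M_\cC$ with $C_3 \subseteq (C_1 \cup C_2) \setminus \{e\} \subseteq S$, hence $C_3 \in \cC_N$.

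To verify the regular-representation axioms for $\cC|_S$: (i) is immediate from $\cC = -\cC$ together with the support-preservation of negation, and (ii) holds by construction. For (iii), I would first establish that in any regular orthogonal representation, two signed circuits with the same support $C$ differ only by a global sign: fixing a basis $B$ of $M_\cC$, Lemma~\ref{lem: sum of fund circuit} decomposes each such signed circuit as a sum of uniquely-determined fundamental signed circuits indexed by $C \setminus B$, so matching any single coordinate forces the global sign. For (iv), given $\vC_1, \vC_2 \in \cC|_S$ satisfying the hypothesis at some common element $e$, the candidate $\vC_3 := \vC_1 + \vC_2$ lies in the $\Z$-row-span of $\Lambda(\cC, B)$ by Lemma~\ref{lem: equal row span}; the corank-$2$ hypothesis forces $\mathrm{supp}(\vC_3)$ to equal the unique circuit of $M_\cC$ inside $(\mathrm{supp}(\vC_1) \cup \mathrm{supp}(\vC_2)) \setminus \{e\}$ (provided by the elimination step above, and contained in $S$), while principal unimodularity of the matrix $A$ in $\Lambda(\cC, B) = (I \mid A)$ constrains the entries of $\vC_3$ to $\{0, \pm 1\}$, making $\vC_3$ a genuine signed circuit in $\cC|_S$.

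The principal obstacle is (iv): a priori some coordinate $\vC_3(f) = \vC_1(f) + \vC_2(f)$ could equal $\pm 2$ when $\vC_1(f) = \vC_2(f) \ne 0$, and ruling this out requires a careful use of the corank-$2$ condition together with principal unimodularity (via the row-span identification of Lemma~\ref{lem: equal row span}). An alternative route would be a direct circuit-elimination chain inside the oriented orthogonal matroid $\cC$, iteratively shrinking supports and tracking signs using the uniqueness established in (iii).
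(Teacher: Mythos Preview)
Your plan for the matroid axioms and for conditions (i)--(iii) of Definition~\ref{def:regular matroid} is sound; in particular, your direct argument for (iii) via Lemma~\ref{lem: sum of fund circuit} (choose a basis $B'$ of $M_\cC$ with $|C\setminus B'|=1$, so the decomposition has a single summand) is a valid alternative to the paper's citation of \cite[Lemma~3.7]{JK2023}.

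The gap is in (iv). You set $\vC_3 := \vC_1 + \vC_2$ and then assert two things: that $\mathrm{supp}(\vC_3)$ equals the unique circuit inside $(C_1\cup C_2)\setminus\{e\}$, and that principal unimodularity of $A$ forces the entries of $\vC_3$ into $\{0,\pm1\}$. Neither is justified. Principal unimodularity controls principal minors, not integer combinations of rows, so it does not directly rule out an entry $\vC_1(f)+\vC_2(f)=\pm2$; and until you know the entries are bounded, there is no reason the support should shrink to the unique circuit rather than being all of $(C_1\cup C_2)\setminus\{e\}$.

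The paper avoids both issues by reversing the logic. It first takes a genuine signed circuit $\vC_3\in\cC|_S$ with support contained in $(C_1\cup C_2)\setminus\{e\}$ (this exists by the circuit elimination already established), so the $\{0,\pm1\}$ and circuit-support constraints hold automatically. The corank-$2$ hypothesis yields a basis $B$ of $N$ with $C_i\setminus B=\{x_i\}$, hence $C_3\setminus B=\{x_1,x_2\}$; extending $B$ to a basis $B'$ of $M_\cC$ and applying Lemma~\ref{lem: sum of fund circuit} to $\vC_3$ gives $\vC_3=\alpha_1\vC_1+\alpha_2\vC_2$ with $\alpha_i\in\{\pm1\}$. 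Evaluating at $e$ forces $\alpha_1=\alpha_2$, whence $\vC_1+\vC_2=\pm\vC_3\in\cC|_S$. The key move is to apply Lemma~\ref{lem: sum of fund circuit} to the \emph{known} signed circuit $\vC_3$ rather than to the a~priori unstructured sum $\vC_1+\vC_2$; this is essentially the ``alternative route'' you gesture at, made precise.
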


\begin{proof}
    By~\ref{item:c3}, the set of circuits $C$ of $M_{\cC}$ with $C\subseteq S$ satisfies the circuit elimination axiom for matroids.
    Therefore, it forms the set of circuits of a matroid, say $N$, on $S$.

    Now, it suffices to check that $\cC|_{S}$ satisfies (iii) and (iv) of Definition~\ref{def:regular matroid}.
    The condition~(iii) follows from~{\cite[Lemma~3.7]{JK2023}}. %

    We claim that $\cC|_{S}$ satisfies (iv).
    Let $\vC_1, \vC_2\in \cC|_S$ and let $e\in S$ be such that $\vC_1(e) = -\vC_2(e)$.
    By definition of $\cC|_{S}$ and the circuit elimination axiom, there exists $\vC_3\in \cC|_{S}$ such that $C_3 \subseteq (C_1\cup C_2) \setminus\{e\}$, where $C_i := \mathrm{supp}(\vC_i)$.
    Suppose that the rank of $C_1 \cup C_2$ is equal to $|C_1\cup C_2|-2$.
    Then there is a basis $B$ of $N$ such that $|C_1 \setminus B| = |C_2 \setminus B| = 1$.
    Writing $C_i \setminus B = \{x_i\}$, we have $C_3 \setminus B = \{x_1,x_2\}$.
    The underlying orthogonal matroid $M_{\cC}$ has a basis $B'$ containing $B$, since $B$ does not contain any circuits of $M_{\cC}$.
    Since $B' \cap \{x_1,x_2\} = \emptyset$,
    we must have $\vC_3 = \alpha_1 \vC_1 + \alpha_2 \vC_2$ for some $\alpha_i\in \{\pm 1\}$ by Lemma~\ref{lem: sum of fund circuit}.
    Then $0 = \vC_3(e) = \alpha_1 \vC_1(e) + \alpha_2\vC_2(e)$.
    Because $\vC_1(e) = -\vC_2(e)$, we have $\alpha_1=\alpha_2$.
    Then $\vC_1+\vC_2 = \vC_3$ or $-\vC_3$, implying that $\cC|_{S}$ satisfies (iv).
\end{proof}

\subsection{Fourientations and a generalized Farkas' Lemma}\label{sec: farkas}

In this section we formulate an extension of Farkas' Lemma for oriented orthogonal matroids.
For a vector $\vC$, we write $\vC \ge 0$ if all entries of $\vC$ are nonnegative.

\begin{lemma}
    \label{lem: Farkas}
    Let $\cC$ be an oriented orthogonal matroid on $E\cup E^*$ and let $e\in E$.
    Then there is either a signed circuit $\vC$ such that $\vC \ge 0$ and $\vC(e) \neq 0$ or a signed circuit $\vC$ such that $\vC \ge 0$ and $\vC(e^*) \neq 0$, but not both.
\end{lemma}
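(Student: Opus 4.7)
Suppose for contradiction there exist nonnegative signed circuits $\vC_1, \vC_2 \in \cC$ with $\vC_1(e) \neq 0$ and $\vC_2(e^*) \neq 0$; since all entries lie in $\{0,\pm 1\}$, we have $\vC_1(e) = \vC_2(e^*) = 1$. Then for every $f \in E \cup E^*$, the product $\vC_1(f)\vC_2^*(f) = \vC_1(f)\vC_2(f^*)$ is nonnegative (a product of nonnegative entries), and at $f=e$ it equals $1$. The orthogonality axiom~(iii) in Definition~\ref{def: oriented OM} demands that either all these products vanish or both $+1$ and $-1$ appear among them; neither alternative is compatible with what we just observed.

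\textbf{At least one.} For existence I would argue by contradiction: assume neither (a) nor (b) holds. First, I would check that signed circuits containing $e$ and signed circuits containing $e^*$ both exist. Indeed, if no circuit of $M_\cC$ contained $e$, then $\{e\}$ would not be a circuit, so $e$ lies in some basis. By the failure of (b), $\{e^*\}$ is also not a circuit, so some basis $B'$ contains $e^*$; axiom~\ref{item:c5} applied to $B'$ and $e \in (B')^*$ yields a circuit $C \subseteq B' \cup \{e\}$ containing $e$ (since bases contain no circuits), contradicting the assumption. A symmetric argument handles $e^*$. Now, among signed circuits $\vC \in \cC$ with $\vC(e)=1$, pick one minimizing $n(\vC) := |\{f \in E \cup E^* : \vC(f) = -1\}|$; by failure of (a) we have $n(\vC) > 0$, so we can fix $g$ with $\vC(g) = -1$. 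The plan is to produce $\vC' \in \cC$ with $\vC'(e)=1$ and $n(\vC') < n(\vC)$, contradicting minimality.

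\textbf{Reduction step and main obstacle.} To reduce $n(\vC)$ I would invoke the oriented strengthening of the circuit elimination axiom~\ref{item:c3} for oriented orthogonal matroids (as developed in~\cite{JK2023}): find a signed circuit $\vD$ with $\vD(g) = 1$ whose support together with $\mathrm{supp}(\vC)$ is a subtransversal and whose signs are \emph{conformal} with $\vC$ on the overlap (i.e., $\vD(f)\vC(f) \geq 0$ for all $f \neq g$). Conformal elimination at $g$ then yields $\vC' \in \cC$ supported in $(\mathrm{supp}(\vC) \cup \mathrm{supp}(\vD)) \setminus \{g\}$ with $\vC'(e) = \vC(e) = 1$ and strictly fewer negative entries, giving the desired contradiction. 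The main obstacle is proving the existence of a \emph{suitable} $\vD$: in the orthogonal setting the subtransversal constraint makes naive vector-addition arguments fail, and one must carefully track signs along skew pairs $\{f,f^*\}$. I expect the cleanest route is to use the fourientation framework of~\cite{BH17} as adapted to orthogonal matroids in~\cite{Ding2024}, which was designed precisely for Farkas-style arguments in this setting.
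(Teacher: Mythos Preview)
Your ``not both'' argument is correct and is exactly how the paper handles that direction.

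The existence argument has a genuine gap. Having chosen $\vC$ with $\vC(e)=1$ minimizing $n(\vC)$ and an element $g$ with $\vC(g)=-1$, you need a signed circuit $\vD$ with $\vD(g)=1$, with $\mathrm{supp}(\vC)\cup\mathrm{supp}(\vD)$ a subtransversal, and with $\vD$ conformal to $\vC$ away from $g$. You give no reason such a $\vD$ should exist, and the appeal to a signed-elimination axiom in \cite{JK2023} is not justified: even if some oriented elimination is available there, nothing forces the union of supports to avoid skew pairs, nothing forces conformality on the overlap, and nothing guarantees the eliminated circuit still hits $e$. Your closing sentence, deferring to ``the fourientation framework \ldots\ adapted to orthogonal matroids,'' is circular: that adaptation is precisely what the present paper carries out, and Lemma~\ref{lem: Farkas} (via Lemma~\ref{lem: Farkas2}) is the main ingredient.

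The paper takes a different route that avoids elimination entirely. It proves the more general Lemma~\ref{lem: Farkas2} for an arbitrary positive (or negative) fourientation $\vF$, by induction on the number $t$ of elements $f\in E$ with $|\vF(f)|=1$. The base case $t=1$ is just~\ref{item:c5}. For the inductive step one picks such an $f\neq e$, forms two relaxed fourientations $\vF_1,\vF_2$ by setting $\{\vF_i(f),\vF_i(f^*)\}=\{\{\pm1\},\emptyset\}$ in the two possible ways, and obtains circuits $\vC_1\subseteq\vF_1$, $\vC_2\subseteq\vF_2$ by induction. If neither lies in $\vF$, then $\vC_1(f)$ and $\vC_2(f^*)$ are both nonzero with the ``wrong'' sign, and since $\vF$ is positive all nonzero products $\vC_1(h)\vC_2(h^*)$ have the same sign; this contradicts axiom~(iii) of Definition~\ref{def: oriented OM}. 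Lemma~\ref{lem: Farkas} is then the special case $\vF=\vF_+$. The key idea you are missing is this two-sided relaxation trick, which replaces the search for a single conformal partner $\vD$ by two inductively-supplied circuits that are played off against each other using only orthogonality.
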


Farkas' Lemma for oriented matroids~{\cite[Corollary~3.4.6]{BVSWZ1999}} is equivalent to the special case of Lemma~\ref{lem: Farkas} where the underlying orthogonal matroid of $\cC$ in Lemma~\ref{lem: Farkas} is equal to $\mathrm{lift}(N)$ for some (oriented) matroid~$N$.
We will in fact prove a more general version of Lemma~\ref{lem: Farkas}, which implies the ``$4$-painting lemma'' for oriented matroids {\cite[Theorem~3.4.4]{BVSWZ1999}}.
To present the result, we begin by introducing fourientations, which play a central role in Section~\ref{sec: BBY}.

\begin{definition}[Compare with Backman--Hopkins \cite{BH17}]
A \emph{fourientation} $\vF$ is a function from $E\cup E^*$ to $\{\{ 1,-1\},\{1\},\{-1\},\emptyset\}$. We frequently write $\{\pm 1\}$ as shorthand for 
$\{ 1,-1\}$. If $\vF(e)=\{\pm 1 \}$ we call $e$ {\em bi-oriented}, and if $\vF(e)=\{ \emptyset \}$ we call $e$ {\em unoriented}. Moreover:
\begin{enumerate}[label=\rm(\roman*)]
    \item Given a fourientation $\vF$, we define the fourientation $-\vF$ by
    \[(-\vF)(e)=-(\vF(e)),\]
    where $-\emptyset:=\emptyset$, $-\{\pm 1\}:=\{\pm 1\}$, $-\{1\}:=\{-1\}$, and $-\{-1\}:=\{1\}$.
    \item Given two fourientations $\vF_1$ and $\vF_2$, the fourientation $\vF_1\cap\vF_2$ is defined by\[(\vF_1\cap\vF_2)(e):=(\vF_1(e))\cap(\vF_2(e)).\]
    \item We write $\vF_1\subseteq\vF_2$ if and only if $\vF_1(e)\subseteq\vF_2(e)$ for every $e\in E\cup E^*$.
\end{enumerate}
\end{definition}

\begin{remark}
A signed circuit $\vC\in \{0,\pm 1\}^{E\cup E^*}$ can be viewed as a fourientation by replacing $0$, $1$, and $-1$ with $\emptyset$, $\{1\}$, and $\{-1\}$ respectively. Hence it makes sense to write $\vC\subseteq\vF$ or say that $\vF$ contains $\vC$, where $\vF$ is a fourientation. 
\end{remark}

\begin{definition}\label{def: positive fourientation}
    A fourientation $\vF$ is \emph{positive} if $\vF(e) = \{ 1,-1 \} \setminus (-\vF(e^*))$ for each $e\in E$.
    It is \emph{negative} if $\vF(e) = \{ 1,-1 \} \setminus \vF(e^*)$ for each $e\in E$.
\end{definition}

\begin{example} \label{ex:foutientations}
\leavevmode
\begin{enumerate}
    \item If $\vF_{+}$ is the fourientation with $\vF(e)=\{+1\}$ for all $e\in E\cup E^*$,
then $\vF_{+}$ is positive. 
    \item Fix $e\in E$ and let $\vF_e$ be the fourientation such that $\vF_e(e) = -\vF_e(e^*) = \{+1\}$, and such that $\vF_e(f)$ is bi-oriented and $\vF_e(f^*)$ is unoriented for each $f\in E\setminus \{e\}$. Then $\vF_e$ is negative.
    By~\ref{item:c5}, if $\cC$ is an oriented orthogonal matroid on $E \cup E^*$, there is always a signed circuit $\vC$ of $\cC$ such that $\vC \subseteq \vF_e$.
\end{enumerate}
\end{example}

The following observation generalizes Example~\ref{ex:foutientations}(2):

\begin{lemma}\label{lem: Farkas2}
    Let $\cC$ be an oriented orthogonal matroid on $E\cup E^*$ and let $e\in E$.
    Let $\vF$ be a positive or negative fourientation with ${|\vF(e)| = 1}$. 
    Then there is either a signed circuit $\vC$ such that $\vC \subseteq \vF$ and $\vC(e) \ne 0$ or a signed circuit $\vC$ such that $\vC \subseteq \vF$ and $\vC(e^*) \ne 0$, but not both.
\end{lemma}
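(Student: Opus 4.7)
The plan splits into two normalising reductions followed by the two directions of the lemma. First, I reduce the negative case to the positive case via the sign-flip $\vC\mapsto\vC'$ with $\vC'(i)=\vC(i)$ for $i\in E$ and $\vC'(i)=-\vC(i)$ for $i\in E^*$; a direct check of the axioms in Definition~\ref{def: oriented OM} shows that this sends oriented orthogonal matroids to oriented orthogonal matroids, turns negative fourientations into positive ones, and preserves the containment $\vC\subseteq\vF$. Next, I reorient $\cC$ on $X=\{f\in E:\vF(f)=\{-1\}\}$ (cf.\ Definition~\ref{def:reorientation}) so that $\vF(e)=\vF(e^*)=\{+1\}$ and every singleton configuration on a skew pair is $(\{+1\},\{+1\})$. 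This leaves the three normalised configurations $(\{+1\},\{+1\})$, $(\{\pm 1\},\emptyset)$, $(\emptyset,\{\pm 1\})$, yielding a partition $E=U\sqcup V\sqcup W$ with $e\in U$.

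For the ``at most one'' direction, suppose $\vC_1,\vC_2\subseteq\vF$ are signed circuits with $\vC_1(e)\neq 0$ and $\vC_2(e^*)\neq 0$. The normalisation forces $\vC_1(e)=\vC_2(e^*)=1$, and subtransversality of the supports forces $\vC_1(e^*)=\vC_2(e)=0$. I compute the pairing $\sum_f\vC_1(f)\vC_2^*(f)$ by grouping it into skew pairs: the pair $\{e,e^*\}$ contributes $1\cdot 1+0\cdot 0=1$; for $f\in U\setminus\{e\}$ both summands $\vC_1(f)\vC_2(f^*)$ and $\vC_1(f^*)\vC_2(f)$ lie in $\{0,1\}$; and for $f\in V\cup W$ the empty side of the configuration forces each summand to vanish. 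The total is strictly positive with no negative summand, contradicting the alternative in Definition~\ref{def: oriented OM}(iii).

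For the ``at least one'' direction, I induct on $|E|$. In the base case $|E|=1$, axiom~\ref{item:c5} applied to $T=\{e\}$ and $e^*\in T^*$ gives a circuit $C\in\{\{e\},\{e^*\}\}$, and $\cC=-\cC$ lets me select the signed circuit of the correct sign so that it lies in $\vF$. For the inductive step, I produce an initial signed circuit $\vC_0$ passing through $\{e,e^*\}$ via axiom~\ref{item:c5} on a suitable extended transversal, and then iteratively repair $\vC_0$ until it is contained in $\vF$: at each coordinate $g$ where $\vC_0\not\subseteq\vF$, a second application of axiom~\ref{item:c5} (possibly after reorientation of $\cC$ on $\{g\}\cap E$ or $\{g^*\}\cap E$) supplies a signed circuit $\vD$ with $\vD(g)$ of the opposite sign, and a signed circuit elimination derived from axiom~\ref{item:c3} together with the orthogonality of Definition~\ref{def: oriented OM}(iii) combines $\vC_0$ and $\vD$ into a new signed circuit with strictly fewer violations while still passing through $\{e,e^*\}$.

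The main obstacle is the iterative repair step: establishing a signed circuit elimination for oriented orthogonal matroids---an oriented-OM analogue of the strong elimination theorem for oriented matroids---from axiom~\ref{item:c3} and the orthogonality of Definition~\ref{def: oriented OM}(iii), and then verifying that each elimination strictly decreases the monovariant $|\mathrm{supp}(\vC_0)\cap(V^*\cup W)|$ without undoing the passage through $\{e,e^*\}$. Ensuring the latter requires a careful choice of the eliminating signed circuit $\vD$ (for instance, produced so as to be supported away from $\{e,e^*\}$) together with a monovariant argument guaranteeing termination.
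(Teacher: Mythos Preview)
Your reductions and the ``not both'' direction are fine and track the paper's argument closely. The genuine gap is in the existence direction.

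You are proposing to produce an initial signed circuit $\vC_0$ through $\{e,e^*\}$ and then repair it step by step using a signed circuit elimination: given a violation at $g$, find an auxiliary $\vD$ and combine $\vC_0$ and $\vD$ into a new signed circuit with fewer violations that still uses $e$ or $e^*$. You yourself flag this as ``the main obstacle,'' and indeed it is unresolved. Axiom~\ref{item:c3} only gives an \emph{unsigned} circuit $C_3\subseteq (C_1\cup C_2)\setminus\{g\}$; nothing in Definition~\ref{def: oriented OM} lets you control the signs of the associated signed circuit $\vC_3$, and the orthogonality condition~(iii) is a constraint between two signed circuits, not a construction principle for a third. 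You would essentially need an oriented-orthogonal-matroid analogue of the strong signed elimination axiom for oriented matroids, and that is not available from the axioms at hand. Your monovariant $|\mathrm{supp}(\vC_0)\cap(V^*\cup W)|$ also ignores sign violations inside $U$, and the framing as ``induction on $|E|$'' is not actually used anywhere in the repair loop.

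The paper sidesteps all of this with a different induction: on the number $t$ of elements $f\in E$ with $|\vF(f)|=1$. In the base case $t=1$ the only singleton constraint is at $e$, so~\ref{item:c5} immediately produces the required signed circuit. For $t\ge 2$, pick $f\neq e$ with $|\vF(f)|=1$ and form two fourientations $\vF_1,\vF_2$ by replacing $(\vF(f),\vF(f^*))$ with $(\{\pm 1\},\emptyset)$ and $(\emptyset,\{\pm 1\})$ respectively; each has $t-1$ singletons, so induction gives $\vC_i\subseteq\vF_i$ hitting $\{e,e^*\}$. If neither $\vC_i$ lies in $\vF$, then $\vC_1(f)$ and $\vC_2(f^*)$ are both nonzero with signs forced by $\vF$, and the positivity (or negativity) of $\vF$ makes every nonzero product $\vC_1(g)\vC_2(g^*)$ have the same sign, contradicting~(iii). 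Thus one of $\vC_1,\vC_2$ already works. No signed elimination is needed at any point.
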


\begin{proof}
    We first claim the existence of a signed circuit $\vC$ such that $\vC\subseteq \vF$ and $\vC(e)+\vC(e^*) \neq 0$.
    We proceed by induction on the number $t$ of elements $f\in E$ with $|\vF(f)|=1$.
    If $t=1$, then there is a signed circuit $\vC$ such that $\vC \subseteq \vF$ and $\vC(e) + \vC(e^*) \ne 0$ by Lemma~\ref{lem: maximality-stronger} and Definition~\ref{def: oriented OM}(ii). 
    
    We now assume that $t \ge 2$, and we choose $f\in E\setminus\{e\}$ with $|\vF(f)| = |\vF(f^*)| = 1$.
    Let $\vF_1$ and $\vF_2$ be fourientations such that $\vF_1(f) = \vF_2(f^*) = \{\pm 1\}$, $\vF_1(f^*) = \vF_2(f) = \emptyset$, and $\vF_1(g) = \vF_2(g) = \vF(g)$ for all $g\in (E\cup E^*) \setminus \{f,f^*\}$.
    By the induction hypothesis, there are signed circuits $\vC_i$ with $i=1,2$ such that $\vC_i \subseteq \vF_i$ and $\vC_i(e) + \vC_i(e^*) \ne 0$.
    
    Assume that neither $\vC_1$ nor $\vC_2$ is contained in $\vF$.
    Then $\vC_1(f) = -\vF(f)$ and $\vC_2(f^*) = -\vF(f^*)$.
    If $\vF$ is positive, then $\vC_1(f)$ and $\vC_2(f^*)$ have the same sign, and
    for any $g\in E\cup E^*$, $\vC_1(g)$ and $\vC_2(g^*)$ either have the same sign or one of them is $0$.
    However, this contradicts Definition~\ref{def: oriented OM}(iii).
    If $\vF$ is negative, we obtain a contradiction in the same way.
    Therefore either $\vC_1$ or $\vC_2$ is contained in $\vF$.

    Suppose that there are signed circuits $\vC_1$ and $\vC_2$ contained in $\vF$ such that 
    $\vC_1(e) \vC_2(e^*) \ne 0$.
    Then the signs of nonzero $\vC_1(f)\vC_2(f^*)$ with $f\in E\cup E^*$ are the same ($+1$ if $\vF$ is positive and $-1$ if $\vF$ is negative), 
    contradicting Definition~\ref{def: oriented OM}(iii). %
\end{proof}

Applying Lemma~\ref{lem: Farkas2} to the positive fourientation $\vF_{+}$, we obtain Lemma~\ref{lem: Farkas} as a special case.

\subsection{Ribbon graphs}\label{subsection: ribbon}

A \emph{ribbon graph} $\vG = (G,\Sigma)$ is a graph $G = (V,E)$ together with a cellular embedding of $G$ in a connected closed orientable\footnote{Ribbon graphs are generally defined on arbitrary closed surfaces, regardless of orientability. However, we assume in this paper that the ambient surface for every ribbon graph is orientable.} surface $\Sigma$, where a {\em cellular embedding} is an injective map the complement of whose image is homeomorphic to a disjoint union of open disks.

The \emph{vertices} and \emph{edges} of the ribbon graph $\vG$ are those of the graph $G$.

The ribbon graph $\vG$ associated with $G\hookrightarrow \Sigma$ naturally induces a \emph{dual} graph $G^* = (V^*,E^*)$.
The vertices of $G^*$ are points located in different components of the complement of $G$, and each edge $e^*$ of $G^*$ traverses the corresponding edge $e$ of the original graph $G$.
See Figure~\ref{fig:torus} for an example.
The \emph{dual} ribbon graph is $\vG^* := (G^*,\Sigma)$. 
We call the vertices and edges of $G^*$ the \emph{covertices} and \emph{coedges}, respectively, of the ribbon graph $\vG$.

A transversal $B$ of $E\cup E^*$ is a \emph{basis} of $\vG$ if $\Sigma \setminus B$ is connected, and we denote by $\cB(\vG)$ the set of bases.

\begin{figure}
    \centering
    \begin{tikzpicture}
        \node at (0,0) {\includegraphics[width=0.40\linewidth]{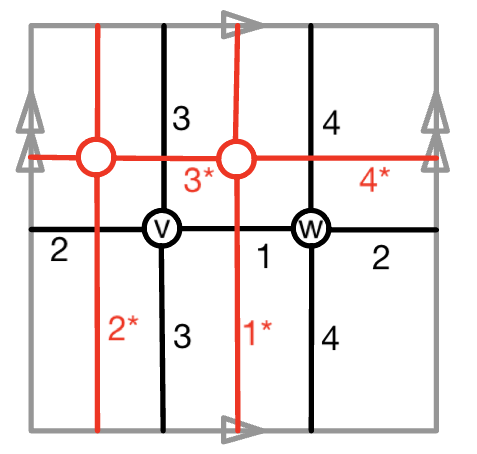}};
        \node at (7.8,0) {\includegraphics[width=0.50\linewidth]{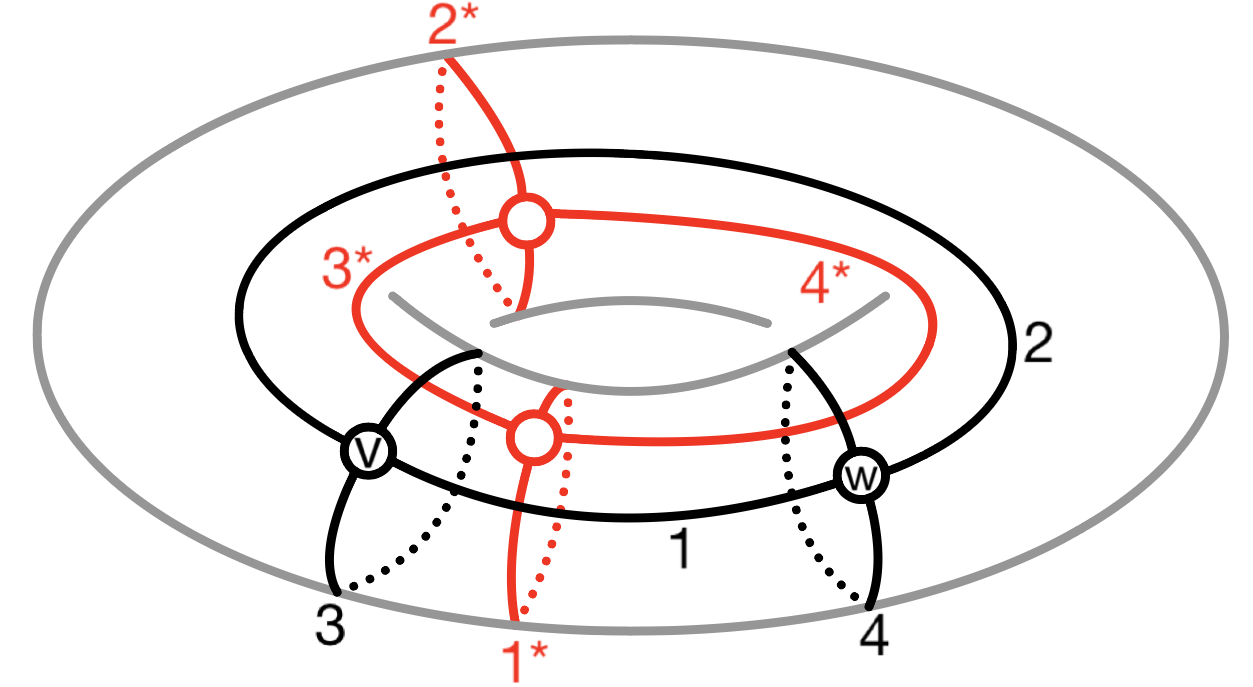}};
    \end{tikzpicture}
    \caption{A graph $G = (\{v,w\},\{1,2,3,4\})$ embedded on a torus.
    The graph $G$ is colored black and its dual $G^*$ is colored red.}
    \label{fig:torus}
\end{figure}

\begin{theorem}[Bouchet~\cite{Bouchet1987b}]
    $M(\vG) := (E\cup E^*,\cB(\vG))$ is an orthogonal matroid.
\end{theorem}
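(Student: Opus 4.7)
The plan is to verify the symmetric exchange axiom (B) for $\cB(\vG)$ by reducing the problem to a topological analysis of how toggling skew pairs affects the connectedness of $\Sigma \setminus B$. The first step is to reformulate the basis condition in more tractable terms via spanning quasi-trees, and the second is to perform a direct boundary-walk argument on the $\epsilon$-thickening.

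I would begin by establishing the reformulation: a transversal $B$ belongs to $\cB(\vG)$ iff $Q := B \cap E$ is a spanning quasi-tree of $\vG$, that is, $Q$ is connected and spanning and the $\epsilon$-thickening of $Q$ in $\Sigma$ has exactly one boundary component. The key geometric observation is that the primal edges $B \cap E$ and the dual edges $B \cap E^*$ are disjoint in $\Sigma$, because $B$ is a transversal and each primal edge only crosses its own dual; consequently $\Sigma \setminus B$ deformation retracts onto a subsurface whose connectivity is controlled by the boundary-component count of the thickening of $Q$. An Euler-characteristic calculation then identifies ``$\Sigma\setminus B$ is connected'' with ``the thickening of $Q$ has one boundary component''. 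The axiom (B) for $\cB(\vG)$ is thereby converted into an exchange property for spanning quasi-trees, encoded as transversals.

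To verify the exchange, I would fix $B,B'\in\cB(\vG)$ with $e \in B\setminus B'$, corresponding to quasi-trees $Q,Q'$. Traversing the unique boundary curve of the $\epsilon$-thickening $N_Q$ in accordance with the orientation of $\Sigma$ produces a cyclic sequence in which each element of $E\setminus Q$ is encountered twice, and each element of $Q$ contributes two ``sides'' of its ribbon. Toggling the single element $e$ will in general split $\partial N_Q$ into two arcs; to restore a one-component boundary we must simultaneously toggle a second element $f$ that ``stitches'' these arcs back together. Comparing the cyclic walks on $\partial N_Q$ and $\partial N_{Q'}$ and using that both $Q$ and $Q'$ have exactly one boundary component, we can locate such an $f\in B\setminus B'$ with $f\ne e$ so that $B\triangle\{e,e^*,f,f^*\}$ is again a basis.

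The main obstacle is the topological bookkeeping: one must track separately the cases $e\in E$ vs.\ $e\in E^*$ (and likewise for $f$), and check at each step how a pair of toggles affects the boundary-component count. A cleaner alternative—and in fact Bouchet's original approach in \cite{Bouchet1987b,Bouchet1988}—is to construct a regular orthogonal representation $\cC(\vG)$ from the interlacement structure of $\vG$ relative to a fixed spanning tree, verify that the associated skew-symmetric matrix is principally unimodular, and then invoke Theorem~\ref{thm:main theorem 2}'s precursor (Bouchet's representation theorem recalled before Corollary~\ref{cor:counting bases}) to identify $\cB(\vG)$ with the bases of $M_\cC$; since $M_\cC$ is automatically an orthogonal matroid by Definition~\ref{def: oriented OM}(ii), the symmetric exchange property then follows for free.
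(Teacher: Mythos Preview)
The paper does not prove this statement; it is attributed to Bouchet~\cite{Bouchet1987b} and stated without proof, so there is no argument in the paper to compare against.

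Your first approach is the right strategy and is close in spirit to Bouchet's original argument (phrased there via transition systems on the medial graph; cf.\ the proof of Lemma~\ref{lem:ribbon basis}). But the decisive step---locating $f\in B\setminus B'$ after toggling $e$---is only asserted, not argued. The sentence ``comparing the cyclic walks on $\partial N_Q$ and $\partial N_{Q'}$ \ldots\ we can locate such an $f$'' is precisely where all the work lies, and you give no mechanism for why such an $f$ must lie in $B\setminus B'$ rather than merely somewhere in $E\cup E^*$. You yourself flag this as ``the main obstacle'' and then abandon the approach rather than resolve it.

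Your second approach contains a circularity. You write that ``$M_\cC$ is automatically an orthogonal matroid by Definition~\ref{def: oriented OM}(ii)'', but condition~(ii) in that definition is a \emph{hypothesis} on $\cC$, not a conclusion: to certify that $\cC(\vG)$ is an oriented orthogonal matroid one must already know that the supports form the circuit set of some orthogonal matroid---which is the very assertion under proof. One can salvage this route by appealing to the independent linear-algebraic fact that any PU skew-symmetric matrix defines an orthogonal matroid (via nonsingularity of principal submatrices), but then one must still check that its bases coincide with $\cB(\vG)$, and that verification requires the same topological input you deferred in the first approach.
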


A subgraph $H$ of $G$ is \emph{spanning} if its vertex set is $V$, and is a \emph{quasi-tree} if an $\epsilon$-neighborhood of $H$ in $\Sigma$ has exactly one boundary component for a sufficiently small $\epsilon>0$.
We often identify an edge set $Q\subseteq E$ with the spanning subgraph $(V,Q)$, and let $\cQ(\vG) \subseteq 2^E$ be the set of spanning quasi-trees of $\vG$.

\begin{example}\label{ex:ribbon basis}
    The bases of the ribbon graph depicted in Figure~\ref{fig:torus} are $12^*3^*4^*$, $1^*23^*4^*$, $1234^*$, and $123^*4$.
    Its spanning quasi-trees are $1$, $2$, $123$, and $124$.
\end{example}

By the following lemma, we can identify the spanning quasi-trees of $\vG$ with the bases of $\vG$. 

\begin{lemma}\label{lem:ribbon basis}
    A subset $Q$ of $E$ is a spanning quasi-tree of $\vG$ if and only if $Q\cup (E\setminus Q)^*$ is a basis of $\vG$.
\end{lemma}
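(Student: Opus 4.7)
The plan is to reduce the biconditional to a topological identification: I will show that $\Sigma \setminus B$, where $B := Q \cup (E \setminus Q)^*$, deformation retracts onto the boundary $\partial N$ of an $\epsilon$-ribbon-neighborhood $N$ of the spanning subgraph $(V, Q) \subseteq \Sigma$. Granting this, the lemma follows immediately: $\Sigma \setminus B$ is connected iff $\partial N$ consists of a single circle iff $Q$ is a spanning quasi-tree. (A disconnected $Q$ would make $N$ disconnected and so force $|\pi_0(\partial N)| \ge 2$, ruling out that degenerate case in both directions of the biconditional.)

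The main step is to exhibit a \emph{complementary ribbon decomposition} $\Sigma = N \cup N^c$, with $N \cap N^c = \partial N = \partial N^c =: C$, in which $N^c := \overline{\Sigma \setminus N}$ is itself a ribbon neighborhood of the dual spanning subgraph $(V^*, (E \setminus Q)^*)$ of $\vG^*$. I would set this up via the standard band/handle decomposition of $\Sigma$ coming from $\vG$: primal vertices contribute 0-handles, faces of $\vG$ (i.e., covertices) contribute 2-handles, and each edge $e \in E$ contributes a 1-handle containing both $e$ and $e^*$. Allocating each 1-handle to $N$ if $e \in Q$ and to $N^c$ if $e \notin Q$, together with the $V$-handles to $N$ and the $V^*$-handles to $N^c$, yields the claimed decomposition.

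Once this is in hand, $B \cap N = V \cup Q$ and $B \cap N^c = V^* \cup (E \setminus Q)^*$ are precisely the underlying 1-complexes (spines) of the two ribbon surfaces. I would then invoke the standard fact that a ribbon surface minus its spine deformation retracts onto its topological boundary, producing deformation retractions $N \setminus B \to C$ and $N^c \setminus B \to C$ that agree on $C$. Patching yields a deformation retraction $\Sigma \setminus B \to C$, so $\pi_0(\Sigma \setminus B) = \pi_0(\partial N)$, completing the argument.

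The main obstacle will be carefully justifying the complementary decomposition in the middle paragraph. Geometrically it is very natural---the primal and dual ribbon graphs share the same 1-handles---but making this precise (choosing $\epsilon$ so that the vertex disks and edge bands fit together without gaps or excess overlap, and verifying that the closure of $\Sigma \setminus N$ really does package as a ribbon thickening of the claimed dual subgraph) requires some topological bookkeeping. The deformation retraction step is then routine.
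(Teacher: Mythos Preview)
Your approach is correct and more self-contained than the paper's, which simply cites Bouchet: it identifies $\partial N_\epsilon$ with the transition system $H|\tau(Q,(E\setminus Q)^*)$ on the medial graph $H$ of $(G,\Sigma)$ and invokes \cite[Corollary~3.4]{Bouchet1987b}. Your complementary decomposition $\Sigma=N\cup N^c$ is exactly right---the 1-handle for each edge $e$ carries both $e$ and $e^*$ and is allocated according to whether $e\in Q$---and the retraction of a ribbon surface minus its spine onto its boundary is standard.

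The one gap is in the degenerate cases. Your assertion $B\cap N=V\cup Q$ is not quite right: as a point-set, $B=Q\cup(E\setminus Q)^*$ contains only those vertices of $V$ that are endpoints of $Q$-edges, so any $v$ isolated in $(V,Q)$ is missing. For such a $v$ the disk $D_v$ lies entirely in $N\setminus B$, and a disk does not retract to its boundary circle, so the retraction $N\setminus B\to C$ fails there. Your parenthetical asserts that disconnected $(V,Q)$ makes both sides of the biconditional false, but you do not actually argue that $\Sigma\setminus B$ is then disconnected; you also do not address the dual degeneracy (a face all of whose boundary edges lie in $Q$, giving an isolated covertex in $(V^*,(E\setminus Q)^*)$) or the corner case $|V|=1$, $Q=\emptyset$, where $(V,Q)$ is connected yet $v$ is still isolated. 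A clean uniform fix: since deleting finitely many points from an open $2$-manifold does not change $\pi_0$, replace $\Sigma\setminus B$ by $\Sigma\setminus(B\cup V\cup V^*)$. The latter decomposes as $\bigl(N\setminus(V\cup Q)\bigr)\cup_C\bigl(N^c\setminus(V^*\cup(E\setminus Q)^*)\bigr)$, and now both pieces are ribbon surfaces minus their full spines, so both retract to $C$ unconditionally.
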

\begin{proof}
    The boundary of a small $\epsilon$-neighborhood of $(V,Q)$ in $\Sigma$ is homeomorphic to $H|\tau(Q,(E\setminus Q)^*)$ in the sense of~{\cite[Section~3]{Bouchet1987b}}, where $H$ is the ``medial graph'' of $(G,\Sigma)$.
    So the desired equivalence follows from~{\cite[Corollary~3.4]{Bouchet1987b}}.
\end{proof}

We call $M(\vG)$ a \emph{ribbon-graphic} orthogonal matroid.
By definition, we can characterize the circuits of $M(\vG)$ as follows:

\begin{lemma}\label{lem:ribbon cycle}
    A subset $C$ of $E\cup E^*$ is a circuit of $M(\vG)$ if and only if $C$ is a minimal subtransversal whose complement in $\Sigma$ is not connected. 
    \qed
\end{lemma}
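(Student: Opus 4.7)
Plan. The plan is to unfold the definition of a circuit of $M(\vG)$---namely, a minimal subtransversal not contained in any basis---and combine it with the characterization of bases (transversals $B$ with $\Sigma\setminus B$ connected) set up just before Lemma~\ref{lem:ribbon basis}. The lemma then reduces to establishing the following equivalence for any subtransversal $C$ of $E\cup E^*$:
\[
\Sigma\setminus C\text{ is connected} \iff C\text{ is contained in some basis of }M(\vG).
\]

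The ``contains a basis'' direction is immediate: if $C\subseteq B$ for a basis $B$, then $\Sigma\setminus C$ is obtained from the connected set $\Sigma\setminus B$ by adjoining the closed arcs corresponding to the elements of $B\setminus C$, each of which is connected and meets $\Sigma\setminus B$ at its two endpoints (vertices of $G$ or $G^*$, which are not in $C$). The resulting set is therefore still connected.

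For the converse, I would proceed by induction on $n-|C|$, where $n:=|E|$. The base case $|C|=n$ is trivial, since $C$ itself is then a basis. For the inductive step, fix any skew pair $\{e,e^*\}$ disjoint from $C$; the main subclaim is that at least one of $\Sigma\setminus(C\cup\{e\})$ and $\Sigma\setminus(C\cup\{e^*\})$ is again connected. To prove it, write $X:=\Sigma\setminus C$, let $p=e\cap e^*$ denote the unique transverse crossing, and choose a small open disk $D\subseteq X$ about $p$, disjoint from $C$, so that $e\cup e^*$ meets $D$ in a ``$+$'' and divides $\partial D$ into four cyclically ordered open arcs $I_1,I_2,I_3,I_4$. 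If both $X\setminus\mathring e$ and $X\setminus\mathring{e^*}$ were disconnected, then the first disconnection would place $\{I_1,I_2\}$ and $\{I_3,I_4\}$ in different components of $(X\setminus D)\setminus\mathring e$, while the second would analogously separate $\{I_1,I_4\}$ from $\{I_2,I_3\}$ in $(X\setminus D)\setminus\mathring{e^*}$. Combining these two incompatible pairwise separations with the connectedness of $X$, and tracking how the four ``bridging'' pieces of $e$ and $e^*$ lying outside $D$ reconnect the resulting cells, yields the desired contradiction.

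The main obstacle is the final step of the subclaim. Naively, the two separations of $\{I_1,I_2,I_3,I_4\}$ look compatible through a $4$-cycle of bridging arcs, so one must rule out such patterns by invoking the orientability of $\Sigma$ (equivalently, the well-defined cyclic order of edges at each vertex) and by passing to a small regular neighborhood of $C$ so that $X$ becomes a genuine $2$-manifold with boundary. A careful analysis of whether the endpoints of $e$ and $e^*$ lie in the interior or on the boundary of this manifold, together with the cyclic arrangement of the four bridges around $\partial D$, should suffice to force the contradiction.
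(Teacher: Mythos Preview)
The paper gives no proof here at all: the lemma is stated with a bare \qed after the phrase ``by definition,'' so there is nothing to compare your argument against. You are right that the statement is \emph{not} purely definitional. The equivalence you isolate,
\[
\Sigma\setminus C\text{ connected} \iff C\text{ contained in some basis},
\]
is exactly what is needed, and your inductive strategy via the subclaim is a sound way to attack it.

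The gap in your subclaim is real, but the missing ingredient is not orientability---it is the hypothesis that $C$ is a \emph{subtransversal}. Here is how to close it. First, if any endpoint of $e$ (a vertex $v_i$) lies in $X=\Sigma\setminus C$, then near $v_i$ the set $X$ is a full disk (no element of $C$ meets $v_i$), so $X\setminus e$ is locally a slit disk there, and the two sides of $e$ are connected by going around $v_i$; hence $X\setminus e$ is connected. Likewise if an endpoint of $e^*$ lies in $X$. So assume all four endpoints lie outside $X$. Now pass to the compact surface $\widehat X$ obtained by cutting $\Sigma$ along $C$; then $\widehat e$ and $\widehat{e^*}$ become properly embedded arcs crossing once. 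The key observation is that because $C$ is a subtransversal, no edge $f\in C$ meets any coedge $g^*\in C$ (they could only meet if $f=g$, which would force a skew pair in $C$). Hence the boundary circles of $\widehat X$ come in two flavours---those arising from cutting along edges, which carry only copies of vertices, and those arising from cutting along coedges, which carry only copies of covertices---and the two flavours never mix. Thus the endpoints of $\widehat e$ lie on edge-type circles and those of $\widehat{e^*}$ lie on coedge-type circles, so in particular on \emph{different} boundary circles.

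Now suppose $\widehat e$ separates $\widehat X$. A properly embedded separating arc must have both endpoints on the \emph{same} boundary circle (otherwise one can pass from one side of the arc to the other by going around the boundary circle through either endpoint), so the two endpoints of $\widehat e$ lie on one edge-type circle $\gamma$. Any coedge-type boundary circle, being disjoint from $\gamma$, lies entirely in one component of $\widehat X\setminus\widehat e$; in particular both endpoints of $\widehat{e^*}$ lie in the same component. But $\widehat{e^*}$ crosses $\widehat e$ exactly once, so its endpoints must lie in different components---contradiction. Hence $\widehat{e^*}$ cannot also separate, and the subclaim follows.

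So your plan works once you replace the appeal to orientability with this use of the subtransversal hypothesis.
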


\begin{example}
    The circuits of the ribbon-graphic orthogonal matroid corresponding to Figure~\ref{fig:torus} are $1^*2^*$, $34$, $1^*3$, $1^*4$, $2^*3$, $2^*4$, and $123^*4^*$.
\end{example}

Every ribbon-graphic orthogonal matroid is regular, and the regularity is witnessed by the following representation.

\medskip

First, by convention, fix the orientation on $\Sigma$ which is counter-clockwise with respect to (outward) normal vectors.
Every submanifold $\Sigma'$ of $\Sigma$ inherits an orientation from $\Sigma$, and its boundary $\partial \Sigma'$ is also endowed with an \emph{induced orientation}. A precise definition of the induced orientation on a submanifold can be found in~\cite[Chapter~15]{Lee}. Informally, the induced orientation on $\partial \Sigma'$ may be described as follows: if a person traverses $\partial \Sigma'$ with their head pointing in the direction of the normal vector to $\Sigma'$ and with $\Sigma'$ on their left, then the induced orientation on $\partial \Sigma'$ is the direction of travel.

Now choose a reference orientation~$\vec{G}$ of the set $E$ of edges of $G$, and assign the {\em dual reference orientation} to $G^*$ by the ``right-hand rule'', i.e., so that each coedge $e^*$ crosses $e$ from right to left (or, equivalently, $e$ crosses $e^*$ from left to right); see Figure~\ref{fig:orientation}.

\begin{definition}\label{def: ribbon to om}
    Let $\cC(\vG, \vec{G}) \subseteq \mathbb{Z}^{E\cup E^*}$ be the set of vectors such that 
    \begin{enumerate}[label=\rm(\roman*)]
        \item $\{\mathrm{supp}(\vC): \vC\in \cC(\vG,\vec{G})\}$ is the set of circuits of $M(\vG)$, and 
        \item for each $\vC \in \cC(\vG,\vec{G})$, there is a connected component $\Sigma'$ of $\Sigma\setminus \mathrm{supp}(\vC)$ with boundary $\mathrm{supp}(\vC)$ such that
        \begin{align*}
            \vC(e) = 
            \begin{cases}
                \vspace{-0.2cm}
                +1 & \text{if the reference orientation of $e$} \\ 
                   & \text{agrees with the induced orientation on $\partial \Sigma'$}, \\
                -1 & \text{otherwise}
            \end{cases}
        \end{align*}
        for all $e\in \mathrm{supp}(\vC)$. In this case, we say that the signed circuit $\vC$ is {\em oriented by $\Sigma'$}.
    \end{enumerate}
    We call each vector $\vC\in \cC(\vG,\vec{G})$ a \emph{signed ribbon cycle} of $\vG$.
    We often abbreviate $\cC(\vG,\vec{G})$ to $\cC(\vG)$ if the reference orientation $\vec{G}$ is clear from the context.
\end{definition}

\begin{example}\label{ex: circuits}
    The signed ribbon cycle oriented by $\partial \Sigma'$ in Figure~\ref{fig:orientation} is the $(0,\pm1)$-vector
    \[
        \vC = \begin{pmatrix}
            0 & 0 & 1 & -1 \\
            0 & 0 & 0 & 0 \\
        \end{pmatrix},
    \]
    where the upper four entries are $\vC(1),\ldots, \vC(4)$ and the lower four entries are $\vC(1^*),\ldots,\vC(4^*)$.
    The signed ribbon cycles of $\vG$, up to multiplication by $-1$, are:
    \begin{align*}
        &
        \begin{pmatrix}
            0 & 0 & 0 & 0 \\
            1 & -1 & 0 & 0 \\
        \end{pmatrix},
        \quad
        \begin{pmatrix}
            0 & 0 & 1 & -1 \\
            0 & 0 & 0 & 0 \\
        \end{pmatrix} ,
        \quad
        \begin{pmatrix}
            0 & 0 & 1 & 0 \\
            -1 & 0 & 0 & 0 \\
        \end{pmatrix},
        \quad
        \begin{pmatrix}
            0 & 0 & 0 & 1 \\
            -1 & 0 & 0 & 0 \\
        \end{pmatrix}, \\
        &
        \begin{pmatrix}
            0 & 0 & 1 & 0 \\
            0 & -1 & 0 & 0 \\
        \end{pmatrix},
        \quad
        \begin{pmatrix}
            0 & 0 & 0 & 1 \\
            0 & -1 & 0 & 0 \\
        \end{pmatrix} ,
        \quad
        \begin{pmatrix}
            1 & 1 & 0 & 0 \\
            0 & 0 & 1 & 1 \\
        \end{pmatrix}.
    \end{align*}
\end{example}

\begin{figure}
    \centering
    \includegraphics[width=0.5\linewidth]{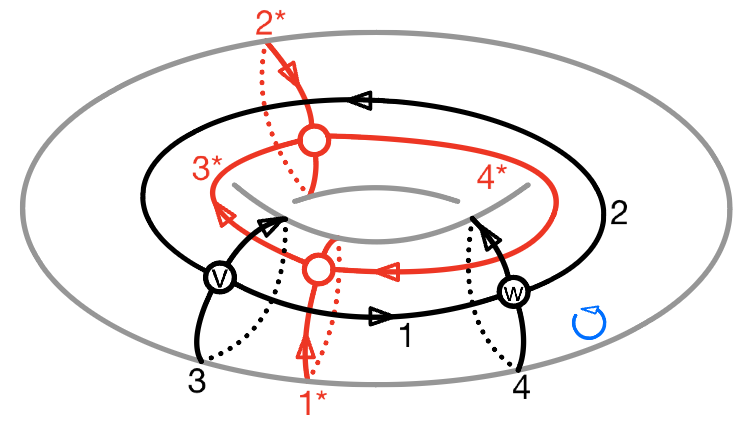}
    \includegraphics[width=0.4\linewidth]{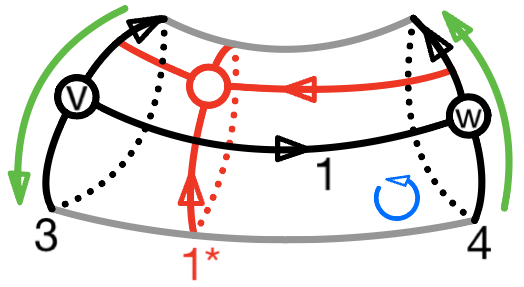}
    \caption{A reference orientation of a ribbon graph and the dual reference orientation (left), and a connected component $\Sigma'$ of $\Sigma$ minus edges $3$ and $4$ (right). The induced orientation on $\partial \Sigma'$ is colored green.}
    \label{fig:orientation}
\end{figure}

\begin{theorem}\label{thm:ribbon representation}
    $\cC(\vG)$ is a regular orthogonal representation of $M(\vG)$.
\end{theorem}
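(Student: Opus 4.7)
I will verify the three axioms (i), (ii), (iii${}'$) of Definition~\ref{def: regular representation} for $\cC(\vG)$. Axioms (i) and (ii) are essentially topological bookkeeping, while (iii${}'$) requires a genuine geometric argument.

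For (ii), I want $\{\mathrm{supp}(\vC) : \vC \in \cC(\vG)\}$ to be exactly the circuit set of $M(\vG)$, which by Lemma~\ref{lem:ribbon cycle} consists of minimal subtransversals $C \subseteq E \cup E^*$ with disconnected complement in $\Sigma$. The plan is to argue that for such a $C$, minimality forces $\Sigma \setminus C$ to have exactly two components, each with topological boundary equal to $C$: if an element $x \in C$ had both of its sides in the same component of $\Sigma \setminus C$, then removing $x$ from $C$ would not change connectivity, contradicting minimality. Thus the ``quotient graph'' whose vertices are components of $\Sigma \setminus C$ and whose edges are elements of $C$ is a multigraph on exactly two vertices, and each component has topological boundary $C$. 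Orienting the boundary of either component via the orientation of $\Sigma$ produces a signed ribbon cycle with support $C$, which gives (ii); and since the two components induce opposite orientations on their shared boundary, they yield $\vC$ and $-\vC$, which gives (i).

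For (iii${}'$), given $\vC$ oriented by a component $\Sigma'$ of $\Sigma \setminus \mathrm{supp}(\vC)$ and $\vD$ oriented by $\Sigma''$, the plan hinges on the decomposition
\[
\partial \Sigma' = \alpha_\vC + \beta_\vC, \qquad \alpha_\vC := \sum_{e \in E} \vC(e)\,[e] \in C_1(G), \quad \beta_\vC := \sum_{e \in E} \vC(e^*)\,[e^*] \in C_1(G^*),
\]
and analogously $\alpha_\vD, \beta_\vD$. Since the vertex set $V(G)$ is disjoint from the covertex set $V(G^*)$ as subsets of $\Sigma$, the identity $\partial(\partial \Sigma') = 0$ forces $\partial \alpha_\vC = 0$ and $\partial \beta_\vC = 0$ separately, so $\alpha_\vC$ is a $1$-cycle in $G$ and $\beta_\vC$ is a $1$-cycle in $G^*$. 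Moreover, $\partial \Sigma'$ being a boundary in $\Sigma$ yields $[\alpha_\vC] + [\beta_\vC] = 0$ in $H_1(\Sigma; \Z)$, and likewise $[\alpha_\vD] + [\beta_\vD] = 0$.

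With this in hand, I interpret
\[
\langle \vC, \vD \rangle = \sum_{e \in E} \bigl( \vC(e) \vD(e^*) + \vC(e^*) \vD(e) \bigr)
\]
as a signed count of crossings in $\Sigma$. Since the right-hand rule defining $\cC(\vG)$ makes $(\vec e, \vec{e^*})$ a positively oriented basis of $T_p \Sigma$ at the crossing $p \in e \cap e^*$, the first sum equals the intersection pairing $\langle [\alpha_\vC], [\beta_\vD] \rangle_\Sigma$ on $H_1(\Sigma;\Z)$, while the second equals $-\langle [\beta_\vC], [\alpha_\vD] \rangle_\Sigma$ (the extra minus sign coming from $\det(\vec{e^*}, \vec e) = -\det(\vec e, \vec{e^*})$). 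Substituting $[\alpha_\vC] = -[\beta_\vC]$ and $[\alpha_\vD] = -[\beta_\vD]$, both intersection numbers reduce to $-\langle [\beta_\vC], [\beta_\vD] \rangle_\Sigma$, so they cancel and $\langle \vC, \vD \rangle = 0$. The main obstacle I anticipate is tracking these sign conventions carefully (the right-hand rule orienting $G^*$, the Stokes-type induced boundary orientation of $\Sigma'$, and the antisymmetric intersection form on $H_1(\Sigma)$) so that the two terms indeed agree up to sign and cancel after the homological substitution.
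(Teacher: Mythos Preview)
Your argument is correct and takes a genuinely different route from the paper's. The paper proves (iii${}'$) by first citing a result of Booth--Borovik--Gelfand--Stone (Theorem~\ref{thm:BBGS}) asserting that the image of the intersection-index map $\iota : H_1(\Sigma\setminus (V\cup V^*)) \to \Q^{E\cup E^*}$ is isotropic for $\langle\cdot,\cdot\rangle$, and then proves a somewhat technical lemma (Lemma~\ref{lem:vC and fudamental circuit}) identifying each $\vC^*$ with $\iota(c)$ for a suitable boundary curve~$c$ of an $\epsilon$-neighborhood of a near-quasi-tree. Your approach sidesteps both ingredients: you work directly in $H_1(\Sigma;\Z)$ rather than $H_1(\Sigma\setminus(V\cup V^*))$, decompose the bounding cycle $\partial\Sigma'$ into its $G$-part $\alpha_\vC$ and $G^*$-part $\beta_\vC$, observe that these are separately cycles (since $V\cap V^*=\emptyset$) which are negatives of one another in homology, and then read off $\langle\vC,\vD\rangle$ as a difference of two algebraic intersection numbers that collapse to the same quantity after substituting $[\alpha]=-[\beta]$.

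What each approach buys: the paper's detour through Lemma~\ref{lem:vC and fudamental circuit} and the $\iota$ map is not wasted, since it immediately yields the corollary that the span of $\cC(\vG)^*$ equals $\iota(H_1(\Sigma\setminus(V\cup V^*)))$, tying the combinatorial representation to the topological homology of the punctured surface. Your argument is more self-contained and conceptually cleaner for the theorem as stated (no external citation, no $\epsilon$-neighborhoods), and in effect gives a short direct proof of the relevant special case of the BBGS result; but it does not by itself produce that corollary. Your caution about sign-tracking is well placed but your bookkeeping is correct: with $[e]\cdot[e^*]=+1$ the two sums become $[\alpha_\vC]\cdot[\beta_\vD]$ and $-[\beta_\vC]\cdot[\alpha_\vD]$, and after the substitutions both equal $-[\beta_\vC]\cdot[\beta_\vD]$, so their difference vanishes.
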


Theorem~\ref{thm:ribbon representation} can be derived from the results of Booth, Borovik, Gelfand, and Stone~\cite{BBGS2000}, as described below. It also follows from earlier work of Bouchet~\cite{Bouchet1987c}, who proved a corresponding result in terms of circle graphs. We refer the reader to Appendix~\ref{subsec:bouchet} for a comparison of his approach with that presented in this paper.

The \emph{intersection index} $(p,q) \in \mathbb{Z}$ of two curves $p$ and $q$ on $\Sigma$ is defined as the number of times $p$ traverses $q$ from left to right minus the number of times $p$ traverses $q$ from right to left.
For example, $(e,e^*)=1$ for each edge $e$.
For a cycle $c$ in the homology group $H_1(\Sigma\setminus (V\cup V^*)) := H_1(\Sigma\setminus (V\cup V^*),\mathbb{Q})$, let
\begin{align*}
    \iota(c) := \sum_{e\in E\cup E^*} (c,e) e \in \mathbb{Q}^{E\cup E^*}.
\end{align*} 

\begin{theorem}[\cite{BBGS2000}]\label{thm:BBGS}
    For all $X,Y\in \iota(H_1(\Sigma\setminus (V\cup V^*)))$, we have $\langle X,Y \rangle = 0$.
\end{theorem}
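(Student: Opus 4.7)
The strategy is to interpret $\langle\iota(c),\iota(d)\rangle$ via the algebraic intersection form on the oriented surface $\Sigma$, and to exploit the conflict between the symmetry of $\langle\cdot,\cdot\rangle$ on $\mathbb{Q}^{E\cup E^*}$ and the antisymmetry of the intersection form on $H_1(\Sigma)$. Because $\langle\cdot,\cdot\rangle$ is $\mathbb{Q}$-bilinear and symmetric and $\iota$ is $\mathbb{Q}$-linear, the polarization identity reduces the problem to showing $\langle\iota(c),\iota(c)\rangle = 2\sum_{e\in E}(c,e)(c,e^*) = 0$ for every class $c \in H_1(\Sigma\setminus(V\cup V^*);\mathbb{Q})$.

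I would then pick a convenient generating set for $H_1(\Sigma\setminus(V\cup V^*);\mathbb{Q})$: the small loops $\ell_v$ around $v\in V$, the small loops $\ell_{v^*}$ around $v^*\in V^*$, and $2g$ loops $\gamma_1,\ldots,\gamma_{2g}$ representing a symplectic basis of $H_1(\Sigma)$, all chosen disjoint from $V\cup V^*$ and transverse to $G\cup G^*$. By bilinearity it suffices to verify orthogonality on pairs of generators. For $x = \ell_v$ with $v\in V$, the vector $\iota(\ell_v)$ is supported on $E$ alone (since $\ell_v$ is too small to meet any coedge), and $\langle\cdot,\cdot\rangle$ pairs $E$-entries only with $E^*$-entries, so $\iota(\ell_v)$ is automatically self-orthogonal and orthogonal to every other $\iota(\ell_{v'})$; dually for the $\ell_{v^*}$'s. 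The mixed pairings $\langle\iota(\ell_v),\iota(\ell_{v^*})\rangle$ reduce to a signed count of edges simultaneously incident to $v$ and bordering the face of $v^*$, and the right-hand-rule convention relating the orientations of $G$ and $G^*$ makes those signs cancel in pairs around consecutive corners of $v$ in that face.

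The crux is the global case $\langle\iota(\gamma_i),\iota(\gamma_j)\rangle$. I would push off a parallel copy $\gamma_j'$ of $\gamma_j$ using the orientation of $\Sigma$; since $\Sigma$ is orientable, $\gamma_j'$ is homologous to $\gamma_j$ inside $\Sigma$, so $(\gamma_i\cdot\gamma_j')_\Sigma = (\gamma_i\cdot\gamma_j)_\Sigma$. Localizing the intersections of $\gamma_i$ with $\gamma_j'$ to the cellulation induced by $G\cup G^*$ allows one to assign each intersection to a pair $(e,e^*)$ of dual edges with a definite sign, leading (after careful bookkeeping) to the identity
\[
\langle\iota(\gamma_i),\iota(\gamma_j)\rangle \;=\; (\gamma_i\cdot\gamma_j)_\Sigma + (\gamma_j\cdot\gamma_i)_\Sigma,
\]
whose right-hand side vanishes by antisymmetry of the intersection form on the oriented surface $\Sigma$. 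The main obstacle is precisely the sign analysis in this last step: one must verify that the right-hand-rule convention from Definition~\ref{def: ribbon to om} matches the orientation-based push-off so that the hyperbolic form $\langle\cdot,\cdot\rangle$ on $\mathbb{Q}^{E\cup E^*}$ encodes the \emph{symmetrization} of the (antisymmetric) intersection pairing on $\Sigma$.
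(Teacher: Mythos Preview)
The paper does not prove Theorem~\ref{thm:BBGS}; it is quoted as a black box from \cite{BBGS2000} and then used (together with Lemma~\ref{lem:vC and fudamental circuit}) to derive Theorem~\ref{thm:ribbon representation}. So there is no in-paper argument to compare against, and your proposal must be judged on its own.

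Your overall strategy is the right one: the form $\langle\cdot,\cdot\rangle$ on $\mathbb{Q}^{E\cup E^*}$ is symmetric, while the algebraic intersection pairing on $H_1(\Sigma)$ is antisymmetric, and the two are linked through $\iota$. The cleanest way to exploit this is to prove the single identity
\[
\sum_{e\in E}(c,e)(d,e^*) \;=\; (c\cdot d)_{\Sigma}
\qquad\text{for all } c,d\in H_1(\Sigma\setminus(V\cup V^*)),
\]
from which $\langle\iota(c),\iota(d)\rangle=(c\cdot d)_\Sigma+(d\cdot c)_\Sigma=0$ follows immediately. You gesture at this for the $\gamma_i,\gamma_j$ case but never state it as the organizing lemma; proving it once, for arbitrary transverse representatives, would eliminate your case split entirely.

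As written, the proposal has genuine gaps. First, you omit the mixed cases $\langle\iota(\ell_v),\iota(\gamma_j)\rangle$ and $\langle\iota(\ell_{v^*}),\iota(\gamma_j)\rangle$; these do not vanish for trivial support reasons, and your generating-set argument is incomplete without them. Second, the polarization reduction in your opening paragraph is not actually used and cannot substitute for checking pairs: knowing $\langle\iota(c),\iota(c)\rangle=0$ on generators alone does not force the quadratic form to vanish. Third, the heart of the matter---the sign analysis showing that the hyperbolic form really computes the symmetrized intersection number under the right-hand-rule convention of Definition~\ref{def: ribbon to om}---is exactly what you label ``careful bookkeeping'' and then defer. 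That is the entire content of the theorem; everything else is formal. If you want a self-contained proof, that local sign computation (at a single crossing of $c$ with an edge~$e$ and of $d$ with the dual coedge~$e^*$, near the point $v_e$) is what you must actually write down.
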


\begin{lemma}\label{lem:vC and fudamental circuit}
    Let $\vC\in \cC(\vG)$ be a signed ribbon cycle.
    Let $B$ be a basis of $\vG$ and $x$ be an element of $E$ such that $\mathrm{supp}(\vC) \subseteq B\triangle \{x,x^*\}$.
    Then 
    \begin{enumerate}[label=\rm(\roman*)]
        \item a small $\epsilon$-neighborhood of $(V, (E\cap B)\triangle \{x\})$ in $\Sigma$ has exactly two boundary components, say $c$ and $c'$, and
        \item $\{ \iota(c),\iota(c') \} = \{ \vC^*, -\vC^* \}$ as sets.
    \end{enumerate}
\end{lemma}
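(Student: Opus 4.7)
The plan is to reduce both parts to topological arguments inside the closed orientable surface $\Sigma$. Let $Q := E \cap B$, so that $B = Q \cup (E \setminus Q)^*$ corresponds to the spanning quasi-tree $Q$ under Lemma~\ref{lem:ribbon basis}, and the transversal $B \triangle \{x, x^*\}$ corresponds to $Q' := Q \triangle \{x\} = (E \cap B) \triangle \{x\}$. Thus the $\epsilon$-neighborhood $N(Q')$ of $(V, Q')$ is obtained from $N(Q)$ by attaching (if $x \notin Q$) or detaching (if $x \in Q$) the ribbon of $x$.

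For~(i), first observe that $Q'$ is not a spanning quasi-tree: by Lemma~\ref{lem:ribbon cycle} the set $\Sigma \setminus \mathrm{supp}(\vC)$ is disconnected, so $\Sigma \setminus (B \triangle \{x, x^*\})$ is also disconnected since $\mathrm{supp}(\vC) \subseteq B \triangle \{x, x^*\}$, and Lemma~\ref{lem:ribbon basis} then gives that $Q'$ is not a spanning quasi-tree. Hence the number $b'$ of boundary components of $N(Q')$ is not~$1$. Comparing Euler characteristics via $\chi = 2 - 2g - b$, the modification from $N(Q)$ to $N(Q')$ is (or is the inverse of) the attachment of a $1$-handle; since $N(Q)$ has a single boundary component, the orientability of $\Sigma$ restricts the possibilities to $(\Delta g, \Delta b) \in \{(0,+1),(1,-1)\}$, so $b' \in \{0, 2\}$. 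The case $b' = 0$ is ruled out since the cellular embedding has at least one face of $G$, which lies outside $N(Q')$, so $N(Q') \subsetneq \Sigma$. Therefore $b' = 2$.

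For~(ii), let $\Sigma_1, \Sigma_2$ be the two components of $\Sigma \setminus \mathrm{supp}(\vC)$, with $\Sigma_1$ the component that orients $\vC$ in the sense of Definition~\ref{def: ribbon to om}. Since $c, c' \subseteq \partial N(Q')$ are disjoint both from the edges of $Q'$ (interior to $N(Q')$) and from the coedges $e^*$ with $e \notin Q'$ (exterior to $N(Q')$), they are disjoint from $\mathrm{supp}(\vC)$, and therefore each lies entirely in $\Sigma_1$ or $\Sigma_2$. A case analysis of $N(Q')$ relative to $\mathrm{supp}(\vC)$ then shows that $c$ and $c'$ lie in different components, and without loss of generality $c \subseteq \Sigma_1$ and $c' \subseteq \Sigma_2$. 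For an edge $f = e$ with $e \notin Q'$ or a coedge $f = e^*$ with $e \in Q'$, the path $f$ crosses $c \cup c'$ transversally at exactly two points with opposite signs; for any other $f$, it has no transverse crossings with $\partial N(Q')$. Since each edge crosses only its own dual coedge transversally, the two crossings of $f$ lie on the same component of $\partial N(Q')$ if $f$ does not cross $\mathrm{supp}(\vC)$, and on different components precisely when $f^* \in \mathrm{supp}(\vC)$, i.e.\ $f \in \mathrm{supp}(\vC^*)$. This gives $\mathrm{supp}(\iota(c)) = \mathrm{supp}(\vC^*)$ with $(c, f) = \pm 1$ on this support.

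The remaining step --- matching signs globally --- is where I expect the main obstacle. I plan to view $c$ as a boundary circle of the $2$-submanifold $\overline{\Sigma_1} \setminus N(Q')^\circ \subseteq \overline{\Sigma_1}$, whose other boundary pieces are the coedges in $\mathrm{supp}(\vC) \cap E^*$ carrying the induced orientation from $\partial \Sigma_1$. A careful comparison using the right-hand-rule convention relating the reference orientations of $e$ and $e^*$ from Definition~\ref{def: ribbon to om} then yields $(c, f) = \epsilon \cdot \vC^*(f)$ for a single sign $\epsilon \in \{\pm 1\}$ independent of $f$, so $\iota(c) = \epsilon\,\vC^*$. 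Finally, $c + c'$ bounds the $2$-chain $N(Q')$ in $\Sigma$, so $(c, f) + (c', f) = 0$ for every edge and coedge $f$, forcing $\iota(c') = -\iota(c) = -\epsilon\,\vC^*$. Therefore $\{\iota(c), \iota(c')\} = \{\vC^*, -\vC^*\}$.
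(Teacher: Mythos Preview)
Your overall strategy parallels the paper's, and your treatment of part~(i) is in fact more careful than the paper's one-line citation of Lemma~\ref{lem:ribbon basis}. The gap is in your sign-matching plan for part~(ii).

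The 2-submanifold $S = \overline{\Sigma_1} \setminus N(Q')^\circ$ you propose has boundary consisting of $-c$ together with the \emph{coedges} in $\mathrm{supp}(\vC)\cap E^*$; the \emph{edges} in $\mathrm{supp}(\vC)\cap E$ lie in $N(Q')^\circ$ and so do not appear in $\partial S$. The identity $(\partial S,f)=0$ that your ``careful comparison'' implicitly relies on is only valid when both endpoints of $f$ lie outside $S$. For $f\in E$ this holds, since the endpoints of an edge are vertices of $G$, all of which sit in $N(Q')^\circ$. But for $f=e^*\in E^*$ with $e\in \mathrm{supp}(\vC)\cap E$, the endpoints of $f$ are covertices, which lie in the faces of $G$ and hence outside $N(Q')$; one of them may well land in $\Sigma_1$ and therefore inside $S$. (For the triangle in the sphere with $\mathrm{supp}(\vC)$ the full 3-cycle, one covertex is in $S$ and the other is not, so $(\partial S,e^*)=\pm 1\ne 0$.) Your single region thus pins down $(c,f)$ only for $f\in E\cap\mathrm{supp}(\vC^*)$, and says nothing about $(c,e^*)$ when $e$ is an edge of the circuit.

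The paper closes this gap by also using the complementary region $\Sigma_1\cap N_\epsilon$, whose boundary is $c$ together with the \emph{edges} in $\mathrm{supp}(\vC)\cap E$. For that region the endpoints of any coedge $e^*$ lie outside $N_\epsilon$, so $(\partial(\Sigma_1\cap N_\epsilon),e^*)=0$ is legitimate and yields $(c,e^*)=-\vC(e)$. In short, you need both halves of $\Sigma_1$ relative to $N(Q')$: one half controls the signs coming from the coedge part of $\mathrm{supp}(\vC)$, the other the signs coming from the edge part.
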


\begin{proof}
    (i)
    By Lemma~\ref{lem:ribbon basis}, a small $\epsilon$-neighborhood $N_\epsilon$ of $(V,(E\cap B)\triangle \{x\})$ in $\Sigma$ has exactly two boundary components $c$ and $c'$.

    (ii)
    We can regard $c$ and $c'$ as elements in $H_1(\Sigma \setminus (V\cup V^*))$.
    Orient $c$ and $c'$ according to the induced orientation on $\partial N_\epsilon$. 
    Let $\Sigma_1$ and $\Sigma_2$ be the two connected components of $\Sigma \setminus \mathrm{supp}(\vC)$.
    Because $\mathrm{supp}(\vC) \subseteq N_\epsilon$, $c$ and $c'$ are in the different components of $\Sigma \setminus \mathrm{supp}(\vC)$.
    We may assume that $c\subseteq\Sigma_1$ and $c'\subseteq \Sigma_2$.
    See Figure~\ref{fig: fundamental ribbon cycle} for an example.

    An edge or coedge $e$ in $B\triangle\{x,x^*\}$ does not intersect either $c$ or $c'$, while
    each $e\in (B\triangle \{x,x^*\})^*$ intersects the union of $c$ and $c'$ exactly twice.
    
    Suppose that $e\in (B\triangle \{x,x^*\})^* \setminus \mathrm{supp}(\vC)^*$.
    Then $e$ is entirely contained in either $\Sigma_1$ or $\Sigma_2$, and so it intersects only one of $c$ and $c'$.
    Hence, $(c,e) = 0 = (c',e)$.
    
    Suppose that $e\in \mathrm{supp}(\vC)^*$.
    Then $e$ intersects both $c$ and $c'$.
    Moreover, $(c,e) = -(c',e)$ by our choice of orientations on $c$ and $c'$.
    Therefore, $\iota(c) = -\iota(c')$ and the supports of $\iota(c)$ and $\iota(c')$ are $\mathrm{supp}(\vC)^*$.

    Let $e\in \mathrm{supp}(\vC)$.
    Then the (co)edge $e$ lies on the boundary of $\Sigma_1$.
    Recall that $c$ is one of the two boundary components of $N_\epsilon$, which is contained in $\Sigma_1$.

    Suppose first that $e\in E$.
    Then $e$ is contained in $N_\epsilon$.
    Hence, both $e$ and $c$ lie on $\partial_1 := \partial(\Sigma_1 \cap N_\epsilon)$, and $c$ agrees with the induced orientation on $\partial_1$.
    Let $\bar{e} = e$ if $e$ agrees with the induced orientation on $\partial_1$ and $\bar{e} = -e$ otherwise.
    Then $(\bar{e},e^*) = \vC(e)$.
    Since $\partial_1$ crosses $e^*$ exactly twice in opposite directions, we have 
    \begin{align*}
        0 
        = (\partial (\Sigma_1 \cap N_\epsilon), e^*) 
        = (c,e^*) + (\bar{e}, e^*)
        = (c,e^*) + \vC(e).
    \end{align*}
    
    Now suppose that $e\in E^*$.
    Then $e$ is contained in $\Sigma \setminus N_\epsilon$. 
    Hence, both $e$ and $c$ lie on $\partial_2 := \partial(\Sigma_1 \setminus N_\epsilon)$, and $c$ is oriented oppositely to 
    the induced orientation on $\partial_2$.
    Let $\bar{e} = e$ if $e$ agrees with the induced orientation on $\partial_2$ and $\bar{e} = -e$ otherwise.
    Then $(e^*,\bar{e}) = \vC(e)$. Since $\partial_2$ crosses $e^*$ exactly twice in opposite directions, we have
    \begin{align*}
        0 
        = (\partial (\Sigma_1 \setminus N_\epsilon), e^*) 
        = -(c,e^*) + (\bar{e}, e^*)
        = -(c,e^*) - \vC(e).
    \end{align*}
    Therefore, $\vC^* = -\iota(c)$.
\end{proof}

\begin{figure}
    \centering
    \includegraphics[width=0.5\linewidth]{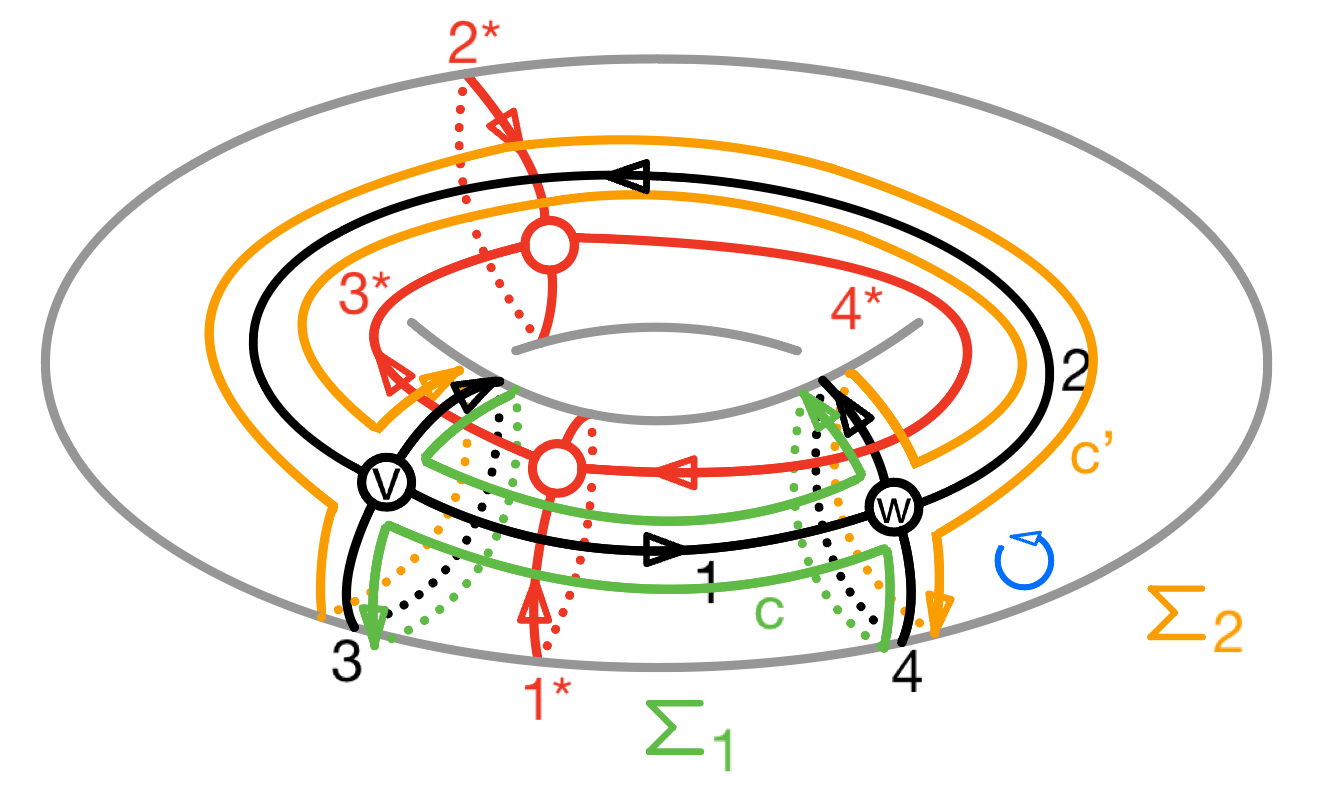}
    \caption{An example for the proof of Lemma~\ref{lem:vC and fudamental circuit}, applied to the ribbon graph in Figure~\ref{fig:orientation}, $B=1234^*$, and $x=4$.
    The two boundaries $c$ and $c'$ of a small $\epsilon$-neighborhood of $(V, (E\cap B)\triangle\{x\})$ are colored by green and orange, respectively.
    Note that $\FC(B,x) = 34$.
    }
    \label{fig: fundamental ribbon cycle}
\end{figure}

\begin{proof}[Proof of Theorem~\ref{thm:ribbon representation}]
    Let $\vC, \vD \in \cC(\vG)$.
    By Lemma~\ref{lem:vC and fudamental circuit}, there are cycles $c$ and $d$ in $H_1(\Sigma\setminus (V\cup V^*))$ such that $\vC = \iota(c)^*$ and $\vD = \iota(d)^*$.
    Then $\langle \vC, \vD \rangle = \langle \iota(c), \iota(d) \rangle = 0$ by Theorem~\ref{thm:BBGS}.
    Therefore, $\cC(\vG)$ is a regular orthogonal representation of $M(\vG)$.
\end{proof}

\begin{corollary}
    The span of $\cC(\vG)^*$ is equal to $\iota(H_1(\Sigma \setminus (V\cup V^*)))$.
\end{corollary}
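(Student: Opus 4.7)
The plan is to prove the inclusion $\mathrm{span}(\cC(\vG)^*) \subseteq \iota(H_1(\Sigma \setminus (V \cup V^*)))$ and then match dimensions, from which equality will follow automatically. The main ingredients are already isolated in Lemma~\ref{lem:vC and fudamental circuit}, Lemma~\ref{lem: sum of fund circuit}, and Theorem~\ref{thm:BBGS}, so the argument is essentially a dimension comparison inside a non-degenerate pairing.

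For the inclusion, I would argue on generators. Lemma~\ref{lem:vC and fudamental circuit} directly exhibits each fundamental signed ribbon cycle $\vC$ (one whose support is $\FC(B,x)$ for some basis $B$ and some $x \in E$) as satisfying $\vC^* = \pm \iota(c)$ for one of the two boundary components $c$ of an $\epsilon$-thickening of $(V,(E \cap B) \triangle \{x\})$; in particular $\vC^* \in \iota(H_1(\Sigma \setminus (V \cup V^*)))$. For an arbitrary $\vC \in \cC(\vG)$, I would fix a basis $B$ of $M(\vG)$ and invoke Lemma~\ref{lem: sum of fund circuit} to write $\vC$ as a $\Z$-linear combination of fundamental signed ribbon cycles. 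Then $\vC^*$ is a corresponding $\Z$-linear combination of elements of the form $\pm \iota(c_i)$, and linearity of $\iota$ places $\vC^*$ inside $\iota(H_1)$. Taking $\Q$-linear combinations gives the claimed inclusion of spans.

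For the dimension count, note that $\vv \mapsto \vv^*$ is a $\Q$-linear involution of $\Q^{E\cup E^*}$, so $\dim_\Q \mathrm{span}(\cC(\vG)^*) = \dim_\Q \mathrm{span}(\cC(\vG))$. By Lemma~\ref{lem: equal row span}, the latter coincides with the row span of $\Lambda(\cC(\vG),B) = (I \mid A)$, whose $n$ rows are linearly independent; hence both quantities equal $n = |E|$. On the other hand, Theorem~\ref{thm:BBGS} shows that $\iota(H_1(\Sigma \setminus (V \cup V^*)))$ is isotropic with respect to the non-degenerate bilinear form $\langle \cdot, \cdot \rangle$ on the $2n$-dimensional space $\Q^{E \cup E^*}$, and as observed in the discussion following the definition of $\Lambda(\cC,B)$, any such isotropic subspace has dimension at most $n$. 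Since $\iota(H_1)$ has dimension at most $n$ and contains the $n$-dimensional subspace $\mathrm{span}(\cC(\vG)^*)$, the two must coincide.

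I do not foresee any serious obstacle: the argument amounts to recognizing $\mathrm{span}(\cC(\vG)^*)$ and $\iota(H_1(\Sigma \setminus (V \cup V^*)))$ as two maximal isotropic subspaces for the same non-degenerate pairing, one of which is contained in the other. The only minor bookkeeping is to suppress the sign ambiguity in the identity $\vC^* = \pm \iota(c)$ from Lemma~\ref{lem:vC and fudamental circuit}, which is harmless because $\iota(H_1)$ is closed under negation.
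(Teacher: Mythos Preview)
Your proposal is correct and follows essentially the same approach as the paper: both arguments use Lemma~\ref{lem:vC and fudamental circuit} to place the fundamental signed ribbon cycles inside $\iota(H_1)$, and both conclude by combining the fact that $\mathrm{span}(\cC(\vG)^*)$ has dimension $n$ with the isotropy bound $\dim \iota(H_1) \le n$ from Theorem~\ref{thm:BBGS}. The only cosmetic difference is that the paper works directly with the $n$ linearly independent fundamental circuits rather than first passing to arbitrary circuits via Lemma~\ref{lem: sum of fund circuit}, but the core mechanism is identical.
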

\begin{proof}
    Let $B$ be a basis of $\vG$, and for each $e\in B^*$ let $\vC_e$ be a signed ribbon cycle of $\vG$ such that $\mathrm{supp}(\vC_e) = \FC(B,e)$.
    Then the $n$ vectors $\vC_e^*$ with $e\in B^*$ are in $\iota(H_1(\Sigma \setminus (V\cup V^*)))$ by Lemma~\ref{lem:vC and fudamental circuit}.
    Moreover, they are linearly independent, since for $f\in B^*$ we have $\vC_e(f) \ne 0$ if and only $e=f$.
    By Theorem~\ref{thm:BBGS}, $\iota(H_1(\Sigma \setminus (V\cup V^*)))$ has dimension at most $n$, and therefore it is spanned by $\cC(\vG)^*$.
\end{proof}

\begin{remark}\label{rem: reference orientation}
Different choices of a reference orientation of $G$ yield regular orthogonal representations of $M(\vG)$ which differ only by reorientations.
Choose two reference orientations~$\vec{G}_1$ and~$\vec{G}_2$ of $G$. 
Let $X$ be the set of edges on which $\vec{G}_1$ and $\vec{G}_2$ differ.
Then for any $e\in E\cup E^*$, the intersection indices $(c,e)$ are the same for $\vec{G}_1$ and $\vec{G}_2$ if and only if $e\notin X\cup X^*$.
Therefore, $\cC(\vG,\vec{G}_1) = \cC(\vG,\vec{G}_2)^X$. 
\end{remark}

\begin{remark}
We have two choices of ``consistent'' dual orientations of $G^*$.
One convention is that each coedge $e^*$ crosses its corresponding edge $e$ from right to left, as above, and the other
is that each coedge $e^*$ crosses $e$ from left to right.
These choices yield two regular orthogonal representations of $M(\vG)$ which differ from each other by a simple operation.
For a fixed reference orientation of $G$, consider the two regular orthogonal representations $\cC_1(\vG)$ and $\cC_2(\vG)$ induced by the two different choices for dual orientations of $G^*$.
Then for any $e\in E$, the intersection indices $(c,e)$ are the same, but $(c,e^*)$ are different in the two cases.
This implies $\cC_1(\vG) = \{\vC' : \vC\in \cC_2(\vG)\}$, where
\begin{align*}
    \vC'(x) = 
    \begin{cases}
        \vC(x) & \text{if } x\in E, \\
        -\vC(x) & \text{if } x\in E^*. \\
    \end{cases}
\end{align*}
\end{remark}

\section{The Jacobian group and the circuit reversal classes}\label{sec: cycle reversal}

In this section, we first define the Jacobian group $\J(\cC)$ and the circuit reversal classes $\cR(\cC)$ of a regular orthogonal representation $\cC$. Then we will prove that there is a canonical group action of $\J(\cC)$ on the set $\cR(\cC)$. 

\subsection{Definition of the Jacobian group}
Let $\cC$ be a regular orthogonal representation of an orthogonal matroid $M=(E\cup E^*, \cB)$. 

We define the function \[\pi\colon\Q^{E\cup E^*}\to\Q^E\] by \[\pi(\vv)(e)=\vv(e)+\vv(e^*).\]

Define \[\pcc:=\text{the subgroup of }\Z^E\text { generated by }\{\pi(\vC):\vC\in\cC\}.\]

We remark that $\pi(\vC)\in\{0, \pm 1\}^E$ for any signed circuit $\vC$, because
the support of any signed circuit $\vC$ is a subtransversal. 

We define an equivalence relation $\sim_\mathrm{J}$ on $\Q^E$ by the rule
\[\vv_1\sim_\mathrm{J}\vv_2\Leftrightarrow \vv_2-\vv_1\in\pcc.\]
Note that $\sim_\mathrm{J}$ induces an equivalence relation on $\Z^E$. We denote an equivalence class for this relation by $\bJ{\vv}$. 

\begin{definition}\label{def: Jac}
The \emph{Jacobian group} of the regular orthogonal representation $\cC$ is defined to be the quotient group \[\J (\cC):=\Z^{E}/\sim_\mathrm{J}.\] 
\end{definition}

\begin{proposition}\label{prop: the order of Jac}
The order of the Jacobian group $\J(\cC)$ equals the number of bases of $M$.
\end{proposition}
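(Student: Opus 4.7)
The plan is to compute $|\J(\cC)| = |\Z^E / \pcc|$ as the absolute determinant of an explicit $n \times n$ integer matrix, and then match that determinant with $\det(I+A)$, which equals $|\cB(M)|$ by Corollary~\ref{cor:counting bases}. Concretely, I would produce a generating set for $\pcc$ coming directly from the fundamental-circuit rows of $\Lambda(\cC,B)$, show that the corresponding matrix is $I+A$ up to reindexing, and then invoke the standard formula $|\Z^E / L| = |\det(P)|$ whenever $P$ is a square matrix whose rows are a $\Z$-basis for $L$.

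First I would fix a basis $B$ of $M = M_\cC$ and write $\Lambda(\cC,B) = \begin{pmatrix} I & A \end{pmatrix}$ as in~\eqref{eq:A-matrix}, with rows $\vC_b \in \cC$ ($b \in B$) satisfying $\mathrm{supp}(\vC_b) = \FC(B,b^*)$ and $\vC_b(b^*)=1$. By Lemma~\ref{lem: equal row span}, these $n$ rows form a $\Z$-basis of the $\Z$-span of $\cC$. Since $\pi$ is $\Z$-linear and surjects the $\Z$-span of $\cC$ onto $\pcc$, the vectors $\pi(\vC_b)$ for $b \in B$ generate $\pcc$. Assemble them into a square matrix $\tilde P$ by indexing columns via the natural bijection $\phi \colon B \to E$ that sends $b \in B \cap E$ to $b$ and $b \in B \cap E^*$ to $b^*$.

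Next I would check that $\tilde P = I + A$. Because $\mathrm{supp}(\vC_b) \subseteq (B \setminus \{b\}) \cup \{b^*\}$, the element $b$ is not in $\mathrm{supp}(\vC_b)$, so $A_{b,b} = \vC_b(b) = 0$. For $b=b'$ the computation gives $\pi(\vC_b)(\phi(b)) = \vC_b(b) + \vC_b(b^*) = 0+1 = 1$, producing the identity on the diagonal. For $b' \neq b$ in $B$, the element $b'^*$ lies neither in $B \setminus \{b\}$ nor equals $b^*$, so $\vC_b(b'^*) = 0$, and $\pi(\vC_b)(\phi(b')) = \vC_b(b') = A_{b,b'}$. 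Hence $\tilde P = I + A$ under the identification $\phi$.

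Finally I would conclude: by Corollary~\ref{cor:counting bases} we have $\det(\tilde P) = \det(I+A) = |\cB(M)| > 0$, so the rows of $\tilde P$ are $\Q$-linearly independent and therefore form a $\Z$-basis of the full-rank sublattice $\pcc \subseteq \Z^E$; the standard index formula then yields $|\J(\cC)| = |\Z^E/\pcc| = |\det(\tilde P)| = |\cB(M)|$. The only real care needed is the bookkeeping that identifies the row/column labels of $A$ (both indexed by the transversal $B$) with the coordinates of $\Z^E$ via $\phi$; once that bijection is fixed everything else is a direct computation and an application of Corollary~\ref{cor:counting bases}.
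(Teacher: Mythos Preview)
Your proof is correct and follows essentially the same approach as the paper. The paper's proof is a terse two-line argument invoking Lemma~\ref{lem: equal row span} and Corollary~\ref{cor:counting bases}; you have simply unpacked the implicit identification of $\pi(\vC_b)$ with the rows of $I+A$ via the bijection $\phi\colon B\to E$, which the paper leaves to the reader.
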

\begin{proof}
By Lemma~\ref{lem: equal row span}, the subgroup $\pcc$ of $\Z^E$ is spanned by the rows of $I+A$. Then by Corollary~\ref{cor:counting bases}, we get the desired formula. 
\end{proof}

\begin{example}
    Let $\cC$ be the set of signed circuits calculated in Example~\ref{ex: circuits}. It is easy to see that the subgroup $\pcc$ is generated by $(1,-1,0,0)$, $(1,0,-1,0)$, $(1,0,0,-1)$ and $(1,1,1,1)$. The Jacobian group is \[\J(\cC)=\{\bJ{(0,0,0,0)}, \bJ{(1,0,0,0)}, \bJ{(1,1,0,0)}, \bJ{(1,1,1,0)}\}\] which is isomorphic to $\Z/4\Z$. 
\end{example}

\subsection{The circuit reversal system}

\begin{definition}
\label{def:circuit reversal class}
We define the circuit reversal system for $\cC$ as follows. 
\begin{enumerate}[label=\rm(\roman*)]
    \item An \emph{orientation} (with respect to $\cC$) is an element of the set \[\cO:=\{\pm \frac{1}{2}\}^E.\] 

    \item A \emph{circuit reversal} turns an orientation $\vO$ into the orientation $\vO+\pi(\vC)$ for some $\vC\in\cC$. 

    \item Two orientations $\vO_1$ and $\vO_2$ are said to be \emph{equivalent}, denoted $\vO_1\sim_\mathrm{R}\vO_2$, if one orientation can be obtained from the other by a sequence of circuit reversals. This defines an equivalence relation on the set of orientations. We call an equivalence class for this relation a \emph{circuit reversal class}. The circuit reversal class of an orientation $\vO$ is denoted by $\bR{\vO}$, and the set of circuit reversal classes of $\cC$ is denoted by $\cR(\cC)$. We call $\cR(\cC)$ the \emph{circuit reversal system} for $\cC$.
\end{enumerate}
\end{definition}

\begin{example}\label{ex: circuit reversal classes}
     We continue with the example in Figure~\ref{fig:torus}. We denote its orientations by $\vO_{++++}=(\frac{1}{2},\frac{1}{2},\frac{1}{2},\frac{1}{2})$, $\vO_{+++-}=(\frac{1}{2},\frac{1}{2},\frac{1}{2},-\frac{1}{2})$, etc. We have $\vO_{++++}\sim_\mathrm{R} \vO_{----}$ because $\vO_{++++}-\vO_{----}=(1,1,1,1)=\pi(\vC)$, where $\vC=(1,1,0,0, 0,0,1,1)$ is the last signed circuit in Example~\ref{ex: circuits}. 
     The complete set of circuit reversal classes is as follows:
     \begin{align*}
        &\bR{\vO_{++++}}=\{\vO_{++++}, \vO_{----}\},\\
        &\bR{\vO_{+++-}}=\{\vO_{+++-}, \vO_{++-+}, \vO_{+-++}, \vO_{-+++}\},\\
        &\bR{\vO_{++--}}=\{\vO_{++--}, \vO_{+-+-}, \vO_{-++-}, \vO_{+--+}, \vO_{-+-+}, \vO_{--++}\},\\
        &\bR{\vO_{+---}}=\{\vO_{+---}, \vO_{-+--}, \vO_{--+-}, \vO_{---+}\}.
    \end{align*}
\end{example}

\begin{definition}\label{def: action}
    We define a group action of $\J(\cC)$ on $\cR(\cC)$ as follows. For $\bJ{\vv}\in\J(\cC)$ and $\bR{\vO_1}\in \cR(\cC)$, we define 
    \[\bJ{\vv}\cdot \bR{\vO_1}:=\bR{\vO_2},\]
    where $\vO_2$ is any orientation such that $\vv+\vO_1 \sim_\mathrm{J} \vO_2$.
\end{definition}

Our first main result of the paper is: 

\begin{theorem}\label{thm: action}
    The group action in Definition~\ref{def: action} is well defined and simply transitive.
\end{theorem}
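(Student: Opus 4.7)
My plan is to reduce the theorem to a single surjectivity claim and then establish that claim iteratively using Lemma~\ref{lem: Farkas2}.

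First, I would fix a base orientation $\vO_0\in\cO$ (say, $\vO_0=(\tfrac{1}{2},\ldots,\tfrac{1}{2})$) and consider the map $\Psi:\cO\to\J(\cC)$ given by $\Psi(\vO):=[\vO-\vO_0]\in\Z^E/\pcc$. Since $\vO_1\sim_\mathrm{R}\vO_2$ iff $\vO_2-\vO_1\in\pcc$, $\Psi$ descends to an injection $\bar\Psi:\cR(\cC)\hookrightarrow\J(\cC)$. Once $\bar\Psi$ is known to be a bijection, the proposed action of $\J(\cC)$ on $\cR(\cC)$ is transported via $\bar\Psi$ to the regular translation action of $\J(\cC)$ on itself, which is visibly simply transitive; the remaining well-definedness checks (independence of the choice of $\vO_2$ and of the representatives $\vv$ and $\vO_1$) reduce immediately to $\pcc$ being a subgroup of $\Z^E$. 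The only nontrivial input is existence: for every $\vv\in\Z^E$ and $\vO_1\in\cO$, some $\vO_2\in\cO$ satisfies $\vO_2-\vv-\vO_1\in\pcc$, which is exactly the surjectivity of $\bar\Psi$.

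The main obstacle is proving this surjectivity. Setting $\vu:=\vv+\vO_1\in(\tfrac{1}{2}+\Z)^E$, I would introduce the defect $\delta(\vu):=\sum_{e\in E}(|\vu(e)|-\tfrac{1}{2})$, a nonnegative integer that vanishes exactly when $\vu\in\cO$. When $\delta(\vu)>0$, the plan is to use Lemma~\ref{lem: Farkas2} to produce a signed circuit $\vC\in\cC$ with $\delta(\vu-\pi(\vC))\le\delta(\vu)-1$, so that iteration terminates in $\cO$. Concretely, pick $f\in E$ with $|\vu(f)|\ge\tfrac{3}{2}$, say $\vu(f)\ge\tfrac{3}{2}$ (the other case is symmetric, using $-\vC$). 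Define a fourientation $\vF$ by $\vF(e)=\vF(e^*)=\{\operatorname{sign}(\vu(e))\}$ for each $e\in E$; a direct check confirms that $\vF$ is positive in the sense of Definition~\ref{def: positive fourientation} and that $|\vF(f)|=1$. Lemma~\ref{lem: Farkas2} then yields a signed circuit $\vC\subseteq\vF$ with $\vC(f)\ne 0$ or $\vC(f^*)\ne 0$. Because $\operatorname{supp}(\vC)$ is a subtransversal, $\pi(\vC)(f)=1$, and for each $e\ne f$ we have $\pi(\vC)(e)\in\{0,\operatorname{sign}(\vu(e))\}$; a short coordinate-by-coordinate case check then gives the desired inequality $\delta(\vu-\pi(\vC))\le\delta(\vu)-1$.

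With $\bar\Psi$ a bijection, simple transitivity is automatic: transitivity holds because any two orientations differ by an integer vector, and freeness because $\vv+\vO_1\sim_\mathrm{J}\vO_1$ forces $\vv\in\pcc$. The only nontrivial step throughout is the Farkas-based construction of the defect-reducing signed circuit.
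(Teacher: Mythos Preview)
Your surjectivity argument via the defect function is correct and essentially matches the paper's Lemma~\ref{lem: cube represents all}. The gap is in the sentence ``Since $\vO_1\sim_\mathrm{R}\vO_2$ iff $\vO_2-\vO_1\in\pcc$, $\Psi$ descends to an injection.'' Only the forward implication is immediate from the definitions; the reverse implication---which is exactly what you need for injectivity of $\bar\Psi$, and also for the well-definedness checks you later dismiss as routine---is the substantive part of the proof. A circuit reversal is only permitted when $\vO+\pi(\vC)$ lands back in $\cO=\{\pm\tfrac12\}^E$, so knowing $\vO_2-\vO_1=\sum_k\pi(\vC_k)$ for some integer combination does not automatically give a path of orientations from $\vO_1$ to $\vO_2$: the partial sums may leave the cube.

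The paper fills this gap in Proposition~\ref{prop: R-equivalence}, and the argument is not formal. One first shows (Lemma~\ref{lem: subtransveral}) that any $\Z$-combination $\vv$ of signed circuits with $\pi(\vv)\in\{0,\pm1\}^E$ has subtransversal support; this uses the isotropy relation $\langle\vv,\vv\rangle=0$. Then, restricting $\cC$ to that subtransversal gives a regular \emph{matroid} representation (Proposition~\ref{prop: restriction}), where the classical fact that every $(0,\pm1)$-vector in the circuit span decomposes into \emph{disjoint} signed circuits (Lemma~\ref{lem: disjoint circuits}) applies. Disjointness of supports is precisely what guarantees each partial sum stays in $\cO$. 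Your defect argument does not supply any of this: it produces a path through $(\tfrac12+\Z)^E$, not through $\cO$, and when both endpoints already lie in $\cO$ it gives no information at all.
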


\begin{corollary}\label{cor: count the circuit reversal classes}
We have\[|\J(\cC)|=|\cR(\cC)|=|\cB(M)|.\]
\end{corollary}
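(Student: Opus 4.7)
The plan is to observe that this corollary is an immediate consequence of the two results stated just before it, so the ``proof'' is really just an assembly of previously established facts.

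First, I would invoke Proposition~\ref{prop: the order of Jac}, which already gives $|\J(\cC)| = |\cB(M)|$ via the identification of $\langle \pi(\cC) \rangle$ with the row span of $I + A$ (Lemma~\ref{lem: equal row span}) combined with the determinantal count $\det(I+A)$ from Corollary~\ref{cor:counting bases}. This handles the second equality.

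Next, to conclude $|\cR(\cC)| = |\J(\cC)|$, I would apply Theorem~\ref{thm: action}. By the definition of a simply transitive action of a group $G$ on a nonempty set $X$, the orbit map $g \mapsto g \cdot x_0$ (for any fixed $x_0 \in X$) is a bijection from $G$ onto $X$, so $|G| = |X|$. Here $\cR(\cC)$ is nonempty because $\cO = \{\pm \tfrac{1}{2}\}^E$ is nonempty and surjects onto $\cR(\cC)$, and Theorem~\ref{thm: action} furnishes the simply transitive action of $\J(\cC)$ on $\cR(\cC)$. Hence $|\cR(\cC)| = |\J(\cC)|$.

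Combining the two equalities yields $|\J(\cC)| = |\cR(\cC)| = |\cB(M)|$, as claimed. There is no real obstacle here, since the substance of the statement is contained in Proposition~\ref{prop: the order of Jac} and Theorem~\ref{thm: action}; the corollary merely packages them. The one thing worth flagging is that, in the logical order of the paper, the genuine work lies in proving Theorem~\ref{thm: action} (well-definedness, transitivity, and freeness of the action), whereas the corollary itself requires nothing beyond the formal properties of torsors.
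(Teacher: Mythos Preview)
Your proof is correct and follows exactly the same approach as the paper, which simply states that the corollary is a direct consequence of Proposition~\ref{prop: the order of Jac} and Theorem~\ref{thm: action}. You have merely spelled out the standard fact that a simply transitive action forces equal cardinalities, which the paper leaves implicit.
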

\begin{proof}
This is a direct consequence of Proposition~\ref{prop: the order of Jac} and Theorem~\ref{thm: action}. 
\end{proof}

In order to prove Theorem~\ref{thm: action}, we must first show that the orientation $\vO_2$ in Definition~\ref{def: action} always exists. The proof uses Farkas' Lemma for oriented orthogonal matroids (Lemma~\ref{lem: Farkas}), which for the present purpose we reformulate as follows:

\begin{lemma}\label{lem: sign}
For any function $f\colon E\to \{\pm 1\}$ and any $i\in E$, there exists a signed circuit $\vC\in\cC$ such that $\pi(\vC)(i)=f(i)$ and $\pi(\vC)(j)\in \{ 0, f(j) \}$ for every $j\in E$.

\end{lemma}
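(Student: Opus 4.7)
The plan is to reduce the statement to the nonnegative case, where it becomes essentially Farkas' Lemma (Lemma~\ref{lem: Farkas}), via a reorientation that absorbs the signs specified by $f$. The key observation is that for any subtransversal (and in particular the support of any signed circuit), at most one of $\vC(j)$ and $\vC(j^*)$ is nonzero, so $\pi(\vC)(j)\in\{0,\pm 1\}$, and the condition $\pi(\vC)(j)\in\{0,f(j)\}$ amounts to saying that the single nonzero contribution (if any) carries the sign $f(j)$.

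First, I would let $X:=\{j\in E: f(j)=-1\}$ and pass to the reoriented representation $\cC^X$, which is again a regular orthogonal representation by the remark following Definition~\ref{def:reorientation}. Applying Lemma~\ref{lem: Farkas} to $\cC^X$ at the element $i\in E$ produces a signed circuit $\vD\in\cC^X$ with $\vD\ge 0$ and either $\vD(i)\neq 0$ or $\vD(i^*)\neq 0$. In either case, since $\vD\ge 0$, we get $\pi(\vD)(i)=+1$ and $\pi(\vD)(j)\in\{0,+1\}$ for every $j\in E$.

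Next, I would pull $\vD$ back through the reorientation bijection: let $\vC\in\cC$ be the unique signed circuit with $\vC^X=\vD$. A direct case analysis then finishes the proof. If $j\notin X$, then $\vC(j)=\vD(j)\ge 0$ and $\vC(j^*)=\vD(j^*)\ge 0$, so $\pi(\vC)(j)=\pi(\vD)(j)\in\{0,+1\}=\{0,f(j)\}$. If $j\in X$, then $\vC(j)=-\vD(j)\le 0$ and $\vC(j^*)=-\vD(j^*)\le 0$, so $\pi(\vC)(j)=-\pi(\vD)(j)\in\{0,-1\}=\{0,f(j)\}$. In particular, taking $j=i$, we obtain $\pi(\vC)(i)=f(i)$, which is exactly what is required.

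I do not anticipate a real obstacle here: the content is entirely packaged in Lemma~\ref{lem: Farkas} together with the fact that reorientation preserves regular orthogonal representations. The only minor point to be careful about is bookkeeping the signs when transporting $\vD$ back to $\vC$, and verifying that the same reorientation $X$ simultaneously handles the distinguished index $i$ and all other indices $j\in E$.
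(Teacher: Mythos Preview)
Your proposal is correct and is essentially identical to the paper's proof: the paper's one-line argument is ``Apply Lemma~\ref{lem: Farkas} to the reorientation $\cC^{f^{-1}(-1)}$,'' and your $X=\{j\in E:f(j)=-1\}$ is exactly $f^{-1}(-1)$. You have simply unpacked the sign bookkeeping that the paper leaves implicit.
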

\begin{proof}
    Apply Lemma~\ref{lem: Farkas} to the reorientation $\cC^{f^{-1}(-1)}$.
\end{proof}

Define
\[
\hZ^E:=\text{the set of functions from }E\text{ to }\{n+\frac{1}{2}:n\in\Z\}
\]
and note that $\sim_\mathrm{J}$ induces an equivalence relation on $\hZ^E$. 
We again denote an equivalence class for this relation by $\bJ{\vv}$. 

\begin{lemma}\label{lem: cube represents all}
For any $\vv\in\hZ^E$, there exists an orientation $\vO$ such that $\bJ{\vv}=\bJ{\vO}$.
\end{lemma}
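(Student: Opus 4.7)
The plan is to induct on the $\ell^1$-norm $\|\vv\|_1 := \sum_{i \in E} |\vv(i)|$, which lies in $\frac{1}{2}\Z_{\ge 0}$ since $\vv \in \hZ^E$. The base case is $\|\vv\|_1 = \frac{|E|}{2}$, i.e., $|\vv(i)| = \frac{1}{2}$ for every $i$, in which case $\vv$ is already an element of $\cO = \{\pm\frac12\}^E$ and we take $\vO = \vv$.

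For the inductive step, assume some coordinate satisfies $|\vv(i)| \ge \tfrac{3}{2}$, and define $f\colon E \to \{\pm 1\}$ by $f(j) := \mathrm{sign}(\vv(j))$; this is well-defined because every coordinate of $\vv$ is a nonzero half-integer. Applying Lemma~\ref{lem: sign} at the index $i$ produces a signed circuit $\vC \in \cC$ with $\pi(\vC)(i) = f(i) = \mathrm{sign}(\vv(i))$ and $\pi(\vC)(j) \in \{0, f(j)\}$ for every $j \in E$. Set $\vv' := \vv - \pi(\vC)$. Since $\pi(\vC) \in \pcc$ we have $\bJ{\vv'} = \bJ{\vv}$, and since $\pi(\vC) \in \Z^E$, the vector $\vv'$ remains in $\hZ^E$.

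A coordinate-wise check now gives the required norm decrease. For each $j$ with $\pi(\vC)(j) = 0$ we have $|\vv'(j)| = |\vv(j)|$, while for each $j$ with $\pi(\vC)(j) = \mathrm{sign}(\vv(j))$ we have $|\vv'(j)| = \bigl||\vv(j)| - 1\bigr| \le |\vv(j)|$, with equality only when $|\vv(j)| = \tfrac{1}{2}$. At $j = i$, since $|\vv(i)| \ge \tfrac{3}{2}$ we obtain the strict drop $|\vv'(i)| = |\vv(i)| - 1$, so $\|\vv'\|_1 \le \|\vv\|_1 - 1$. By induction there is an orientation $\vO$ with $\bJ{\vO} = \bJ{\vv'} = \bJ{\vv}$.

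The only nontrivial ingredient is the existence of the signed circuit $\vC$ with the prescribed sign pattern relative to $f$, which is exactly the content of Lemma~\ref{lem: sign} (equivalently, Farkas's Lemma for oriented orthogonal matroids, Lemma~\ref{lem: Farkas}, applied to the reorientation $\cC^{f^{-1}(-1)}$). Everything else is routine bookkeeping with half-integer arithmetic, and I do not anticipate any subtle obstacles.
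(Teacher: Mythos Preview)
Your proof is correct and is essentially the same as the paper's: both induct on a defect measuring distance from $\cO$ (your $\ell^1$-norm differs from the paper's $\delta(\vv)=\sum_j(|\vv(j)|-\tfrac12)$ only by the constant $|E|/2$), and both invoke Lemma~\ref{lem: sign} to find a signed circuit whose projection pushes each coordinate toward zero, with strict decrease at the offending index. The only cosmetic difference is a sign convention---you take $f=\mathrm{sign}(\vv)$ and subtract $\pi(\vC)$, while the paper takes $f=-\mathrm{sign}(\vv)$ and adds---which has no effect on the argument.
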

\begin{proof}
    We define the \emph{defect} of $\vv$ to be
    \[
    \delta(\vv) := \sum_{j\in E} (|\vv(j)|-\frac{1}{2}).
    \] 
    
    Note that $\delta(\vv)$ is a non-negative integer, and it is zero if and only if $\vv\in\cO$. 
    
    We may assume that $\delta(\vv)$ is positive. It is enough to show that there exists a signed circuit $\vC$ such that $\delta(\vv')<\delta(\vv)$, where $\vv':=\vv + \pi(\vC)$, because we can repeat this process until the defect becomes zero.  
    
    Because $\delta(\vv)>0$, there exists $i\in E$ such that $\vv(i) \notin \{\pm\frac{1}{2}\}$. Let $f : E\to \{\pm 1\}$ be the function such that for any $j\in E$, 
    \[f(j) = \begin{cases}
        1 & \text{if } \vv(j) < 0, \\
        -1 & \text{if } \vv(j) > 0. \\
    \end{cases}\]
    By Lemma~\ref{lem: sign}, there exists a signed circuit $\vC$ such that $\pi(\vC)(i)=f(i)$ and $\pi(\vC)(j)\in\{0,f(j)\}$ for every $j\in E$. Then $\vC$ is the desired signed circuit, because 
    \[|\vv(j)|\geq |\vv(j)+\pi(\vC)(j)|=|\vv'(j)|\]
    for every $j\in E$, and when $j=i$ we have strict inequality. 
\end{proof}

\begin{corollary}\label{cor: R'}
Define
\[\cR'(\cC):=\{\bJ{\vO}:\vO\in\cO\}.\] 
    \begin{enumerate}[label=\rm(\roman*)]
    \item We have \[\cR'(\cC)=\hZ^E/\sim_\mathrm{J}\]as sets. 
    \item The group action of $\J(\cC)$ on $\cR'(\cC)$ defined by\[\bJ{\vv}\cdot \bJ{\vO_1}=\bJ{\vO_2}\Leftrightarrow\bJ{\vv+\vO_1}=\bJ{\vO_2}\]is well defined and simply transitive.
    \end{enumerate}
\end{corollary}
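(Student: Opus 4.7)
The plan is to derive Corollary~\ref{cor: R'} as an essentially formal consequence of Lemma~\ref{lem: cube represents all}; the substantive content has already been carried out there, so I do not expect a real obstacle.

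For part (i), the inclusion $\cR'(\cC) \subseteq \hZ^E/\sim_\mathrm{J}$ is immediate because $\cO \subseteq \hZ^E$, so every class $\bJ{\vO}$ with $\vO\in\cO$ is in particular an element of $\hZ^E/\sim_\mathrm{J}$. Conversely, given any class $\bJ{\vv}$ represented by some $\vv \in \hZ^E$, Lemma~\ref{lem: cube represents all} furnishes an orientation $\vO\in\cO$ with $\bJ{\vv} = \bJ{\vO}$, so every class in $\hZ^E/\sim_\mathrm{J}$ lies in $\cR'(\cC)$.

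For part (ii), I would first record two elementary compatibilities: translation by elements of $\Z^E$ preserves $\hZ^E$ (since integer shifts send half-integer vectors to half-integer vectors), and translation by elements of $\pcc \subseteq \Z^E$ preserves the equivalence relation $\sim_\mathrm{J}$. Hence the rule $\bJ{\vv}\cdot \bJ{\vO_1} := \bJ{\vv+\vO_1}$ descends to a well-defined map $\J(\cC)\times (\hZ^E/\sim_\mathrm{J}) \to \hZ^E/\sim_\mathrm{J}$, and by part (i) the target is exactly $\cR'(\cC)$. The group action axioms $\bJ{0}\cdot\bJ{\vO_1}=\bJ{\vO_1}$ and $\bJ{\vv_1+\vv_2}\cdot \bJ{\vO_1}=\bJ{\vv_1}\cdot(\bJ{\vv_2}\cdot \bJ{\vO_1})$ then follow by inspection.

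To conclude the proof it remains to verify that the action is simply transitive. For transitivity, given $\vO_1,\vO_2 \in \cO$, the difference $\vv := \vO_2-\vO_1$ lies in $\Z^E$ (each coordinate is a difference of two values in $\{\pm\tfrac{1}{2}\}$), and by construction $\bJ{\vv}\cdot \bJ{\vO_1} = \bJ{\vO_2}$. For freeness, suppose $\bJ{\vv}\cdot \bJ{\vO_1} = \bJ{\vO_1}$ for some $\vv\in\Z^E$; then $\vv+\vO_1 \sim_\mathrm{J} \vO_1$, i.e.\ $\vv\in\pcc$, so $\bJ{\vv}$ is the identity of $\J(\cC)$. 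The only non-formal ingredient anywhere in this argument is the existence of an orientation representative in each $\sim_\mathrm{J}$-class, which is exactly Lemma~\ref{lem: cube represents all}.
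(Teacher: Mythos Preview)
Your proof is correct and follows the same approach as the paper. The paper's own proof is even terser---it simply cites Lemma~\ref{lem: cube represents all} for part (i) and for part (ii) invokes the general fact that $\Z^E/\sim_\mathrm{J}$ acts simply transitively on $\hZ^E/\sim_\mathrm{J}$ (since $\hZ^E$ is a single coset of $\Z^E$)---but what you have written is exactly an unpacking of that sentence.
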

\begin{proof}
The first part is a consequence of Lemma~\ref{lem: cube represents all}. 
The second part follows from the fact that the Jacobian group $\Z^E/\sim_\mathrm{J}$ acts on $\hZ^E/\sim_\mathrm{J}$ simply transitively. 
\end{proof}

The next step is to show that the combinatorial equivalence relation $\vO_1\sim_\mathrm{R}\vO_2$ coincides with the algebraic equivalence relation $\vO_2\sim_\mathrm{J}\vO_1$. 
To prove this, we will need two more lemmas. 

\begin{lemma}[{\cite[Lemma~4.1.1]{BBY2019}}]\label{lem: disjoint circuits}
Let $\cC$ be a regular representation of a \emph{matroid}. Then every $(0,\pm 1)$-vector in the span of $\cC$ is a sum of disjoint signed circuits in $\cC$.
\end{lemma}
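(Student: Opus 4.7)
My plan is to argue by induction on $|\mathrm{supp}(\vv)|$, bootstrapping off the following key claim, which I will call the \emph{conformal circuit lemma}: if $\vv$ is a nonzero $(0,\pm 1)$-vector in the $\Q$-span of $\cC$, then there exists a signed circuit $\vC\in\cC$ with $\mathrm{supp}(\vC)\subseteq\mathrm{supp}(\vv)$ such that $\vC(e)=\vv(e)$ for all $e\in\mathrm{supp}(\vC)$. Granted this, the vector $\vv - \vC$ is again a $(0,\pm 1)$-vector in the span of $\cC$ whose support is the strictly smaller set $\mathrm{supp}(\vv)\setminus\mathrm{supp}(\vC)$. Applying the inductive hypothesis to $\vv - \vC$ produces a decomposition as a sum of signed circuits with pairwise disjoint supports, each contained in $\mathrm{supp}(\vv)\setminus\mathrm{supp}(\vC)$; combining with $\vC$ yields the desired disjoint decomposition of $\vv$. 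The base case $\vv=0$ is the empty sum.

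To prove the conformal circuit lemma, let $N$ be the underlying matroid of $\cC$. Because $\vv\neq 0$ lies in the cycle space of $N$, the set $\mathrm{supp}(\vv)$ is dependent and therefore contains a circuit of $N$, so the collection of signed circuits $\vD\in\cC$ with $\mathrm{supp}(\vD)\subseteq\mathrm{supp}(\vv)$ is nonempty. Among such $\vD$, choose $\vC$ minimizing the \emph{conflict count} $d(\vD) := |\{e\in\mathrm{supp}(\vD) : \vD(e)\ne\vv(e)\}|$. I claim $d(\vC)=0$. Suppose otherwise and pick $e\in\mathrm{supp}(\vC)$ with $\vC(e)=-\vv(e)$. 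Writing $\vv$ as an integer combination of signed circuits in $\cC$ (which is possible by Lemma~\ref{lem: equal row span} applied after lifting or more directly from Definition~\ref{def:regular matroid}), at least one signed circuit $\vC'$ occurring in the combination must satisfy $\vC'(e)=\vv(e)=-\vC(e)$; passing to a minimal such $\vC'$ inside $\mathrm{supp}(\vv)$ by iterated application of the matroid circuit elimination axiom inside the restriction of $N$ to $\mathrm{supp}(\vv)$, we may arrange that $\mathrm{supp}(\vC)\cup\mathrm{supp}(\vC')$ has rank exactly $|\mathrm{supp}(\vC)\cup\mathrm{supp}(\vC')|-2$. Axiom (iv) of Definition~\ref{def:regular matroid} then produces a signed circuit $\vC''=\vC+\vC'\in\cC$ whose support is contained in $(\mathrm{supp}(\vC)\cup\mathrm{supp}(\vC'))\setminus\{e\}\subseteq\mathrm{supp}(\vv)$. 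A case check shows $d(\vC'')<d(\vC)$: the bad coordinate $e$ disappears, a coordinate $f$ where $\vC$ and $\vC'$ have the same sign either stays correct or stays wrong (unchanged contribution), and a coordinate where they cancel drops out of the support entirely; since $\vC'$ was chosen to agree with $\vv$ at $e$, the net conflict strictly decreases. This contradicts the minimality of $d(\vC)$.

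I expect the main obstacle to be arranging the rank condition required by axiom (iv) of Definition~\ref{def:regular matroid}: two signed circuits with intersecting supports need not combine into a single signed circuit unless the rank of their union equals its cardinality minus $2$. Overcoming this requires a careful selection of $\vC'$ --- typically by replacing $\vC'$ with a suitable signed circuit in the restricted matroid $N|_{\mathrm{supp}(\vv)}$ using matroid circuit elimination, possibly after contracting a flat of small corank --- before invoking (iv). Once that step is secured, the inductive shrinkage and the verification that $d(\vC'')<d(\vC)$ are essentially bookkeeping, and the conformal decomposition follows at once.
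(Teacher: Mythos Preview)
The paper does not prove this lemma; it is quoted verbatim from \cite[Lemma~4.1.1]{BBY2019} without argument. Your overall scheme---peel off a conformal signed circuit and induct on the size of the support---is exactly the standard route, so the only question is whether your proof of the conformal circuit lemma goes through from axioms (i)--(iv) of Definition~\ref{def:regular matroid} alone.

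It does not, and the gap is more serious than the rank-$2$ obstacle you already acknowledge. Suppose axiom~(iv) applies and yields $\vC'' = \vC + \vC' \in \cC$. Because $\vC''$ has entries in $\{0,\pm 1\}$, on $\mathrm{supp}(\vC)\cap\mathrm{supp}(\vC')$ one is forced to have $\vC(f) = -\vC'(f)$ everywhere; your ``same sign'' case simply cannot occur, and $\mathrm{supp}(\vC'') = \mathrm{supp}(\vC)\,\triangle\,\mathrm{supp}(\vC')$. So the conflict at $e$ vanishes, but every $f\in\mathrm{supp}(\vC')\setminus\mathrm{supp}(\vC)$ with $\vC'(f)\ne\vv(f)$ is a \emph{new} conflict for $\vC''$. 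Nothing in your construction bounds these: by minimality of $d(\vC)$ you necessarily have $d(\vC')\ge d(\vC)$, and a short count shows that (for instance) when $\mathrm{supp}(\vC)\cap\mathrm{supp}(\vC')=\{e\}$ one gets $d(\vC'')\ge 2\,d(\vC)-1$, which is strictly \emph{larger} than $d(\vC)$ as soon as $d(\vC)\ge 2$. Thus the conflict-minimization argument does not close. The proofs in the literature avoid this by passing through either total unimodularity (Cramer's rule on the submatrix indexed by $\mathrm{supp}(\vv)$ produces a conformal circuit directly) or the oriented-matroid conformal composition theorem in~\cite{BVSWZ1999}; neither of these is an immediate formal consequence of axioms (i)--(iv), and your sketch does not supply the missing bridge.
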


\begin{lemma}\label{lem: subtransveral}
Let $\vv$ be a $\Z$-linear combination of elements in $\cC$ such that $\pi(\vv)\in\{0,\pm 1\}^E$. Then the support of $\vv$ is a subtransversal.   
\end{lemma}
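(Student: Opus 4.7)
I would argue by contradiction. Suppose the support of $\vv$ is not a subtransversal, so there is an $e_0\in E$ with $\vv(e_0)\neq 0$ and $\vv(e_0^*)\neq 0$. Since $\pi(\vv)(e_0) = \vv(e_0) + \vv(e_0^*) \in \{0,\pm 1\}$, these two nonzero integers cannot share a sign (else $|\pi(\vv)(e_0)|\geq 2$), so they are of opposite sign. The same reasoning at every $e \in E$ shows that on each skew pair the nonzero entries of $\vv$ have opposite signs.

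This sign structure lets me reorient $\vv$ into a convenient shape. Set $X := \{e \in E : \vv(e) < 0 \text{ or } \vv(e^*) > 0\}$; by the observation above the two conditions defining $X$ never conflict on a skew pair. Replacing $(\cC, \vv)$ by $(\cC^X, \vv^X)$ preserves all hypotheses — $\cC^X$ is still a regular orthogonal representation by Definition~\ref{def:reorientation}, $\vv^X$ still lies in its $\Z$-span, and $\pi(\vv^X)(e) = \pm\pi(\vv)(e) \in \{0,\pm 1\}$ — so I may assume that $\vv(e) \geq 0$ for every $e \in E$, $\vv(e^*) \leq 0$ for every $e^* \in E^*$, and in particular $\vv(e_0) > 0 > \vv(e_0^*)$.

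Next I construct a \emph{negative} fourientation $\vF$ (in the sense of Definition~\ref{def: positive fourientation}) tailored to $\vv$: on every skew pair $(e,e^*)$ on which $\vv$ is not identically zero I set $\vF(e) = \{1\}$ and $\vF(e^*) = \{-1\}$, and on the remaining pairs I set $\vF(e) = \{\pm 1\}$ and $\vF(e^*) = \emptyset$. A direct check shows $\vF$ is negative, and $|\vF(e_0)|=1$. Applying Lemma~\ref{lem: Farkas2} at $e_0$ then produces a signed circuit $\vC \in \cC$ with $\vC \subseteq \vF$ satisfying $\vC(e_0)\neq 0$ or $\vC(e_0^*)\neq 0$.

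The punchline comes from isotropy: since $\vv$ lies in the $\Z$-span of $\cC$ and $\cC$ is isotropic for $\langle\cdot,\cdot\rangle$ (condition~\ref{item:ror}), we have $\langle \vv, \vC \rangle = 0$. Decomposing this sum into skew-pair contributions $\vv(e)\vC(e^*) + \vv(e^*)\vC(e)$, the sign constraints ($\vv(e) \geq 0$, $\vv(e^*) \leq 0$, together with $\vC(e) \in \{0,1\}$ and $\vC(e^*) \in \{0,-1\}$ on every pair where $\vF$ is sign-constrained) make every summand non-positive, and the $(e_0,e_0^*)$-summand is strictly negative since either $\vC(e_0) = 1$ (giving a term $\vv(e_0^*) < 0$) or $\vC(e_0^*) = -1$ (giving a term $-\vv(e_0) < 0$). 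Summation then contradicts $\langle \vv, \vC \rangle = 0$, completing the argument. I expect the subtle point to be the choice of a \emph{negative} (rather than positive) fourientation: only this choice aligns the admissible signs of $\vC$ with the sign pattern of $\vv$ skew pair by skew pair, so that isotropy is witnessed to fail.
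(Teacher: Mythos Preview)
Your argument is correct, but it takes a considerably more elaborate route than the paper. The paper's proof is a two-line application of self-orthogonality: since $\vv$ lies in the $\Z$-span of $\cC$, the same isotropy you invoke gives $\langle \vv,\vv\rangle = 0$, i.e.\ $\sum_{e\in E}\vv(e)\vv(e^*)=0$. Your opening observation shows $\vv(e_0)\vv(e_0^*)<0$; hence some other term $\vv(j)\vv(j^*)$ must be strictly positive, forcing $\vv(j)$ and $\vv(j^*)$ to be nonzero with the same sign, which contradicts $|\vv(j)+\vv(j^*)|\leq 1$. No reorientation, fourientations, or Farkas-type lemma are needed.

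What your approach buys is a demonstration that Lemma~\ref{lem: Farkas2} is strong enough to recover this fact, and it illustrates a general technique (pairing $\vv$ against a carefully chosen circuit rather than against itself) that could be useful when self-pairing is unavailable. What the paper's approach buys is brevity and a cleaner logical dependency: Lemma~\ref{lem: subtransveral} is used in the proof of Proposition~\ref{prop: R-equivalence}, which underlies the main structural results, so keeping it independent of the Farkas machinery is desirable. Your proof is a nice exercise, but here the self-pairing shortcut is the natural move.
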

\begin{proof}
By Definition~\ref{def: regular representation}, we have
    \begin{equation}\label{eqn: 1}
    \sum_{e\in E} \vv(e)\vv(e^*) = \frac{1}{2}\langle \vv, \vv\rangle = 0.
    \end{equation}
Assume for the sake of contradiction that $\vv(i)$ and $\vv(i^*)$ are both nonzero for some $i\in E$.
Because $\vv(i)+\vv(i^*)\in \{0,\pm 1\}$, we have $\vv(i) \vv(i^*) < 0$. Thus, by (\ref{eqn: 1}), there exists $j\in E$ such that $\vv(j) \vv(j^*) > 0$. 
It follows that $|\pi(\vv)(j)| = |\vv(j) + \vv(j^*)| \ge 2$, which contradicts our assumption that $\pi(\vv)\in\{0,\pm 1\}^E$. 
\end{proof}

\begin{proposition}\label{prop: R-equivalence}
Let $\vO_1$ and $\vO_2$ be two orientations. Then \[\vO_1\sim_\mathrm{R}\vO_2 \Leftrightarrow\vO_2\sim_\mathrm{J}\vO_1.\]
Moreover, when $\vO_1\sim_\mathrm{R}\vO_2$, there exist signed circuits $\vC_1,\cdots,\vC_m\in\cC$ (where $m$ is possibly zero) such that \begin{itemize}
    \item $\vO_2-\vO_1=\sum_{k=1}^{m}\pi(\vC_k)$, 
    \item the supports of these signed circuits are disjoint, and
    \item the union of these supports is a subtransversal.
\end{itemize}
\end{proposition}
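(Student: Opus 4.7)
The forward direction $\vO_1 \sim_\mathrm{R} \vO_2 \Rightarrow \vO_2 \sim_\mathrm{J} \vO_1$ is immediate from the definitions: any sequence of circuit reversals realizing $\vO_1 \sim_\mathrm{R} \vO_2$ expresses $\vO_2 - \vO_1$ as a $\Z$-linear combination of elements $\pi(\vC) \in \pcc$. The substantive content is the converse, which I plan to prove together with the ``moreover'' structural statement.

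Assume $\vO_2 \sim_\mathrm{J} \vO_1$, and set $\vw := \vO_2 - \vO_1 \in \{-1,0,1\}^E \cap \pcc$. Choose any expression $\vw = \sum_i a_i \pi(\vC_i)$ with $a_i \in \Z$ and $\vC_i \in \cC$, and let $\vv := \sum_i a_i \vC_i$; this vector lies in the $\Z$-span of $\cC$ and satisfies $\pi(\vv) = \vw \in \{0,\pm 1\}^E$. By Lemma~\ref{lem: subtransveral}, $S := \mathrm{supp}(\vv)$ is a subtransversal, and since for each $e \in E$ at most one of $\vv(e), \vv(e^*)$ is nonzero, we have $\vv \in \{0, \pm 1\}^{E \cup E^*}$. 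By Proposition~\ref{prop: restriction}, $\cC|_S := \{\vD \in \cC : \mathrm{supp}(\vD) \subseteq S\}$ is a regular representation of a matroid $N$ on $S$. The crucial step, which I flag as the main obstacle, is to verify that $\vv$ lies in the $\Z$-span of $\cC|_S$; granting this, Lemma~\ref{lem: disjoint circuits} applied to $\cC|_S$ decomposes $\vv = \sum_{k=1}^{m} \vC_k$ as a sum of signed circuits $\vC_k \in \cC|_S \subseteq \cC$ with pairwise disjoint supports, each contained in $S$.

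The three bullets of the ``moreover'' statement are then immediate: applying $\pi$ to the decomposition yields $\vw = \sum_k \pi(\vC_k)$; the supports of the $\vC_k$'s are pairwise disjoint; and their union, being contained in the subtransversal $S$, is itself a subtransversal. To close out the converse of the equivalence, I note that disjoint supports inside a common subtransversal force the $\pi(\vC_k)$'s to have pairwise disjoint supports as vectors on $E$. Starting from $\vO_1$ and applying the reversals $+\pi(\vC_1), \ldots, +\pi(\vC_m)$ in any order therefore produces a sequence of genuine orientations: each coordinate $e \in E$ is flipped at most once, and precisely when $\vw(e) \neq 0$, so every intermediate state lies in $\{\pm\frac{1}{2}\}^E$. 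The final state equals $\vO_1 + \vw = \vO_2$, which shows $\vO_1 \sim_\mathrm{R} \vO_2$.

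The principal technical obstacle is the claim that $\vv$ lies in the $\Z$-span of $\cC|_S$. My approach is to fix a basis $B$ of $M_\cC$ and expand $\vv = \sum_{e \in B} \vv(e^*)\,\vC_e$, where $\vC_e$ is the $e$-th row of $\Lambda(\cC, B) = (I \mid A)$ (Lemmas~\ref{lem: sum of fund circuit} and~\ref{lem: equal row span}). Since $\vv(e^*) = 0$ for $e^* \notin S$, only the indices $e \in B_S := \{e \in B : e^* \in S\}$ contribute. The constraints that $\vv$ vanishes on coordinates outside $S$ translate into linear relations among columns of $A$, and one then uses the principal unimodularity of $A$ together with the isotropy condition $\langle \vC_e, \vC_f \rangle = 0$ to rearrange the expansion into a $\Z$-linear combination of signed circuits in $\cC|_S$. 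A more conceptual reformulation is to work at the level of the Lagrangian subspace $V$, defined as the $\Q$-span of $\cC$ in $\Q^{E \cup E^*}$, and to show that $V \cap \Q^S$ coincides with the $\Q$-span of $\cC|_S$ (since $S$ is an isotropic coordinate subspace for $\langle \cdot, \cdot \rangle$); integrality then follows from the regular structure of $\cC|_S$.
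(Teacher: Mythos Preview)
Your proof is correct and essentially identical to the paper's: both set $\vv$ in the $\Z$-span of $\cC$ with $\pi(\vv) = \vO_2 - \vO_1$, use Lemma~\ref{lem: subtransveral} to get that $S = \mathrm{supp}(\vv)$ is a subtransversal (so $\vv \in \{0,\pm 1\}^{E\cup E^*}$), and then apply Proposition~\ref{prop: restriction} together with Lemma~\ref{lem: disjoint circuits} to decompose $\vv$ into disjoint signed circuits supported in $S$.

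The ``principal technical obstacle'' you flag --- that $\vv$ lies in the span of $\cC|_S$ --- is real but much lighter than your last paragraph suggests, and the paper simply absorbs it into the citation of those two results. Two simplifications: first, Lemma~\ref{lem: disjoint circuits} only requires the $\Q$-span, so the integrality discussion can be dropped. Second, the equality $V \cap \Q^S = \mathrm{span}_\Q(\cC|_S)$ follows by a routine reduction: any nonzero $\vw \in V$ with subtransversal support has a circuit of $M_\cC$ inside its support (otherwise $\mathrm{supp}(\vw)$ lies in some basis $B$, and reading off coefficients from the $B^*$-columns of $\Lambda(\cC,B)=(I\mid A)$ forces $\vw=0$); subtract a suitable multiple of that signed circuit to shrink the support and induct. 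Your proposed detour through principal unimodularity and explicit isotropy computations is unnecessary.
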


\begin{proof}
    The forward direction $\vO_1\sim_\mathrm{R}\vO_2 \Rightarrow\vO_2\sim_\mathrm{J}\vO_1$ is trivial.
    To prove the backward direction, assume that $\vO_2-\vO_1\in\pcc$. 
    Let $\vv$ be a $\Z$-linear combination of elements in $\cC$ such that \[\vO_2-\vO_1=\pi(\vv).\] 
    Since $\vO_1,\vO_2\in \cO$, we have $\pi(\vv)\in\{0,\pm 1\}^E$. Hence, by Lemma~\ref{lem: subtransveral}, the support of $\vv$ is a subtransversal. 
    
    By Proposition~\ref{prop: restriction} and Lemma~\ref{lem: disjoint circuits}, we have
    \[\vv=\sum_{k=1}^m\vC_k,\]
    where the $\vC_k$'s are signed circuits in $\cC$ with disjoint supports contained in $\mathrm{supp}(\vv)$. Thus the $\pi(\vC_k)$'s also have disjoint supports (in $E$). This implies that the partial sum $\vO_1+\pi(\vC_1)+\cdots+\pi(\vC_{m'})$ is always an orientation for any $m'<m$. Therefore we get $\vO_1\sim_\mathrm{R}\vO_2$, together with the desired signed circuits. 
\end{proof}

\begin{proof}[Proof of Theorem~\ref{thm: action}]
By Proposition~\ref{prop: R-equivalence}, the set $\cR'(\cC)$ in Corollary~\ref{cor: R'} is the same as the set $\cR(\cC)$, and the group action in Corollary~\ref{cor: R'} is the same as the one in Definition~\ref{def: action}. 
\end{proof}

\subsection{Invariance under reorientation}\label{subsec: reorientation invariance}
As before, let $\cC$ be a regular orthogonal representation of an orthogonal matroid $M=(E\cup E^*, \cB)$. 
Fix a subset $X$ of $E$. Recall that the reorientation of $\cC$ on $X$ is defined as
    \[\cC^{X} := \{\vC^{X} \in \mathbb{Z}^{E\cup E^*} : \vC\in\cC\},\] where
    \begin{align*}
        \vC^{X}(i) = \begin{cases}
            -\vC(i) & \text{if } i\in X\cup X^*, \\
            \vC(i) & \text{otherwise}.
        \end{cases}
    \end{align*}
\begin{definition}
Given $\vv\in\Q^{E}$, we define a new element $\vv^X\in\Q^{E}$, called the \emph{reorientation} of $\vv$ on $X$, by
\[
        \vv^{X}(i) = \begin{cases}
            -\vv(i) & \text{if } i\in X, \\
            \vv(i) & \text{otherwise}.
        \end{cases}
\]
In particular, it makes sense to write $\vO^X$ for an orientation $\vO$. 
\end{definition}
Note also that $\pi(\vC^X)=(\pi(\vC))^X$. 

\begin{lemma}\label{lem: reorientation and Jac}
We have a natural group isomorphism
\begin{align*}
   \J(\cC)  &\to \J(\cC^X)\\
  \bJ{\vv} &\mapsto \bJ{\vv^X}.
\end{align*}
\end{lemma}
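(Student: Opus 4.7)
The plan is to exhibit the map $\phi_X\colon \Z^E\to \Z^E$ defined by $\vv\mapsto \vv^X$ as a $\Z$-linear involution that descends to the desired isomorphism on Jacobian quotients. Concretely, $\phi_X$ is the sign-flip on coordinates indexed by $X$, so it is clearly a $\Z$-linear automorphism of $\Z^E$ (and of $\Q^E$) satisfying $\phi_X\circ \phi_X=\mathrm{id}$. It is also a group homomorphism because reorientation is defined coordinate-wise and thus commutes with addition.

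The heart of the argument is the compatibility
\[
\pi(\vC^X)=\pi(\vC)^X \quad\text{for every } \vC\in\cC.
\]
To verify this I would just split into cases on $e\in E$: if $e\in X$, then $e,e^*\in X\cup X^*$, so both $\vC^X(e)$ and $\vC^X(e^*)$ pick up a minus sign and $\pi(\vC^X)(e)=-\pi(\vC)(e)=\pi(\vC)^X(e)$; if $e\notin X$, then neither $e$ nor $e^*$ lies in $X\cup X^*$, so $\pi(\vC^X)(e)=\pi(\vC)(e)=\pi(\vC)^X(e)$. This is a routine check, and it is the only substantive computation in the proof.

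With this in hand, since $\cC^X=\{\vC^X:\vC\in\cC\}$, the image of the generating set $\{\pi(\vC):\vC\in\cC\}$ under $\phi_X$ is exactly the generating set $\{\pi(\vC^X):\vC\in\cC\}$ of $\langle \pi(\cC^X)\rangle$. Therefore $\phi_X$ restricts to an isomorphism between the subgroups $\langle \pi(\cC)\rangle$ and $\langle \pi(\cC^X)\rangle$ of $\Z^E$. Hence $\phi_X$ descends to a well-defined group homomorphism
\[
\J(\cC)=\Z^E/\langle\pi(\cC)\rangle\;\longrightarrow\;\Z^E/\langle\pi(\cC^X)\rangle=\J(\cC^X),\qquad \bJ{\vv}\mapsto \bJ{\vv^X}.
\]
Because $\phi_X$ is an involution on $\Z^E$ preserving both subgroups, the induced map on quotients is its own inverse, and hence a group isomorphism. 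There is no real obstacle in this argument; the only thing worth flagging is the sign bookkeeping in the case analysis above, which is the one place a mistake could sneak in.
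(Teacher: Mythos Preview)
Your proof is correct and follows essentially the same approach as the paper: exhibit $\vv\mapsto \vv^X$ as a group automorphism of $\Z^E$ that carries $\langle\pi(\cC)\rangle$ to $\langle\pi(\cC^X)\rangle$, then pass to the quotient. The paper's proof is terser, but the identity $\pi(\vC^X)=\pi(\vC)^X$ you verify is exactly the observation recorded just before the lemma statement.
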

\begin{proof}
The map $\vv\mapsto\vv^X$ is a group isomorphism from $\Z^E$ to $\Z^E$ sending the subgroup $\pcc$ to $\langle \pi(\cC^X)\rangle$. Therefore it induces the desired isomorphism. 
\end{proof}

\begin{lemma}\label{lem: reorientation and circuit reversal}
We have a natural bijection
\begin{align*}
   \cR(\cC)  &\to \cR(\cC^X)\\
  \bR{\vO} &\mapsto \bR{\vO^X}.
\end{align*}
\end{lemma}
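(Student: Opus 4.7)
The plan is to lift the orientation-level reorientation map $\vO \mapsto \vO^X$ to the level of circuit reversal classes and then observe that it is an involution, hence a bijection.

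First, I would record three elementary facts. (a) The map $\vv \mapsto \vv^X$ is $\Q$-linear on $\Q^E$. (b) For every signed circuit $\vC \in \cC$, one has $\pi(\vC^X) = \pi(\vC)^X$; this is a coordinate-wise check, since for $e \in E$ both sides equal $\vC(e)+\vC(e^*)$ when $e \notin X$ and equal $-(\vC(e)+\vC(e^*))$ when $e \in X$, using that reorientation flips the signs on both $e$ and $e^*$ simultaneously. (c) Reorientation is an involution on $\cO = \{\pm \tfrac{1}{2}\}^E$, since $\{\pm \tfrac{1}{2}\}$ is stable under negation.

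Next, I would prove that $\vO \mapsto \vO^X$ sends $\sim_\mathrm{R}$-equivalent orientations (with respect to $\cC$) to $\sim_\mathrm{R}$-equivalent ones (with respect to $\cC^X$). By induction on the length of a circuit reversal sequence, it suffices to handle a single reversal $\vO_2 = \vO_1 + \pi(\vC)$, where $\vC \in \cC$ and $\vO_1, \vO_2 \in \cO$. Applying $(\,\cdot\,)^X$ and using (a) and (b) gives
\[
\vO_2^X \;=\; \vO_1^X + \pi(\vC)^X \;=\; \vO_1^X + \pi(\vC^X),
\]
where $\vC^X \in \cC^X$ by the very definition of $\cC^X$, and both $\vO_1^X, \vO_2^X$ lie in $\cO$ by (c). This is exactly a single circuit reversal in $\cC^X$, so the induced map on classes is well defined.

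Finally, since $(\cC^X)^X = \cC$ and $(\vO^X)^X = \vO$, running the same construction with $\cC$ replaced by $\cC^X$ and the subset $X$ unchanged produces a two-sided inverse; hence $\bR{\vO} \mapsto \bR{\vO^X}$ is a bijection $\cR(\cC) \to \cR(\cC^X)$. I do not anticipate a serious obstacle here: the whole argument is formal once the identity $\pi(\vC^X) = \pi(\vC)^X$ of (b) is in hand, and one does not even need the machinery of Proposition~\ref{prop: R-equivalence}. If one wishes to additionally package this bijection with Lemma~\ref{lem: reorientation and Jac} into an isomorphism of torsors (which is presumably the sense of the word \emph{natural} in the statement), equivariance of $\bJ{\vv}\cdot\bR{\vO} \mapsto \bJ{\vv^X}\cdot\bR{\vO^X}$ follows immediately from the $\Q$-linearity recorded in (a).
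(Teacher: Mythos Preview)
Your proposal is correct and takes essentially the same approach as the paper: the paper's one-line proof simply asserts that $\vO_1\sim_\mathrm{R}\vO_2$ with respect to $\cC$ if and only if $\vO_1^X\sim_\mathrm{R}\vO_2^X$ with respect to $\cC^X$, and you have spelled out exactly this verification (the identity $\pi(\vC^X)=\pi(\vC)^X$ you record in (b) is noted in the paper just before Lemma~\ref{lem: reorientation and Jac}).
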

\begin{proof}
This follows from the easily verified fact that $\vO_1\sim_\mathrm{R}\vO_2$ in the definition of $\cR(\cC)$ if and only if $\vO_1^X\sim_\mathrm{R}\vO_2^X$ in the definition of $\cR(\cC^X)$. 
\end{proof}

    We will use the notation 
    $\bJ{\vv}^X:=\bJ{\vv^X}$ and $\bR{\vO}^X:=\bR{\vO^X}$ 
    to emphasize the fact that 
    $\bJ{\vv}^X\in\J(\cC^X)$ and $\bR{\vO}^X\in\cR(\cC^X)$ respectively. 

\begin{lemma}\label{lem: reorientation and action}
For any $\bJ{\vv}\in\J(\cC)$ and $\bR{\vO_1},\bR{\vO_2}\in \cR(\cC)$, we have  
    \[\bJ{\vv}\cdot \bR{\vO_1}=\bR{\vO_2}\Leftrightarrow\bJ{\vv}^X\cdot \bR{\vO_1}^X=\bR{\vO_2}^X.\]

\end{lemma}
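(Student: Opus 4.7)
The plan is to unwind both sides of the equivalence via Definition~\ref{def: action} and reduce everything to the single key identity
\[
\pi(\vC)^X = \pi(\vC^X) \quad \text{for all } \vC \in \cC,
\]
which I would verify by a direct check: since $X \subseteq E$, the reorientation $\vw \mapsto \vw^X$ on $\Q^E$ negates the coordinate at $i$ precisely when $i \in X$, while the reorientation $\vC \mapsto \vC^X$ on $\Z^{E \cup E^*}$ negates both $\vC(i)$ and $\vC(i^*)$ for $i \in X$; both operations therefore negate $\vC(i) + \vC(i^*) = \pi(\vC)(i)$ on $X$ and fix it off $X$. In particular, $\pi(\cC)^X = \pi(\cC^X)$ as subsets of $\Z^E$.

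Next, by Definition~\ref{def: action} the equation $\bJ{\vv} \cdot \bR{\vO_1} = \bR{\vO_2}$ is equivalent to $\vO_2 - \vO_1 - \vv \in \pcc$, i.e.\ to an expression
\[
\vO_2 - \vO_1 - \vv \;=\; \sum_i a_i\, \pi(\vC_i)
\]
for some integers $a_i$ and some $\vC_i \in \cC$. The map $\vw \mapsto \vw^X$ is a $\Z$-linear automorphism of $\Z^E$, so applying it to both sides and invoking the identity above yields
\[
\vO_2^X - \vO_1^X - \vv^X \;=\; \sum_i a_i\, \pi(\vC_i^X) \;\in\; \langle \pi(\cC^X) \rangle,
\]
since each $\vC_i^X \in \cC^X$. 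Unwinding the right-hand side of the stated equivalence with the group action on $\J(\cC^X)$ and $\cR(\cC^X)$, and using Lemmas~\ref{lem: reorientation and Jac} and~\ref{lem: reorientation and circuit reversal} to identify $\bJ{\vv}^X$ and $\bR{\vO_i}^X$ as the images under the canonical bijections, this is exactly $\bJ{\vv}^X \cdot \bR{\vO_1}^X = \bR{\vO_2}^X$.

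For the converse I would avoid repeating the calculation by noting that both reorientation operations are involutions on their respective ambient spaces and that $(\cC^X)^X = \cC$; applying the forward direction to $\cC^X$ and the same set $X$ then delivers the other implication. There is no real obstacle in this proof: the whole statement is a naturality/bookkeeping check confirming that the operation $(\cdot)^X$ is compatible with $\pi$, with the group $\pcc$, and with the definition of the action, and the only content is the identity $\pi(\vC)^X = \pi(\vC^X)$.
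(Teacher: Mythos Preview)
Your proof is correct and follows the same approach as the paper's. The paper's own argument is a terse two-line version of yours: it simply notes that the left side unwinds to $\vv+\vO_1-\vO_2\in\pcc$ and the right side to $\vv^X+\vO_1^X-\vO_2^X\in\langle \pi(\cC^X)\rangle$, and declares these equivalent (the identity $\pi(\vC^X)=(\pi(\vC))^X$ you verify having already been recorded earlier in the section).
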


\begin{proof}
By definition, the left hand side is equivalent to $\vv+\vO_1-\vO_2\in\pcc$, and the right hand side is equivalent to $\vv^X+\vO_1^X-\vO_2^X\in\langle \pi(\cC^X)\rangle$. It is clear that the two formulas are equivalent. 
\end{proof}

Putting together the various results proved in this section, we see that $\J(\cC)$, $\cR(\cC)$, and the action of $\J(\cC)$ on $\cR(\cC)$ all depend only on the regular orthogonal representation $\cC$ up to reorientation equivalence. 

\subsection{The lift of circuit reversals} \label{sec:liftsection}
The reader might have noticed that the signed circuits of a regular orthogonal representation $\cC$ are elements of $\Q^{E \cup E^*},$ but the circuit reversal system is defined in terms of an equivalence relation on orientations, which are elements of $\Q^E$. 
The map $\pi : \Q^{E \cup E^*} \to \Q^E$ is used to compare the two sides.
It is natural to wonder if one can ``lift'' orientations to $\Q^{E \cup E^*}$ instead of ``projecting'' circuits to $\Q^E$.
In this section, we show that this is indeed the case.

Although the results of this section will not be used elsewhere in the paper, it seems worth recording this alternative description of the equivalence relation $\sim_R$ generated by circuit reversals.

\begin{definition}\label{def: C and O}
Let $\vO$ be an orientation and let $\vC$ be a signed circuit. If $\pi(\vC)(f)\cdot\vO(f)>0$ for all $f\in \mathrm{supp}(\pi(\vC))$, we say that the signed circuit $\vC$ is \emph{compatible} with the orientation $\vO$.
\end{definition}

We now define a ``lifting'' map $\psi : \cO = \{\pm\frac{1}{2}\}^{E} \to \{0,\pm\frac{1}{2}\}^{E\cup E^*}$.

\begin{definition}\label{def: psi}
Let $\vO$ be an orientation. For any element $e\in E$, 
it follows from Lemma~\ref{lem: Farkas} that %
either: 
\begin{enumerate}[label=\rm(\roman*)]
\item there exists a signed circuit $\vC$ compatible with $\vO$ such that $\vC(e)\neq 0$, or
\item there exists a signed circuit $\vC$ compatible with $\vO$ such that $\vC(e^*)\neq 0$,
\end{enumerate}
but not both. Let $\psi(\vO)\in\Q^{E\cup E^*}$ be the unique element such that: 
\begin{itemize}
    \item $\mathrm{supp}(\psi(\vO))$ is a transversal of $E\cup E^*$,
    \item $\pi(\psi(\vO))=\vO$, and
    \item for any $e\in E$, $\psi(\vO)(e)=0$ iff we are in case (ii), i.e., $\vC(e)=0$ for every signed circuit $\vC$ compatible with $\vO$.
\end{itemize}
\end{definition}
Note that the first two properties of $\psi(\vO)$ imply that 
\[\psi(\vO)\in\{0,\pm\frac{1}{2}\}^{E\cup E^*}.\]

\begin{example}
     In our running example from Figure~\ref{fig:torus}, we have \[\psi(\vO_{++++})=\begin{pmatrix}
         \frac{1}{2} & \frac{1}{2} &0&0\\ 0&0&\frac{1}{2}&\frac{1}{2}\\
     \end{pmatrix}\]and\[\psi(\vO_{+++-})=\begin{pmatrix}
         0&0&\frac{1}{2}&-\frac{1}{2}\\ \frac{1}{2}&\frac{1}{2}&0&0\\
     \end{pmatrix}.\]
\end{example}

The next lemma follows immediately from Definition~\ref{def: psi}.

\begin{lemma}\label{lem: C and O 1}
Let $\vO$ be an orientation and let $\vC$ be a signed circuit compatible with $\vO$. Then $\mathrm{supp}(\vC)\subseteq\mathrm{supp}(\psi(\vO))$. Moreover, for all $f\in\mathrm{supp}(\vC)$ we have $\vC(f)=2\psi(\vO)(f)$. 
\end{lemma}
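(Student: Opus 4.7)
The plan is to fix an arbitrary $f \in \mathrm{supp}(\vC)$ and verify both $f \in \mathrm{supp}(\psi(\vO))$ and $\vC(f) = 2\psi(\vO)(f)$ by directly unpacking Definition~\ref{def: psi}. The key preliminary observation is that $\mathrm{supp}(\vC)$ is a subtransversal, so for each $e \in E$ at most one of $\vC(e), \vC(e^*)$ is nonzero; hence $\pi(\vC)(e)$ either vanishes or equals the unique nonzero entry of $\vC$ on the skew pair $\{e, e^*\}$. In particular, every $f \in \mathrm{supp}(\vC)$ gives rise to an element of $\mathrm{supp}(\pi(\vC))$, so the compatibility hypothesis of Definition~\ref{def: C and O} applies to $f$ (respectively to $e$ if $f = e^*$).

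I would then split into two symmetric cases. If $f \in E$, then $\vC(f^*) = 0$ forces $\pi(\vC)(f) = \vC(f) \in \{\pm 1\}$, which witnesses case (i) of Definition~\ref{def: psi} for the element $f$. By the Farkas dichotomy (Lemma~\ref{lem: Farkas}) case (ii) fails at $f$, so Definition~\ref{def: psi} guarantees $\psi(\vO)(f) \neq 0$; combined with the transversality of $\mathrm{supp}(\psi(\vO))$ and $\pi(\psi(\vO)) = \vO$, this forces $\psi(\vO)(f^*) = 0$ and $\psi(\vO)(f) = \vO(f)$. The compatibility hypothesis now reads $\vC(f)\,\vO(f) = \pi(\vC)(f)\,\vO(f) > 0$, and since $|\vC(f)| = 1 = 2|\vO(f)|$ we conclude $\vC(f) = 2\vO(f) = 2\psi(\vO)(f)$.

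The case $f = e^* \in E^*$ is handled symmetrically: $\vC(e) = 0$ gives $\pi(\vC)(e) = \vC(e^*) \neq 0$, putting us in case (ii); Definition~\ref{def: psi} yields $\psi(\vO)(e) = 0$, and then $\pi(\psi(\vO))(e) = \vO(e) \neq 0$ forces $\psi(\vO)(e^*) = \vO(e)$, so $f \in \mathrm{supp}(\psi(\vO))$. Compatibility together with the same magnitude comparison delivers $\vC(f) = 2\vO(e) = 2\psi(\vO)(f)$. I do not expect any real obstacle here; the lemma is essentially a bookkeeping unwinding of Definitions~\ref{def: C and O} and~\ref{def: psi}, with the only substantive input being the subtransversality of $\mathrm{supp}(\vC)$ and the Farkas dichotomy that underwrites Definition~\ref{def: psi}.
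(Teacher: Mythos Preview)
Your proof is correct and is exactly the direct unwinding of Definition~\ref{def: psi} that the paper has in mind; the paper simply records that the lemma ``follows immediately from Definition~\ref{def: psi}'' and gives no further argument. Your case split on whether $f\in E$ or $f\in E^*$, together with the subtransversality of $\mathrm{supp}(\vC)$ and the ``but not both'' dichotomy built into Definition~\ref{def: psi}, is precisely what that phrase is abbreviating.
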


The following lemma gives an equivalent definition of $\psi(\vO)$. 
\begin{lemma}\label{lem: C and O 2}
Let $\vO$ be an orientation and let $\vv\in\Q^{E\cup E^*}$ be an element such that $\mathrm{supp}(\vv)$ is a transversal and $\pi(\vv)=\vO$. Then $\vv=\psi(\vO)$ if and only if for every $e\in\mathrm{supp}(\vv)$, there exists a signed circuit $\vC$ such that $e\in\mathrm{supp}(\vC)$ and $\vC(f)=2\vv(f)$ for all $f\in\mathrm{supp}(\vC)$. 
\end{lemma}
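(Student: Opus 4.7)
The plan is to prove both directions separately, with the backward direction being the substantive one.

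For the forward direction ($\vv=\psi(\vO)\Rightarrow$ the circuit condition): given $e\in\mathrm{supp}(\psi(\vO))$, we have $\psi(\vO)(e)\ne 0$, so by Definition~\ref{def: psi} we are in case (i), i.e.\ there exists a signed circuit $\vC$ compatible with $\vO$ with $\vC(e)\ne 0$. Then Lemma~\ref{lem: C and O 1} immediately yields $\mathrm{supp}(\vC)\subseteq\mathrm{supp}(\psi(\vO))$ and $\vC(f)=2\psi(\vO)(f)=2\vv(f)$ for all $f\in\mathrm{supp}(\vC)$. This is essentially just unpacking definitions.

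For the backward direction, first observe that since $\mathrm{supp}(\vv)$ is a transversal and $\pi(\vv)=\vO$ never vanishes, exactly one of $e,e^*$ lies in $\mathrm{supp}(\vv)$ for each $e\in E$; consequently $\mathrm{supp}(\vv)$ is a full transversal and $\vv\in\{0,\pm\frac{1}{2}\}^{E\cup E^*}$. The main step is to show that any signed circuit $\vC$ produced by the hypothesis is compatible with $\vO$. Fix such a $\vC$ and an $f\in\mathrm{supp}(\pi(\vC))$. Since $\mathrm{supp}(\vC)$ is a subtransversal, exactly one of $f,f^*$ lies in $\mathrm{supp}(\vC)$; say $f\in\mathrm{supp}(\vC)$ (the other case is symmetric). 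Then $\pi(\vC)(f)=\vC(f)=2\vv(f)\in\{\pm 1\}$, so $\vv(f)=\pm\frac{1}{2}$, which forces $f\in\mathrm{supp}(\vv)$, hence $f^*\notin\mathrm{supp}(\vv)$ and $\vv(f^*)=0$. Therefore $\vO(f)=\pi(\vv)(f)=\vv(f)$, which gives $\pi(\vC)(f)\cdot \vO(f)=2\vv(f)^2>0$, proving compatibility.

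With compatibility in hand, pick any $e\in \mathrm{supp}(\vv)$; the hypothesis provides a signed circuit $\vC$ with $e\in\mathrm{supp}(\vC)$, and by the previous step $\vC$ is compatible with $\vO$ and satisfies $\vC(e)\ne 0$. By Definition~\ref{def: psi}, this places $e$ in case (i), so $\psi(\vO)(e)\ne 0$, i.e.\ $e\in\mathrm{supp}(\psi(\vO))$. Thus $\mathrm{supp}(\vv)\subseteq\mathrm{supp}(\psi(\vO))$. Since both are full transversals of $E\cup E^*$ (of the same cardinality $n$), they are equal. Finally, for any $e\in E$, the element $\vv(e)+\vv(e^*)=\vO(e)=\psi(\vO)(e)+\psi(\vO)(e^*)$ is carried by the unique member of $\{e,e^*\}$ lying in the common support, so $\vv=\psi(\vO)$.

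The only mildly delicate point is the compatibility verification in the backward direction, since one must carefully use the transversality of $\mathrm{supp}(\vv)$ to relate $\vO(f)$ to the single nonzero entry of $\vv$ among $\{f,f^*\}$; everything else is a direct appeal to the definition of $\psi$ or to Lemma~\ref{lem: C and O 1}.
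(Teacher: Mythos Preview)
Your proof is correct and follows essentially the same approach as the paper. The paper handles the forward direction by re-deriving the content of Lemma~\ref{lem: C and O 1} in-line (which you more efficiently invoke directly), and dismisses the backward direction as ``follows from Definition~\ref{def: psi}''; you have simply made explicit the compatibility check that the paper leaves to the reader. One small phrasing point: when you write ``this places $e$ in case~(i)'', note that the case dichotomy in Definition~\ref{def: psi} is indexed by elements of $E$, so if $e\in E^*$ you should instead say that the underlying element $\underline{e}\in E$ is placed in case~(ii), which still yields $e\in\mathrm{supp}(\psi(\vO))$; the conclusion is unaffected.
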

\begin{proof}
The ``if'' part follows from Definition~\ref{def: psi}.

For the ``only if'' part, suppose $\vv=\psi(\vO)$. Then by Definition~\ref{def: psi}, for every $e\in\mathrm{supp}(\vv)$, there exists a signed circuit $\vC$ compatible with $\vO$ such that $\vC(e)\neq 0$. 
We want to show that for every $f\in\mathrm{supp}(\vC)$, $\vC(f) = 2\vv(f)$. We apply Definition~\ref{def: psi} to $\vO$ and the unique element in $\{f,f^*\}\cap E$. 
Because $\vC(f)\neq 0$, we have $\psi(\vO)(f)\neq 0$.  
Since $\pi(\vC)(f)\cdot\vO(f)>0$ and $|\pi(\vC)(f)|=2|\vO(f)|$, we must have $\vC(f)=2\psi(\vO)(f)$. 
\end{proof}

We can now give the following alternative description of circuit reversals. 

\begin{proposition}\label{prop: pullback}
Let $\vO_1$ and $\vO_2$ be orientations, and let $\vC$ be a signed circuit. Then\[\vO_2=\vO_1+\pi(\vC)\Leftrightarrow\psi(\vO_2)=\psi(\vO_1)+\vC.\]
Moreover, if one of them holds, then \[\mathrm{supp}(\vC)\subseteq\mathrm{supp}(\psi(\vO_1))=\mathrm{supp}(\psi(\vO_2)).\]
\end{proposition}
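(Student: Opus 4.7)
The forward direction $(\Leftarrow)$ is immediate: applying $\pi$ to the identity $\psi(\vO_2) = \psi(\vO_1) + \vC$ and using $\pi \circ \psi = \mathrm{id}$ gives $\vO_2 = \vO_1 + \pi(\vC)$, and the ``moreover'' clause is a direct consequence. For the reverse direction, set $\vv := \psi(\vO_1) + \vC$; the plan is to verify the three conditions of Lemma~\ref{lem: C and O 2} to conclude that $\vv = \psi(\vO_2)$, at which point the ``moreover'' equality $\mathrm{supp}(\psi(\vO_1)) = \mathrm{supp}(\psi(\vO_2))$ will also follow.

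The key preparatory observation is that for $f \in \mathrm{supp}(\pi(\vC))$, the equality $\vO_2(f) = \vO_1(f) + \pi(\vC)(f)$ together with $\vO_i(f) \in \{\pm \tfrac{1}{2}\}$ and $\pi(\vC)(f) \in \{\pm 1\}$ forces $\pi(\vC)(f) = -2\vO_1(f) = 2\vO_2(f)$. Hence $\vC$ is compatible with $\vO_2$ and $-\vC$ is compatible with $\vO_1$, so Lemma~\ref{lem: C and O 1} yields $\mathrm{supp}(\vC) \subseteq \mathrm{supp}(\psi(\vO_1)) \cap \mathrm{supp}(\psi(\vO_2))$ with $\vC(f) = 2\psi(\vO_2)(f) = -2\psi(\vO_1)(f)$ for every $f \in \mathrm{supp}(\vC)$. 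In particular, $\vv$ agrees with $\psi(\vO_2)$ on $\mathrm{supp}(\vC)$. Condition (b) $\pi(\vv) = \vO_2$ is immediate by linearity, and condition (a) that $\mathrm{supp}(\vv)$ is a transversal follows from a short case analysis on whether $e$ or $e^*$ lies in $\mathrm{supp}(\vC)$, using that $\mathrm{supp}(\psi(\vO_1))$ is itself a transversal.

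The main obstacle is condition (c): for each $e \in \mathrm{supp}(\vv)$ one must exhibit a signed circuit $\vD \in \cC$ with $e \in \mathrm{supp}(\vD)$ and $\vD(f) = 2\vv(f)$ for all $f \in \mathrm{supp}(\vD)$. When $e \in \mathrm{supp}(\vC)$, the circuit $\vC$ itself works. When $e \notin \mathrm{supp}(\vC)$, necessarily $e \in \mathrm{supp}(\psi(\vO_1))$, so Lemma~\ref{lem: C and O 1} supplies a $\vO_1$-compatible circuit $\vD_0$ with $e \in \mathrm{supp}(\vD_0)$ and $\vD_0(f) = 2\psi(\vO_1)(f)$ on $\mathrm{supp}(\vD_0)$. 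The key trick is then to analyze $\vD_0 + \vC$: on any $f \in \mathrm{supp}(\vD_0) \cap \mathrm{supp}(\vC)$, both $\vD_0$ and $-\vC$ are $\vO_1$-compatible, which forces $\vD_0(f) = -\vC(f)$, so $\vD_0 + \vC$ is $(0,\pm 1)$-valued with support contained in the subtransversal $S := \mathrm{supp}(\vD_0) \cup \mathrm{supp}(\vC) \subseteq \mathrm{supp}(\psi(\vO_1))$. Proposition~\ref{prop: restriction} identifies $\cC|_S$ as a regular representation of a matroid, and Lemma~\ref{lem: disjoint circuits} then decomposes $\vD_0 + \vC$ as a disjoint sum of signed circuits of $\cC$, one of which, say $\vD$, contains $e$. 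A direct computation on the two halves $\mathrm{supp}(\vD_0) \setminus \mathrm{supp}(\vC)$ and $\mathrm{supp}(\vC) \setminus \mathrm{supp}(\vD_0)$ of $\mathrm{supp}(\vD) \subseteq \mathrm{supp}(\vD_0) \triangle \mathrm{supp}(\vC)$, using that $\psi(\vO_1) = -\vC/2$ on the latter, verifies $\vD = \vD_0 + \vC = 2\vv$ on $\mathrm{supp}(\vD)$, completing the verification of (c).
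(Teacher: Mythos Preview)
Your proof is correct and follows essentially the same approach as the paper's own proof: define $\vv = \psi(\vO_1) + \vC$, verify the hypotheses of Lemma~\ref{lem: C and O 2}, and in the non-trivial case decompose $\vD_0 + \vC$ via Proposition~\ref{prop: restriction} and Lemma~\ref{lem: disjoint circuits}. One small slip: the existence of the $\vO_1$-compatible circuit $\vD_0$ through $e$ comes from Lemma~\ref{lem: C and O 2} (or Definition~\ref{def: psi}), not Lemma~\ref{lem: C and O 1}, which only records properties of a circuit already assumed to be compatible.
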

\begin{proof}
The backward direction is trivial. To prove the forward direction, we assume $\vO_2=\vO_1+\pi(\vC)$. 

Clearly, the signed circuits $-\vC$ and $\vC$ are compatible with $\vO_1$ and $\vO_2$ respectively. By Lemma~\ref{lem: C and O 1}, we have\begin{itemize}
    \item $\mathrm{supp}(\vC)\subseteq\mathrm{supp}(\psi(\vO_1))$, and
    \item $\vC(f)=-2\psi(\vO_1)(f)=2\psi(\vO_2)(f)$ for all $f\in\mathrm{supp}(\vC)$. 
\end{itemize}
Thus the vector\[\vv:=\psi(\vO_1)+\vC\in\Q^{E\cup E^*}\] has the following properties: 
\begin{itemize}
    \item $\mathrm{supp}(\vv)=\mathrm{supp}(\psi(\vO_1))$. In particular, $\mathrm{supp}(\vv)$ is a transversal. 
    \item $\pi(\vv)=\pi(\psi(\vO_1))+\pi(\vC)=\vO_2$. 
\end{itemize}

By Lemma~\ref{lem: C and O 2}, to prove $\vv=\psi(\vO_2)$, we need to show that for every $e\in\mathrm{supp}(\vv)$, there exists $\vC_2\in\cC$ such that $e\in\mathrm{supp}(\vC_2)$ and $\vC_2(f)=2\vv(f)$ for all $f\in\mathrm{supp}(\vC_2)$. 

Fix $e\in\mathrm{supp}(\vv)=\mathrm{supp}(\psi(\vO_1))$. There are two cases to consider:
\begin{itemize}
    \item If $e\in\mathrm{supp}(\vC)$, then $\vC_2:=\vC$ is the desired vector.
    \item If $e\notin\mathrm{supp}(\vC)$, then by applying Lemma~\ref{lem: C and O 2} to $\vO_1$ we find $\vC_1\in\cC$ such that $e\in\mathrm{supp}(\vC_1)$ and $\vC_1(f)=2\psi(\vO_1)(f)$ for all $f\in\mathrm{supp}(\vC_1)$. Let $\vC_2':=\vC_1+\vC$. For each $g\in \mathrm{supp}(\vC)\cap\mathrm{supp}(\vC_1)$, we have $\vC_1(g)=2\psi(\vO_1)(g)=-\vC(g)$. Hence $\vC_2'$ is a $(0,\pm 1)$-vector in the span of $\cC$. By applying Proposition~\ref{prop: restriction} to the transversal $\mathrm{supp}(\vv)$ and then Lemma~\ref{lem: disjoint circuits} to $\vC_2'$, we can write $\vC_2'$ as a sum of disjoint signed circuits with supports contained in $\mathrm{supp}(\vv)$. The one containing $e$ is the desired vector $\vC_2$. 
\end{itemize}

Therefore $\vv=\psi(\vO_2)$, and hence $\psi(\vO_2)=\psi(\vO_1)+\vC$. It is clear that $\mathrm{supp}(\vC)\subseteq\mathrm{supp}(\psi(\vO_1))=\mathrm{supp}(\psi(\vO_2))$.
\end{proof}

As a direct consequence of Proposition~\ref{prop: pullback}, we obtain:
\begin{corollary}
Let $\vO_1, \vO_2, \vC_1, \cdots, \vC_m$ be as in Proposition~\ref{prop: R-equivalence}. Then we have\begin{enumerate}[label=\rm(\roman*)]
\item $\mathrm{supp}(\vC_k)\subseteq\mathrm{supp}(\psi(\vO_1))=\mathrm{supp}(\psi(\vO_2))$ for every $k$, and

\item $\psi(\vO_2)-\psi(\vO_1)=\sum_{k=1}^{m}\vC_k$.

\end{enumerate}
\end{corollary}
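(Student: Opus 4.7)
The plan is to apply Proposition~\ref{prop: pullback} iteratively along the sequence of single-circuit reversals guaranteed by Proposition~\ref{prop: R-equivalence}. Define intermediate orientations
\[
\vO^{(0)} := \vO_1, \qquad \vO^{(j)} := \vO_1 + \sum_{k=1}^{j} \pi(\vC_k) \quad (1\le j \le m),
\]
so that $\vO^{(m)} = \vO_2$ and $\vO^{(j)} = \vO^{(j-1)} + \pi(\vC_j)$ for each $j$.

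The first thing to verify is that each $\vO^{(j)}$ is a genuine orientation, i.e., lies in $\{\pm\tfrac12\}^E$. Since Proposition~\ref{prop: R-equivalence} guarantees that the supports of $\vC_1,\dots,\vC_m$ are pairwise disjoint and that their union is a subtransversal of $E\cup E^*$ (so in particular contains no skew pair $\{e,e^*\}$), the projected supports $\mathrm{supp}(\pi(\vC_1)),\dots,\mathrm{supp}(\pi(\vC_m))$ are pairwise disjoint subsets of $E$. Consequently, adding any subset of the $\pi(\vC_k)$ to $\vO_1$ flips the sign of at most one coordinate at each position of $E$, so the result stays in $\{\pm\tfrac12\}^E$.

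Now I apply Proposition~\ref{prop: pullback} to each pair $(\vO^{(j-1)},\vO^{(j)})$ with the signed circuit $\vC_j$. This gives
\[
\psi(\vO^{(j)}) = \psi(\vO^{(j-1)}) + \vC_j \quad\text{and}\quad \mathrm{supp}(\vC_j) \subseteq \mathrm{supp}(\psi(\vO^{(j-1)})) = \mathrm{supp}(\psi(\vO^{(j)})).
\]
Telescoping the equality yields $\psi(\vO_2) - \psi(\vO_1) = \sum_{k=1}^m \vC_k$, which is part (ii). Iterating the support equality shows that all of $\psi(\vO^{(0)}),\dots,\psi(\vO^{(m)})$ share the same support; in particular $\mathrm{supp}(\psi(\vO_1)) = \mathrm{supp}(\psi(\vO_2))$, and $\mathrm{supp}(\vC_k) \subseteq \mathrm{supp}(\psi(\vO^{(k-1)})) = \mathrm{supp}(\psi(\vO_1))$ for every $k$, establishing (i).

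The only nontrivial bookkeeping is confirming that the intermediate partial sums remain orientations so that Proposition~\ref{prop: pullback} is legitimately applicable at every step; this reduces to the disjointness/subtransversality claims already supplied by Proposition~\ref{prop: R-equivalence}, so I do not anticipate any real obstacle.
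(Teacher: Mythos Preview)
Your proof is correct and is exactly the argument the paper has in mind: the paper simply states the corollary as ``a direct consequence of Proposition~\ref{prop: pullback}'' without spelling out the details, and your iterative application of that proposition along the chain of partial sums is the intended expansion. Note that the fact that each partial sum $\vO^{(j)}$ is an orientation is already observed in the proof of Proposition~\ref{prop: R-equivalence} itself, so you could cite that directly rather than re-deriving it.
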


\section{The BBY bijections and torsors of regular orthogonal representations}\label{sec: BBY}
The BBY bijections between bases and circuit-cocircuit reversal classes of regular matroids are defined in \cite{BBY2019}. 
In this section we define analogous bijections for regular orthogonal representations. 

The proof of bijectivity in \cite{BBY2019} involves zonotope tilings, and it is not clear how to define similar geometric objects for orthogonal matroids. We will therefore adopt the machinery introduced in \cite{Ding2024}, where the BBY bijections are generalized and the proof of bijectivity is purely combinatorial. We state and prove our general BBY bijections for regular orthogonal representations in Section~\ref{subsec: BBY bijection}. 
In Section~\ref{subsec: signature}, we introduce the notion of ``acyclic signatures'' and connect this to the results of the previous section.
Finally, in Section~\ref{subsec: BBY torsor} we define and study the corresponding BBY torsors. %

Throughout this section we fix a regular orthogonal representation $\cC$ of a regular orthogonal matroid $M$ with ground set $E\cup E^*$.

\subsection{The BBY bijections}\label{subsec: BBY bijection}

For the definition of fourientations and related notions, see Section~\ref{sec: farkas}.

Recall that orientations are elements in $\cO=\{\pm\frac{1}{2}\}^{E}$. 
For an orientation $\vO \in \cO$ and an element $e \in E$, we define\[\text{sign}(\vO(e)) = \begin{cases}
        \{ 1\} & \text{if } \vO(e)>0, \\
        \{-1\} & \text{if } \vO(e)<0.\\
    \end{cases}\]
We will use the following notation many times.  
\begin{definition}\label{def: from map to fourientation}
For a basis $B\subseteq E\cup E^*$ and an orientation $\vO\in\cO$, we define the fourientation $\vF(B,\vO)$ whose value at each $e$ is determined by

\[\vF(B,\vO)(e) = \begin{cases}
        \{\pm 1\} & \text{if } e\in B, \\
        \text{sign}(\vO(\underline{e})) & \text{if } e\notin B, \\
    \end{cases}\]where $\underline{e}$ is the unique element in $E\cap\{e, e^*\}$. Intuitively, we bi-orient the edges in $B$ and orient the edges not in $B$ according to $\vO$.
\end{definition}

We now give the key definition in this section. 
\begin{definition}\label{def: triangulating}
Let $\beta\colon\cB(M)\to\cO$ be a map from the set of bases of $M$ to the set of orientations of $E$. 
\begin{enumerate}[label=\rm(\roman*)]
\item If the fourientation \[\vF(B_1,\beta(B_1))\cap (-\vF(B_2,\beta(B_2)))\] contains no signed circuit for any two bases $B_1$ and $B_2$, then we call the map $\beta$ \emph{triangulating}.

\item From the map $\beta$, we obtain the induced map
\begin{align*}
  \bar{\beta} \colon \cB(M) &\to \cR(\cC)\\
  B &\mapsto \bR{\beta(B)}.
\end{align*}

\end{enumerate}
\end{definition}

\begin{remark}\label{rem: atlas}
The notion of triangulating maps comes from triangulating atlases of regular matroids in \cite{Ding2024}.   
\end{remark}

The following is the main theorem of this section.
\begin{theorem}\label{thm: triangulating}
If $\beta\colon\cB(M)\to\cO$ is triangulating, then $\bar{\beta}$ is a bijection. 
\end{theorem}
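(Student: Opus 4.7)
By Corollary~\ref{cor: count the circuit reversal classes} we have $|\cB(M)| = |\cR(\cC)|$, so it suffices to prove that $\bar{\beta}$ is injective. Suppose $\bar{\beta}(B_1) = \bar{\beta}(B_2)$, i.e.\ $\beta(B_1) \sim_{\mathrm{R}} \beta(B_2)$. By Proposition~\ref{prop: R-equivalence} we can choose signed circuits $\vC_1, \dots, \vC_m \in \cC$ (with $m \geq 0$) whose supports are pairwise disjoint and contained in a common subtransversal of $E \cup E^*$, satisfying $\beta(B_2) - \beta(B_1) = \sum_{k=1}^m \pi(\vC_k)$. Abbreviate $\vF := \vF(B_1, \beta(B_1)) \cap \bigl(-\vF(B_2, \beta(B_2))\bigr)$. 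Assuming $B_1 \neq B_2$ for contradiction, I will exhibit a signed circuit inside $\vF$, contradicting the triangulating hypothesis.

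In the case $m \geq 1$, the desired witness is $\vD := -\vC_1 \in \cC$. For each $e \in \mathrm{supp}(\vC_1)$, the disjoint-subtransversal structure of the supports forces $\pi(\vC_k)(\underline{e}) = 0$ for $k \neq 1$ and $\pi(\vC_1)(\underline{e}) = \vC_1(e)$, hence $\vC_1(e) = \beta(B_2)(\underline{e}) - \beta(B_1)(\underline{e}) \in \{\pm 1\}$, which forces $\beta(B_1)(\underline{e}) = -\beta(B_2)(\underline{e})$. Verifying $-\vC_1(e) \in \vF(e)$ then reduces to a direct four-case check, according to whether $e$ lies in $B_1 \cap B_2$, $B_1 \setminus B_2$, $B_2 \setminus B_1$, or $(E \cup E^*) \setminus (B_1 \cup B_2)$; in each case the identity for $-\vC_1(e)$ matches the prescription of $\vF(e)$.

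In the case $m = 0$, we have $\beta(B_1) = \beta(B_2) =: \vO$, and a direct computation gives
\[
    \vF(e) =
    \begin{cases}
        \{\pm 1\} & \text{if } e \in B_1 \cap B_2,\\
        \bigl\{-\text{sign}(\vO(\underline{e}))\bigr\} & \text{if } e \in B_1 \setminus B_2,\\
        \bigl\{\text{sign}(\vO(\underline{e}))\bigr\} & \text{if } e \in B_2 \setminus B_1,\\
        \emptyset & \text{if } e \notin B_1 \cup B_2,
    \end{cases}
\]
and the identity $\vF(e) = \{\pm 1\} \setminus \vF(e^*)$ holds in each case, so $\vF$ is a \emph{negative} fourientation in the sense of Definition~\ref{def: positive fourientation}. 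Since $B_1 \neq B_2$, one can choose $e \in E$ with $e \in B_1 \triangle B_2$, which gives $|\vF(e)| = 1$. The generalized Farkas' Lemma (Lemma~\ref{lem: Farkas2}) then produces a signed circuit inside $\vF$.

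Both cases contradict triangulation, so $B_1 = B_2$ and $\bar{\beta}$ is injective. The arguments are primarily bookkeeping, but the main conceptual step is recognizing that the diagonal case $m = 0$ is controlled by a \emph{negative} fourientation, so that Lemma~\ref{lem: Farkas2} applies; the sign-matching case analysis for the witness $-\vC_1$ in the case $m \geq 1$, while routine, is the step that requires the most careful verification.
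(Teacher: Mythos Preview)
Your proof is correct and follows essentially the same approach as the paper's own proof: reduce to injectivity via Corollary~\ref{cor: count the circuit reversal classes}, invoke Proposition~\ref{prop: R-equivalence} to write the difference of orientations as a disjoint sum of projected signed circuits, exhibit one of these circuits inside $\vF(B_1,\beta(B_1))\cap(-\vF(B_2,\beta(B_2)))$ when $m\ge 1$, and in the remaining case $m=0$ compute $\vF$ explicitly, observe it is a negative fourientation, and apply Lemma~\ref{lem: Farkas2}. The only differences are cosmetic: your sign convention is $\beta(B_2)-\beta(B_1)=\sum\pi(\vC_k)$ (so your witness is $-\vC_1$ rather than $\vC_1$), and you spell out the four-case verification in the $m\ge 1$ case a bit more explicitly than the paper, which simply asserts that each $\vC_k$ lies in both $\vF_1$ and $-\vF_2$.
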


\begin{proof}
    By Corollary~\ref{cor: count the circuit reversal classes}, it suffices to show that $\bar{\beta}$ is injective.

    Assume, for the sake of contradiction, that $\beta(B_1)\sim_\mathrm{R} \beta(B_2)$ for two different bases $B_1$ and $B_2$. By Proposition~\ref{prop: R-equivalence}, we have \[\beta(B_1) - \beta(B_2)=\sum_{k=1}^m \pi(\vC_k),\] where $\vC_1,\ldots,\vC_m$ are signed circuits 
    with the property that their supports are disjoint and the union of their supports is a subtransversal. Thus every $\vC_k$ is contained in $\vF_1:=\vF(B_1,\beta(B_1))$ and in $-\vF_2:=-\vF(B_2,\beta(B_2))$. If $m\neq 0$, then the fourientation \[\vF:=\vF_1 \cap (-\vF_2)\] contains a signed circuit, which contradicts the assumption that $\beta$ is triangulating. Thus $m=0$, which means that $\beta(B_1) = \beta(B_2)$.

Let $\vO:=\beta(B_1) = \beta(B_2)$. By a direct computation, for any element $e\in E$ we have:

\[\vF(e) = \begin{cases}
        \{\pm 1\} & \text{if } e\in B_1 \cap B_2, \\
        \emptyset & \text{if } e\notin B_1 \cup B_2, \\
        -\text{sign}(\vO(e))& \text{if } e\in B_1\setminus B_2, \\
        \text{sign}(\vO(e)) & \text{if } e\in B_2\setminus B_1, \\
    \end{cases}\]
and
\[\vF(e^*) = \begin{cases}
        \emptyset & \text{if } e\in B_1 \cap B_2, \\
        \{\pm 1\} & \text{if } e\notin B_1 \cup B_2, \\
        \text{sign}(\vO(e)) & \text{if } e\in B_1\setminus B_2, \\
        -\text{sign}(\vO(e)) & \text{if } e\in B_2\setminus B_1. \\
    \end{cases}\]
Thus $\vF$ is negative (Definition~\ref{def: positive fourientation}). Note that there is at least one element $e$ with $|\vF(e)| = |\vF(e^*)| = 1$. Hence, by Lemma~\ref{lem: Farkas2}, there is a signed circuit contained in $\vF$, which contradicts the fact that $\beta$ is triangulating.
\end{proof}

\begin{definition}
When $\beta$ is triangulating, we call $\bar{\beta}$ a BBY bijection. 
\end{definition}

\begin{example}\label{ex: map and fourientation}
We continue with the example described in Figure~\ref{fig:torus}. \begin{enumerate}[label=\rm(\roman*)]
\item One can check that the map $\beta$ in Table~\ref{table: triangulating map} is triangulating. 
For example, denote the first and third bases by $B_1$ and $B_3$, respectively. Then the fourientation
\[\vF(B_1,\beta(B_1))\cap (-\vF(B_3,\beta(B_3))) = \begin{pmatrix}
\{\pm 1\} & \{-1\} & \{-1\} & \emptyset   \\
\emptyset & \{-1\} & \{+1\}   & \{\pm 1\}
\end{pmatrix}\]
does not contain any signed circuit $\vC$ (listed in Example~\ref{ex: circuits}) or its opposite $-\vC$. The induced map $\bar{\beta}$ is a bijection by Theorem~\ref{thm: triangulating}.

\begin{table}[ht]
\centering
\begin{tabular}{|cccc|c|cccc|}
\hline
\multicolumn{4}{|c|}{Basis $B$}  &  $2\beta(B)$    & \multicolumn{4}{|c|}{$\vF(B,\beta(B))$} \\
\hline
\color{red} $1$ &   &  &  & \multirow{2}{*}{$(+1,-1,-1,+1)$} & \color{red} $\pm$   & $-$   & $-$   & $+$  \\
&\color{red} $2^*$ &\color{red} $3^*$ &\color{red} $4^*$ &  & $+$ &  \color{red} $\pm$ & \color{red}$\pm$   &  \color{red} $\pm$  \\ \hline
  & \color{red} $2$     &       &       & \multirow{2}{*}{$(-1,-1,-1,-1)$}  &  $-$   & \color{red} $\pm$   &  $-$  & $-$  \\
\color{red}$1^*$  &       &  \color{red} $3^*$    &  \color{red}$4^*$     &     & \color{red} $\pm$    &  $-$   &  \color{red} $\pm$   & \color{red} $\pm$   \\ \hline
 \color{red}$1$ &\color{red} $2$     & \color{red}$3$     &       & \multirow{2}{*}{$(+1,+1,-1,+1)$}   & \color{red} $\pm$  & \color{red} $\pm$  &  \color{red} $\pm$  & $+$   \\
& & & \color{red} $4^*$ &            &   $+$  &  $+$   &   $-$  &  \color{red} $\pm$  \\ \hline
\color{red} $1$ &   \color{red}$2$    &       & \color{red}$4$      & \multirow{2}{*}{$(+1,-1,-1,-1)$}  & \color{red} $\pm$   &  \color{red} $\pm$ & $-$ & \color{red} $\pm$ \\
  &       &      \color{red}$3^*$ &       &                              &  $+$   & $-$  & \color{red} $\pm$  & $-$  \\ \hline
\end{tabular}
\caption{The table shows a map $\beta$ for the example in Figure~\ref{fig:torus}, along with the fourientation $\vF(B,\beta(B))$ for every basis $B$. When the data in a cell are arranged in a $2\times 4$ matrix, the top/bottom positions are indexed by elements in $E$/$E^*$ in order. In fourientations, $\{1\}$ is presented by $+$, etc. By Definition~\ref{def: from map to fourientation}, the positions of the red elements in $B$ match the red elements in $\vF(B,\beta(B))$; the signs of the black elements in $\beta(B)$ match the signs of the black elements in  $\vF(B,\beta(B))$. }
\label{table: triangulating map}
\end{table}
\item  Let $\beta'$ be the map in Table~\ref{table: non-triangulating map}. It is not triangulating, because the fourientation
\[\vF(B_1,\beta'(B_1))\cap (-\vF(B_3,\beta'(B_3)))=
 \begin{pmatrix}
\{\pm 1\} & \{-1\} & \{-1\} & \{+1\}   \\
\emptyset & \{-1\} & \{+1\}   & \{\pm 1\}
\end{pmatrix}
\]contains the signed circuit $\vC=\begin{pmatrix}
0&0&0&1\\0&-1&0&0\\    
\end{pmatrix}$. One can also see that $\beta'$ is not triangulating using Theorem~\ref{thm: triangulating}; indeed, the map $\overline{\beta'}$ is not a bijection since $\overline{\beta'}(B_1)=\overline{\beta'}(B_3)$ (cf. Example~\ref{ex: circuit reversal classes}). More precisely, since $\beta'(B_1)-\beta'(B_3)=(0,-1,0,1)=\pi(\vC)$, %
one can see that $\vC$ is contained in the fourientation. 

\begin{table}[ht]
\centering
\begin{tabular}{|cccc|c|cccc|}
\hline
\multicolumn{4}{|c|}{Basis $B$}  &  $2\beta'(B)$    & \multicolumn{4}{|c|}{$\vF(B,\beta'(B))$} \\
\hline
\color{red} $1$ &   &  &  & \multirow{2}{*}{$(+1,-1,-1,+1)$} & \color{red} $\pm$   & $-$   & $-$   & $+$  \\
&\color{red} $2^*$ &\color{red} $3^*$ &\color{red} $4^*$ &  & $+$ &  \color{red} $\pm$ & \color{red}$\pm$   &  \color{red} $\pm$  \\ \hline
  & \color{red} $2$     &       &       & \multirow{2}{*}{$(-1,-1,-1,-1)$}  &  $-$   & \color{red} $\pm$   &  $-$  & $-$  \\
\color{red}$1^*$  &       &  \color{red} $3^*$    &  \color{red}$4^*$     &     & \color{red} $\pm$    &  $-$   &  \color{red} $\pm$   & \color{red} $\pm$   \\ \hline
 \color{red}$1$ &\color{red} $2$     & \color{red}$3$     &       & \multirow{2}{*}{$(+1,+1,-1,\fbox{-1})$}   & \color{red} $\pm$  & \color{red} $\pm$  &  \color{red} $\pm$  & \fbox {$-$}   \\
& & & \color{red} $4^*$ &            &   $+$  &  $+$   &   $-$  &  \color{red} $\pm$  \\ \hline
\color{red} $1$ &   \color{red}$2$    &       & \color{red}$4$      & \multirow{2}{*}{$(+1,-1,\fbox{+1},-1)$}  & \color{red} $\pm$   &  \color{red} $\pm$ & \fbox{$+$} & \color{red} $\pm$ \\
  &       &      \color{red}$3^*$ &       &                              &  $+$   & $-$  & \color{red} $\pm$  & $-$  \\ \hline
\end{tabular}
\caption{The table shows another map $\beta'$ for the example in Figure~\ref{fig:torus}, along with the fourientation $\vF(B,\beta'(B))$ for every basis $B$. See the caption of Table~\ref{table: triangulating map}. The differences between this table and Table~\ref{table: triangulating map} are highlighted by boxes.}
\label{table: non-triangulating map}
\end{table}

\end{enumerate}

\end{example}

\subsection{Circuit signatures}\label{subsec: signature}
In order to apply Theorem~\ref{thm: triangulating}, one needs a criterion to certify that a given map $\beta\colon\cB\to\cO$ is triangulating.
As in the case of regular matroids \cite{BBY2019}, a convenient tool in this regard is the notion of \emph{acyclic signatures}.

\begin{definition}\label{def: fsigma} 
    A \emph{circuit signature} of $\cC$ is a set of signed circuits which contains exactly one of $\vC$ and $-\vC$ for each signed circuit $\vC$ in $\cC$. 
    A circuit signature~$\sigma$ is said to be \emph{acyclic} if $\sum \lambda_i \vC_i=0$ with $\lambda_i\ge 0$ and $\vC_i\in\sigma$ implies $\lambda_i=0$ for all~$i$.
\end{definition}

\begin{example}\label{ex: acyclic signature}
    Fix a linear ordering of the elements in $E\cup E^*$. Let $\sigma$ be a circuit signature defined as follows.
    For each signed circuit $\vC$, let $e$ be the minimum element in $\mathrm{supp}(\vC)$. If $\vC(e) = 1$, we put $\vC$ in $\sigma$; otherwise, we put $-\vC$ in $\sigma$. Then it is easy to check that $\sigma$ is acyclic.
\end{example}

\begin{definition}\label{def: from signature to map}
For a circuit signature $\sigma$, we define the map \[\beta_\sigma\colon\cB\to\cO\] as follows. 
For every $B \in \cB$ and $e\in E$, let $e'$ be the unique element in $\{e,e^*\}\setminus B$ and let $\vC\in\sigma$ be the unique signed circuit with $\mathrm{supp}(\vC)=\FC(B,e')$. We define
    \begin{align*}
        \beta_\sigma(B)(e) := 
        \begin{cases}
            \frac{1}{2} & \text{if } \vC(e') = 1, \\
            -\frac{1}{2} & \text{if } \vC(e') = -1.
        \end{cases}
    \end{align*}    
\end{definition}

\begin{example}\label{ex: signature to map}
We continue with the example described in Figure~\ref{fig:torus}. Let $\{\ve_i:i\in E\cup E^*\}$ be the standard basis of $\Q^{E\cup E^*}$, where $E=\{1,2,3,4\}$. We will write signed circuits in terms of these basis vectors. \begin{enumerate}[label=\rm(\roman*)]
\item Let \[\sigma=\{\ve_{1^*}-\ve_{2^*}, -\ve_{3}+\ve_{4},\ve_{1^*}-\ve_{3},\ve_{1^*}-\ve_{4},\ve_{2^*}-\ve_{3},-\ve_{2^*}+\ve_{4},-\ve_{1}-\ve_{2}-\ve_{3^*}-\ve_{4^*}\}\]be a circuit signature. One can check that it is acyclic. The map $\beta_\sigma$ is calculated in Table~\ref{table: BBY from acyclic signature}. It equals the map $\beta$ in Example~\ref{ex: map and fourientation}, hence the induced map $\bar{\beta}_\sigma$ is a bijection. 
\begin{table}[ht]
\centering
\begin{tabular}{|c|c|c|c|c|c|}\hline
Basis $B$   & $\vC_1$ & $\vC_2$ & $\vC_3$ & $\vC_4$ & $2\beta_\sigma(B)$  \\ \hline
$12^*3^*4^*$ & ${\color{red}\ve_{1^*}}-\ve_{2^*}$ & $\begin{array}{c}
-\ve_{1}{\color{red}-\ve_{2}}   \\
 -\ve_{3^*}-\ve_{4^*}
\end{array}$   & $\ve_{2^*}{\color{red}-\ve_{3}}$ &  $-\ve_{2^*}{\color{red}+\ve_{4}}$  &  $(+1,-1,-1,+1)$ \\ \hline
$1^*23^*4^*$  & $\begin{array}{c}
{\color{red}-\ve_{1}}-\ve_{2}   \\
 -\ve_{3^*}-\ve_{4^*}
\end{array}$   &  $\ve_{1^*}{\color{red}-\ve_{2^*}}$  & $\ve_{1^*}{\color{red}-\ve_{3}}$   & $\ve_{1^*}{\color{red}-\ve_{4}}$   & $(-1,-1,-1,-1)$\\ \hline
$1234^*$  & ${\color{red}\ve_{1^*}}-\ve_{3}$   & ${\color{red}\ve_{2^*}}-\ve_{3}$   &  $\begin{array}{c}
-\ve_{1}-\ve_{2}   \\
 {\color{red}-\ve_{3^*}}-\ve_{4^*}
\end{array}$   &  $-\ve_{3}{\color{red}+\ve_{4}}$  &  $(+1,+1,-1,+1)$ \\ \hline
$123^*4$&  ${\color{red}\ve_{1^*}}-\ve_{4}$  & ${\color{red}-\ve_{2^*}}+\ve_{4}$   &  ${\color{red}-\ve_{3}}+\ve_{4}$  &  $\begin{array}{c}
-\ve_{1}-\ve_{2}   \\
 -\ve_{3^*}{\color{red}-\ve_{4^*}}
\end{array}$   & $(+1,-1,-1,-1)$ \\ \hline
\end{tabular}
\caption{ The map $\beta_\sigma$ is calculated in the table, where $\sigma$ is defined in Example~\ref{ex: signature to map}. The signed circuit $\vC_e$ is the unique signed circuit in $\sigma$  with $\mathrm{supp}(\vC_e)=\FC(B,e')$, where $e'$ is defined in Definition~\ref{def: from signature to map}. The orientations are determined by the red vectors. }
\label{table: BBY from acyclic signature}
\end{table}
\item Let \[\sigma'=(\sigma\cup\{\ve_{3}-\ve_{4}\})\setminus\{-\ve_{3}+\ve_{4}\}\]be another circuit signature. It is not acyclic because 
\[(\ve_{3}-\ve_{4})+(\ve_{2^*}-\ve_{3})+(-\ve_{2^*}+\ve_{4})=0.\]
The map $\beta_{\sigma'}$ is calculated in Table~\ref{table: map from non-acyclic signature}. It equals the map $\beta'$ in Example~\ref{ex: map and fourientation}. The induced map $\bar{\beta}_{\sigma'}$ is not a bijection. 
\begin{table}[ht]
\centering
\begin{tabular}{|c|c|c|c|c|c|}\hline
Basis $B$   & $\vC_1$ & $\vC_2$ & $\vC_3$ & $\vC_4$ & $2\beta_{\sigma'}(B)$  \\ \hline
$12^*3^*4^*$ & ${\color{red}\ve_{1^*}}-\ve_{2^*}$ & $\begin{array}{c}
-\ve_{1}{\color{red}-\ve_{2}}   \\
 -\ve_{3^*}-\ve_{4^*}
\end{array}$   & $\ve_{2^*}{\color{red}-\ve_{3}}$ &  $-\ve_{2^*}{\color{red}+\ve_{4}}$  &  $(+1,-1,-1,+1)$ \\ \hline
$1^*23^*4^*$  & $\begin{array}{c}
{\color{red}-\ve_{1}}-\ve_{2}   \\
 -\ve_{3^*}-\ve_{4^*}
\end{array}$   &  $\ve_{1^*}{\color{red}-\ve_{2^*}}$  & $\ve_{1^*}{\color{red}-\ve_{3}}$   & $\ve_{1^*}{\color{red}-\ve_{4}}$   & $(-1,-1,-1,-1)$\\ \hline
$1234^*$  & ${\color{red}\ve_{1^*}}-\ve_{3}$   & ${\color{red}\ve_{2^*}}-\ve_{3}$   &  $\begin{array}{c}
-\ve_{1}-\ve_{2}   \\
 {\color{red}-\ve_{3^*}}-\ve_{4^*}
\end{array}$   &  \fbox{$\ve_{3}{\color{red}-\ve_{4}}$}  &  $(+1,+1,-1,\fbox{-1})$ \\ \hline
$123^*4$&  ${\color{red}\ve_{1^*}}-\ve_{4}$  & ${\color{red}-\ve_{2^*}}+\ve_{4}$   &  \fbox{${\color{red}\ve_{3}}-\ve_{4}$}  &  $\begin{array}{c}
-\ve_{1}-\ve_{2}   \\
 -\ve_{3^*}{\color{red}-\ve_{4^*}}
\end{array}$   & $(+1,-1,\fbox{+1},-1)$ \\ \hline
\end{tabular}
\caption{ The map $\beta_{\sigma'}$ is calculated in the table, where $\sigma'$ is defined in Example~\ref{ex: signature to map}. The signed circuit $\vC_e$ is the unique signed circuit in $\sigma'$  with $\mathrm{supp}(\vC_e)=\FC(B,e')$, where $e'$ is defined in Definition~\ref{def: from signature to map}. The orientations are determined by the red vectors. The differences between this table and Table~\ref{table: BBY from acyclic signature} are highlighted by boxes.}
\label{table: map from non-acyclic signature}
\end{table}
\end{enumerate}
\end{example}

The following lemma is an immediate consequence of Definitions~\ref{def: from map to fourientation} and \ref{def: from signature to map}.

\begin{lemma}\label{lem: signature to fourientation}
Let $B$ be a basis and let $\sigma$ be a circuit signature. If a signed circuit $\vC$ is contained in $\vF(B,\beta_\sigma(B))$ and $\mathrm{supp}(\vC)$ is a fundamental circuit with respect to $B$, then $\vC\in\sigma$.    
\end{lemma}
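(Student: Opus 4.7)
The plan is to prove this lemma by a direct unpacking of definitions; no heavy machinery is needed. Let $e' \in B^*$ be such that $\mathrm{supp}(\vC) = \FC(B, e')$. First I would observe that since the fundamental circuit is contained in $B \triangle \{e', (e')^*\}$ and $B \setminus \{(e')^*\}$ contains no circuit, we must have $\mathrm{supp}(\vC) \setminus B = \{e'\}$, while every other element of $\mathrm{supp}(\vC)$ lies in $B$.

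Next, I would use the shape of the fourientation $\vF(B, \beta_\sigma(B))$ from Definition~\ref{def: from map to fourientation}: at positions $f \in B$ it is bi-oriented, so the containment $\vC(f) \in \vF(B,\beta_\sigma(B))(f)$ holds automatically. The only genuine condition forced by $\vC \subseteq \vF(B,\beta_\sigma(B))$ is therefore the single constraint $\vC(e') \in \text{sign}(\beta_\sigma(B)(\ul{e'}))$.

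I would then unravel Definition~\ref{def: from signature to map}: if $\vC_{e'} \in \sigma$ denotes the unique signed circuit in $\sigma$ with support $\FC(B, e')$, then by construction $\beta_\sigma(B)(\ul{e'}) = \tfrac{1}{2}\vC_{e'}(e')$, so $\text{sign}(\beta_\sigma(B)(\ul{e'})) = \{\vC_{e'}(e')\}$. The containment condition at $e'$ thus reduces to the equality $\vC(e') = \vC_{e'}(e')$.

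Finally, I would invoke the fact (essentially~{\cite[Lemma~3.7]{JK2023}}, also used in the proof of Proposition~\ref{prop: restriction}) that two signed circuits of an oriented orthogonal matroid with the same support differ by a sign. Thus $\vC = \pm \vC_{e'}$, and the matching value at $e'$ forces $\vC = \vC_{e'} \in \sigma$. There is really no obstacle here; the lemma is almost tautological, reflecting the fact that $\beta_\sigma$ was defined precisely so that the sign of the orientation at each coordinate records the chosen $\sigma$-representative of the corresponding fundamental circuit.
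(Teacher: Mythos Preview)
Your proof is correct and follows essentially the same approach as the paper, which simply states that the lemma is an immediate consequence of Definitions~\ref{def: from map to fourientation} and~\ref{def: from signature to map}; you have merely spelled out the details of that unpacking.
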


We will use the following property of acyclic circuit signatures to prove that $\beta_\sigma$ is triangulating whenever $\sigma$ is acyclic. 

\begin{lemma}\label{lem: acyclic signature1}
    Let $\sigma$ be an acyclic circuit signature.
    If a signed circuit $\vC$ is contained in $\vF(B,\beta_\sigma(B))$, then $\vC\in\sigma$.
\end{lemma}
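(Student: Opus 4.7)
The plan is to decompose $\vC$ as a sum of fundamental signed circuits with respect to $B$, show that each fundamental piece lies in $\sigma$ via Lemma~\ref{lem: signature to fourientation}, and then invoke acyclicity of $\sigma$ to rule out $-\vC \in \sigma$.

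First I would apply Lemma~\ref{lem: sum of fund circuit} to write
\[
\vC = \sum_{e \in \mathrm{supp}(\vC) \setminus B} \vC_e,
\]
where each $\vC_e \in \cC$ has $\mathrm{supp}(\vC_e) = \FC(B,e)$ and $\vC_e(e) = \vC(e)$. The next step is the key technical point: I claim each $\vC_e$ is itself contained in $\vF := \vF(B, \beta_\sigma(B))$. Since $e \notin B$ forces $e^* \in B$, we have $\mathrm{supp}(\vC_e) \subseteq \FC(B,e) \subseteq (B \setminus \{e^*\}) \cup \{e\}$. For any $f \in \mathrm{supp}(\vC_e) \cap B$, we have $\vF(f) = \{\pm 1\}$ by Definition~\ref{def: from map to fourientation}, so $\vC_e(f) \in \vF(f)$ trivially. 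The only remaining coordinate is $f = e$, where $\vF(e) = \mathrm{sign}(\beta_\sigma(B)(\underline{e}))$ is a singleton; but because $\vC \subseteq \vF$ and $\vC_e(e) = \vC(e) \neq 0$, the value $\vC_e(e) = \vC(e)$ already lies in $\vF(e)$. Hence $\vC_e \subseteq \vF$ and, since $\mathrm{supp}(\vC_e)$ is a fundamental circuit with respect to $B$, Lemma~\ref{lem: signature to fourientation} yields $\vC_e \in \sigma$.

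Finally, I would argue by contradiction: if $\vC \notin \sigma$ then $-\vC \in \sigma$, and the identity
\[
(-\vC) + \sum_{e \in \mathrm{supp}(\vC)\setminus B} \vC_e = 0
\]
is a nonnegative linear relation among elements of $\sigma$ in which the coefficient of $-\vC$ is $1 \neq 0$, contradicting the acyclicity of $\sigma$. Therefore $\vC \in \sigma$.

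The main obstacle is the verification in the middle paragraph that each fundamental piece $\vC_e$ sits inside $\vF(B,\beta_\sigma(B))$; this is where the interplay between $\beta_\sigma$ and the fundamental-circuit structure matters, and it rests on the precise identity $\vC_e(e) = \vC(e)$ furnished by Lemma~\ref{lem: sum of fund circuit}. Everything else is bookkeeping once this compatibility and Lemma~\ref{lem: signature to fourientation} are in place.
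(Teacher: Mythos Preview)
Your proof is correct and follows essentially the same approach as the paper's: decompose $\vC$ via Lemma~\ref{lem: sum of fund circuit}, verify each fundamental piece $\vC_e$ lies in $\vF(B,\beta_\sigma(B))$ (the paper does this in one sentence, you spell it out more carefully), apply Lemma~\ref{lem: signature to fourientation} to get $\vC_e\in\sigma$, and use acyclicity on the relation $-\vC+\sum_e\vC_e=0$ to conclude $-\vC\notin\sigma$. Your more detailed treatment of the containment $\vC_e\subseteq\vF$ is a welcome elaboration but not a different idea.
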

\begin{proof}
By Lemma~\ref{lem: sum of fund circuit}, we have
\[
- \vC + \sum_{e\in \mathrm{supp}(\vC)\setminus B} \vC_e = 0,
\]
where $\vC_e$ is the signed circuit such that $\mathrm{supp}(\vC_e)=\FC(B,e)$ and $\vC_e(e) = \vC(e)$. 
Moreover, each summand $\vC_e$ is contained in $\vF(B,\beta_\sigma(B))$, since the elements in $\mathrm{supp}(\vC_e)\setminus\{e\}$ are in $B$ and hence are bioriented in $\vF(B,\beta_\sigma(B))$. 
By Lemma~\ref{lem: signature to fourientation}, every summand $\vC_e $ is in $\sigma$. Because $\sigma$ is acyclic, we have $-\vC \notin \sigma$ and hence $\vC \in \sigma$. 
\end{proof}

\begin{lemma}\label{lem: acyclic signature2}
If $\sigma$ is acyclic, then $\beta_\sigma$ is triangulating.   
\end{lemma}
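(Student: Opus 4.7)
The plan is to reduce the statement directly to Lemma~\ref{lem: acyclic signature1} together with the defining property of a circuit signature, namely that $\sigma$ contains exactly one of $\vC$ and $-\vC$ for each signed circuit. Concretely, I would argue by contradiction: suppose $\beta_\sigma$ is not triangulating, so there exist bases $B_1,B_2$ and a signed circuit $\vC$ with
\[
\vC \;\subseteq\; \vF(B_1,\beta_\sigma(B_1)) \;\cap\; \bigl(-\vF(B_2,\beta_\sigma(B_2))\bigr).
\]

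From the first containment, $\vC \subseteq \vF(B_1,\beta_\sigma(B_1))$, Lemma~\ref{lem: acyclic signature1} (applied with the basis $B_1$) yields $\vC \in \sigma$. From the second containment we have $-\vC \subseteq \vF(B_2,\beta_\sigma(B_2))$, and Lemma~\ref{lem: acyclic signature1} applied now with $B_2$ and the signed circuit $-\vC$ yields $-\vC \in \sigma$. This contradicts Definition~\ref{def: fsigma}, which forbids $\sigma$ from containing both a signed circuit and its negation.

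I expect the entire argument to fit in a few lines, with no serious obstacles, since all of the real work — the reduction of a general signed circuit contained in $\vF(B,\beta_\sigma(B))$ to a sum of fundamental signed circuits that must lie in an acyclic signature — was already carried out in Lemma~\ref{lem: acyclic signature1}. The only bookkeeping step is to verify that $\vC \subseteq -\vF_2$ is the same as $-\vC \subseteq \vF_2$, which is immediate from the definition of $-\vF_2$ as the pointwise negation of $\vF_2$ and the convention that signed circuits are viewed as fourientations via the evident embedding.
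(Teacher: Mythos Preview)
Your proposal is correct and essentially identical to the paper's proof: both argue by contradiction, apply Lemma~\ref{lem: acyclic signature1} once to $\vC \subseteq \vF(B_1,\beta_\sigma(B_1))$ and once to $-\vC \subseteq \vF(B_2,\beta_\sigma(B_2))$, and derive $\vC,-\vC\in\sigma$, contradicting the definition of a circuit signature. The only difference is that you spell out the trivial equivalence $\vC\subseteq -\vF_2 \Leftrightarrow -\vC\subseteq \vF_2$, which the paper leaves implicit.
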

\begin{proof}
    Suppose, for the sake of contradiction, that there are two distinct bases $B_1$ and $B_2$ such that $\vF(B_1,\beta_\sigma(B_1))$ and $-\vF(B_2,\beta_\sigma(B_2))$ contain a common signed circuit, say $\vC$.
    By Lemma~\ref{lem: acyclic signature1}, we have both $\vC\in \sigma$ and $-\vC\in \sigma$, a contradiction.
\end{proof}

From Theorem~\ref{thm: triangulating} and Lemma~\ref{lem: acyclic signature2}, we obtain: 
\begin{corollary}\label{cor: acyclic to BBY}
If $\sigma$ is acyclic, then the map $\bar{\beta}_\sigma$ is a BBY bijection.     
\end{corollary}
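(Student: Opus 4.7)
The statement follows directly by chaining the two results immediately preceding it, so the proof plan is essentially a one-line combination rather than a new argument. My proposal is as follows.

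First, given an acyclic circuit signature $\sigma$, I would invoke Lemma~\ref{lem: acyclic signature2} to conclude that the map $\beta_\sigma \colon \cB(M) \to \cO$ constructed in Definition~\ref{def: from signature to map} is triangulating, i.e., that $\vF(B_1, \beta_\sigma(B_1)) \cap (-\vF(B_2, \beta_\sigma(B_2)))$ contains no signed circuit for any two bases $B_1, B_2$. This is the substantive content of the argument, and it in turn rests on Lemma~\ref{lem: acyclic signature1}: a signed circuit $\vC$ sitting inside $\vF(B, \beta_\sigma(B))$ must lie in $\sigma$, which is ruled out for both $\vC$ and $-\vC$ when $\sigma$ is acyclic.

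Second, having certified that $\beta_\sigma$ is triangulating, I would apply Theorem~\ref{thm: triangulating} to conclude that the induced map $\bar{\beta}_\sigma \colon \cB(M) \to \cR(\cC)$, defined by $B \mapsto \bR{\beta_\sigma(B)}$, is a bijection. Since by the definition immediately following Theorem~\ref{thm: triangulating} any such bijection arising from a triangulating map is called a BBY bijection, this completes the proof.

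There is no genuine obstacle here, since both ingredients have been established in the preceding lemmas. The only thing worth emphasizing in the write-up is that the corollary is simply the composition of the two implications \emph{$\sigma$ acyclic $\Rightarrow$ $\beta_\sigma$ triangulating} and \emph{$\beta_\sigma$ triangulating $\Rightarrow$ $\bar{\beta}_\sigma$ bijective}. The conceptual work was done in the cycle-reversal machinery of Section~\ref{sec: cycle reversal} (used to count $\cR(\cC)$ and reduce bijectivity to injectivity) and in the generalized Farkas' Lemma (Lemma~\ref{lem: Farkas2}), which was the engine behind Theorem~\ref{thm: triangulating}.
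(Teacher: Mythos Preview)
Your proposal is correct and matches the paper's approach exactly: the paper simply states that the corollary follows from Theorem~\ref{thm: triangulating} and Lemma~\ref{lem: acyclic signature2}, with no further argument given. Your explanation of why this chaining works (including the role of Lemma~\ref{lem: acyclic signature1} and the definition of BBY bijection) is accurate and more detailed than the paper itself.
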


\begin{remark}
For any regular orthogonal representation $\cC$, Example~\ref{ex: acyclic signature} shows that the set of acyclic signatures for $\cC$, and hence the set of BBY bijections for $\cC$, is non-empty.
\end{remark}

\begin{remark}
    For regular matroids, acyclic circuit signatures have an equivalent definition \cite[Lemma~3.1.1]{BBY2019}. There is a similar result for regular orthogonal representations $\cC$: a circuit signature $\sigma$ of $\cC$ is acyclic if and only if there exists a vector $\vw\in\mathbb{R}^{E\cup E^*}$ such that $\langle\vC,\vw\rangle>0$ for all $\cC\in\sigma$, where $\langle,\rangle$ denotes the standard inner product. 
    We omit the proof, which is similar to the regular matroid case, since this result will not be used in the rest of this paper. 

\end{remark}

\subsection{The BBY torsors}\label{subsec: BBY torsor}

In Section~\ref{sec: cycle reversal}, we have shown that the natural group action of $\J(\cC)$ on $\cR(\cC)$ is simply transitive. 
And in Section~\ref{subsec: BBY bijection},
we have shown that the map $\bar{\beta}$ from $\cB$ to $\cR(\cC)$ is a bijection whenever $\beta$ is triangulating. 

By composing the action and the bijection, we obtain, for every triangulating map $\beta : \cB \to \cO$, a simply transitive action $\Gamma_\beta$ of $\J(\cC)$ on $\cB$ defined by\begin{align*}
\Gamma_\beta\colon\J(\cC) \times \cB &\to \cB \\
(\bJ{\vv},B) &\mapsto \bar{\beta}^{-1}(\bJ{\vv} \cdot \bar{\beta}(B)).
\end{align*}

This makes the set $\cB$ of bases of $M$ into a torsor for the group $\J(\cC)$.

\begin{definition}\label{def: BBY torsor}    
The torsor described above is called a \emph{BBY torsor}. 
\end{definition}

Using Definition~\ref{def: action} and Definition~\ref{def: BBY torsor}, we have the following equivalent ways to describe a BBY torsor. 

\begin{lemma}\label{lem: from basis-action to o-action} Suppose $\beta$ is a triangulating map. For $B_1, B_2\in\cB$ and $\bJ{\vv}\in\J(\cC)$, we have
\begin{align*} 
& \Gamma_\beta(\bJ{\vv}, B_1) = B_2  \\ 
\Leftrightarrow\; & \bJ{\vv} \cdot\bar{\beta}(B_1) = \bar{\beta}(B_2)\\
\Leftrightarrow\; & \bJ{\vv+\beta(B_1)} = \bJ{\beta(B_2)}.
\end{align*}
   
\end{lemma}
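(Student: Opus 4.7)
The plan is straightforward: this lemma is essentially a chain of definitional equivalences, and no combinatorial input is required beyond results already established in the paper.

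First I would prove the equivalence $\Gamma_\beta(\bJ{\vv}, B_1) = B_2 \Leftrightarrow \bJ{\vv}\cdot\bar{\beta}(B_1) = \bar{\beta}(B_2)$. Since $\beta$ is triangulating, Theorem~\ref{thm: triangulating} guarantees that $\bar{\beta}\colon \cB(M)\to \cR(\cC)$ is a bijection. The formula defining $\Gamma_\beta$ from Definition~\ref{def: BBY torsor}, namely $\Gamma_\beta(\bJ{\vv}, B_1) = \bar{\beta}^{-1}(\bJ{\vv}\cdot \bar{\beta}(B_1))$, can therefore be rewritten by applying the bijection $\bar{\beta}$ to both sides, which yields the asserted equivalence with no further work.

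Next I would establish $\bJ{\vv}\cdot\bar{\beta}(B_1) = \bar{\beta}(B_2) \Leftrightarrow \bJ{\vv+\beta(B_1)} = \bJ{\beta(B_2)}$. By Definition~\ref{def: triangulating}(ii), we have $\bar{\beta}(B_i) = \bR{\beta(B_i)}$ for $i = 1, 2$. By Definition~\ref{def: action}, which is a well-defined simply transitive action in view of Theorem~\ref{thm: action}, the identity $\bJ{\vv}\cdot\bR{\beta(B_1)} = \bR{\beta(B_2)}$ holds precisely when $\vv + \beta(B_1) \sim_\mathrm{J} \beta(B_2)$, that is, when $\beta(B_2) - (\vv + \beta(B_1)) \in \pcc$. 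Unwinding the definition of $\sim_\mathrm{J}$, this is exactly the statement $\bJ{\vv + \beta(B_1)} = \bJ{\beta(B_2)}$.

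There is no substantive obstacle here. The only mild subtlety worth flagging is that $\vv + \beta(B_1)$ lies in $\hZ^E$ rather than $\Z^E$, but this causes no issue because $\sim_\mathrm{J}$ is originally defined as an equivalence relation on all of $\Q^E$ and restricts consistently to both $\Z^E$ and $\hZ^E$ (this was already used implicitly in Corollary~\ref{cor: R'}). Thus the lemma follows entirely from a careful bookkeeping of Definitions~\ref{def: Jac}, \ref{def: action}, \ref{def: triangulating}, and~\ref{def: BBY torsor}, together with the bijectivity statement of Theorem~\ref{thm: triangulating}.
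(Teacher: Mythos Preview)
Your proposal is correct and takes essentially the same approach as the paper, which simply remarks that the lemma follows from Definition~\ref{def: action} and Definition~\ref{def: BBY torsor} without giving any further details. Your write-up is in fact more explicit than the paper's, and your observation about $\vv+\beta(B_1)$ living in $\hZ^E$ is a nice point of care.
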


Our next goal is to show that the BBY bijections and BBY torsors are (in a suitable sense) invariant under reorientation.

\begin{definition}
Let $\beta\colon\cB\to\cO$ be a map and let $\sigma$ be a circuit signature for $\cC$. Let $X\subseteq E$. We define 
\begin{align*}
  \beta^X \colon \cB &\to \cO\\
  B &\mapsto \beta(B)^X
\end{align*}
and
\[\sigma^X:=\{\vC^X:\vC\in\sigma\}.\]
\end{definition}
It is straightforward to prove the following results. 
\begin{lemma}\label{lem: BBY cut-switching}
Let $\beta\colon\cB\to\cO$ be a map and let $\sigma$ be a circuit signature. Then:
\begin{enumerate}[label=\rm(\roman*)]
\item $\beta$ is triangulating if and only if $\beta^X$ is triangulating. 
\item $\sigma$ is acyclic if and only if $\sigma^X$ is acyclic. 
\item $\bar{\beta}(B)=\bR{\vO}\Leftrightarrow\overline{\beta^X}(B)=\bR{\vO}^X$. 
\item $\beta_{\sigma^X}=\beta_{\sigma}^X$ (i.e., $\beta_\sigma(B)=\vO\Leftrightarrow\beta_{\sigma^X}(B)=\vO^X$).
\item $\bar{\beta}_\sigma(B)=\bR{\vO}\Leftrightarrow\bar{\beta}_{\sigma^X}(B)=\bR{\vO}^X$. 
\end{enumerate}
\end{lemma}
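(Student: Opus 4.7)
The plan is to prove the five assertions in order, treating (i)--(iii) as direct consequences of how fourientations, signed circuits, and the equivalence relations behave under the involution $\vv \mapsto \vv^X$, and then deducing (iv) and (v) from a small calculation at the level of fundamental circuits.

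The conceptual backbone is to observe that reorientation on $X$ extends naturally to an operation on fourientations by setting $\vF^X(e) = -\vF(e)$ for $e \in X \cup X^*$ and $\vF^X(e) = \vF(e)$ otherwise. With this definition, a direct case check using Definition~\ref{def: from map to fourientation} shows
\[
\vF(B, \beta^X(B)) = \vF(B,\beta(B))^X
\]
for every basis $B$: elements $e \in B$ remain bi-oriented on both sides, while for $e \notin B$ one checks separately the cases $e \in X \cup X^*$ and $e \notin X \cup X^*$, using $\beta(B)^X(\underline{e}) = -\beta(B)(\underline{e})$ in the first case. Combined with the equivalence $\vC \subseteq \vF$ $\Leftrightarrow$ $\vC^X \subseteq \vF^X$ and the fact that $\vC \mapsto \vC^X$ is a bijection $\cC \to \cC^X$, this immediately yields (i): the intersection fourientation for $\beta^X$ with respect to $(B_1,B_2)$ is the reorientation of the one for $\beta$, and hence contains a signed circuit of $\cC^X$ if and only if the latter contains a signed circuit of $\cC$.

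For (ii), the map $\vC \mapsto \vC^X$ is $\mathbb{Z}$-linear as a map on $\mathbb{Z}^{E \cup E^*}$, so a nonnegative linear dependence among elements of $\sigma$ transports to one among elements of $\sigma^X$ and vice versa. For (iii), I would use Lemma~\ref{lem: reorientation and circuit reversal}, which already records that $\bR{\vO_1}^X = \bR{\vO_1^X}$ and that reorientation is a bijection on circuit reversal classes; the statement then reduces to $\bR{\beta^X(B)} = \bR{\beta(B)^X}$, which holds by the very definition of $\beta^X$.

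The remaining parts are essentially bookkeeping. For (iv), I would unwind Definition~\ref{def: from signature to map}: the fundamental circuit $\FC(B,e')$ is determined purely by $M$ and so does not change under reorientation, while the unique circuit in $\sigma^X$ with that support is precisely $\vC^X$ where $\vC$ is the corresponding element of $\sigma$; since $e' \in \{e,e^*\}$, one checks that $\vC^X(e') = \vC(e')$ when $e \notin X$ and $\vC^X(e') = -\vC(e')$ when $e \in X$, which is exactly the rule defining $\beta_\sigma^X(B)$. Part (v) is then the composition of (iii) applied to $\beta = \beta_\sigma$ with (iv). I do not anticipate a genuine obstacle here; the only part that requires careful bookkeeping is the case analysis in step (i), since one must track the four possibilities ($e \in B$ vs.\ $e \notin B$, crossed with $e \in X$ vs.\ $e \notin X$) together with their analogues on $E^*$ to verify the fourientation identity.
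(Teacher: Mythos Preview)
Your proposal is correct and is essentially the routine verification the paper has in mind: the authors simply write ``It is straightforward to prove the following results'' and give no argument. Your approach---extending reorientation to fourientations, checking that $\vF(B,\beta^X(B))=\vF(B,\beta(B))^X$ and that the operations $\cap$, negation, and circuit containment all commute with $(-)^X$, then handling (iv) by tracking the sign of $\vC^X(e')$---is exactly the kind of case analysis that makes the lemma straightforward, so there is nothing to compare.
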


The BBY torsor does not depend on reorientation in the following sense. 
\begin{theorem}\label{thm: BBY up to cut-switching}
Let $B_1, B_2\in\cB$ and let $\bJ{\vv}\in\J(\cC)$. 
\begin{enumerate}[label=\rm(\roman*)]
\item If $\beta$ is a triangulating map, then
\[\Gamma_\beta(\bJ{\vv}, B_1) = B_2 \Leftrightarrow\Gamma_{\beta^X}(\bJ{\vv}^X, B_1) = B_2.\]
\item If $\sigma$ is an acyclic circuit signature, then
\[\Gamma_{\beta_\sigma}(\bJ{\vv}, B_1) = B_2 \Leftrightarrow\Gamma_{\beta_{\sigma^X}}(\bJ{\vv}^X, B_1) = B_2.\]
\end{enumerate}
\end{theorem}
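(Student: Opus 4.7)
The plan is to reduce both statements to a chain of already-proved lemmas. The key observation is that each of the three ingredients that defines the action $\Gamma_\beta$ — the bijection $\bar\beta\colon\cB\to\cR(\cC)$, the group $\J(\cC)$, and the action of $\J(\cC)$ on $\cR(\cC)$ — is compatible with reorientation by a result already in hand: Lemma~\ref{lem: BBY cut-switching}(iii) handles the bijection, Lemma~\ref{lem: reorientation and Jac} handles the group, and Lemma~\ref{lem: reorientation and action} handles the action itself. So part (i) should follow by transporting the defining equation of $\Gamma_\beta$ through these isomorphisms, and part (ii) should be a formal consequence of part (i) applied to $\beta = \beta_\sigma$.

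More concretely, for part (i) I would start by rewriting the left-hand side using Lemma~\ref{lem: from basis-action to o-action} as
\[
\Gamma_\beta(\bJ{\vv}, B_1) = B_2 \;\Longleftrightarrow\; \bJ{\vv}\cdot \bar\beta(B_1) = \bar\beta(B_2) \text{ in } \cR(\cC).
\]
Then Lemma~\ref{lem: reorientation and action} converts this equivalence in $\cR(\cC)$ into the equivalence $\bJ{\vv}^X\cdot \bar\beta(B_1)^X = \bar\beta(B_2)^X$ in $\cR(\cC^X)$. Finally, Lemma~\ref{lem: BBY cut-switching}(iii) identifies $\bar\beta(B_i)^X = \overline{\beta^X}(B_i)$, so the transported equation reads $\bJ{\vv}^X\cdot \overline{\beta^X}(B_1) = \overline{\beta^X}(B_2)$. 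Since $\beta^X$ is triangulating by Lemma~\ref{lem: BBY cut-switching}(i), the action $\Gamma_{\beta^X}$ is defined, and applying Lemma~\ref{lem: from basis-action to o-action} again (now for $\cC^X$) converts this back into $\Gamma_{\beta^X}(\bJ{\vv}^X, B_1) = B_2$.

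For part (ii), the proof is immediate given (i). By Lemma~\ref{lem: BBY cut-switching}(ii), $\sigma^X$ is acyclic, so by Lemma~\ref{lem: acyclic signature2} the map $\beta_{\sigma^X}$ is triangulating; and by Lemma~\ref{lem: BBY cut-switching}(iv), $\beta_{\sigma^X} = (\beta_\sigma)^X$. So applying part (i) with the triangulating map $\beta := \beta_\sigma$ yields exactly the desired equivalence.

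There is no real obstacle here: all the heavy lifting has already been done in Section~\ref{subsec: reorientation invariance} and in the proofs of Theorem~\ref{thm: triangulating} and Lemma~\ref{lem: acyclic signature2}. The only thing one has to check carefully is that each step in the chain of equivalences lands in the correct ambient object (the group $\J(\cC^X)$ acting on the set $\cR(\cC^X)$ of circuit reversal classes of $\cC^X$), which is a bookkeeping exercise using the natural identifications $\bJ{\vv}\leftrightarrow \bJ{\vv}^X$ and $\bR{\vO}\leftrightarrow \bR{\vO}^X$ from Lemmas~\ref{lem: reorientation and Jac} and~\ref{lem: reorientation and circuit reversal}.
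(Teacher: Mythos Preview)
Your proposal is correct and follows essentially the same argument as the paper: a chain of equivalences via Lemma~\ref{lem: from basis-action to o-action}, Lemma~\ref{lem: reorientation and action}, and Lemma~\ref{lem: BBY cut-switching}(iii), with part~(ii) deduced from part~(i) using Lemma~\ref{lem: BBY cut-switching}(iv). If anything, you are more explicit than the paper in naming Lemma~\ref{lem: reorientation and action} for the middle step, which the paper leaves implicit.
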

\begin{proof}
By Lemma~\ref{lem: from basis-action to o-action} and Lemma~\ref{lem: BBY cut-switching}, we have\begin{align*} 
& \Gamma_\beta(\bJ{\vv}, B_1) = B_2  \\ 
\Leftrightarrow\; & \bJ{\vv} \cdot\bar{\beta}(B_1) = \bar{\beta}(B_2)\\
\Leftrightarrow\; & \bJ{\vv}^X \cdot\overline{\beta^X}(B_1) = \overline{\beta^X}(B_2)\\
\Leftrightarrow\; &\Gamma_{\beta^X}(\bJ{\vv}^X, B_1) = B_2. 
\end{align*}
This proves (i), and (ii) is a direct consequence of (i). 
\end{proof}

\section{The canonical torsor for a ribbon graph}
\label{sec:ribbon torsor}

In the previous section, we have defined BBY torsors for regular orthogonal representations of regular orthogonal matroids. In this section, we apply the theory to ribbon graphs $\vG$. We obtain a canonical BBY torsor associated to $\vG$ by defining a natural class $\{ \sigma_p \}$ of acyclic signatures and showing that they all give rise to the same torsor. 
We then define Bernardi-style bijections and Bernardi torsors for $\vG$, and we show that they coincide with the BBY bijections and BBY torsors for the acyclic signatures $\sigma_p$. In particular, the Bernardi torsor is also canonical. 

\subsection{The Jacobian group and ribbon cycle reversal system of a ribbon graph}
Let $\vG$ be a ribbon graph. From $\vG$, we obtain the orthogonal matroid $M(\vG)$ whose bases are in bijection with the spanning quasi-trees of $\vG$ (cf. Lemma~\ref{lem:ribbon basis}). Now choose a reference orientation $\vec{G}$ of the set $E$ of edges of $\vG$, and assign the dual reference orientation to $E^*$ by the ``right-hand rule''. From these data, we obtain the regular orthogonal representation $\cC(\vG,\vec{G})$ of $M(\vG)$ (cf. Definition~\ref{def: ribbon to om}).

\begin{definition}\label{def: ribbon Jacobian}
Let $\vG$ be a ribbon graph with a reference orientation $\vec{G}$. 
\begin{enumerate}[label=\rm(\roman*)]
    \item The \emph{Jacobian group}\footnote{Our notion of the Jacobian group is isomorphic to the {\em critical group} of $\vG$ as defined by Merino, Moffatt, and Noble \cite{MMN2023}; see Appendix~\ref{subsec:bouchet} for a proof. } of $\vG$ is defined to be
\[\J(\vG):=\J(\cC(\vG,\vec{G})).\]
    \item The \emph{ribbon cycle reversal system} of $\vG$ is defined to be
\[\cR(\vG):=\cR(\cC(\vG,\vec{G})).\]
    \item There is a simply transitive group action of $\J(\vG)$ on $\cR(\vG)$, defined as in Definition~\ref{def: action}. 
\end{enumerate}
\end{definition}

The objects defined in Definition~\ref{def: ribbon Jacobian} are independent of the choice of reference orientation $\vec{G}$, in the following sense. To each edge $e$ (resp. coedge $e^*$) of $\vG$ we associate two opposite directed edges (resp. coedges). The standard basis vectors $\ve$ of $\Z^E=\oplus_{e\in E}\Z \ve$ are interpreted as directed edges in $\vec{G}$, and the opposite vectors $-\ve$ are interpreted as the opposite directed edges. The standard basis vectors $\ve^*$ of $\Z^{E^*}=\oplus_{e^*\in E}\Z \ve^*$ are interpreted similarly. A signed ribbon cycle $\vC\in\cC(\vG,\vec{G})$ is interpreted as the set of directed edges and coedges whose sum is $\vC$. Then a different choice of $\vec{G}$ yields a reorientation $\cC(\vG,\vec{G})^X$ of $\cC(\vG,\vec{G})$, where $X$ is the set of edges where the two reference orientations differ (cf. Remark~\ref{rem: reference orientation}). Note that the signed ribbon cycles $\vC\in\cC(\vG,\vec{G})$ and $\vC^X\in\cC(\vG,\vec{G})^X$ are interpreted as the same set of directed edges and coedges. By Lemma~\ref{lem: reorientation and Jac}, we may identify $\bJ{\ve}\in\J(\cC(\vG,\vec{G}))$ with $\bJ{\ve^X}\in\J(\cC(\vG,\vec{G})^X)$, and the two vectors $\ve$ and $\ve^X$ are interpreted as the same directed edge of $\vG$. In this sense, we may identify the Jacobian groups obtained from different reference orientations (and, in particular, these groups are isomorphic to one another).

Similarly, we may identify the ribbon cycle reversal systems $\cR(\cC(\vG,\vec{G}))$ obtained from different reference orientations $\vec{G}$ using Lemma~\ref{lem: reorientation and circuit reversal}. In particular, the circuit reversals $\vO+\pi(\vC)$ and $\vO^X+\pi(\vC^X)$ have the same geometric interpretation: reverse the directions of the edges $e\in\mathrm{supp}(\pi(\vC))$ in $\vO$. By Lemma~\ref{lem: reorientation and action}, the group action of $\J(\cC(\vG,\vec{G}))$ on $\cR(\cC(\vG,\vec{G}))$ is independent of the choice of $\vec{G}$. 

The discussion above leads to a ``geometric'' description of the group action, without referring to the reference orientation $\vec{G}$ (cf. \cite[Section 4.3]{BBY2019}). More precisely, the action of $\J(\vG)$ on $\cR(\vG)$ can be defined by linearly extending the following action of each generator $\bJ{\ve}$ of $\J(\vG)$ on an orientation class $\bR{\vO}$ of $\cR(\vG)$: 
\begin{enumerate}[label=\rm(\roman*)]
\item if the directed edge $\ve$ is not in $\vO$, then reverse the orientation of $e$ in $\vO$ to obtain $\bJ{\ve}\cdot\bR{\vO}$; 
\item otherwise (using Lemma~\ref{lem: sign}), find a signed ribbon cycle compatible with $\vO$, reverse the cycle in $\vO$, and then apply (i).  
\end{enumerate}

It is straightforward to check that this definition is equivalent to Definition~\ref{def: action} for ribbon graphs.

\subsection{The canonical BBY torsor for a ribbon graph}
\label{subsec:ribbon bby}

Recall that the ribbon graph $\vG$ is embedded into a connected closed orientable surface $\Sigma$.

\begin{definition}\label{def: p sign}
Fix a point $p\in \Sigma\setminus(E\cup E^*)$. Let $\sigma_p$ be the circuit signature where every signed circuit $\vC$ is oriented according to the component of $\Sigma\setminus \mathrm{supp}(\vC)$ containing $p$ (cf. Definition~\ref{def: ribbon to om}). 
\end{definition}

\begin{example}\label{ex: point to signature}
Let $p$ be the point shown in Figure~\ref{fig:p on torus}. Then the circuit signature $\sigma_p$ is equal to the acyclic signature $\sigma$ from Example~\ref{ex: signature to map}.

\begin{figure}
    \centering
    \includegraphics[width=0.50\linewidth]{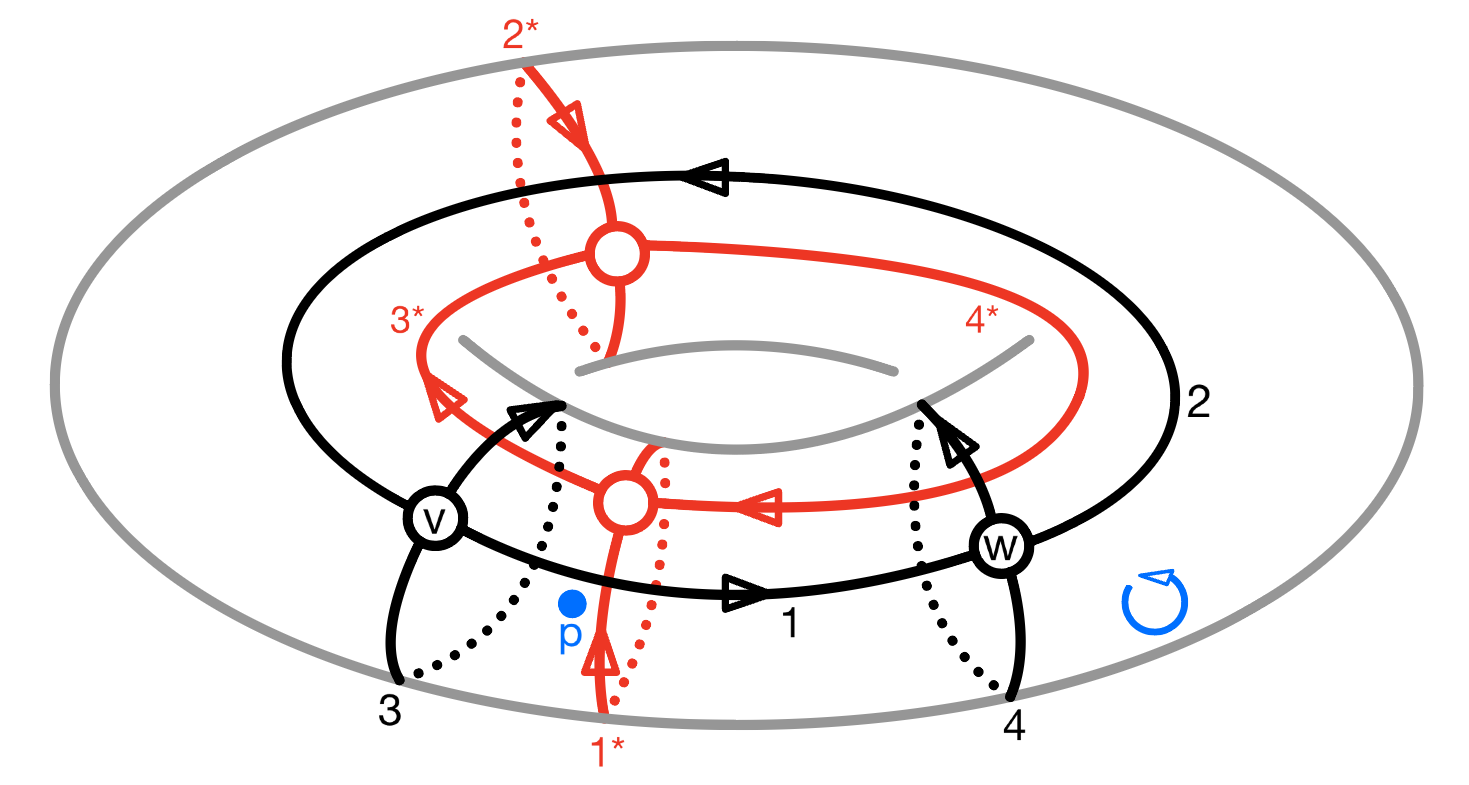}
    \caption{The point $p$ is near the intersection of the edge $1$ and the coedge $1^*$, under the edge $1$ and to the left of the coedge $1^*$. %
    The black and red arrows indicate the reference orientations of the underlying graph and its dual.}
    \label{fig:p on torus}
\end{figure}
    
\end{example}

\begin{proposition}\label{prop: p-signature}
The circuit signature $\sigma_p$ is acyclic.     
\end{proposition}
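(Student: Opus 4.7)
The plan is to interpret each signed ribbon cycle $\vC \in \sigma_p$ as the cellular boundary of a $2$-chain on $\Sigma$ containing the point $p$, and then to derive a contradiction from any non-trivial non-negative relation by comparing the multiplicities of the associated $2$-chain at $p$ versus at a point on the ``far side'' of some $\mathrm{supp}(\vC_j)$.

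More precisely, I will first observe that by Lemma~\ref{lem:ribbon cycle} together with the minimality of circuits, for each signed ribbon cycle $\vC$ the complement $\Sigma \setminus \mathrm{supp}(\vC)$ has exactly two connected components, each of which has $\mathrm{supp}(\vC)$ as its full boundary. (Otherwise one could delete an edge of $\mathrm{supp}(\vC)$ and the complement would still be disconnected, contradicting minimality.) Therefore $\sigma_p$ is well defined, and if $\Sigma^+_\vC$ denotes the component containing $p$, then Definition~\ref{def: ribbon to om} literally says that the sign vector $\vC$ (viewed as a $1$-chain $\sum_e \vC(e)\,e$ with each edge or coedge oriented by the reference orientation) equals the cellular boundary $\partial \Sigma^+_\vC$ of $\Sigma^+_\vC$ (viewed as a $2$-chain with the orientation induced from~$\Sigma$), computed in the CW structure on $\Sigma$ whose $1$-skeleton is $G \cup G^*$.

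Now suppose for contradiction that $\sum_{i=1}^{m} \lambda_i \vC_i = 0$ in $\Q^{E \cup E^*}$ with $\lambda_i \geq 0$, $\vC_i \in \sigma_p$, and not all $\lambda_i$ zero. Setting $\Sigma_i := \Sigma^+_{\vC_i}$ and applying $\partial$, the relation becomes
\[
\partial\!\left( \sum_{i=1}^{m} \lambda_i \Sigma_i \right) \;=\; 0
\]
in the cellular chain complex $C_*(\Sigma;\Q)$. Since $\Sigma$ is a connected closed oriented surface and has no cells in dimension $\geq 3$, every $2$-cycle is a rational multiple of the fundamental class $[\Sigma]$; thus $\sum_i \lambda_i \Sigma_i = k\,[\Sigma]$ as $2$-chains, for a unique $k \in \Q$.

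The contradiction is now obtained by evaluating multiplicities on individual $2$-cells. At any $2$-cell $F$ containing $p$, each $\Sigma_i$ contains $p$ and hence contains $F$, giving multiplicity $\sum_i \lambda_i = k$. On the other hand, choose any $j$ with $\lambda_j > 0$; since $\Sigma \setminus \mathrm{supp}(\vC_j)$ has a second component (the complement of $\Sigma_j$), we may pick a $2$-cell $F'$ inside it. Then $F' \not\subseteq \Sigma_j$, so the multiplicity of $\sum_i \lambda_i \Sigma_i$ at $F'$ is at most $\sum_{i \neq j} \lambda_i < k$, contradicting $\sum_i \lambda_i \Sigma_i = k\,[\Sigma]$. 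Hence all $\lambda_i = 0$, and $\sigma_p$ is acyclic. The only step requiring any real care is the identification $\vC_i = \partial \Sigma_i$ as $1$-chains, which however is built directly into the sign convention of Definition~\ref{def: ribbon to om}; everything after that is a short chain-level argument on $\Sigma$.
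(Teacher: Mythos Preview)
Your argument is correct and takes a genuinely different route from the paper. The paper equips $\Sigma$ with a smooth structure and a volume form $\omega$, uses the vanishing $H^2_{\mathrm{dR}}(\Sigma\setminus\{p\})=0$ to write $\omega=d\alpha$ on the punctured surface, and applies Stokes' theorem: for each $\vC\in\sigma_p$, with $A_{\vC}$ the component of $\Sigma\setminus\mathrm{supp}(\vC)$ \emph{not} containing $p$, one has $\int_{A_{\vC}}\omega=-\int_{\vC}\alpha$, so a nontrivial relation $\sum_i\lambda_i\vC_i=0$ with $\lambda_i\ge 0$ would force $\sum_i\lambda_i\int_{A_{\vC_i}}\omega=0$, impossible since each area is positive. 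Your approach is the Poincar\'e-dual cellular version: you read $\vC_i=\partial\Sigma_i$ directly from Definition~\ref{def: ribbon to om}, use $H_2(\Sigma;\Q)\cong\Q$ to conclude $\sum_i\lambda_i\Sigma_i=k[\Sigma]$, and then compare the multiplicity at the $2$-cell containing $p$ (which lies in every $\Sigma_i$) with that at a $2$-cell outside some $\Sigma_j$. The only point worth spelling out a bit more is that the cellular $1$-skeleton of $G\cup G^*$ consists of half-edges and half-coedges, so one should note that for $e\in\mathrm{supp}(\vC_i)$ both half-edges of $e$ receive the same coefficient in $\partial\Sigma_i$ (the two adjacent $2$-cells on a given side of $e$ are joined across $e^*\notin\mathrm{supp}(\vC_i)$), while for $e\notin\mathrm{supp}(\vC_i)$ the half-edges cancel; this makes the identification $\vC_i\leftrightarrow\partial\Sigma_i$ precise. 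Your argument buys elementarity---no smooth structures, no de Rham theory, no vanishing theorem for noncompact manifolds---at the cost of a small bookkeeping check; the paper's argument is shorter once the analytic machinery is granted.
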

\begin{proof}
Give $\Sigma$ a smooth structure. Let $\omega$ be a volume form on $\Sigma$, which exists because $\Sigma$ is orientable. Then $\omega$ will also be a volume form on $\Sigma\setminus\{p\}$. It is a well-known fact that the top de Rham cohomology of a noncompact connected orientable smooth manifold vanishes; see \cite[Theorem 17.32]{Lee}. Hence $H^2_{\text{dR}}(\Sigma\setminus\{p\})=0$, which implies that we can write the closed $2$-form $\omega$ on $\Sigma\setminus\{p\}$ as an exact form $\omega=\text{d} \alpha$. For any circuit $C$ of $M(\vG)$, let $A_C$ be the connected component of $\Sigma\setminus C$ \emph{not} containing $p$. By Stokes' theorem, we have
\[\int_{A_C} \omega=\int_{-\vC} \alpha=-\int_{\vC} \alpha.\]
Now suppose \[\sum\lambda_i\vC_i=0,\]
where $\lambda_i\geq 0$ and $\vC_i\in \sigma$ for all $i$. Then we have 
\[\sum\lambda_i\int_{A_{C_i}} \omega=-\sum\lambda_i\int_{\vC_i} \alpha=-\int_{\sum\lambda_i\vC_i} \alpha=0.\]
Because $\int_{A_{C_i}} \omega$ is always positive, all the coefficients $\lambda_i$ vanish. Therefore the circuit signature $\sigma_p$ is acyclic.  

\end{proof}

For simplicity, we denote \[\beta_p:=\beta_{\sigma_p}.\]
By Proposition~\ref{prop: p-signature} and Corollary~\ref{cor: acyclic to BBY}, the map $\bar{\beta}_p$ is a BBY bijection between the set $\cQ(\vG)$ of spanning quasi-trees of $\vG$ and the set $\cR(\vG)$ of ribbon cycle reversal system of $\vG$. Hence we obtain a simply transitive group action $\Gamma_p$ of the Jacobian group $\J(\vG)$ on $\cQ(\vG)$, given by the formula (cf. Section~\ref{subsec: BBY torsor})
\begin{align*}
\Gamma_p\colon\J(\vG) \times \cQ(\vG) &\to \cQ(\vG) \\
(\bJ{\vv},B) &\mapsto \bar{\beta}_p^{-1}(\bJ{\vv} \cdot \bar{\beta}_p(B)),
\end{align*}
where we identify a spanning quasi-tree $Q\in\mathcal{Q}$ with the basis $B=Q\cup (E\setminus Q)^*$. 

\begin{corollary}\label{cor: ribbon action indep of reference orientation}
The action $\Gamma_p$ is independent of the choice of a reference orientation $\vec{G}$. 
\end{corollary}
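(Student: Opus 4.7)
The plan is to reduce the claim to Theorem~\ref{thm: BBY up to cut-switching}(ii). Let $\vec{G}_1$ and $\vec{G}_2$ be two reference orientations of the edge set $E$, and let $X \subseteq E$ be the set of edges on which they differ. Write $\cC_i := \cC(\vG, \vec{G}_i)$. By Remark~\ref{rem: reference orientation}, $\cC_2 = \cC_1^X$, and by Lemma~\ref{lem: reorientation and Jac} the canonical isomorphism $\J(\cC_1) \xrightarrow{\sim} \J(\cC_2)$ sends $\bJ{\vv}$ to $\bJ{\vv^X}$. This is precisely the identification of the two Jacobian groups alluded to in the geometric discussion following Definition~\ref{def: ribbon Jacobian}: the generator $\bJ{\ve}$ and its image $\bJ{\ve^X}$ correspond to the same directed edge of $\vG$.

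The key step is to check that $\sigma_{p,2} = (\sigma_{p,1})^X$, where $\sigma_{p,i}$ denotes the $p$-signature computed with respect to the reference orientation $\vec{G}_i$. This amounts to unwinding Definition~\ref{def: ribbon to om}: for every circuit $C$ of $M(\vG)$, the connected component $\Sigma'$ of $\Sigma \setminus C$ containing $p$ is determined by the embedding and is independent of the reference orientation. The only change when passing from $\vec{G}_1$ to $\vec{G}_2$ is that the sign assigned to an edge $e \in C$ flips precisely when $e \in X$. Hence for each $\vC \in \sigma_{p,1}$, its reorientation $\vC^X$ is the corresponding signed circuit in $\cC_2$ oriented by the same component $\Sigma'$, and lies in $\sigma_{p,2}$.

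Once the equality $\sigma_{p,2} = (\sigma_{p,1})^X$ is in hand, Theorem~\ref{thm: BBY up to cut-switching}(ii) gives, for all spanning quasi-trees $Q_1, Q_2$ (equivalently, bases of $M(\vG)$) and all $\bJ{\vv} \in \J(\cC_1)$,
\[
\Gamma_{\beta_{\sigma_{p,1}}}(\bJ{\vv}, Q_1) = Q_2 \quad \Longleftrightarrow \quad \Gamma_{\beta_{\sigma_{p,2}}}(\bJ{\vv}^X, Q_1) = Q_2.
\]
Since $\bJ{\vv} \in \J(\cC_1)$ is identified with $\bJ{\vv^X} \in \J(\cC_2)$ under the canonical isomorphism, the two simply transitive actions $\Gamma_p$ on $\cQ(\vG)$ coincide.

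The only nontrivial point in the argument is the identification $\sigma_{p,2} = (\sigma_{p,1})^X$, which I expect to be the main (though minor) obstacle; everything else is a direct application of the reorientation-invariance package of Lemmas~\ref{lem: reorientation and Jac}, \ref{lem: reorientation and circuit reversal}, \ref{lem: reorientation and action} and Theorem~\ref{thm: BBY up to cut-switching}(ii) established in \S\ref{subsec: reorientation invariance}--\S\ref{subsec: BBY torsor}.
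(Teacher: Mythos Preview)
Your proposal is correct and follows the same route as the paper, which simply writes ``This is a direct consequence of Theorem~\ref{thm: BBY up to cut-switching} and Remark~\ref{rem: reference orientation}.'' You have unpacked what that one-line citation means, including the identification $\sigma_{p,2} = (\sigma_{p,1})^X$ (a minor note: the sign flips for all elements in $X \cup X^*$, not just edges in $X$, since flipping the reference orientation of $e$ also flips the dual orientation of $e^*$; your conclusion $\vC^X \in \sigma_{p,2}$ is nonetheless correct).
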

\begin{proof}
This is a direct consequence of Theorem~\ref{thm: BBY up to cut-switching} and Remark~\ref{rem: reference orientation}. 
\end{proof}

Although the bijection $\beta_p$ depends on the choice of $p$ in general, we will show that the group action $\Gamma_p$ does not. 
\begin{theorem}\label{thm: independent p}
The action $\Gamma_p$ is independent of $p$. 
\end{theorem}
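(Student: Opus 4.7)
The plan is to reduce the independence statement to a local computation via the cellular decomposition of $\Sigma$, and then show that crossing a single element of $E\cup E^*$ translates the BBY bijection $\bar\beta_p$ by an element of $\J(\vG)$ that does not depend on the basis. First I will observe that $\sigma_p$ depends only on the open face (i.e., the connected component of $\Sigma\setminus(E\cup E^*)$) containing $p$, since no path inside a single face can cross any circuit of $M(\vG)$ (all circuit supports lie in $E\cup E^*$). By path-connectedness of $\Sigma$ and a general-position perturbation, any two points $p, p'$ can be joined by a path meeting $E\cup E^*$ transversely in finitely many points, so it suffices to establish $\Gamma_p = \Gamma_{p'}$ when $p$ and $p'$ lie in two faces separated by a single element $x \in E\cup E^*$. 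For such adjacent $p, p'$, I will verify that $\sigma_{p'}(C) = -\sigma_p(C)$ when $x \in C$ and $\sigma_{p'}(C) = \sigma_p(C)$ otherwise, because $\mathrm{supp}(\vC)$ separates $\Sigma$ into two components and crossing $x$ swaps sides precisely when $x$ lies on this separator.

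The heart of the argument is the following \emph{key claim}: there exists an element $\vv_x \in \Z^E$, depending on $x$ and on the direction of the crossing but not on the basis, such that
\[
\beta_{p'}(B) - \beta_p(B) \equiv \vv_x \pmod{\pcc}
\]
for every basis $B$ of $M(\vG)$. Granting the key claim, $\bar\beta_{p'}(B) = \bJ{\vv_x}\cdot\bar\beta_p(B)$ for every $B$. Since $\J(\vG)$ is abelian, the basis-independent left-translation by $\bJ{\vv_x}$ commutes with the action of $\J(\vG)$ on $\cR(\vG)$, hence
\[
\Gamma_{p'}(\bJ{\vw}, B) = \bar\beta_{p'}^{-1}\bigl(\bJ{\vw}\cdot\bJ{\vv_x}\cdot\bar\beta_p(B)\bigr) = \bar\beta_p^{-1}\bigl(\bJ{\vw}\cdot\bar\beta_p(B)\bigr) = \Gamma_p(\bJ{\vw}, B),
\]
proving the theorem.

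To prove the key claim, I will fix a reference basis $B_0$, set $\vv_x := \beta_{p'}(B_0) - \beta_p(B_0)$, and induct on the number of symmetric exchanges needed to connect $B_0$ to an arbitrary basis $B$ (such a sequence exists by axiom (B)). The inductive step reduces to showing that for one exchange $B \to B' = B\triangle\{y, y^*, z, z^*\}$, the quantity $[\beta_{p'}(B') - \beta_p(B')] - [\beta_{p'}(B) - \beta_p(B)]$ lies in $\pcc$. This is the main obstacle: one must track exactly which fundamental circuits $\FC(\cdot, e')$ contain $x$ before and after the exchange, since these are the only coordinates where $\beta_p$ and $\beta_{p'}$ disagree. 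Lemma~\ref{lem: sum of fund circuit} lets us expand each old fundamental circuit as a $\Z$-linear combination of new ones, and after applying $\pi$ the basis-dependent discrepancies collapse into a sum of terms of the form $\pi(\vC) \in \pcc$, which yields the required congruence.
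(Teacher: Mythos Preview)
Your reduction to adjacent faces and your identification of the key claim are both correct and match the paper exactly: one connects $p$ and $p'$ by a generic path, and it suffices to show that when $p,p'$ lie on opposite sides of a single element $x\in E\cup E^*$, the bijections $\bar\beta_p$ and $\bar\beta_{p'}$ differ by a basis-independent translation in $\J(\vG)$.

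Where your proposal diverges from the paper is in the proof of the key claim. The paper does \emph{not} induct on basis exchanges; it computes $\beta_p(B)-\beta_q(B)$ directly for every $B$ (Lemma~\ref{lem: independent p} and Lemma~\ref{lem: independent p full version}), obtaining the explicit formula
\[
\beta_p(B)-\beta_q(B)=
\begin{cases}
-\delta_e & \text{if } e\notin B,\\
\pi(\vC)-\delta_e & \text{if } e\in B,
\end{cases}
\]
where $\vC$ is the fundamental circuit with $\mathrm{supp}(\vC)=\FC(B,e^*)$ and $\vC(e^*)=1$. In both cases this is congruent to $-\delta_e$ modulo $\pcc$, which is visibly independent of $B$. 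The engine of this computation is Lemma~\ref{lem: 2 fundamental circuits}, the orthogonality identity $\vC_1(e_1)\vC_2(e_1^*)+\vC_1(e_2^*)\vC_2(e_2)=0$ for two fundamental circuits, which pins down exactly which coordinates flip and with what sign when $p$ crosses $e$.

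Your inductive step, as written, has a gap. You invoke Lemma~\ref{lem: sum of fund circuit} to express old fundamental circuits as $\Z$-linear combinations of new ones, but $\beta_p(B)(e)$ is not a linear functional of the signed fundamental circuit: it records only the \emph{sign} of $\vC(e')$ after choosing $\vC\in\sigma_p$. Knowing that $\vC=\sum \vC_f$ does not by itself tell you how the sign at $e'$ relates to the signs of the $\vC_f$'s at their distinguished coordinates, nor how the $\sigma_p$-normalization interacts with the decomposition. The assertion that ``the basis-dependent discrepancies collapse into a sum of terms of the form $\pi(\vC)$'' is exactly the content that needs proof, and carrying it out for a single exchange $B\to B\triangle\{y,y^*,z,z^*\}$ would require tracking how containment of $x$ in $\FC(B,e')$ versus $\FC(B',e')$ changes and matching signs---work that is at least as delicate as the paper's direct computation and for which Lemma~\ref{lem: sum of fund circuit} alone is not sufficient. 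The missing ingredient is precisely the orthogonality relation of Lemma~\ref{lem: 2 fundamental circuits}; once you have it, the direct computation is both shorter and avoids the induction entirely.
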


By Corollary~\ref{cor: ribbon action indep of reference orientation} and Theorem~\ref{thm: independent p}, the action $\Gamma := \Gamma_p$ depends only on the ribbon graph $\vG$ and our convention for how to form a dual orientation. In this sense, we say that the action $\Gamma$ is \emph{canonical}.   

To prove the theorem, we start with a lemma. This lemma follows from Bouchet's results on circle graphs~\cite{Bouchet1987c} and multimatroids~\cite{Bouchet2001}, but we give a direct proof here for completeness.

\begin{lemma}\label{lem: 2 fundamental circuits}
Let $B$ be a basis and let $e_1, e_2$ be distinct elements of $E \cup E^*\setminus B$. For $i\in\{1,2\}$, let $\vC_i$ be a signed circuit such that $\mathrm{supp}(\vC_i)=\FC(B, e_i)$. Then we have
\[\vC_1(e_1)\vC_2(e_1^*)+\vC_1(e_2^*)\vC_2(e_2)=0.\]
In particular, $e_1^*\notin\mathrm{supp}(\vC_2)$ implies $\vC_1(e_2)+\vC_1(e_2^*)=0$. 
\end{lemma}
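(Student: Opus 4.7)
The plan is to derive the identity directly from the defining orthogonality relation of a regular orthogonal representation, namely property~\ref{item:ror} in Definition~\ref{def: regular representation}, which asserts $\langle \vC_1, \vC_2 \rangle = \sum_{e \in E \cup E^*} \vC_1(e)\vC_2(e^*) = 0$. The argument reduces to identifying which terms in this sum can be nonzero, using only the support constraint imposed by the fundamental circuit.

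Concretely, I would first record that since $e_i \notin B$ and $B$ is a transversal, we must have $e_i^* \in B$, and the fundamental circuit $\FC(B, e_i)$ is contained in $B \triangle \{e_i, e_i^*\} = (B \setminus \{e_i^*\}) \cup \{e_i\}$. In particular $\mathrm{supp}(\vC_i) \subseteq (B \setminus \{e_i^*\}) \cup \{e_i\}$ and contains $e_i$. Next I would verify that $e_1 \neq e_2^*$: otherwise $\{e_1, e_1^*\} = \{e_2, e_2^*\}$ would be a skew pair disjoint from $B$, contradicting that $B$ is a transversal. Now in the orthogonality sum, a term $\vC_1(e)\vC_2(e^*)$ is nonzero only when $e \in \mathrm{supp}(\vC_1)$ and $e^* \in \mathrm{supp}(\vC_2)$. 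Splitting on whether $e$ lies in $B$ or equals $e_1$, the support constraints together with $B$ being a transversal force $e \in \{e_1, e_2^*\}$, and these are distinct. The sum therefore collapses to $\vC_1(e_1)\vC_2(e_1^*) + \vC_1(e_2^*)\vC_2(e_2) = 0$, which is the claimed identity.

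For the ``in particular'' statement, the hypothesis $e_1^* \notin \mathrm{supp}(\vC_2)$ gives $\vC_2(e_1^*) = 0$, so the main identity reduces to $\vC_1(e_2^*)\vC_2(e_2) = 0$, and since $e_2 \in \mathrm{supp}(\vC_2)$ forces $\vC_2(e_2) \neq 0$, we conclude $\vC_1(e_2^*) = 0$. On the other hand, $\vC_1(e_2) = 0$ holds automatically: $e_2 \notin B$ and $e_2 \neq e_1$, so $e_2 \notin \mathrm{supp}(\vC_1)$. Combining, $\vC_1(e_2) + \vC_1(e_2^*) = 0$.

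There is no real obstacle here; the main thing to get right is the bookkeeping of which elements can lie in $\mathrm{supp}(\vC_i)$, and the small but essential observation that $e_1 \neq e_2^*$, which is what rules out a degenerate case in which the two terms of the identity would coincide.
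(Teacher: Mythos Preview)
Your proof is correct and follows essentially the same approach as the paper: both expand the orthogonality relation $\langle \vC_1,\vC_2\rangle=0$ and use the containment $\mathrm{supp}(\vC_i)\subseteq (B\setminus\{e_i^*\})\cup\{e_i\}$ to reduce the sum to the two surviving terms. Your explicit verification that $e_1\neq e_2^*$ is a nice touch that the paper leaves implicit.
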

\begin{proof}
Denote $C_i=\mathrm{supp}(\vC_i)$. 
Consider\[\langle \vC_1, \vC_2\rangle = \sum_{e\in E\cup E^*} \vC_1(e)\vC_2(e^*) = 0.\] 
By the definition of fundamental circuits, we have $e_i\in C_i$, $e_i^*\notin C_i$, and $C_i\setminus \{e_i\}\subseteq B$. Thus
\begin{itemize}
    \item when $e\notin\{e_1, e_1^*, e_2, e_2^*\}$, at least one of $C_1$ and $C_2^*$ does not contain $e$, and hence $\vC_1(e)\vC_2(e^*)=0$;
    \item $\vC_1(e_1^*)=\vC_1(e_2)=0$;
    \item $\vC_2(e_2)\neq 0$. 
\end{itemize}
Therefore the only possibly non-zero terms in the sum are $\vC_1(e_1)\vC_2(e_1^*)$ and $\vC_1(e_2^*)\vC_2(e_2)$. Then we get the first desired equality. 

When $e_1^*\notin C_2$, we have $\vC_1(e_1)\vC_2(e_1^*)=0$ and hence $\vC_1(e_2^*)\vC_2(e_2)=0$. Since $\vC_2(e_2)\neq 0$, we get $\vC_1(e_2^*)=0$. Then we get $\vC_1(e_2)+\vC_1(e_2^*)=0$.
\end{proof}

For a given edge $e\in E$, let \[\delta_e:E\to\mathbb{Z}\] be the function that sends $e$ to $1$ and the other edges to $0$.

\begin{lemma}\label{lem: independent p}
    Let $p$ and $q$ be points in different regions of $\Sigma \setminus (E\cup E^*)$ such that $p$ is in the third quadrant and $q$ is in the second quadrant of the $e-e^*$ plane, where $e\in E$; see Figure~\ref{fig: pqrs}. 
    Then 
    \begin{align*}
        \beta_p(B) - \beta_q(B)
        =
        \begin{cases}
            -\delta_e & \text{if } e\notin B, \\
            \pi(\vC)-\delta_e & \text{if } e\in B,
        \end{cases}
    \end{align*}
    where $\vC$ is the signed circuit such that $\mathrm{supp}(\vC) = \FC(B,e^*)$ and $\vC(e^*) = 1$.
\end{lemma}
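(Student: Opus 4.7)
The plan is to compute the difference $\beta_p(B)-\beta_q(B)$ coordinate by coordinate. For each $f\in E$, let $f'\in\{f,f^*\}\setminus B$, and let $\vC_p^f\in\sigma_p$ and $\vC_q^f\in\sigma_q$ be the signed circuits with support $\FC(B,f')$. By Definition~\ref{def: from signature to map},
\[\beta_p(B)(f)-\beta_q(B)(f) \;=\; \tfrac{1}{2}\bigl(\vC_p^f(f')-\vC_q^f(f')\bigr),\]
which is $0$ when $\vC_p^f=\vC_q^f$ and $\pm 1$ when $\vC_p^f=-\vC_q^f$. By Definition~\ref{def: p sign}, these two alternatives are distinguished by whether $p$ and $q$ lie in the same component of $\Sigma\setminus\FC(B,f')$. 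Since $p$ and $q$ occupy adjacent regions of $\Sigma\setminus(E\cup E^*)$ separated only by $e$, this reduces to whether $e\in\FC(B,f')$.

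Next I would characterize when $e\in\FC(B,f')$. If $e\notin B$, then $\FC(B,f')\subseteq B\cup\{f'\}$ contains $e$ iff $f'=e$, i.e.\ iff $f=e$. If $e\in B$, I would apply Lemma~\ref{lem: 2 fundamental circuits} to $\vC_1=\vC$ and $\vC_2=\vC_p^f$; using $\vC(e^*)=1$ this yields
\[\vC_p^f(e) \;=\; -\vC((f')^*)\,\vC_p^f(f').\]
Since $\vC_p^f(f')\neq 0$ always, it follows that $e\in\FC(B,f')$ iff $\vC((f')^*)\neq 0$. Moreover, because $B$ is a transversal and $f\neq e$, at most one of $f,f^*$ can lie in $\mathrm{supp}(\vC)$, and the only candidate is $(f')^*\in B$; hence $\pi(\vC)(f)=\vC((f')^*)$, so the condition becomes $\pi(\vC)(f)\neq 0$.

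To pin down the signs when $\vC_p^f\neq\vC_q^f$, I would use the geometric position of $p$. Since $p$ lies in the third quadrant of the $e$-$e^*$ plane, $p$ sits immediately south of $e$ (taking the reference orientation of $e$ to point east and the dual orientation of $e^*$ to point north). In a sufficiently small neighborhood of the intersection of $e$ and $e^*$, the only part of $\FC(B,f')$ present is $e$ itself, because $e^*\notin\FC(B,f')$ in both cases under consideration (in the case $e\in B$, $e^*\notin B$ and $e^*\neq f'$ since $f\neq e$; in the case $e\notin B$, already $f=e$). Hence the component $\Sigma_p$ of $\Sigma\setminus\FC(B,f')$ containing $p$ locally occupies the southern side of $e$. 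Since $\Sigma$ is oriented counterclockwise and the induced orientation on $\partial\Sigma_p$ keeps $\Sigma_p$ on the left, the induced orientation of $e$ in $\partial\Sigma_p$ runs westward, opposite to the reference orientation of $e$; hence $\vC_p^f(e)=-1$, and symmetrically $\vC_q^f(e)=+1$.

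Finally I assemble the pieces. In the case $e\in B$ with $f\neq e$ and $e\in\FC(B,f')$, substituting $\vC_p^f(e)=-1$ into the identity from step two gives $\vC_p^f(f')=\vC((f')^*)=\pi(\vC)(f)$, so $\beta_p(B)(f)-\beta_q(B)(f)=\pi(\vC)(f)=(\pi(\vC)-\delta_e)(f)$; the remaining values of $f$ (including $f=e$, where $e\notin\FC(B,e^*)=\mathrm{supp}(\vC)$) contribute $0$ on both sides, matching $(\pi(\vC)-\delta_e)(e)=1-1=0$. In the case $e\notin B$, only $f=e$ contributes, and the geometric sign computation directly gives $\beta_p(B)(e)-\beta_q(B)(e)=-1=-\delta_e(e)$. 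The main obstacle throughout is sign bookkeeping: one must carefully align the combinatorial orthogonality of Lemma~\ref{lem: 2 fundamental circuits} with the geometric sign conventions governing induced boundary orientations on~$\Sigma$.
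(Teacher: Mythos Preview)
Your proof is correct and follows essentially the same approach as the paper's: both compute $\beta_p(B)-\beta_q(B)$ coordinate by coordinate, reduce the question to whether $e\in\FC(B,f')$ via the observation that $p$ and $q$ are separated only by $e$, and then invoke Lemma~\ref{lem: 2 fundamental circuits} with $(e_1,e_2)=(e^*,f')$ to link $\vC_p^f(e)$ to $\vC((f')^*)$. The only organizational difference is that you use the lemma first to \emph{derive} the dichotomy $e\in\FC(B,f')\iff\pi(\vC)(f)\neq 0$ and then read off the sign, whereas the paper splits into the two subcases $e\in\FC(B,x')$ and $e\notin\FC(B,x')$ directly and applies the lemma separately in each; your geometric justification that $\vC_p^f(e)=-1$ is also more explicit than the paper's bare assertion ``$\vD\in\sigma_p$''.
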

\begin{proof}
    We compare $\beta_p(B)(x)$ with $\beta_q(B)(x)$ for every $x\in E$. Let $x'$ be the unique element in $\{x,x^*\}\setminus B$.  Recall that the value of $\beta_p(B)(x)$
    is determined by the component of $\Sigma\setminus \FC(B,x')$ containing $p$.  

    We first consider the case $e\notin B$. When $x=e$, we have $x'=e$. Hence we have $\beta_p(B)(e) = -\frac{1}{2}$ and $\beta_q(B)(e) = \frac{1}{2}$. When $x\ne e$, the points $p$ and $q$ are in the same component of $\Sigma\setminus \FC(B,x')$ because one can move from $p$ to $q$ crossing the edge $e\notin \FC(B,x')$. Thus $\beta_p(B)(x) = \beta_q(B)(x)$. Therefore, we have $\beta_p(B) - \beta_q(B) = -\delta_e$ in this case. 

    Next we consider the case where $e\in B$. 
    \begin{itemize}
        \item When $x=e$, we have $x'=e^*$. Because the points $p$ and $q$ are in the same component of $\Sigma\setminus \FC(B,e^*)$, we get $\beta_p(B)(e) = \beta_q(B)(e)$. Since $\pi(\vC)(e)=\vC(e)+\vC(e^*)=1$, the formula $\beta_p(B) - \beta_q(B)=\pi(\vC)-\delta_e$ holds at $x=e$.
        \item When $x\ne e$, there are two cases: \begin{itemize}
            \item If $e\notin \FC(B,x')$, then $p$ and $q$ are in the same component of $\Sigma\setminus \FC(B,x')$ and thus $\beta_p(B)(x) = \beta_q(B)(x)$. By applying Lemma~\ref{lem: 2 fundamental circuits} to $\vC_1=\vC$, $\vC_2=$ a signed circuit with support $\FC(B,x')$, $e_1=e^*$, and $e_2=x'$, we get $\pi(\vC)(x)=0$. Thus the formula $\beta_p(B) - \beta_q(B)=\pi(\vC)-\delta_e$ holds in this case. 
            \item If $e\in \FC(B,x')$,
            then $p$ and $q$ are in different components of $\Sigma\setminus \FC(B,x')$, which implies $\beta_p(B)(x) \ne \beta_q(B)(x)$. 
            Let $\vD$ be the signed circuit such that $\mathrm{supp}(\vD)=\FC(B,x')$ and $\vD(e)=-1$.
            Then $\vD\in\sigma_p$, and we have \[\beta_p(B)(x) = \frac{\vD(x')}{2} \text{ and }\beta_q(B)(x) = -\frac{\vD(x')}{2}.\] Hence we get
            \[\beta_p(B)(x)- \beta_q(B)(x) = \vD(x').\]
            By applying Lemma~\ref{lem: 2 fundamental circuits} to $\vC_1=\vC$ and $\vC_2=\vD$, we have\[\vC(e^*)\vD(e)+\vC((x')^*)\vD(x')=0.\] Thus we have $\vC((x')^*)\vD(x')=1$, which implies $\vC((x')^*) = \vD(x')$. It follows that\[\pi(\vC)(x)=\vD(x').\]
            Then we get \[\beta_p(B)(x) - \beta_q(B)(x) = \pi(\vC)(x) =\pi(\vC)(x) -\delta_e(x).\]
            \end{itemize}
        Therefore, $\beta_p(B) - \beta_q(B) = \pi(\vC) - \delta_e$ also holds at $x\neq e$. \qedhere
    \end{itemize}

\end{proof}

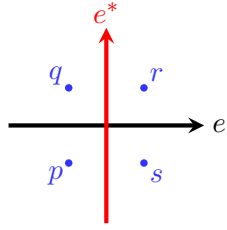
\begin{figure}
    \centering
    \begin{tikzpicture}
        \draw[ultra thick, -stealth] (-1.3,0) -- (1.3,0);
        \draw[ultra thick, -stealth, color=red] (0,-1.3) -- (0,1.3);
        \node at (1.5,0) {$e$};
        \node[color=red] at (0,1.5) {$e^*$};
        
        \node (p) at (-0.5,-0.5)[circle,fill,color=blue!80,inner sep=1pt] {};
        \node[color=blue!80] at (-0.67,-0.67) {$p$};
        
        \node (q) at (-0.5,0.5)[circle,fill,color=blue!80,inner sep=1pt] {};
        \node[color=blue!80] at (-0.67,0.67) {$q$};
        
        \node (r) at (0.5,0.5)[circle,fill,color=blue!80,inner sep=1pt] {};
        \node[color=blue!80] at (0.67,0.67) {$r$};
        
        \node (s) at (0.5,-0.5)[circle,fill,color=blue!80,inner sep=1pt] {};
        \node[color=blue!80] at (0.67,-0.67) {$s$};
    \end{tikzpicture}
    \caption{The points $p$, $q$, $r$, and $s$ defined in Lemma~\ref{lem: independent p} and Lemma~\ref{lem: independent p full version}. }
    \label{fig: pqrs}
\end{figure}

Using the same method, we may prove the following result. 

\begin{lemma}\label{lem: independent p full version}
    Let $p$, $q$, $r$, and $s$ be four points in different regions of $\Sigma \setminus (E\cup E^*)$ such that $p$ is in the third quadrant, $q$ is in the second quadrant, $r$ is in the first quadrant, and $s$ is in the fourth quadrant of the $e-e^*$ plane; see Figure~\ref{fig: pqrs}. Let $\vC$ be the signed circuit such that $\mathrm{supp}(\vC) = \FC(B,e^*)$ and $\vC(e^*) = 1$. Let $\vD$ be the signed circuit such that $\mathrm{supp}(\vD) = \FC(B,e)$ and $\vD(e) = 1$.
    Then 
    \begin{align*}
        \beta_s(B) - \beta_r(B)=\beta_p(B) - \beta_q(B)
        =
        \begin{cases}
            -\delta_e & \text{if } e\notin B, \\
            \pi(\vC)-\delta_e & \text{if } e\in B,
        \end{cases}
    \end{align*}
    \begin{align*}
        \beta_s(B) - \beta_p(B)=\beta_r(B) - \beta_q(B)
        =
        \begin{cases}
            -\delta_e & \text{if } e\in B, \\
            \pi(\vD)-\delta_e & \text{if } e\notin B.
        \end{cases}
    \end{align*}

\end{lemma}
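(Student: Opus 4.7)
The plan is to derive both equalities by recycling the argument of Lemma~\ref{lem: independent p}, applied to different pairs of points and (for the second equality) with the roles of $e$ and $e^*$ exchanged.

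For the first equality $\beta_s(B) - \beta_r(B) = \beta_p(B) - \beta_q(B)$, the observation is that $(s, r)$ stands in exactly the same local configuration with respect to $e$ as $(p, q)$: the short path connecting the two points crosses only $e$, in the same direction (from below to above), and does not cross $e^*$ or any other edge or coedge. The proof of Lemma~\ref{lem: independent p} uses only this local information---it determines, for each $x \in E$, whether $p$ and $q$ lie in the same component of $\Sigma \setminus \FC(B, x')$ by examining whether $e$ belongs to $\FC(B, x')$. That argument therefore applies verbatim to $(s, r)$, giving the first equality, with the common value computed in Lemma~\ref{lem: independent p}.

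For the second equality $\beta_s(B) - \beta_p(B) = \beta_r(B) - \beta_q(B)$, the same reasoning shows that $(p, s)$ and $(q, r)$ are in matching local configurations with respect to $e^*$: both paths cross only $e^*$ and in the same direction, so the net change is the same for both pairs. To compute $\beta_s(B) - \beta_p(B)$ explicitly, I would rerun the proof of Lemma~\ref{lem: independent p} with $e^*$ in place of $e$. For each $x \in E$, the values $\beta_p(B)(x)$ and $\beta_s(B)(x)$ differ precisely when $e^* \in \FC(B, x')$. If $e \in B$ (so $e^* \notin B$), then among the fundamental circuits at $B$ the only one containing $e^*$ is $\FC(B, e^*)$ itself; this contributes the single term $-\delta_e$. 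If $e \notin B$ (so $e^* \in B$), then Lemma~\ref{lem: 2 fundamental circuits} applied with $\vC_1 = \vD$ and $\vC_2$ a signed circuit supported on $\FC(B, x')$ accumulates the contributions across all $x$ and produces $\pi(\vD) - \delta_e$, exactly mirroring the case analysis of Lemma~\ref{lem: independent p}.

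The main obstacle will be carefully tracking signs under the $e \leftrightarrow e^*$ swap, since the dual reference orientation of $e^*$ is induced from that of $e$ by the right-hand rule and is not symmetric under the exchange. In particular, confirming that the sign flip at $x = e$ in the $e \in B$ case produces $-\delta_e$ (rather than $+\delta_e$) requires a careful inspection of which component of $\Sigma \setminus \FC(B, e^*)$ contains $p$ versus $s$ relative to the orientation convention of Definition~\ref{def: ribbon to om}. This is routine unwinding of the definitions, however, and no new ideas beyond those in the proof of Lemma~\ref{lem: independent p} are required.
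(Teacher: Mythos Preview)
Your proposal is correct and matches the paper's approach: the paper itself does not give a separate proof but simply states ``Using the same method, we may prove the following result,'' referring to the proof of Lemma~\ref{lem: independent p}. Your plan—applying that argument verbatim to the pair $(s,r)$ for the first equality, and rerunning it with the roles of $e$ and $e^*$ exchanged (using Lemma~\ref{lem: 2 fundamental circuits} with $\vC_1=\vD$) for the second—is exactly what is intended, and your identification of the sign-tracking as the only point requiring care is accurate.
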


\begin{corollary}\label{cor: equal actions}
Let $p$, $q$, $r$, and $s$ be the four points in Lemma~\ref{lem: independent p full version}. Then the four actions $\Gamma_p$, $\Gamma_q$, $\Gamma_r$, and $\Gamma_s$ are equal.     
\end{corollary}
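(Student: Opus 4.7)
The plan is to reduce the equality of actions to a statement modulo $\pcc$, and then use Lemma~\ref{lem: independent p full version} together with the observation that $\pi(\vC)\in\pcc$ for every signed ribbon cycle $\vC$.

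First I would translate the equality $\Gamma_p=\Gamma_q$ into an algebraic condition. By Lemma~\ref{lem: from basis-action to o-action}, $\Gamma_p(\bJ{\vv},B_1)=B_2$ is equivalent to $\vv+\beta_p(B_1)-\beta_p(B_2)\in\pcc$. Hence $\Gamma_p=\Gamma_q$ holds if and only if, for all bases $B_1,B_2$,
\begin{equation*}
\bigl(\beta_p(B_1)-\beta_p(B_2)\bigr)-\bigl(\beta_q(B_1)-\beta_q(B_2)\bigr)\in\pcc,
\end{equation*}
or equivalently the class of $(\beta_p-\beta_q)(B)$ in $\Z^E/\pcc=\J(\vG)$ does not depend on $B$.

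Next I would invoke Lemma~\ref{lem: independent p full version}, which gives the explicit description
\begin{equation*}
\beta_p(B)-\beta_q(B)=\begin{cases}-\delta_e & \text{if }e\notin B,\\ \pi(\vC_B)-\delta_e & \text{if }e\in B,\end{cases}
\end{equation*}
where $\vC_B$ is a specific signed ribbon cycle with support $\FC(B,e^*)$. The key observation is that the term $\pi(\vC_B)$ always lies in $\pcc$ by the very definition of $\pcc$, so in both cases $\beta_p(B)-\beta_q(B)\equiv -\delta_e \pmod{\pcc}$. In particular, the residue class is $-\bJ{\delta_e}$ regardless of $B$, so $(\beta_p-\beta_q)(B_1)\equiv(\beta_p-\beta_q)(B_2)\pmod{\pcc}$, and we conclude $\Gamma_p=\Gamma_q$.

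The same argument, applied to the second identity of Lemma~\ref{lem: independent p full version}, yields $\Gamma_s=\Gamma_p$ and $\Gamma_r=\Gamma_q$: in each case the difference $\beta_{\bullet}(B)-\beta_{\bullet}(B)$ is of the form $-\delta_e$ or $\pi(\vD_B)-\delta_e$, so is constant modulo $\pcc$. Combining these three equalities gives $\Gamma_p=\Gamma_q=\Gamma_r=\Gamma_s$. There is no real obstacle here; the proof is essentially a one-line algebraic consequence of Lemma~\ref{lem: independent p full version} once one notices that the $\pi(\vC)$-correction term is killed in the Jacobian.
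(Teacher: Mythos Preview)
Your proof is correct and follows essentially the same approach as the paper: both reduce via Lemma~\ref{lem: from basis-action to o-action} to showing that $\beta_p(B)-\beta_q(B)$ has a constant image in $\J(\vG)$ independent of $B$, and then read this off from Lemma~\ref{lem: independent p full version} using that $\pi(\vC)\in\pcc$.
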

\begin{proof}
By Lemma~\ref{lem: from basis-action to o-action}, we have\[\Gamma_p(\bJ{\vv}, B_1) = B_2 \Leftrightarrow \bJ{\vv+\beta_p(B_1)} = \bJ{\beta_p(B_2)},\]and\[\Gamma_q(\bJ{\vv}, B_1) = B_2 \Leftrightarrow \bJ{\vv+\beta_q(B_1)} = \bJ{\beta_q(B_2)}.\]
By Lemma~\ref{lem: independent p full version}, we have \[\bJ{\vv+\beta_p(B_1)}-\bJ{\vv+\beta_q(B_1)}=\bJ{-\delta_e}= \bJ{\beta_p(B_2)}-\bJ{\beta_q(B_2)},\]
and hence
\[\Gamma_p(\bJ{\vv}, B_1) = B_2 \Leftrightarrow \Gamma_q(\bJ{\vv}, B_1) = B_2.\]
This means $\Gamma_p=\Gamma_q$. Similarly, we have $\Gamma_s=\Gamma_r$ and $\Gamma_s=\Gamma_p$.
\end{proof}

\begin{proof}[Proof of Theorem~\ref{thm: independent p}]
For any two points $p_1$ and $p_2$ in $\Sigma\setminus (E\cup E^*)$, one can connect them using a path $\gamma$ avoiding $V\cup V^*$. Each time the path crosses an edge or a coedge, the action remains the same by Corollary~\ref{cor: equal actions}. 
\end{proof}

\subsection{The Bernardi bijections for ribbon graphs}
\label{subsec:ribbon bernardi}

We extend the Bernardi bijections on graphs \cite{BW2018,Bernardi2008} to ribbon graphs, and we show that they can be identified with a special subclass of the BBY bijections introduced in Section~\ref{subsec:ribbon bby}.

Let $\vG = (G,\Sigma)$ be a ribbon graph.
In this subsection, we do not assume a fixed reference orientation of $G$ {\em a priori}.
This approach ensures that our result is as general as possible while maintaining compatibility with the original result by Bernardi~\cite{Bernardi2008}.

We denote by $v_e$ the unique point in the intersection of an edge $e$ and the corresponding coedge $e^*$,
so that $v_e$ divides $e$ (resp. $e^*$) into two half-edges (resp. half-coedges).
Given a half-edge $h$ associated with an edge $e$, let $h^*$ be the half-coedge that immediately follows $h$ in the counter-clockwise ordering among the four half-(co)edges incident with $v_e$; see Figure~\ref{fig: half-edge}. 
We also define $(h^*)^* := h$, so that $*$ is an involution on the set of half-edges and half-coedges. 
In particular, the operation $*$ rotates half-coedges clockwise.

Let $N_\epsilon$ be a small $\epsilon$-neighborhood of a spanning quasi-tree $Q \subseteq E$. 
For $e\in E\cup E^*$, the boundary $\partial N_\epsilon$ intersects $e$ if and only if $e \in Q^* \cup (E\setminus Q)$.
Whenever $\partial N_\epsilon$ intersects $e$, it traverses each of the corresponding half-(co)edges exactly once.
Consequently, we obtain a cyclic ordering, denoted $Q^\circ$, on the $2|E|$ half-(co)edges that intersect $\partial N_\epsilon$ by enumerating them according to the induced orientation on $\partial N_\epsilon$.
See Example~\ref{ex: bernardi}.

We consider a pair $(h_1,h_2)$ of half-edges associated with an edge $e$ as a directed edge $\vec{e}$ with tail $h_1$ and head $h_2$.

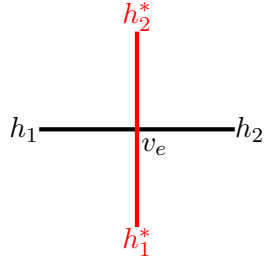
\begin{figure}
    \centering
    \begin{tikzpicture}
        \draw[ultra thick] (-1.3,0) -- (1.3,0);
        \draw[ultra thick, color=red] (0,-1.3) -- (0,1.3);
        \node at (0.23,-0.23) {$v_e$};
        \node at (-1.5,0) {$h_1$};
        \node at (1.5,0) {$h_2$};
        \node[color=red] at (0,-1.5) {$h_1^*$};
        \node[color=red] at (0,1.5) {$h_2^*$};
    \end{tikzpicture}
    \caption{Four half-edges and half-coedges associated with an edge $e$}
    \label{fig: half-edge}
\end{figure}

\begin{definition}\label{def: Bernardi tour}
    Let $h$ be a half-edge and let $Q \subseteq E$ be a spanning quasi-tree.
    Then we obtain a linear ordering $h_0 < h_1 < \ldots < h_{2|E|-1}$ from the cyclic ordering $Q^\circ$ by fixing the least element $h_0$ to be either $h$ or $h^*$ (whichever one appears in $Q^\circ$).
    Let $\gamma_{h}(Q)$ be the orientation of $G$ defined as follows.
    \begin{itemize}
        \item If $e\in Q$, then we orient $e$ by
    \begin{align*}
        \vec{e} = (h_i^*, h_j^*),
    \end{align*}
    where $h_i < h_j$ are the two half-coedges of $e$ appearing in $Q^\circ$.
    \item If $e\notin Q$, then we orient $e$ by
    \begin{align*}
        \vec{e} = (h_j, h_i),
    \end{align*}
     where $h_i < h_j$ are the two half-edges of $e$ appearing in $Q^\circ$.
    \end{itemize}

\end{definition}

\begin{remark}
    If $T$ is a spanning tree of $G$, then $\gamma_h(T)$ is equal to the orientation $\cO_T$ in~\cite{Bernardi2008}.
\end{remark}

\begin{remark}
    For a less formal but more intuitive defintion of $\gamma_h(Q)$, see \S\ref{subsec:Bernardi-intro}.
\end{remark}

\begin{example}\label{ex: bernardi}
    We demonstrate the above process applied to Figure~\ref{fig:torus}.
    Let $h$ be one of the half-edges of $1$ incident with $v$, and let $Q = \{1\}$ be a spanning quasi-tree.
    Then the boundary of a small $\epsilon$-neighborhood of $Q$ intersects $h^*$.
    Fixing the least element $h_0=h^*$, the linear ordering associated with $Q^\circ$ is 
    \begin{align*}
        h^* = h_0 < h_1 < h_2 < h_3 < h_4 < h_5 < h_6 < h_7,
    \end{align*}
    as depicted in Figure~\ref{fig: bernardi}(left).
    Then the orientation $\gamma_h(Q)$ of $G$ is given by $\vec{1} = (h_0^*, h_4^*)$, $\vec{2} = (h_6, h_2)$, $\vec{3} = (h_7, h_5)$, and $\vec{4} = (h_3, h_1)$.
    See Figure~\ref{fig: bernardi}(right). 
\end{example}

\begin{figure}[ht]
    \centering    %
    \begin{tikzpicture}
        \begin{scope}
            \draw[line width=0.6mm, color=gray] (-2.4,-2.4) rectangle (2.4,2.4);
            \draw[line width=0.6mm, color=gray, ->>] (-2.4,0) -- (-2.4,1.8);
            \draw[line width=0.6mm, color=gray, ->>] (2.4,0) -- (2.4,1.8);
            \draw[line width=0.6mm, color=gray, ->] (0,-2.4) -- (0.4,-2.4);
            \draw[line width=0.6mm, color=gray, ->] (0,2.4) -- (0.4,2.4);

            \draw[line width=0.6mm, color=black] 
                (-2.4,0) -- (2.4,0)
                (-0.8,-2.4) -- (-0.8,2.4)
                (0.8,-2.4) -- (0.8,2.4);

            \draw[line width=0.6mm, color=cyan!80!black]
                (-0.8,0) -- (0,0);
            \node[color=cyan!80!black] at (-0.3,0.15) {\tiny $h$};

            \fill[color=cyan!80!black] (-0.3,-0.18) circle (0.05);
            \node[color=cyan!80!black] at (-0.16,-0.23) {\tiny $p$};

            \draw[line width=0.6mm, color=red]
                (-2.4,1.0) -- (2.4,1.0)
                (-1.6,-2.4) -- (-1.6,2.4)
                (0,-2.4) -- (0,2.4);

            \draw[line width=0.6mm, color=black, fill=white]
                (-0.8,0) circle (0.23)
                (0.8,0) circle (0.23);
            \node at (-0.8,0) {$v$};
            \node at (0.8,0) {$w$};

            \draw[line width=0.6mm, color=red, fill=white]
                (-1.6,1.0) circle (0.23)
                (0,1.0) circle (0.23);

            \draw[line width=0.6mm, color=green!80!black]
                (-0.95,-0.45) -- (0.95,-0.45)
                (-0.95,0.45) -- (0.95,0.45)
                (-0.95,-0.45) arc (270:90:0.45)
                (0.95,0.45) arc (90:-90:0.45);
            \draw[line width=0.6mm, color=green!80!black, ->]
                (-0.95,-0.45) -- (0.58,-0.45);

            \node[color=green!80!black] at (0.25,-0.7) {\small $h_0$};
            \node[color=green!80!black] at (1.05,-0.7) {\small $h_1$};
            \node[color=green!80!black] at (1.65,0.21) {\small $h_2$};
            \node[color=green!80!black] at (1.05,0.65) {\small $h_3$};
            \node[color=green!80!black] at (0.25,0.65) {\small $h_4$};
            \node[color=green!80!black] at (-0.55,0.65) {\small $h_5$};
            \node[color=green!80!black] at (-1.6,0.21) {\small $h_6$};
            \node[color=green!80!black] at (-0.55,-0.7) {\small $h_7$};

        \end{scope}
        \begin{scope}[xshift=7cm]
            \draw[line width=0.6mm, color=gray] (-2.4,-2.4) rectangle (2.4,2.4);
            \draw[line width=0.6mm, color=gray, ->>] (-2.4,0) -- (-2.4,1.8);
            \draw[line width=0.6mm, color=gray, ->>] (2.4,0) -- (2.4,1.8);
            \draw[line width=0.6mm, color=gray, ->] (0,-2.4) -- (0.4,-2.4);
            \draw[line width=0.6mm, color=gray, ->] (0,2.4) -- (0.4,2.4);

            \draw[line width=0.6mm, color=black] 
                (-2.4,0) -- (2.4,0)
                (-0.8,-2.4) -- (-0.8,2.4)
                (0.8,-2.4) -- (0.8,2.4);
            \draw[line width=0.6mm, color=black, -stealth] (0.2,0) -- (0.4,0); 
            \draw[line width=0.6mm, color=black, -stealth] (1.8,0) -- (1.57,0); 
            \draw[line width=0.6mm, color=black, -stealth] (-0.8,1.75) -- (-0.8,1.55);
            \draw[line width=0.6mm, color=black, -stealth] (0.8,1.55) -- (0.8,1.75);

            \draw[line width=0.6mm, color=cyan!80!black]
                (-0.8,0) -- (0,0);
            \node[color=cyan!80!black] at (-0.3,0.15) {\tiny $h$};

            \fill[color=cyan!80!black] (-0.3,-0.18) circle (0.05);
            \node[color=cyan!80!black] at (-0.16,-0.23) {\tiny $p$};

            \draw[line width=0.6mm, color=red]
                (-2.4,1.0) -- (2.4,1.0)
                (-1.6,-2.4) -- (-1.6,2.4)
                (0,-2.4) -- (0,2.4);

            \draw[line width=0.6mm, color=black, fill=white]
                (-0.8,0) circle (0.23)
                (0.8,0) circle (0.23);
            \node at (-0.8,0) {$v$};
            \node at (0.8,0) {$w$};

            \draw[line width=0.6mm, color=red, fill=white]
                (-1.6,1.0) circle (0.23)
                (0,1.0) circle (0.23);

            \draw[line width=0.6mm, color=green!80!black]
                (-0.95,-0.45) -- (0.95,-0.45)
                (-0.95,0.45) -- (0.95,0.45)
                (-0.95,-0.45) arc (270:90:0.45)
                (0.95,0.45) arc (90:-90:0.45);
            \draw[line width=0.6mm, color=green!80!black, ->]
                (-0.95,-0.45) -- (0.58,-0.45);
                
            \node[color=black] at (0.33,-0.23) {$1$};
            \node[color=black] at (1.9,-0.23) {$2$};
            \node[color=black] at (-2.1,-0.23) {$2$};
            \node[color=black] at (-0.6,-1.4) {$3$};
            \node[color=black] at (-0.6,1.45) {$3$};
            \node[color=black] at (1.0,-1.4) {$4$};
            \node[color=black] at (1.0,1.45) {$4$};
            
            \node[color=red] at (0.27,-1.4) {$1^*$};
            \node[color=red] at (-1.6+0.27,-1.4) {$2^*$};
            \node[color=red] at (-0.43,1.0-0.23) {$3^*$};
            \node[color=red] at (1.9,1.0-0.23) {$4^*$};
        \end{scope}
    \end{tikzpicture}
    \caption{The orientation $\gamma_h(Q)$ of $G$ in Examples~\ref{ex: bernardi} and~\ref{ex: bernardi2}. The green curve represents the boundary of a small $\epsilon$-neighborhood of $Q=\{1\}$.}
    \label{fig: bernardi}
\end{figure}
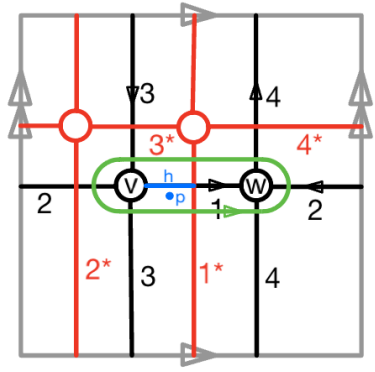

Now fix a reference orientation $\vec{G}$,
and for a half-edge $h$, let $\beta_h'(-,\vec{G}) : \cB(\vG) \to \cO = \{\pm\frac{1}{2}\}^E$ be the map defined by
\begin{align*}
    \beta_h'(B,\vec{G})(e) = 
    \begin{cases}
        +\frac{1}{2} & \text{if the directions of $e$ are the same in $\vec{G}$ and $\gamma_h(E\cap B)$}, \\
        -\frac{1}{2} & \text{otherwise}.
    \end{cases}
\end{align*}

Let $p$ be the point of $\Sigma \setminus (G\cup G^*)$ which is located at distance $\epsilon/2$ from the midpoint of $h$ in the direction of $h^*$; 
see Figure~\ref{fig: bernardi}. %
We denote by $\beta_p(-,\vec{G})$ the corresponding BBY bijection.

\begin{theorem}\label{thm:bernardi and bby}
    The two maps $\beta_h'(-,\vec{G})$ and $\beta_p(-,\vec{G})$ are identical.
\end{theorem}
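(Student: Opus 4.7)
The plan is to show the equality edge-by-edge: for each basis $B$ (equivalently, each spanning quasi-tree $Q = E \cap B$) and each edge $e \in E$, I would verify that $\beta_h'(B,\vec{G})(e) = \beta_p(B,\vec{G})(e)$. Setting $e'$ to be the unique element of $\{e,e^*\} \setminus B$ and letting $\vC \in \sigma_p$ be the unique signed ribbon cycle with $\mathrm{supp}(\vC) = \FC(B,e')$, the point is that both sides amount to an orientation of $e$ compared against $\vec{G}$, so it suffices to show that the two chosen orientations coincide.

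First I would translate the BBY side into the geometry of the ribbon graph. Applying Lemma~\ref{lem:vC and fudamental circuit} to the basis $B$ and the element $e \in E$, the $\epsilon$-neighborhood $N_\epsilon'$ of $(V, Q \triangle \{e\})$ has exactly two boundary components $c, c'$ whose images under $\iota$ are $\pm \vC^*$. The sign of $\vC(e')$ is therefore determined by which of $c, c'$ bounds the connected component of $\Sigma \setminus \mathrm{supp}(\vC)$ containing $p$, together with the induced orientation on that component.

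Next I would connect this to the Bernardi tour. The tour $\partial N_\epsilon$ around $Q$ is a single oriented loop that passes, in some order, through the two half-(co)edges associated to $e$, namely $h_i < h_j$ in the linear ordering on the tour. Passing from $N_\epsilon$ to $N_\epsilon'$ is a local surgery near $e$: the two ``long sides'' of the $e$-ribbon are removed (if $e \in Q$) or added (if $e \notin Q$), with corresponding adjustments on the vertex disks at the endpoints of $e$. This surgery cuts the tour at the passes through $h_i, h_j$ and re-joins the pieces into two loops $c, c'$; the arc of $\partial N_\epsilon$ strictly between the two passes becomes one of the loops (say $c$), and the complementary arc becomes $c'$, with orientations inherited from the tour direction. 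To locate $p$, one uses that $p$ sits just off the tour near the starting half-(co)edge $h_0 \in \{h,h^*\}$, displaced in the direction of $h_0^*$. Analyzing this displacement in the cases $h_0 = h$ and $h_0 = h^*$ determines which of $c, c'$ lies in the same component of $\Sigma \setminus \mathrm{supp}(\vC)$ as $p$.

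Finally I would compare the two signs directly. The Bernardi sign $\beta_h'(B,\vec{G})(e)$ is determined by whether $(h_i^*, h_j^*)$ if $e \in Q$, or $(h_j, h_i)$ if $e \notin Q$, specifies the same direction of $e$ as $\vec{G}$, while $\beta_p(B,\vec{G})(e)$ is determined by whether the reference orientation of $e'$ agrees with the induced orientation on the boundary of the component of $\Sigma \setminus \mathrm{supp}(\vC)$ containing $p$. The right-hand rule defining $\vec{G^*}$ from $\vec{G}$, together with the counter-clockwise convention $h \mapsto h^*$ at $v_e$, precisely identifies ``tour direction at the first of the two passes'' with ``induced orientation on the side of $p$ at $e'$,'' so the two signs must agree in both cases $e \in Q$ and $e \notin Q$. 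The hard part will be the orientation bookkeeping: four conventions must be reconciled simultaneously --- the reference orientation $\vec{G}$, the right-hand rule for $\vec{G^*}$, the counter-clockwise rotation at $v_e$ defining $h \mapsto h^*$, and the induced orientation on $\partial \Sigma'$ relative to the outward normal of $\Sigma$. Moreover, since $p$ lies inside $N_\epsilon$ when the edge containing $h$ belongs to $Q$ and outside otherwise, and since the surgery near $e$ behaves differently when $h$ happens to lie on $e$ itself, a careful case analysis is needed to avoid sign errors.
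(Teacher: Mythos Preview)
Your approach is sound in principle, but the paper takes a slicker route that sidesteps nearly all of the orientation bookkeeping you (rightly) flag as the hard part. The paper's key idea is to change the reference orientation from the arbitrary $\vec{G}$ to $\gamma_h(Q)$ itself. By definition $\beta_h'(B,\gamma_h(Q)) = (1/2,\ldots,1/2)$ (one is comparing $\gamma_h(Q)$ against itself), and both $\beta_h'(-,\cdot)$ and $\beta_p(-,\cdot)$ transform identically under a change of reference orientation (each coordinate flips sign exactly where the two references disagree). So the whole theorem reduces to the single statement (Lemma~\ref{lem:bernardi all positive}) that $\beta_p(B,\gamma_h(Q)) = (1/2,\ldots,1/2)$ as well.

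The proof of that lemma is still a two-case check ($e\in Q$ versus $e\notin Q$), but with a simpler geometric device than your surgery. Rather than cutting the tour into two loops via Lemma~\ref{lem:vC and fudamental circuit} and tracking which loop lies on the side of $p$, the paper takes $N$ to be an $\epsilon/2$-neighborhood so that $p$ actually sits on $\partial N$, and then slides $p$ along the tour to a point $p'$ just before the \emph{first} crossing of the relevant half-(co)edge associated to $e$. Since the slide stays in one component of $\Sigma\setminus\FC(B,e')$, we have $\beta_p(B,\gamma_h(Q))(e) = \beta_{p'}(B,\gamma_h(Q))(e)$, and at $p'$ the sign comparison is a one-line local check against the induced boundary orientation. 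Your direct route would work, but it forces you to carry the arbitrary $\vec{G}$ through the whole argument and to handle separately the case where $h$ lies on $e$; the paper's change-of-reference trick eliminates both complications at once.
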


\begin{example}\label{ex: bernardi2}
    We continue Example~\ref{ex: bernardi}.
    Let $\vec{G}$ be the reference orientation given in Figure~\ref{fig:orientation}.
    The edges that have different directions in $\gamma_h(Q)$ and $\vec{G}$ are $2$ and $3$.
    Therefore, $\beta'_h(12^*3^*4^*,\vec{G}) = \left( \frac{1}{2}, -\frac{1}{2}, -\frac{1}{2}, \frac{1}{2} \right)$, which is equal to $\beta_p(12^*3^*4^*,\vec{G})$ by Example~\ref{ex: signature to map}.
\end{example}

\begin{lemma}\label{lem:bernardi all positive}
    All entries in $\beta_p(B , \gamma_h(Q))$ are $1/2$, where $Q = E\cap B$.
\end{lemma}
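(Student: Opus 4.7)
The claim unpacks, via Definitions~\ref{def: p sign},~\ref{def: from signature to map}, and~\ref{def: ribbon to om}, into the following geometric assertion for each $e \in E$: letting $e' \in \{e,e^*\}\setminus B$ and $\Sigma_p$ denote the component of $\Sigma \setminus \FC(B,e')$ containing $p$, the reference orientation $\gamma_h(Q)$ of $e$ (together with its induced dual orientation on $e^*$) agrees with the induced orientation on $\partial \Sigma_p$ at $e'$. The plan is therefore to locate $\Sigma_p$ geometrically and then compare the resulting boundary orientation with $\gamma_h(Q)$.

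To locate $\Sigma_p$, I would apply Lemma~\ref{lem:vC and fudamental circuit} to the basis $B$ and the element $e$: it shows that the two boundary components $c, c'$ of a small $\epsilon$-neighborhood $N_\epsilon'$ of $(V, Q \triangle \{e\})$ satisfy $\{\iota(c), \iota(c')\} = \{\vC^*, -\vC^*\}$, where $\vC$ is a signed ribbon cycle with $\mathrm{supp}(\vC) = \FC(B, e')$. Consequently, $c$ and $c'$, with their induced orientations from the ambient orientation of $\Sigma$, serve as the oriented boundaries of the two components of $\Sigma \setminus \FC(B, e')$. Next, I would relate $N_\epsilon'$ to the $\epsilon$-neighborhood $N_\epsilon$ of $Q$ (whose boundary carries the Bernardi tour) by either removing a thin strip around $e$ from $N_\epsilon$ (when $e \in Q$) or attaching a thin strip around $e$ to $N_\epsilon$ (when $e \notin Q$). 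This explicit surgery allows us to express arcs of $c$ and $c'$ as concatenations of subarcs of $\partial N_\epsilon$ together with small pieces lying along $e$ (respectively $e^*$). Since $p$ sits just off $\partial N_\epsilon$ at the starting half-coedge $h^*$, tracking $p$ through this surgery determines which of $c, c'$ bounds $\Sigma_p$.

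Once $\Sigma_p$ is identified, for each $x \in \FC(B, e')$ I would compare the induced orientation on $\partial \Sigma_p$ at $x$ with the direction of the Bernardi tour at its first crossing of $x$: at elements $x \neq e'$ the arc of $\partial \Sigma_p$ near $x$ locally coincides with $\partial N_\epsilon$, so the two directions agree; at $x = e'$ the agreement is forced by the placement of $p$ near $h^*$ and the surgery description above. By Definition~\ref{def: Bernardi tour}, $\gamma_h(Q)$ is defined precisely so that its reference orientation on each edge agrees with the tour direction at first traversal (for coedges, through the right-hand-rule dual), yielding $\vC(e') = +1$ as required. The principal obstacle is the careful bookkeeping of orientation conventions---the right-hand-rule dual orientation of coedges, the boundary orientation on $\partial \Sigma_p$ induced from the ambient orientation of $\Sigma$, and the counter-clockwise rotation implicit in the definition $h \mapsto h^*$---all of which must be checked to be mutually compatible; I expect the core of the argument to be a local check at the vertex $v_{e'}$ in the two cases $e \in Q$ and $e \notin Q$, after which the consistency at the remaining elements of $\FC(B, e')$ follows immediately from the fact that $\partial \Sigma_p$ and $\partial N_\epsilon$ coincide there.
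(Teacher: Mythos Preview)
Your plan would eventually work, but it is considerably more elaborate than what the paper does, and a couple of your intermediate claims are off.

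The paper's argument avoids Lemma~\ref{lem:vC and fudamental circuit} and any surgery on neighborhoods entirely. Its key observation is purely combinatorial: the Bernardi tour $\partial N$ crosses only the elements of $B^* = Q^* \cup (E\setminus Q)$, whereas $\FC(B,e') \subseteq B \cup \{e'\}$, so the \emph{only} element of $\FC(B,e')$ that the tour ever crosses is $e'$ itself. Hence you can slide $p$ along the tour to a point $p'$ immediately adjacent to the relevant half-(co)edge of $e'$ while remaining in the same component $\Sigma'$ of $\Sigma\setminus\FC(B,e')$. At $p'$ the local picture is trivial: the induced boundary orientation on $\partial\Sigma'$ at $e'$ is read off directly from the direction of the tour there, and by the very definition of $\gamma_h(Q)$ that direction is the reference orientation of $e$. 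No comparison of $N_\epsilon$ with $N_\epsilon'$, and no appeal to intersection indices, is needed.

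Two specific issues in your write-up. First, only the single value $\vC(e')$ enters the definition of $\beta_p(B)(e)$, so there is no reason to examine $\vC(x)$ for $x\in\FC(B,e')\setminus\{e'\}$; that entire paragraph can be dropped. Second, your claim that ``at elements $x\neq e'$ the arc of $\partial\Sigma_p$ near $x$ locally coincides with $\partial N_\epsilon$'' is not correct as stated: such $x$ lie in $B$, and the tour $\partial N_\epsilon$ never crosses elements of $B$ (it crosses only $B^*$), so there is no ``first crossing'' of $x$ by the tour to speak of. What is true---and what the paper uses---is that the tour stays entirely inside $\Sigma'$ until it meets $e'$, which is a cleaner and sufficient statement.
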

\begin{proof}
    Let $N$ be an $\epsilon/2$-neighborhood of $Q$ in $\Sigma$, so that $p$ is in $\partial N$. %
    For each $e\in E$, we claim that $\beta_p(B , \gamma_h(Q))(e) = 1/2$.
    We denote by $h_1$ and $h_2$ the half-edges of $e$ such that the direction of $e$ in $\gamma_h(Q)$ is $(h_1,h_2)$.

    \textbf{Case I.}
    $e\in Q$.
    Then, as we move along the boundary $\partial N$ (with its induced orientation) away from $p$, we cross $h_1^*$ first and $h_2^*$ second.
    Let $p'$ be a point in $\partial N$ just before we cross $h_1^*$. 
    We denote by $\Sigma'$ the component of $\Sigma \setminus \FC(B, e^*)$ with $p\in \Sigma'$.
    Then $p'\in \Sigma'$ as well, and hence the directed coedge $(h_1^*,h_2^*)$ agrees with the induced orientation on $\partial \Sigma'$.
    Therefore, $\beta_p(B , \gamma_h(Q))(e) = \beta_{p'}(B , \gamma_h(Q))(e) = 1/2$.
    
    \textbf{Case II.}
    $e\in E\setminus Q$.
    Then, as we move along the boundary $\partial N$ away from $p$, we cross $h_2$ first and $h_1$ second.
    Let $p'$ be a point in $\partial N_\epsilon$ just before we cross $h_2$.
    We denote by $\Sigma'$ the component of $\Sigma \setminus \FC(B, e)$ with $p\in \Sigma'$.
    Then $p'\in \Sigma'$ as well, and hence the directed edge $(h_1,h_2)$ 
    agrees with the induced orientation on $\partial \Sigma'$.
    Therefore, $\beta_p(B , \gamma_h(Q))(e) = \beta_{p'}(B , \gamma_h(Q))(e) = 1/2$. 
\end{proof}

\begin{proof}[Proof of Theorem~\ref{thm:bernardi and bby}]
    Let $B$ be a basis of $\vG$ and set $Q:= E\cap B$.
    By definition, $\beta_p(-,\vec{G})(e) = \beta_p(-,\gamma_h(Q))(e)$ if and only if the directions of the edge $e$ in $\vec{G}$ and $\gamma_h(Q)$ are the same.
    The latter condition is equivalent to  $\beta_h'(-,\vec{G})(e) = \beta_h'(-,\gamma_h(Q))(e)$.
    By definition and Lemma~\ref{lem:bernardi all positive}, $\beta_h'(B,\gamma_h(Q)) = (1/2,\ldots,1/2) = \beta_p(B, \gamma_h(Q))$.
    Therefore, $\beta_h'(B,\vec{G}) = \beta_p(B, \vec{G})$.
\end{proof}

\section{Complements}\label{sec: complement}
\subsection{Orientation conventions}\label{sec: opposite convention}
Our theory for ribbon graphs $\vG=(G,\Sigma)$ has been built on the convention that the orientation of the surface $\Sigma$ is \emph{counter-clockwise} and the dual reference orientation is assigned using the \emph{right-hand rule}. 
If we instead use
the \emph{clockwise} orientation of $\Sigma$ and the \emph{left-hand rule} for the dual orientation, this affects the definition of $\sigma_p$ (cf. Definition~\ref{def: p sign}) and the cyclic ordering used to define the Bernardi map $\beta_h'$ (cf. Definition~\ref{def: Bernardi tour}). With these alternative conventions, Theorem~\ref{thm:bernardi and bby} can be stated as
\[\mathfrak{b}_h'(-,\vec{G})=\mathfrak{b}_p(-,\vec{G}),\]
where the Bernardi map $\mathfrak{b}_h'$ and the BBY map $\mathfrak{b}_p$ are defined using the clockwise orientation of $\Sigma$ and the left-hand rule, and $\vec{G}$ is a reference orientation on $E$ independent of the choice of convention.

\subsection{Duality}
We explain how the canonical BBY torsor defined in Section~\ref{sec: BBY} (or equivalently the Bernardi torsor defined in Section~\ref{sec:ribbon torsor}) is compatible with duality:

Given an orthogonal matroid $M=(E\cup E^*, \cB)$, its \emph{dual orthogonal matroid} is defined to be\[M^*=(E\cup E^*, \cB^*),\] 
where\[\cB^*=\{B^*:B\in\cB\}.\]
This generalizes the notion of dual matroids because for any matroid $N$, we have $\mathrm{lift}(N)^*=\mathrm{lift}(N^*)$. Clearly, $(M^*)^*=M$. 

\begin{definition}
Let $\cC$ be a regular orthogonal representation of $M$. Then its \emph{dual} is the regular orthogonal representation\[\cC^*:=\{\vC^*:\vC\in\cC\}\] of $M^*$. 
\end{definition}
It is straightforward to check that the set $\cC^*$ satisfies Definition~\ref{def: regular representation}. 
Because for any $\vC\in\cC$ we have $\pi(\vC)=\pi(\vC^*)$, the \emph{identity map} induces \begin{itemize}
    \item the group isomorphism $\J(\cC)\cong\J(\cC^*)$,
    \item the bijection $\cR(\cC)\to\cR(\cC^*)$, and
    \item the ``isomorphic actions'' $\J(\cC)\circlearrowright\cR(\cC)$ and $\J(\cC^*)\circlearrowright\cR(\cC^*)$.
\end{itemize}
We will identify the two objects in each pair. 

Now we consider the BBY bijections. Given an acyclic circuit signature $\sigma$ of $M$, the set \[\sigma^*:=\{\vC^*:\vC\in\sigma\}\] is clearly an acyclic circuit signature of $M^*$. Note that for any basis $B$ of $M$ and any $e\notin B$, we have $\FC(B^*,e^*)=\FC(B,e)^*$. Therefore, we have
\begin{equation}\label{eq: dual BBY}
\beta_\sigma(B)=\beta_{\sigma^*}(B^*).    
\end{equation}

Next we study the dual of a ribbon graph $\vG$ with edge set $E$. As defined in Section~\ref{subsection: ribbon}, the dual ribbon graph $\vG^*$ has edge set $E^*$, but in this subsection we interchange $E$ and $E^*$ so that we can consider
spanning quasi-trees of $\vG^*$ as subsets of $E$. Clearly, we have:
\begin{itemize}
    \item $M(\vG)^*=M(\vG^*)$,
    \item a subset $Q$ of $E$ is a spanning quasi-tree of $\vG$ if and only if $E\setminus Q$ is a spanning quasi-tree of $\vG^*$, and
    \item a subset $C$ of $E\cup E^*$ is a circuit of $M(\vG)$ if and only if $C^*$ is a circuit of $M(\vG^*)$.
\end{itemize}
We remark that a circuit $C$ of $M(\vG)$ is geometrically the same as the circuit of $C^*$ of $M(\vG^*)$; see Figure~\ref{fig: dual} for example. 

\begin{figure}
    \centering
    \includegraphics[width=0.8\linewidth]{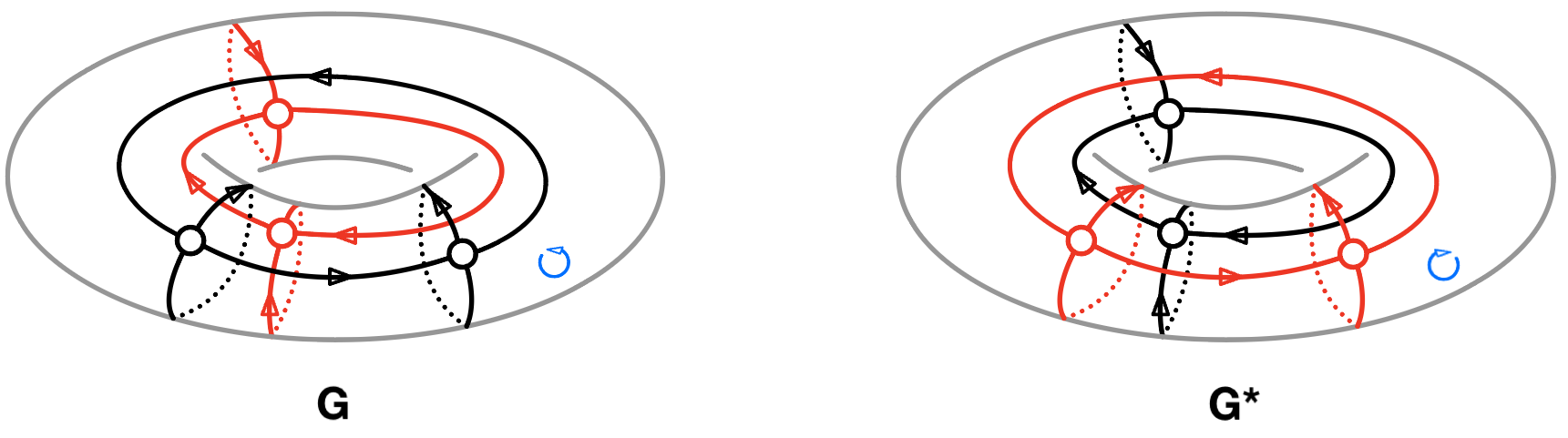}
    \caption{A ribbon graph $\vG$ and its dual $\vG^*$, where the edges are colored black and the coedges are colored red in both $\vG$ and $\vG^*$.
    Each circuit $C$ of $M(\vG)$ in the left figure corresponds to $C^*$ in the right figure, obtained by switching the colors black and red. Moreover, if $\vG$ is given with a reference orientation $\vec{G}$ following the standard convention (the right-hand rule and the counter-clockwise orientation), then this orientation naturally corresponds to the orientation $\vec{G}^*$ for $\vG^*$ with the opposite convention.}
    \label{fig: dual}
\end{figure}

Recall that the regular orthogonal representation $\cC(\vG, \vec{G})$ is obtained from the ribbon graph $\vG$, a reference orientation~$\vec{G}$ of $E$, and the dual reference orientation $\vec{G}^*$ of $E^*$ induced by the right-hand rule. To make our torsors compatible with duality, when we define the regular orthogonal representation $\cC(\vG^*, \vec{G}^*)$ for $M(\vG^*)$, we use the \emph{left-hand} rule (and the clockwise orientation of $\Sigma$), where $\vec{G}^*$ is the reference orientation of $E^*$ for $\vG$ and hence a reference orientation of $E$ for~$\vG^*$; see Figure~\ref{fig: dual}. An immediate consequence is that any signed ribbon cycle $\vC$ of $\vG$ is the same geometric object and the same algebraic object as the signed ribbon cycle $\vC^*$ of $\vG^*$. In particular, we have the identity\[\cC(\vG^*, \vec{G}^*)=\cC(\vG, \vec{G})^*\]and the isomorphism\[\J(\vG)\cong\J(\vG^*)\]
induced by the identity map from $\Z^E$ to $\Z^E$. 

Fix a point $p\in \Sigma\setminus(E\cup E^*)$. By Equation~(\ref{eq: dual BBY}), we get\[
\beta_{\sigma_p}(B)=\beta_{\sigma_p^*}(B^*),\]
where $\sigma_p$ is defined in Definition~\ref{def: p sign} for the ribbon graph $\vG$. Because we use the left-hand rule for $\vG^*$ along with the clockwise surface orientation, we have 
\[
\beta_{\sigma_p^*}(B^*)=-\mathfrak{b}_{p}(B^*,\vec{G}^*).\]
Applying Theorem~\ref{thm:bernardi and bby} to $\vG$ and its left-hand analogue (cf. Section~\ref{sec: opposite convention}) to $\vG^*$, we get
\[\beta_h'(B,\vec{G})=\beta_p(B,\vec{G})(=\beta_{\sigma_p}(B))\]
and
\[\mathfrak{b}_h'(B^*,\vec{G}^*)=\mathfrak{b}_p(B^*,\vec{G}^*).\]The four identities above imply
\begin{equation}\label{eq: dual relation}
\beta_h'(B,\vec{G})=\beta_p(B,\vec{G})=-\mathfrak{b}_{p}(B^*,\vec{G}^*)=-\mathfrak{b}_h'(B^*,\vec{G}^*).    
\end{equation}

Recall that for the ribbon graph $\vG$ (and the right-hand rule), we have the canonical group action
\[\Gamma_p\colon\J(\vG) \times \cQ(\vG) \to \cQ(\vG)\]
determined by the relation
\[\Gamma_p(\bJ{\vv},B_1)=B_2 \Leftrightarrow \bJ{\vv}\cdot\bar{\beta}_p(B_1,\vec{G})=\bar{\beta}_p(B_2,\vec{G}).\]
For the dual ribbon graph $\vG^*$ (and the left-hand rule), the canonical group action\[\Gamma_p^*\colon\J(\vG^*) \times \cQ(\vG^*) \to \cQ(\vG^*)\]is
determined by the relation
\[\Gamma_p^*(\bJ{\vv},B_1^*)=B_2^* \Leftrightarrow \bJ{\vv}\cdot\bar{\mathfrak{b}}_p(B_1^*,\vec{G}^*)=\bar{\mathfrak{b}}_p(B_2^*,\vec{G}^*).\]

By Equation~(\ref{eq: dual relation}), the two group actions satisfy the following property, which means that the canonical torsor $\Gamma_p$ is compatible with duality. 
\begin{theorem}
$\Gamma_p(\bJ{\vv},B_1)=B_2 \Leftrightarrow \Gamma_p^*(-\bJ{\vv},B_1^*)=B_2^*$.
\end{theorem}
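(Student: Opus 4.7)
The plan is to unfold each of $\Gamma_p$ and $\Gamma_p^*$ into the concrete condition on orientations provided by Definition~\ref{def: action}, then invoke the sign flip in Equation~\eqref{eq: dual relation} together with the identification $\langle \pi(\cC(\vG,\vec{G}))\rangle = \langle \pi(\cC(\vG^*,\vec{G}^*))\rangle$ explained just before the theorem.

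First I unwind the left-hand side. By Lemma~\ref{lem: from basis-action to o-action}, $\Gamma_p(\bJ{\vv},B_1)=B_2$ is equivalent to
\[
\vv + \beta_p(B_1,\vec{G}) - \beta_p(B_2,\vec{G}) \in \langle \pi(\cC(\vG,\vec{G}))\rangle.
\]
Similarly, $\Gamma_p^*(-\bJ{\vv},B_1^*)=B_2^*$ is equivalent to
\[
-\vv + \mathfrak{b}_p(B_1^*,\vec{G}^*) - \mathfrak{b}_p(B_2^*,\vec{G}^*) \in \langle \pi(\cC(\vG^*,\vec{G}^*))\rangle.
\]
Now Equation~\eqref{eq: dual relation} gives $\mathfrak{b}_p(B^*,\vec{G}^*) = -\beta_p(B,\vec{G})$ for every basis $B$, so the displayed membership for the dual side becomes
\[
-\bigl(\vv + \beta_p(B_1,\vec{G}) - \beta_p(B_2,\vec{G})\bigr) \in \langle \pi(\cC(\vG^*,\vec{G}^*))\rangle.
\]

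Next I use the fact that $\pi(\vC) = \pi(\vC^*)$ for every signed circuit, since the $e$- and $e^*$-coordinates of $\vC$ and $\vC^*$ are swapped and $\pi$ only records their sum. Consequently the two subgroups $\langle \pi(\cC(\vG,\vec{G}))\rangle$ and $\langle \pi(\cC(\vG^*,\vec{G}^*))\rangle$ of $\Z^E$ coincide, and moreover membership in a subgroup is preserved under negation. Combining these two observations, the two conditions above are literally the same, which proves the equivalence.

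I expect the only subtle point in carrying this out carefully is to confirm that the identifications being made are legitimate: namely that $\cC(\vG^*,\vec{G}^*) = \cC(\vG,\vec{G})^*$ (which is asserted in the paragraph preceding the theorem as a consequence of using the opposite orientation conventions for $\vG^*$), and hence that the Jacobian groups $\J(\vG)$ and $\J(\vG^*)$ are canonically identified via the identity map on $\Z^E$. Once these identifications are in place, no further input is required beyond Lemma~\ref{lem: from basis-action to o-action} and Equation~\eqref{eq: dual relation}, and the proof reduces to the short chain of equivalences above.
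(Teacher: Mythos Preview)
Your proof is correct and follows essentially the same approach as the paper. The paper simply states that the theorem follows from Equation~\eqref{eq: dual relation}, and you have spelled out precisely how: unwind both actions via Lemma~\ref{lem: from basis-action to o-action}, apply the sign flip $\mathfrak{b}_p(B^*,\vec{G}^*)=-\beta_p(B,\vec{G})$, and use $\pi(\vC)=\pi(\vC^*)$ together with $\cC(\vG^*,\vec{G}^*)=\cC(\vG,\vec{G})^*$ to identify the two subgroups of $\Z^E$.
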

This generalizes \cite[Theorem 6.1]{BW2018} which shows that the Bernardi torsor for a plane graph is compatible with duality.

\subsection{Connections to regular matroids}\label{sec: om to m}
We explain how our definitions and results generalize the corresponding ones for regular matroids.

Recall that given a matroid $N = (E,\cB)$, the pair $\mathrm{lift}(N):=(E\cup E^*, \cB')$ is an orthogonal matroid, where $\cB' = \{B\cup (E\setminus B)^* : B\in \cB\}$.
The set of circuits of $\mathrm{lift}(N)$ is the union of $\{C\subseteq E: \text{$C$ is a circuit of $N$}\}$ and $\{D^*\subseteq E^*: \text{$D$ is a cocircuit of $N$}\}$. 

Let $N$ be a regular matroid. Choosing a regular representation of $N$ (i.e., a totally unimodular integer matrix representing $N$), we obtain a set $\cC$ of signed circuits of $N$ and a set $\cD$ of signed cocircuits of $N$, where $\cC,\cD\subseteq\{0,\pm 1\}^E$ and every signed circuit is orthogonal to every signed cocircuit with respect to the standard inner product on $\Z^E$. See \cite[Section 2.1]{BBY2019} for details. The sets $\cC$ and $\cD$ are unique up to reorientations; this result is essentially due to \cite{Camion}.

For every element $\vv\in\cC$ (resp. $\vv\in\cD$), let $\phi(\vv)\in\{0,\pm 1\}^{E\cup E^*}$ be the unique vector such that $\mathrm{supp}(\phi(\vv))\subseteq E$ (resp. $\mathrm{supp}(\phi(\vv))\subseteq E^*$) and $\pi(\phi(\vv))=\vv$. 
Denote\[\cC\cD:=\{\phi(\vv):\vv\in\cC\cup\cD\}\subseteq\{0,\pm 1\}^{E\cup E^*}.\]
The set $\cC\cD$ is a regular orthogonal representation of $\mathrm{lift}(N)$, as one can easily check.

Recall that the \emph{Jacobian group} of $N$ is defined to be \[\J(N):=\Z^E/\langle\cC\cup\cD\rangle.\]      
\begin{proposition}
The Jacobian group of the regular orthogonal representation $\cC\cD$ is naturally isomorphic to the Jacobian group of $N$. 
\end{proposition}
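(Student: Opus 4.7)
The plan is to show that the identity map on $\Z^E$ descends to the desired isomorphism between $\J(\cC\cD)$ and $\J(N)$. Both groups are quotients of $\Z^E$, so it suffices to verify that the two subgroups being quotiented out agree as subsets of $\Z^E$.

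First I would unwind the definitions. By definition, $\J(N) = \Z^E / \langle \cC \cup \cD \rangle$ and, applying Definition~\ref{def: Jac} to the regular orthogonal representation $\cC\cD$ of $\mathrm{lift}(N)$, we have
\[
\J(\cC\cD) \;=\; \Z^E/\langle \pi(\cC\cD)\rangle,
\]
where $\pi(\cC\cD)$ denotes the image of $\cC\cD \subseteq \{0,\pm 1\}^{E\cup E^*}$ under the projection $\pi : \Q^{E\cup E^*} \to \Q^E$, $\pi(\vv)(e) = \vv(e)+\vv(e^*)$.

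The key observation is that $\pi$ and $\phi$ are mutually inverse on the relevant sets: by the very definition of $\phi$, we have $\pi(\phi(\vv)) = \vv$ for every $\vv \in \cC \cup \cD$. Consequently,
\[
\pi(\cC\cD) \;=\; \{\pi(\phi(\vv)) : \vv \in \cC \cup \cD\} \;=\; \cC \cup \cD
\]
as subsets of $\Z^E$. Taking $\Z$-spans gives $\langle \pi(\cC\cD) \rangle = \langle \cC \cup \cD \rangle$ as subgroups of $\Z^E$, so the identity map $\mathrm{id} : \Z^E \to \Z^E$ descends to an isomorphism $\J(\cC\cD) \xrightarrow{\sim} \J(N)$ (indeed the two groups are literally equal as quotients of $\Z^E$). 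This isomorphism is canonical in the sense that it does not depend on any auxiliary choices beyond the regular representation used to define $\cC$ and $\cD$.

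There is no real obstacle in this proof; the only thing to check carefully is the sign/support bookkeeping in the definition of $\phi$, namely that $\phi$ places signed circuits of $N$ on $E$-coordinates and signed cocircuits of $N$ on $E^*$-coordinates, and that $\pi$ then recovers the original vector in each case. Once this is noted, the equality $\pi(\cC\cD) = \cC \cup \cD$ is immediate and the result follows.
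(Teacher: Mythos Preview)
Your proof is correct and follows exactly the same approach as the paper: both argue that $\pi(\phi(\vv))=\vv$ for every $\vv\in\cC\cup\cD$, so $\langle\pi(\cC\cD)\rangle=\langle\cC\cup\cD\rangle$ and the identity on $\Z^E$ induces the isomorphism. The paper's proof is simply a one-line version of what you wrote.
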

\begin{proof}
Because $\langle\pi(\cC\cD)\rangle=\langle\cC\cup\cD\rangle$, the identity map on $\Z^E$ induces the desired isomorphism.     
\end{proof}

We recall the circuit-cocircuit reversal system for regular matroids, defined in \cite{Gioan08}, using our language. The set of orientations is $\{\pm \frac{1}{2}\}^E$. A \emph{circuit reversal} (resp. a \emph{cocircuit reversal}) of the regular matroid $N$ turns an orientation $\vO$ into the orientation $\vO+\vC$ for some $\vC\in\cC$ (resp. for some $\vC\in\cD$). Two orientations $\vO_1$ and $\vO_2$ of $N$ are said to be \emph{equivalent} if one orientation can be obtained from the other by a sequence of circuit reversals and cocircuit reversals. This defines an equivalence relation on the set of orientations. We call an equivalence class for this relation a \emph{circuit-cocircuit reversal class}. The set of circuit-cocircuit reversal classes of $N$ is denoted by $\cR(N)$. 
Because a circuit reversal or a cocircuit reversal $\vO+\vC$ of the regular matroid $N$ is the same as a circuit reversal $\vO+\pi(\phi(\vC))$ of the regular orthogonal representation $\cC\cD$, we have the following result:

\begin{proposition}
$\cR(\cC\cD)=\cR(N)$. 
\end{proposition}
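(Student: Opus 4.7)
The plan is to show that the two equivalence relations appearing in the definitions of $\cR(\cC\cD)$ and $\cR(N)$ coincide as equivalence relations on the common set of orientations $\cO = \{\pm \tfrac{1}{2}\}^{E}$. Once this is done, the equality of the quotient sets is immediate.

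The starting point is the identity $\pi(\phi(\vv)) = \vv$ for every $\vv \in \cC \cup \cD$, which is just the defining property of $\phi$. Consequently
\[
\pi(\cC\cD) \;=\; \{\pi(\phi(\vv)) : \vv \in \cC \cup \cD\} \;=\; \cC \cup \cD,
\]
and since $\cC, \cD$ (hence $\cC\cup\cD$) are closed under negation, the set of elementary moves $\vO \mapsto \vO + \pi(\vD)$ with $\vD \in \cC\cD$ is \emph{exactly} the same set of maps on $\cO$ as the set of elementary moves $\vO \mapsto \vO + \vC$ with $\vC \in \cC \cup \cD$. Namely, every circuit reversal of $\cC\cD$ is either a circuit reversal or a cocircuit reversal of $N$ (depending on whether $\vD$ is supported on $E$ or $E^*$), and conversely every circuit or cocircuit reversal of $N$ arises this way.

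Since the generating sets of elementary moves agree, the equivalence relations they generate on $\cO$ agree, and therefore the equivalence classes agree: $\cR(\cC\cD) = \cR(N)$. There is essentially no obstacle here beyond unwinding the definitions; the only thing to be careful about is that the validity of a given elementary move (the requirement that the result lie in $\cO$) is governed by the same vector $\vC = \pi(\phi(\vC))$ in both settings, so a move is admissible on one side if and only if it is admissible on the other.
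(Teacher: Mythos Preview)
Your proof is correct and follows essentially the same approach as the paper: the paper's argument is simply the one-line observation (stated just before the proposition) that a circuit or cocircuit reversal $\vO+\vC$ of $N$ is the same as the circuit reversal $\vO+\pi(\phi(\vC))$ of $\cC\cD$, which is exactly your identity $\pi(\phi(\vv))=\vv$. You have merely unpacked this in more detail.
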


Similarly, one can check that the following definitions for the regular orthogonal representation $\cC\cD$ are essentially the same as their counterparts for regular matroids. We omit the proofs for brevity. 

\begin{proposition}\;
\begin{enumerate}[label=\rm(\roman*)]
\item The action of $\J(\cC\cD)$ on $\cR(\cC\cD)$ in Definition~\ref{def: action} is the same\footnote{Technically speaking, we use a different convention. The action $\bJ{\vv}\cdot \bR{\vO}$ in our paper is the same as the action $\bJ{-\vv}\cdot \bR{\vO}$ in \cite{BBY2019}.} as the action of $\J(N)$ on $\cR(N)$ in \cite[Section 4.3]{BBY2019}.
\item The map $\beta$ for $\cC\cD$ is triangulating (cf. Definition~\ref{def: triangulating}) if and only if $\beta=f_{\mathcal{A}, \mathcal{A}^*}$ for a pair $(\mathcal{A}, \mathcal{A}^*)$ of triangulating atlases in the sense of \cite[Section 1.4]{Ding2024}. 
\item A circuit signature $\sigma$ of $\cC\cD$ is acyclic if and only if the set \[\sigma_\cC:=\{\vC\in \cC:\phi(\vC)\in\sigma\}\] is an acyclic circuit signature of $N$ and the set \[\sigma_\cD:=\{\vD\in \cD:\phi(\vD)\in\sigma\}\] is an acyclic cocircuit signature of $N$, in the sense of 
\cite[Section 1]{BBY2019}.  \item When the circuit signature $\sigma$ of $\cC\cD$ is acyclic, the BBY bijection $\bar{\beta}_\sigma\colon\cB'(\mathrm{lift}(N))\to\cR(\cC\cD)$ is the same as the BBY bijection induced by a pair $(\sigma_\cC,\sigma_\cD)$ of acyclic signatures defined in \cite[Theorem 1.2.2]{BBY2019}, where we identify each basis $B$ of $N$ with the basis $B\cup (E\setminus B)^*$ of $\mathrm{lift}(N)$. 
\end{enumerate}
\end{proposition}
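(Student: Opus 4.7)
The plan is to treat the four parts in order; each reduces, after the identifications set up in the preceding two propositions, to a short case analysis. I assume throughout the identity maps on $\Z^E$, $\Z^{E\cup E^*}$, and $\cB(N) \leftrightarrow \cB(\mathrm{lift}(N))$ furnished by $\phi$, and I use the key observation that every signed circuit of $\cC\cD$ has support \emph{entirely} in $E$ (if it comes from $\phi(\cC)$) or \emph{entirely} in $E^*$ (if it comes from $\phi(\cD)$).

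For (i), I unfold both actions. By the preceding propositions, the identity on $\Z^E$ gives $\J(\cC\cD) = \J(N)$ and $\cR(\cC\cD) = \cR(N)$. Definition~\ref{def: action} declares $\bJ{\vv}\cdot\bR{\vO_1}=\bR{\vO_2}$ iff $\vv+\vO_1-\vO_2 \in \langle\pi(\cC\cD)\rangle = \langle \cC \cup \cD\rangle$; this is the same relation that defines the action of $\J(N)$ on $\cR(N)$ in \cite[\S4.3]{BBY2019}. For (iii), suppose $\sum \lambda_i \phi(\vC_i)=0$ with $\lambda_i \ge 0$ and $\phi(\vC_i)\in\sigma$. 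By disjoint-support of the $E$-side and $E^*$-side summands, we can separately conclude $\sum_{\phi(\vC_i)\in\sigma_\cC \text{ (lifted)}} \lambda_i \vC_i=0$ and $\sum_{\phi(\vC_i)\in\sigma_\cD \text{ (lifted)}} \lambda_i \vC_i=0$. Hence $\sigma$ is acyclic iff $\sigma_\cC$ and $\sigma_\cD$ are individually acyclic in the sense of \cite[\S1]{BBY2019}.

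For (iv), I carry out the fundamental-circuit computation for $B' := B \cup (E\setminus B)^*$. Fix $e \in E$ and let $e'$ be the unique element of $\{e,e^*\}\setminus B'$. If $e \notin B$ then $e' = e$ and $B'\triangle\{e,e^*\}$ has $E$-part $B \cup \{e\}$ and $E^*$-part $(E\setminus(B\cup\{e\}))^*$; the unique circuit of $\mathrm{lift}(N)$ inside is the usual fundamental circuit $\mathrm{FC}_N(B,e) \subseteq E$, and its signing in $\sigma$ belongs to $\sigma_\cC$. If $e \in B$ then $e'=e^*$ and a parallel computation shows $\FC(B',e^*) = \mathrm{FC}^*_N(B,e)^* \subseteq E^*$ (the fundamental cocircuit), with signing in $\sigma_\cD$. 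Plugging these into Definition~\ref{def: from signature to map} and comparing with the explicit formula in \cite[Thm.~1.2.2]{BBY2019} gives $\beta_\sigma(B') = \beta_{(\sigma_\cC,\sigma_\cD)}(B)$ pointwise.

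The subtlest part is (ii). I would first translate the fourientation $\vF(B',\vO)$ of $E \cup E^*$ into the pair of fourientations of $E$ implicit in Ding's triangulating-atlas framework: the $E$-coordinates of $\vF(B',\vO)$ give a fourientation that bi-orients $B$ and orients $E\setminus B$ by $\vO$, while the $E^*$-coordinates give one that bi-orients $E\setminus B$ and orients $B$ by $\vO$. Since every signed circuit of $\cC\cD$ is supported either in $E$ (giving an element of $\cC$) or in $E^*$ (giving an element of $\cD$ via $\phi^{-1}$), the condition that $\vF(B_1',\beta(B_1'))\cap(-\vF(B_2',\beta(B_2')))$ contain no signed circuit of $\cC\cD$ splits cleanly into a ``no signed circuit of $\cC$'' condition on one atlas and a ``no signed cocircuit of $\cD$'' condition on the other. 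This bijection with the pair $(\mathcal{A},\mathcal{A}^*)$ of triangulating atlases in \cite[\S1.4]{Ding2024} is the main obstacle, since it requires carefully matching conventions for how fourientations are encoded in Ding's setup; once the dictionary is in place the equivalence $\beta = f_{\mathcal{A},\mathcal{A}^*}$ follows tautologically.
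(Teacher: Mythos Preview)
Your proposal is correct and follows exactly the natural approach; the paper itself omits the proof entirely (``We omit the proofs for brevity''), so there is nothing to compare against beyond the implicit claim that the verifications are routine. Your key structural observation---that every signed circuit of $\cC\cD$ is supported either entirely in $E$ or entirely in $E^*$---is precisely what makes each of the four conditions decouple into a circuit part and a cocircuit part, and this is the only idea needed.

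Two minor remarks. First, in (i) you should acknowledge the sign convention flagged in the footnote: the identification is literal on the level of the relation $\vv+\vO_1-\vO_2\in\langle\cC\cup\cD\rangle$, but the paper's action corresponds to the action of $-\vv$ in \cite{BBY2019}. Second, in (ii) you are right that the only real work is the dictionary with Ding's atlas conventions; once you observe that the $E$-restriction of $\vF(B',\vO)$ is exactly the ``basis-biorients, complement-follows-$\vO$'' fourientation and the $E^*$-restriction is the dual version, the equivalence is immediate from the support dichotomy.
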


\section*{Acknowledgements}
Thanks to Nathan Bowler, Matt Larson, Langte Ma, Iain Moffatt, and Wei Wang for helpful discussions, and to Chi Ho Yuen for feedback on the first draft of this manuscript. This work began at the 2024 Workshop on (Mostly) Matroids at the Institute for Basic Science, Daejeon, South Korea, in August 2024, and we would like to thank IBS as well as the organizers of that worshop (S. Backman,
R. Campbell, D. Mayhew, and S. Oum) for providing a stimulating environment. 
The first author was supported by NSF grant DMS-2154224 and a Simons Fellowship in Mathematics.
The third author was supported by the Institute for Basic Science (IBS-R029-C1).

\bibliographystyle{plainurl}
\bibliography{ref.bib}

@article{Kim2025b,
	author = {Kim, Donggyu},
	date-added = {2025-06-17 13:49:05 -0400},
	date-modified = {2025-06-17 13:49:42 -0400},
	doi = {10.1093/imrn/rnaf094},
	fjournal = {International Mathematics Research Notices. IMRN},
	issn = {1073-7928},
	journal = {Int. Math. Res. Not. IMRN},
	keywords = {_myself},
	mrclass = {14M15 (05B35 14T15 52B40)},
	mrnumber = {4892772},
	number = {8},
	pages = {Paper No. rnaf094, 41},
	title = {Baker-{B}owler theory for {L}agrangian {G}rassmannians},
	url = {https://doi.org/10.1093/imrn/rnaf094},
	year = {2025},
	}

@incollection{Bouchet2001,
	author = {Bouchet, Andr\'{e}},
	date-added = {2023-05-17 13:31:16 +0900},
	date-modified = {2023-05-17 13:33:32 +0900},
	doi = {10.1006/eujc.2000.0486},
	fjournal = {European Journal of Combinatorics},
	issn = {0195-6698},
	journal = {European J. Combin.},
	keywords = {delta-matroid},
	mrclass = {05B35},
	mrnumber = {1845490},
	mrreviewer = {Gary Gordon},
	note = {Combinatorial geometries (Luminy, 1999)},
	number = {5},
	pages = {657--677},
	title = {Multimatroids. {III}. {T}ightness and fundamental graphs},
	url = {https://doi.org/10.1006/eujc.2000.0486},
	volume = {22},
	year = {2001},
	}

@article{Bouchet1997,
	author = {Bouchet, Andr\'{e}},
	date-added = {2023-05-17 13:31:19 +0900},
	date-modified = {2023-05-17 13:34:29 +0900},
	doi = {10.1137/S0895480193242591},
	fjournal = {SIAM Journal on Discrete Mathematics},
	issn = {0895-4801},
	journal = {SIAM J. Discrete Math.},
	keywords = {delta-matroid},
	mrclass = {05B35},
	mrnumber = {1477659},
	mrreviewer = {Gary Gordon},
	number = {4},
	pages = {626--646},
	title = {Multimatroids. {I}. {C}overings by independent sets},
	url = {https://doi.org/10.1137/S0895480193242591},
	volume = {10},
	year = {1997},
	}

@incollection{Bouchet1988,
	author = {Bouchet, Andr\'{e}},
	booktitle = {Combinatorics ({E}ger, 1987)},
	date-added = {2023-07-26 13:44:40 +0900},
	date-modified = {2023-07-26 13:45:05 +0900},
	keywords = {delta-matroid},
	mrclass = {05B35},
	mrnumber = {1221555},
	mrreviewer = {Andr\'{a}s Recski},
	pages = {167--182},
	publisher = {North-Holland, Amsterdam},
	series = {Colloq. Math. Soc. J\'{a}nos Bolyai},
	title = {Representability of {$\Delta$}-matroids},
	volume = {52},
	year = {1988},
	}

@article{BCG1996,
	author = {Bouchet, Andr\'{e} and Cunningham, Bill and Geelen, Jim},
	date-added = {2023-04-12 13:19:21 +0900},
	date-modified = {2023-04-12 13:20:07 +0900},
	Xdoi = {10.1007/s004930050033},
	fjournal = {Combinatorica. An International Journal on Combinatorics and the Theory of Computing},
	issn = {0209-9683},
	journal = {Combinatorica},
	keywords = {delta-matroid},
	mrclass = {05C50 (05B35)},
	mrnumber = {1722253},
	mrreviewer = {Gary Gordon},
	number = {4},
	pages = {461--486},
	title = {Principally unimodular skew-symmetric matrices},
	Xurl = {https://doi.org/10.1007/s004930050033},
	volume = {18},
	year = {1998},
	}

@book{Lang1987,
	author = {Lang, Serge},
	date-added = {2024-08-31 17:40:10 +0900},
	date-modified = {2024-09-01 20:53:24 +0900},
	Xdoi = {10.1007/978-1-4757-1949-9},
	edition = {Third},
	isbn = {0-387-96412-6},
	keywords = {book},
	mrclass = {15-01},
	mrnumber = {874113},
	mrreviewer = {S. Lajos},
	pages = {x+285},
	publisher = {Springer-Verlag, New York},
	series = {Undergraduate Texts in Mathematics},
	title = {Linear algebra},
	Xurl = {https://doi.org/10.1007/978-1-4757-1949-9},
	year = {1987},
	bdsk-file-1 = {YnBsaXN0MDDSAQIDBFxyZWxhdGl2ZVBhdGhYYm9va21hcmtfEEIuLi9Cb29rcyBhbmQgTGVjdHVyZSBOb3Rlcy9hbGdlYnJhL0xhbmcxOTg3X0Jvb2tfTGluZWFyQWxnZWJyYS5wZGZPEQT8Ym9va/wEAAAAAAQQMAAAAAAAAAAAAAAAAAAAAAAAAAAAAAAAAAAAAAAAAAAAAAAA7AMAAAUAAAABAQAAVXNlcnMAAAAKAAAAAQEAAGRvbmdneXVraW0AAAcAAAABAQAATGlicmFyeQAQAAAAAQEAAE1vYmlsZSBEb2N1bWVudHMTAAAAAQEAAGNvbX5hcHBsZX5DbG91ZERvY3MAFwAAAAEBAABCb29rcyBhbmQgTGVjdHVyZSBOb3RlcwAHAAAAAQEAAGFsZ2VicmEAHwAAAAEBAABMYW5nMTk4N19Cb29rX0xpbmVhckFsZ2VicmEucGRmACAAAAABBgAABAAAABQAAAAoAAAAOAAAAFAAAABsAAAAjAAAAJwAAAAIAAAABAMAANpaAAAAAAAACAAAAAQDAAAqpgMAAAAAAAgAAAAEAwAAMaYDAAAAAAAIAAAABAMAAP7NAwAAAAAACAAAAAQDAABMzgMAAAAAAAgAAAAEAwAArhYEAAAAAAAIAAAABAMAALAXBAAAAAAACAAAAAQDAADfHDYAAAAAACAAAAABBgAA7AAAAPwAAAAMAQAAHAEAACwBAAA8AQAATAEAAFwBAAAIAAAAAAQAAEHDFouSgAAAGAAAAAECAAABAAAAAAAAAA8AAAAAAAAAAAAAAAAAAAAIAAAABAMAAAYAAAAAAAAABAAAAAMDAAD1AQAACAAAAAEJAABmaWxlOi8vLwwAAAABAQAATWFjaW50b3NoIEhECAAAAAQDAAAAUKEbcwAAAAgAAAAABAAAQcZzhmGAAAAkAAAAAQEAAEM4RDcwMTcxLTBGQkQtNEE0My05OUQ5LURDRDcwNUJBMDEzQRgAAAABAgAAgQAAAAEAAADvEwAAAQAAAAAAAAAAAAAAAQAAAAEBAAAvAAAAAAAAAAEFAAAaAAAAAQEAAE5TVVJMRG9jdW1lbnRJZGVudGlmaWVyS2V5AAAEAAAAAwMAAIsAAAAwAQAAAQIAAGQzMDQ1MWNiNzZiNGU5MGI0NzliMGY0MDYxYWEwZTdmYWFjNmE3N2MxNTYyMTYzYzFlZWUzMjRjOTZmYmE3ZWI7MDA7MDAwMDAwMDA7MDAwMDAwMDA7MDAwMDAwMDA7MDAwMDAwMDAwMDAwMDAyMDtjb20uYXBwbGUuYXBwLXNhbmRib3gucmVhZC13cml0ZTswMTswMTAwMDAwZjswMDAwMDAwMDAwMzYxY2RmOzY3Oy91c2Vycy9kb25nZ3l1a2ltL2xpYnJhcnkvbW9iaWxlIGRvY3VtZW50cy9jb21+YXBwbGV+Y2xvdWRkb2NzL2Jvb2tzIGFuZCBsZWN0dXJlIG5vdGVzL2FsZ2VicmEvbGFuZzE5ODdfYm9va19saW5lYXJhbGdlYnJhLnBkZgDYAAAA/v///wEAAAAAAAAAEQAAAAQQAADEAAAAAAAAAAUQAABsAQAAAAAAABAQAACkAQAAAAAAAEAQAACUAQAAAAAAAAIgAABwAgAAAAAAAAUgAADgAQAAAAAAABAgAADwAQAAAAAAABEgAAAkAgAAAAAAABIgAAAEAgAAAAAAABMgAAAUAgAAAAAAACAgAABQAgAAAAAAADAgAAB8AgAAAAAAAAHAAADEAQAAAAAAABHAAAAUAAAAAAAAABLAAADUAQAAAAAAAIDwAAC0AgAAAAAAAIQCAICoAgAAAAAAAAAIAA0AGgAjAGgAAAAAAAACAQAAAAAAAAAFAAAAAAAAAAAAAAAAAAAFaA==},
	bdsk-url-1 = {https://mathscinet.ams.org/mathscinet-getitem?mr=874113}}

@article{Bouchet1987c,
	author = {Bouchet, Andr\'{e}},
	date-added = {2024-01-02 10:33:27 +0900},
	date-modified = {2024-01-02 10:34:00 +0900},
	Xdoi = {10.1016/0012-365X(87)90132-4},
	fjournal = {Discrete Mathematics},
	issn = {0012-365X},
	journal = {Discrete Math.},
	keywords = {graph, delta-matroid, matroid representation},
	mrclass = {05C75},
	mrnumber = {900943},
	mrreviewer = {Hubert de Fraysseix},
	number = {1-2},
	pages = {203--208},
	title = {Unimodularity and circle graphs},
	Xurl = {https://doi.org/10.1016/0012-365X(87)90132-4},
	volume = {66},
	year = {1987},
	bdsk-file-1 = {YnBsaXN0MDDSAQIDBFxyZWxhdGl2ZVBhdGhYYm9va21hcmtfEFcuLi9QYXBlcnMvbWF0cm9pZC9Cb3VjaGV0L2NpcmNsZSBncmFwaHMvVW5pbW9kdWxhcml0eSBhbmQgY2lyY2xlIGdyYXBocywgQS4gQm91Y2hldC5wZGZPEQVsYm9va2wFAAAAAAQQMAAAAAAAAAAAAAAAAAAAAAAAAAAAAAAAAAAAAAAAAAAAAAAAXAQAAAUAAAABAQAAVXNlcnMAAAAKAAAAAQEAAGRvbmdneXVraW0AAAcAAAABAQAATGlicmFyeQAQAAAAAQEAAE1vYmlsZSBEb2N1bWVudHMTAAAAAQEAAGNvbX5hcHBsZX5DbG91ZERvY3MABgAAAAEBAABQYXBlcnMAAAcAAAABAQAAbWF0cm9pZAAHAAAAAQEAAEJvdWNoZXQADQAAAAEBAABjaXJjbGUgZ3JhcGhzAAAALwAAAAEBAABVbmltb2R1bGFyaXR5IGFuZCBjaXJjbGUgZ3JhcGhzLCBBLiBCb3VjaGV0LnBkZgAoAAAAAQYAAAQAAAAUAAAAKAAAADgAAABQAAAAbAAAAHwAAACMAAAAnAAAALQAAAAIAAAABAMAANpaAAAAAAAACAAAAAQDAAAqpgMAAAAAAAgAAAAEAwAAMaYDAAAAAAAIAAAABAMAAP7NAwAAAAAACAAAAAQDAABMzgMAAAAAAAgAAAAEAwAArxYEAAAAAAAIAAAABAMAANwWBAAAAAAACAAAAAQDAABpGAQAAAAAAAgAAAAEAwAAABwEAAAAAAAIAAAABAMAABr0EAMAAAAAKAAAAAEGAAAcAQAALAEAADwBAABMAQAAXAEAAGwBAAB8AQAAjAEAAJwBAACsAQAACAAAAAAEAABBwu6FfIAAABgAAAABAgAAAQAAAAAAAAAPAAAAAAAAAAAAAAAAAAAACAAAAAQDAAAIAAAAAAAAAAQAAAADAwAA9QEAAAgAAAABCQAAZmlsZTovLy8MAAAAAQEAAE1hY2ludG9zaCBIRAgAAAAEAwAAAFChG3MAAAAIAAAAAAQAAEHGc4ZhgAAAJAAAAAEBAABDOEQ3MDE3MS0wRkJELTRBNDMtOTlEOS1EQ0Q3MDVCQTAxM0EYAAAAAQIAAIEAAAABAAAA7xMAAAEAAAAAAAAAAAAAAAEAAAABAQAALwAAAAAAAAABBQAAGgAAAAEBAABOU1VSTERvY3VtZW50SWRlbnRpZmllcktleQAABAAAAAMDAABLBAAARQEAAAECAAAyOTA5YTdjNDFmNjJlZDkyODM2NzRmMGQ5MjQ2ZTk1NzhkNjk3MDFiY2E5MGM1NGYyM2Y3MWUyZjkyNjBlZmMyOzAwOzAwMDAwMDAwOzAwMDAwMDAwOzAwMDAwMDAwOzAwMDAwMDAwMDAwMDAwMjA7Y29tLmFwcGxlLmFwcC1zYW5kYm94LnJlYWQtd3JpdGU7MDE7MDEwMDAwMGY7MDAwMDAwMDAwMzEwZjQxYTs2NzsvdXNlcnMvZG9uZ2d5dWtpbS9saWJyYXJ5L21vYmlsZSBkb2N1bWVudHMvY29tfmFwcGxlfmNsb3VkZG9jcy9wYXBlcnMvbWF0cm9pZC9ib3VjaGV0L2NpcmNsZSBncmFwaHMvdW5pbW9kdWxhcml0eSBhbmQgY2lyY2xlIGdyYXBocywgYS4gYm91Y2hldC5wZGYAAAAA2AAAAP7///8BAAAAAAAAABEAAAAEEAAA7AAAAAAAAAAFEAAAvAEAAAAAAAAQEAAA/AEAAAAAAABAEAAA7AEAAAAAAAACIAAAyAIAAAAAAAAFIAAAOAIAAAAAAAAQIAAASAIAAAAAAAARIAAAfAIAAAAAAAASIAAAXAIAAAAAAAATIAAAbAIAAAAAAAAgIAAAqAIAAAAAAAAwIAAA1AIAAAAAAAABwAAAHAIAAAAAAAARwAAAFAAAAAAAAAASwAAALAIAAAAAAACA8AAADAMAAAAAAADcAgCAAAMAAAAAAAAACAANABoAIwB9AAAAAAAAAgEAAAAAAAAABQAAAAAAAAAAAAAAAAAABe0=},
	bdsk-url-1 = {https://mathscinet.ams.org/mathscinet-getitem?mr=900943}}

@article{Wenzel1993b,
	author = {Wenzel, Walter},
	date-added = {2023-04-12 10:23:44 +0900},
	date-modified = {2023-04-12 10:27:39 +0900},
	Xdoi = {10.1016/0012-365X(93)90494-E},
	fjournal = {Discrete Mathematics},
	issn = {0012-365X},
	journal = {Discrete Math.},
	keywords = {delta-matroid},
	mrclass = {05B35 (05E05)},
	mrnumber = {1217634},
	mrreviewer = {Andr\'{e} Bouchet},
	number = {1-3},
	pages = {253--266},
	title = {Pfaffian forms and {$\Delta$}-matroids},
	Xurl = {https://doi.org/10.1016/0012-365X(93)90494-E},
	volume = {115},
	year = {1993},
	bdsk-file-1 = {YnBsaXN0MDDSAQIDBFxyZWxhdGl2ZVBhdGhYYm9va21hcmtfEE8uLi9QYXBlcnMvbWF0cm9pZC9EcmVzcyBhbmQgV2VuemVsL1dlbnplbC9QZmFmZmlhbiBmb3JtcyBhbmQgZGVsdGEtbWF0cm9pZHMucGRmTxEFXGJvb2tcBQAAAAAEEDAAAAAAAAAAAAAAAAAAAAAAAAAAAAAAAAAAAAAAAAAAAAAAAEwEAAAFAAAAAQEAAFVzZXJzAAAACgAAAAEBAABkb25nZ3l1a2ltAAAHAAAAAQEAAExpYnJhcnkAEAAAAAEBAABNb2JpbGUgRG9jdW1lbnRzEwAAAAEBAABjb21+YXBwbGV+Q2xvdWREb2NzAAYAAAABAQAAUGFwZXJzAAAHAAAAAQEAAG1hdHJvaWQAEAAAAAEBAABEcmVzcyBhbmQgV2VuemVsBgAAAAEBAABXZW56ZWwAACUAAAABAQAAUGZhZmZpYW4gZm9ybXMgYW5kIGRlbHRhLW1hdHJvaWRzLnBkZgAAACgAAAABBgAABAAAABQAAAAoAAAAOAAAAFAAAABsAAAAfAAAAIwAAACkAAAAtAAAAAgAAAAEAwAA2loAAAAAAAAIAAAABAMAACqmAwAAAAAACAAAAAQDAAAxpgMAAAAAAAgAAAAEAwAA/s0DAAAAAAAIAAAABAMAAEzOAwAAAAAACAAAAAQDAACvFgQAAAAAAAgAAAAEAwAA3BYEAAAAAAAIAAAABAMAAOYXBAAAAAAACAAAAAQDAAAT/lEAAAAAAAgAAAAEAwAAj8H9AQAAAAAoAAAAAQYAABQBAAAkAQAANAEAAEQBAABUAQAAZAEAAHQBAACEAQAAlAEAAKQBAAAIAAAAAAQAAEHD7xEqAAAAGAAAAAECAAABAAAAAAAAAA8AAAAAAAAAAAAAAAAAAAAIAAAABAMAAAgAAAAAAAAABAAAAAMDAAD1AQAACAAAAAEJAABmaWxlOi8vLwwAAAABAQAATWFjaW50b3NoIEhECAAAAAQDAAAAUKEbcwAAAAgAAAAABAAAQcZzhmGAAAAkAAAAAQEAAEM4RDcwMTcxLTBGQkQtNEE0My05OUQ5LURDRDcwNUJBMDEzQRgAAAABAgAAgQAAAAEAAADvEwAAAQAAAAAAAAAAAAAAAQAAAAEBAAAvAAAAAAAAAAEFAAAaAAAAAQEAAE5TVVJMRG9jdW1lbnRJZGVudGlmaWVyS2V5AAAEAAAAAwMAAHMQAAA9AQAAAQIAADlhNzhhMjllM2MyMTlhN2JmNzU4YjVhZDc1MzAyNDVkYmUwMzQ2ZmViMjc4YmU1ZGUzYmM2OGQwMzE1MmY2NWM7MDA7MDAwMDAwMDA7MDAwMDAwMDA7MDAwMDAwMDA7MDAwMDAwMDAwMDAwMDAyMDtjb20uYXBwbGUuYXBwLXNhbmRib3gucmVhZC13cml0ZTswMTswMTAwMDAwZjswMDAwMDAwMDAxZmRjMThmOzY3Oy91c2Vycy9kb25nZ3l1a2ltL2xpYnJhcnkvbW9iaWxlIGRvY3VtZW50cy9jb21+YXBwbGV+Y2xvdWRkb2NzL3BhcGVycy9tYXRyb2lkL2RyZXNzIGFuZCB3ZW56ZWwvd2VuemVsL3BmYWZmaWFuIGZvcm1zIGFuZCBkZWx0YS1tYXRyb2lkcy5wZGYAAAAA2AAAAP7///8BAAAAAAAAABEAAAAEEAAA5AAAAAAAAAAFEAAAtAEAAAAAAAAQEAAA9AEAAAAAAABAEAAA5AEAAAAAAAACIAAAwAIAAAAAAAAFIAAAMAIAAAAAAAAQIAAAQAIAAAAAAAARIAAAdAIAAAAAAAASIAAAVAIAAAAAAAATIAAAZAIAAAAAAAAgIAAAoAIAAAAAAAAwIAAAzAIAAAAAAAABwAAAFAIAAAAAAAARwAAAFAAAAAAAAAASwAAAJAIAAAAAAACA8AAABAMAAAAAAADUAgCA+AIAAAAAAAAACAANABoAIwB1AAAAAAAAAgEAAAAAAAAABQAAAAAAAAAAAAAAAAAABdU=},
	bdsk-url-1 = {https://mathscinet.ams.org/mathscinet-getitem?mr=1217634},
	bdsk-url-2 = {https://doi.org/10.1016/0012-365X(93)90494-E}}

@article{Wenzel1996b,
	author = {Wenzel, Walter},
	date-added = {2023-04-12 10:33:20 +0900},
	date-modified = {2023-04-12 10:33:46 +0900},
	Xdoi = {10.1016/0012-365X(94)00172-F},
	fjournal = {Discrete Mathematics},
	issn = {0012-365X},
	journal = {Discrete Math.},
	keywords = {delta-matroid},
	mrclass = {05B35},
	mrnumber = {1368291},
	mrreviewer = {Andr\'{e} Bouchet},
	number = {1-3},
	pages = {227--252},
	title = {Pfaffian forms and {$\Delta$}-matroids with coefficients},
	Xurl = {https://doi.org/10.1016/0012-365X(94)00172-F},
	volume = {148},
	year = {1996},
	bdsk-file-1 = {YnBsaXN0MDDSAQIDBFxyZWxhdGl2ZVBhdGhYYm9va21hcmtfEGEuLi9QYXBlcnMvbWF0cm9pZC9EcmVzcyBhbmQgV2VuemVsL1dlbnplbC9QZmFmZmlhbiBmb3JtcyBhbmQgZGVsdGEtbWF0cm9pZHMgd2l0aCBjb2VmZmljaWVudHMucGRmTxEFfGJvb2t8BQAAAAAEEDAAAAAAAAAAAAAAAAAAAAAAAAAAAAAAAAAAAAAAAAAAAAAAAGwEAAAFAAAAAQEAAFVzZXJzAAAACgAAAAEBAABkb25nZ3l1a2ltAAAHAAAAAQEAAExpYnJhcnkAEAAAAAEBAABNb2JpbGUgRG9jdW1lbnRzEwAAAAEBAABjb21+YXBwbGV+Q2xvdWREb2NzAAYAAAABAQAAUGFwZXJzAAAHAAAAAQEAAG1hdHJvaWQAEAAAAAEBAABEcmVzcyBhbmQgV2VuemVsBgAAAAEBAABXZW56ZWwAADcAAAABAQAAUGZhZmZpYW4gZm9ybXMgYW5kIGRlbHRhLW1hdHJvaWRzIHdpdGggY29lZmZpY2llbnRzLnBkZgAoAAAAAQYAAAQAAAAUAAAAKAAAADgAAABQAAAAbAAAAHwAAACMAAAApAAAALQAAAAIAAAABAMAANpaAAAAAAAACAAAAAQDAAAqpgMAAAAAAAgAAAAEAwAAMaYDAAAAAAAIAAAABAMAAP7NAwAAAAAACAAAAAQDAABMzgMAAAAAAAgAAAAEAwAArxYEAAAAAAAIAAAABAMAANwWBAAAAAAACAAAAAQDAADmFwQAAAAAAAgAAAAEAwAAE/5RAAAAAAAIAAAABAMAAFCYNwEAAAAAKAAAAAEGAAAkAQAANAEAAEQBAABUAQAAZAEAAHQBAACEAQAAlAEAAKQBAAC0AQAACAAAAAAEAABBw+8ROAAAABgAAAABAgAAAQAAAAAAAAAPAAAAAAAAAAAAAAAAAAAACAAAAAQDAAAIAAAAAAAAAAQAAAADAwAA9QEAAAgAAAABCQAAZmlsZTovLy8MAAAAAQEAAE1hY2ludG9zaCBIRAgAAAAEAwAAAFChG3MAAAAIAAAAAAQAAEHGc4ZhgAAAJAAAAAEBAABDOEQ3MDE3MS0wRkJELTRBNDMtOTlEOS1EQ0Q3MDVCQTAxM0EYAAAAAQIAAIEAAAABAAAA7xMAAAEAAAAAAAAAAAAAAAEAAAABAQAALwAAAAAAAAABBQAAGgAAAAEBAABOU1VSTERvY3VtZW50SWRlbnRpZmllcktleQAABAAAAAMDAAB0EAAATwEAAAECAAAyOWY2MjE3MzRmNzBlMGY2NzkzMzUyNzYzOTkwZmVjYzY3ODMxNWY5NTZlMGVhMTljZWVjNmYyZTFkN2YxYmY5OzAwOzAwMDAwMDAwOzAwMDAwMDAwOzAwMDAwMDAwOzAwMDAwMDAwMDAwMDAwMjA7Y29tLmFwcGxlLmFwcC1zYW5kYm94LnJlYWQtd3JpdGU7MDE7MDEwMDAwMGY7MDAwMDAwMDAwMTM3OTg1MDs2NzsvdXNlcnMvZG9uZ2d5dWtpbS9saWJyYXJ5L21vYmlsZSBkb2N1bWVudHMvY29tfmFwcGxlfmNsb3VkZG9jcy9wYXBlcnMvbWF0cm9pZC9kcmVzcyBhbmQgd2VuemVsL3dlbnplbC9wZmFmZmlhbiBmb3JtcyBhbmQgZGVsdGEtbWF0cm9pZHMgd2l0aCBjb2VmZmljaWVudHMucGRmAADYAAAA/v///wEAAAAAAAAAEQAAAAQQAAD0AAAAAAAAAAUQAADEAQAAAAAAABAQAAAEAgAAAAAAAEAQAAD0AQAAAAAAAAIgAADQAgAAAAAAAAUgAABAAgAAAAAAABAgAABQAgAAAAAAABEgAACEAgAAAAAAABIgAABkAgAAAAAAABMgAAB0AgAAAAAAACAgAACwAgAAAAAAADAgAADcAgAAAAAAAAHAAAAkAgAAAAAAABHAAAAUAAAAAAAAABLAAAA0AgAAAAAAAIDwAAAUAwAAAAAAAOQCAIAIAwAAAAAAAAAIAA0AGgAjAIcAAAAAAAACAQAAAAAAAAAFAAAAAAAAAAAAAAAAAAAGBw==},
	bdsk-url-1 = {https://mathscinet.ams.org/mathscinet-getitem?mr=1368291},
	bdsk-url-2 = {https://doi.org/10.1016/0012-365X(94)00172-F}}

@article{BMP2003,
	author = {Booth, Richard F. and Moreira, Maria Leonor and Pinto, Maria Ros\'{a}rio},
	date-added = {2023-07-22 15:34:09 +0900},
	date-modified = {2024-03-03 19:36:44 +0900},
	Xdoi = {10.1016/S0012-365X(02)00801-4},
	fjournal = {Discrete Mathematics},
	issn = {0012-365X},
	journal = {Discrete Math.},
	keywords = {delta-matroid},
	mrclass = {05B35},
	mrnumber = {1991710},
	note = {The 18th British Combinatorial Conference (Brighton, 2001)},
	number = {1-3},
	pages = {109--118},
	title = {A circuit axiomatisation of {L}agrangian matroids},
	Xurl = {https://doi.org/10.1016/S0012-365X(02)00801-4},
	volume = {266},
	year = {2003},
	bdsk-file-1 = {YnBsaXN0MDDSAQIDBFxyZWxhdGl2ZVBhdGhYYm9va21hcmtfEGQuLi9QYXBlcnMocmVhZCBvbmx5LCBvcmRyZWQgYnkgdGhlIGF1dGhvcnMpL0JNUCwgQSBjaXJjdWl0IGF4aW9tYXRpc2F0aW9uIG9mIExhZ3JhbmdpYW4gbWF0cm9pZHMucGRmTxEFIGJvb2sgBQAAAAAEEDAAAAAAAAAAAAAAAAAAAAAAAAAAAAAAAAAAAAAAAAAAAAAAABAEAAAFAAAAAQEAAFVzZXJzAAAACgAAAAEBAABkb25nZ3l1a2ltAAAHAAAAAQEAAExpYnJhcnkAEAAAAAEBAABNb2JpbGUgRG9jdW1lbnRzEwAAAAEBAABjb21+YXBwbGV+Q2xvdWREb2NzACgAAAABAQAAUGFwZXJzKHJlYWQgb25seSwgb3JkcmVkIGJ5IHRoZSBhdXRob3JzKTgAAAABAQAAQk1QLCBBIGNpcmN1aXQgYXhpb21hdGlzYXRpb24gb2YgTGFncmFuZ2lhbiBtYXRyb2lkcy5wZGYcAAAAAQYAAAQAAAAUAAAAKAAAADgAAABQAAAAbAAAAJwAAAAIAAAABAMAANpaAAAAAAAACAAAAAQDAAAqpgMAAAAAAAgAAAAEAwAAMaYDAAAAAAAIAAAABAMAAP7NAwAAAAAACAAAAAQDAABMzgMAAAAAAAgAAAAEAwAAnt/kAQAAAAAIAAAABAMAAPa2MgIAAAAAHAAAAAEGAAAAAQAAEAEAACABAAAwAQAAQAEAAFABAABgAQAACAAAAAAEAABBxDTC/gAAABgAAAABAgAAAQAAAAAAAAAPAAAAAAAAAAAAAAAAAAAACAAAAAQDAAAFAAAAAAAAAAQAAAADAwAA9QEAAAgAAAABCQAAZmlsZTovLy8MAAAAAQEAAE1hY2ludG9zaCBIRAgAAAAEAwAAAFChG3MAAAAIAAAAAAQAAEHGc4ZhgAAAJAAAAAEBAABDOEQ3MDE3MS0wRkJELTRBNDMtOTlEOS1EQ0Q3MDVCQTAxM0EYAAAAAQIAAIEAAAABAAAA7xMAAAEAAAAAAAAAAAAAAAEAAAABAQAALwAAAAAAAAABBQAAGgAAAAEBAABOU1VSTERvY3VtZW50SWRlbnRpZmllcktleQAABAAAAAMDAABLGgAAUgEAAAECAABmMGYyZTA5ZWQxMDY2NGUyNWZmMzYyNzk1ZDlkOWU4YzM5NWY1NDY2ZTkxMGU1ZGEyYTcyZGRlOGRlMzhlMzFiOzAwOzAwMDAwMDAwOzAwMDAwMDAwOzAwMDAwMDAwOzAwMDAwMDAwMDAwMDAwMjA7Y29tLmFwcGxlLmFwcC1zYW5kYm94LnJlYWQtd3JpdGU7MDE7MDEwMDAwMGY7MDAwMDAwMDAwMjMyYjZmNjs2NzsvdXNlcnMvZG9uZ2d5dWtpbS9saWJyYXJ5L21vYmlsZSBkb2N1bWVudHMvY29tfmFwcGxlfmNsb3VkZG9jcy9wYXBlcnMocmVhZCBvbmx5LCBvcmRyZWQgYnkgdGhlIGF1dGhvcnMpL2JtcCwgYSBjaXJjdWl0IGF4aW9tYXRpc2F0aW9uIG9mIGxhZ3JhbmdpYW4gbWF0cm9pZHMucGRmAAAA2AAAAP7///8BAAAAAAAAABEAAAAEEAAA3AAAAAAAAAAFEAAAcAEAAAAAAAAQEAAApAEAAAAAAABAEAAAlAEAAAAAAAACIAAAcAIAAAAAAAAFIAAA4AEAAAAAAAAQIAAA8AEAAAAAAAARIAAAJAIAAAAAAAASIAAABAIAAAAAAAATIAAAFAIAAAAAAAAgIAAAUAIAAAAAAAAwIAAAfAIAAAAAAAABwAAAxAEAAAAAAAARwAAAFAAAAAAAAAASwAAA1AEAAAAAAACA8AAAtAIAAAAAAACEAgCAqAIAAAAAAAAACAANABoAIwCKAAAAAAAAAgEAAAAAAAAABQAAAAAAAAAAAAAAAAAABa4=},
	bdsk-file-2 = {YnBsaXN0MDDSAQIDBFxyZWxhdGl2ZVBhdGhYYm9va21hcmtfEGcuLi9QYXBlcnMvbWF0cm9pZC9kZWx0YS1tYXRyb2lkL2xhZ3JhbmdpYW4gbWF0cm9pZHMvQSBjaXJjdWl0IGF4aW9tYXRpc2F0aW9uIG9mIExhZ3JhbmdpYW4gbWF0cm9pZHMucGRmTxEFjGJvb2uMBQAAAAAEEDAAAAAAAAAAAAAAAAAAAAAAAAAAAAAAAAAAAAAAAAAAAAAAAHwEAAAFAAAAAQEAAFVzZXJzAAAACgAAAAEBAABkb25nZ3l1a2ltAAAHAAAAAQEAAExpYnJhcnkAEAAAAAEBAABNb2JpbGUgRG9jdW1lbnRzEwAAAAEBAABjb21+YXBwbGV+Q2xvdWREb2NzAAYAAAABAQAAUGFwZXJzAAAHAAAAAQEAAG1hdHJvaWQADQAAAAEBAABkZWx0YS1tYXRyb2lkAAAAEwAAAAEBAABsYWdyYW5naWFuIG1hdHJvaWRzADMAAAABAQAAQSBjaXJjdWl0IGF4aW9tYXRpc2F0aW9uIG9mIExhZ3JhbmdpYW4gbWF0cm9pZHMucGRmACgAAAABBgAABAAAABQAAAAoAAAAOAAAAFAAAABsAAAAfAAAAIwAAACkAAAAwAAAAAgAAAAEAwAA2loAAAAAAAAIAAAABAMAACqmAwAAAAAACAAAAAQDAAAxpgMAAAAAAAgAAAAEAwAA/s0DAAAAAAAIAAAABAMAAEzOAwAAAAAACAAAAAQDAACvFgQAAAAAAAgAAAAEAwAA3BYEAAAAAAAIAAAABAMAAK7/UQAAAAAACAAAAAQDAAC2kJ4AAAAAAAgAAAAEAwAAI7NYAQAAAAAoAAAAAQYAACwBAAA8AQAATAEAAFwBAABsAQAAfAEAAIwBAACcAQAArAEAALwBAAAIAAAAAAQAAEHENML+AAAAGAAAAAECAAABAAAAAAAAAA8AAAAAAAAAAAAAAAAAAAAIAAAABAMAAAgAAAAAAAAABAAAAAMDAAD1AQAACAAAAAEJAABmaWxlOi8vLwwAAAABAQAATWFjaW50b3NoIEhECAAAAAQDAAAAUKEbcwAAAAgAAAAABAAAQcZzhmGAAAAkAAAAAQEAAEM4RDcwMTcxLTBGQkQtNEE0My05OUQ5LURDRDcwNUJBMDEzQRgAAAABAgAAgQAAAAEAAADvEwAAAQAAAAAAAAAAAAAAAQAAAAEBAAAvAAAAAAAAAAEFAAAaAAAAAQEAAE5TVVJMRG9jdW1lbnRJZGVudGlmaWVyS2V5AAAEAAAAAwMAAAESAABVAQAAAQIAADQ1MzVjYTZiYzkzNDAwNGMxZmUzNzRlNTFjODk0NzczMmFiOGQwYzI2ZDgyM2JjZmJlOWMxZTU2YmI1MGQyYmM7MDA7MDAwMDAwMDA7MDAwMDAwMDA7MDAwMDAwMDA7MDAwMDAwMDAwMDAwMDAyMDtjb20uYXBwbGUuYXBwLXNhbmRib3gucmVhZC13cml0ZTswMTswMTAwMDAwZjswMDAwMDAwMDAxNThiMzIzOzY3Oy91c2Vycy9kb25nZ3l1a2ltL2xpYnJhcnkvbW9iaWxlIGRvY3VtZW50cy9jb21+YXBwbGV+Y2xvdWRkb2NzL3BhcGVycy9tYXRyb2lkL2RlbHRhLW1hdHJvaWQvbGFncmFuZ2lhbiBtYXRyb2lkcy9hIGNpcmN1aXQgYXhpb21hdGlzYXRpb24gb2YgbGFncmFuZ2lhbiBtYXRyb2lkcy5wZGYAAAAA2AAAAP7///8BAAAAAAAAABEAAAAEEAAA/AAAAAAAAAAFEAAAzAEAAAAAAAAQEAAADAIAAAAAAABAEAAA/AEAAAAAAAACIAAA2AIAAAAAAAAFIAAASAIAAAAAAAAQIAAAWAIAAAAAAAARIAAAjAIAAAAAAAASIAAAbAIAAAAAAAATIAAAfAIAAAAAAAAgIAAAuAIAAAAAAAAwIAAA5AIAAAAAAAABwAAALAIAAAAAAAARwAAAFAAAAAAAAAASwAAAPAIAAAAAAACA8AAAHAMAAAAAAADsAgCAEAMAAAAAAAAACAANABoAIwCNAAAAAAAAAgEAAAAAAAAABQAAAAAAAAAAAAAAAAAABh0=},
	bdsk-url-1 = {https://mathscinet.ams.org/mathscinet-getitem?mr=1991710}}

@book{BVSWZ1999,
	author = {Bj\"{o}rner, Anders and Las Vergnas, Michel and Sturmfels, Bernd and White, Neil and Ziegler, G\"{u}nter M.},
	date-added = {2023-04-12 10:46:33 +0900},
	date-modified = {2023-05-11 23:15:50 +0900},
	Xdoi = {10.1017/CBO9780511586507},
	edition = {Second},
	isbn = {0-521-77750-X},
	keywords = {book, matroid, oriented matroid},
	mrclass = {52B40 (05B35 52C35)},
	mrnumber = {1744046},
	pages = {xii+548},
	publisher = {Cambridge University Press, Cambridge},
	series = {Encyclopedia of Mathematics and its Applications},
	title = {Oriented matroids},
	Xurl = {https://doi.org/10.1017/CBO9780511586507},
	volume = {46},
	year = {1999},
	bdsk-file-1 = {YnBsaXN0MDDSAQIDBFxyZWxhdGl2ZVBhdGhYYm9va21hcmtvEHsALgAuAC8AQgBvAG8AawBzACAAYQBuAGQAIABMAGUAYwB0AHUAcgBlACAATgBvAHQAZQBzAC8AbQBhAHQAcgBvAGkAZAAgAGIAbwBvAGsAcwAvAEIAagBvAwgAcgBuAGUAcgAtAEwAYQBzACAAVgBlAHIAZwBuAGEAcwAtAFMAdAB1AHIAbQBmAGUAbABzAC0AVwBoAGkAdABlAC0AWgBpAGUAZwBsAGUAcgAgAC0AIABPAHIAaQBlAG4AdABlAGQAIABNAGEAdAByAG8AaQBkAHMAIAAoAGgAaQBnAGgAbABpAGcAaAB0AGUAZAApAC4AcABkAGZPEQV0Ym9va3QFAAAAAAQQMAAAAAAAAAAAAAAAAAAAAAAAAAAAAAAAAAAAAAAAAAAAAAAAZAQAAAUAAAABAQAAVXNlcnMAAAAKAAAAAQEAAGRvbmdneXVraW0AAAcAAAABAQAATGlicmFyeQAQAAAAAQEAAE1vYmlsZSBEb2N1bWVudHMTAAAAAQEAAGNvbX5hcHBsZX5DbG91ZERvY3MAFwAAAAEBAABCb29rcyBhbmQgTGVjdHVyZSBOb3RlcwANAAAAAQEAAG1hdHJvaWQgYm9va3MAAABTAAAAAQEAAEJqb8yIcm5lci1MYXMgVmVyZ25hcy1TdHVybWZlbHMtV2hpdGUtWmllZ2xlciAtIE9yaWVudGVkIE1hdHJvaWRzIChoaWdobGlnaHRlZCkucGRmACAAAAABBgAABAAAABQAAAAoAAAAOAAAAFAAAABsAAAAjAAAAKQAAAAIAAAABAMAANpaAAAAAAAACAAAAAQDAAAqpgMAAAAAAAgAAAAEAwAAMaYDAAAAAAAIAAAABAMAAP7NAwAAAAAACAAAAAQDAABMzgMAAAAAAAgAAAAEAwAArhYEAAAAAAAIAAAABAMAAP3FYwAAAAAACAAAAAQDAABXgLACAAAAACAAAAABBgAAKAEAADgBAABIAQAAWAEAAGgBAAB4AQAAiAEAAJgBAAAIAAAAAAQAAEHEA4ToAAAAGAAAAAECAAABAAAAAAAAAA8AAAAAAAAAAAAAAAAAAAAIAAAABAMAAAYAAAAAAAAABAAAAAMDAAD1AQAACAAAAAEJAABmaWxlOi8vLwwAAAABAQAATWFjaW50b3NoIEhECAAAAAQDAAAAUKEbcwAAAAgAAAAABAAAQcZzhmGAAAAkAAAAAQEAAEM4RDcwMTcxLTBGQkQtNEE0My05OUQ5LURDRDcwNUJBMDEzQRgAAAABAgAAgQAAAAEAAADvEwAAAQAAAAAAAAAAAAAAAQAAAAEBAAAvAAAAAAAAAAEFAAAaAAAAAQEAAE5TVVJMRG9jdW1lbnRJZGVudGlmaWVyS2V5AAAEAAAAAwMAANgQAABqAQAAAQIAAGEzNjFmZDRjMTk4MTEzZTJjYjFlNmZmZjgyNTJkYjNiZGIwMjg4ZmU1YmYwNTgzY2FhOTBjOTg0YzViNDNmNTY7MDA7MDAwMDAwMDA7MDAwMDAwMDA7MDAwMDAwMDA7MDAwMDAwMDAwMDAwMDAyMDtjb20uYXBwbGUuYXBwLXNhbmRib3gucmVhZC13cml0ZTswMTswMTAwMDAwZjswMDAwMDAwMDAyYjA4MDU3OzY3Oy91c2Vycy9kb25nZ3l1a2ltL2xpYnJhcnkvbW9iaWxlIGRvY3VtZW50cy9jb21+YXBwbGV+Y2xvdWRkb2NzL2Jvb2tzIGFuZCBsZWN0dXJlIG5vdGVzL21hdHJvaWQgYm9va3MvYmpvzIhybmVyLWxhcyB2ZXJnbmFzLXN0dXJtZmVscy13aGl0ZS16aWVnbGVyIC0gb3JpZW50ZWQgbWF0cm9pZHMgKGhpZ2hsaWdodGVkKS5wZGYAAADYAAAA/v///wEAAAAAAAAAEQAAAAQQAAAAAQAAAAAAAAUQAACoAQAAAAAAABAQAADgAQAAAAAAAEAQAADQAQAAAAAAAAIgAACsAgAAAAAAAAUgAAAcAgAAAAAAABAgAAAsAgAAAAAAABEgAABgAgAAAAAAABIgAABAAgAAAAAAABMgAABQAgAAAAAAACAgAACMAgAAAAAAADAgAAC4AgAAAAAAAAHAAAAAAgAAAAAAABHAAAAUAAAAAAAAABLAAAAQAgAAAAAAAIDwAADwAgAAAAAAAMACAIDkAgAAAAAAAAAIAA0AGgAjARwAAAAAAAACAQAAAAAAAAAFAAAAAAAAAAAAAAAAAAAGlA==},
	bdsk-file-2 = {YnBsaXN0MDDSAQIDBFxyZWxhdGl2ZVBhdGhYYm9va21hcmtfEN0uLi9Cb29rcyBhbmQgTGVjdHVyZSBOb3Rlcy9tYXRyb2lkIGJvb2tzL0FuZGVycyBCam9ybmVyLCBNaWNoZWwgTGFzIFZlcmduYXMsIEJlcm5kIFN0dXJtZmVscywgTmVpbCBXaGl0ZSwgR3VudGVyIE0uIFppZWdsZXIgLSBPcmllbnRlZCBNYXRyb2lkcywgU2Vjb25kIGVkaXRpb24gKEVuY3ljbG9wZWRpYSBvZiBNYXRoZW1hdGljcyBhbmQgaXRzIEFwcGxpY2F0aW9ucykgKDIwMDApLnBkZk8RBjRib29rNAYAAAAABBAwAAAAAAAAAAAAAAAAAAAAAAAAAAAAAAAAAAAAAAAAAAAAAAAkBQAABQAAAAEBAABVc2VycwAAAAoAAAABAQAAZG9uZ2d5dWtpbQAABwAAAAEBAABMaWJyYXJ5ABAAAAABAQAATW9iaWxlIERvY3VtZW50cxMAAAABAQAAY29tfmFwcGxlfkNsb3VkRG9jcwAXAAAAAQEAAEJvb2tzIGFuZCBMZWN0dXJlIE5vdGVzAA0AAAABAQAAbWF0cm9pZCBib29rcwAAALQAAAABAQAAQW5kZXJzIEJqb3JuZXIsIE1pY2hlbCBMYXMgVmVyZ25hcywgQmVybmQgU3R1cm1mZWxzLCBOZWlsIFdoaXRlLCBHdW50ZXIgTS4gWmllZ2xlciAtIE9yaWVudGVkIE1hdHJvaWRzLCBTZWNvbmQgZWRpdGlvbiAoRW5jeWNsb3BlZGlhIG9mIE1hdGhlbWF0aWNzIGFuZCBpdHMgQXBwbGljYXRpb25zKSAoMjAwMCkucGRmIAAAAAEGAAAEAAAAFAAAACgAAAA4AAAAUAAAAGwAAACMAAAApAAAAAgAAAAEAwAA2loAAAAAAAAIAAAABAMAACqmAwAAAAAACAAAAAQDAAAxpgMAAAAAAAgAAAAEAwAA/s0DAAAAAAAIAAAABAMAAEzOAwAAAAAACAAAAAQDAACuFgQAAAAAAAgAAAAEAwAA/cVjAAAAAAAIAAAABAMAANN3lwEAAAAAIAAAAAEGAACIAQAAmAEAAKgBAAC4AQAAyAEAANgBAADoAQAA+AEAAAgAAAAABAAAQcUdcLwAAAAYAAAAAQIAAAEAAAAAAAAADwAAAAAAAAAAAAAAAAAAAAgAAAAEAwAABgAAAAAAAAAEAAAAAwMAAPUBAAAIAAAAAQkAAGZpbGU6Ly8vDAAAAAEBAABNYWNpbnRvc2ggSEQIAAAABAMAAABQoRtzAAAACAAAAAAEAABBxnOGYYAAACQAAAABAQAAQzhENzAxNzEtMEZCRC00QTQzLTk5RDktRENENzA1QkEwMTNBGAAAAAECAACBAAAAAQAAAO8TAAABAAAAAAAAAAAAAAABAAAAAQEAAC8AAAAAAAAAAQUAABoAAAABAQAATlNVUkxEb2N1bWVudElkZW50aWZpZXJLZXkAAAQAAAADAwAAAhYAAMsBAAABAgAANGEzNmE2YjE2ZDc4NTQwZGU3MmNkNTA5ZGU5ZjkwOWQwOTRhNWFiNzYxY2FhMDljZTQwNzNiNmRmYTI0MmEzNDswMDswMDAwMDAwMDswMDAwMDAwMDswMDAwMDAwMDswMDAwMDAwMDAwMDAwMDIwO2NvbS5hcHBsZS5hcHAtc2FuZGJveC5yZWFkLXdyaXRlOzAxOzAxMDAwMDBmOzAwMDAwMDAwMDE5Nzc3ZDM7Njc7L3VzZXJzL2RvbmdneXVraW0vbGlicmFyeS9tb2JpbGUgZG9jdW1lbnRzL2NvbX5hcHBsZX5jbG91ZGRvY3MvYm9va3MgYW5kIGxlY3R1cmUgbm90ZXMvbWF0cm9pZCBib29rcy9hbmRlcnMgYmpvcm5lciwgbWljaGVsIGxhcyB2ZXJnbmFzLCBiZXJuZCBzdHVybWZlbHMsIG5laWwgd2hpdGUsIGd1bnRlciBtLiB6aWVnbGVyIC0gb3JpZW50ZWQgbWF0cm9pZHMsIHNlY29uZCBlZGl0aW9uIChlbmN5Y2xvcGVkaWEgb2YgbWF0aGVtYXRpY3MgYW5kIGl0cyBhcHBsaWNhdGlvbnMpICgyMDAwKS5wZGYAANgAAAD+////AQAAAAAAAAARAAAABBAAAGABAAAAAAAABRAAAAgCAAAAAAAAEBAAAEACAAAAAAAAQBAAADACAAAAAAAAAiAAAAwDAAAAAAAABSAAAHwCAAAAAAAAECAAAIwCAAAAAAAAESAAAMACAAAAAAAAEiAAAKACAAAAAAAAEyAAALACAAAAAAAAICAAAOwCAAAAAAAAMCAAABgDAAAAAAAAAcAAAGACAAAAAAAAEcAAABQAAAAAAAAAEsAAAHACAAAAAAAAgPAAAFADAAAAAAAAIAMAgEQDAAAAAAAAAAgADQAaACMBAwAAAAAAAAIBAAAAAAAAAAUAAAAAAAAAAAAAAAAAAAc7},
	bdsk-url-1 = {https://mathscinet.ams.org/mathscinet-getitem?mr=1744046},
	bdsk-url-2 = {https://doi.org/10.1017/CBO9780511586507}}

@article{BB2019,
	author = {Baker, Matthew and Bowler, Nathan},
	date-added = {2023-04-12 10:41:01 +0900},
	date-modified = {2024-04-12 00:30:29 +0900},
	Xdoi = {10.1016/j.aim.2018.12.004},
	fjournal = {Advances in Mathematics},
	issn = {0001-8708},
	journal = {Adv. Math.},
	keywords = {matroid, matroid representation},
	mrclass = {52B40 (05B35 12K99 52C40)},
	mrnumber = {3891757},
	mrreviewer = {Dillon Mayhew},
	pages = {821--863},
	rating = {5},
	read = {1},
	title = {Matroids over partial hyperstructures},
	Xurl = {https://doi.org/10.1016/j.aim.2018.12.004},
	volume = {343},
	year = {2019},
	bdsk-file-1 = {YnBsaXN0MDDSAQIDBFxyZWxhdGl2ZVBhdGhYYm9va21hcmtfEGcuLi9QYXBlcnMocmVhZCBvbmx5LCBvcmRyZWQgYnkgdGhlIGF1dGhvcnMpL0Jha2VyIGFuZCBCb3dsZXIsIE1hdG9yaWRzIG92ZXIgcGFydGlhbCBoeXBlcnN0cnVjdHVyZXMucGRmTxEFKGJvb2soBQAAAAAEEDAAAAAAAAAAAAAAAAAAAAAAAAAAAAAAAAAAAAAAAAAAAAAAABgEAAAFAAAAAQEAAFVzZXJzAAAACgAAAAEBAABkb25nZ3l1a2ltAAAHAAAAAQEAAExpYnJhcnkAEAAAAAEBAABNb2JpbGUgRG9jdW1lbnRzEwAAAAEBAABjb21+YXBwbGV+Q2xvdWREb2NzACgAAAABAQAAUGFwZXJzKHJlYWQgb25seSwgb3JkcmVkIGJ5IHRoZSBhdXRob3JzKTsAAAABAQAAQmFrZXIgYW5kIEJvd2xlciwgTWF0b3JpZHMgb3ZlciBwYXJ0aWFsIGh5cGVyc3RydWN0dXJlcy5wZGYAHAAAAAEGAAAEAAAAFAAAACgAAAA4AAAAUAAAAGwAAACcAAAACAAAAAQDAADaWgAAAAAAAAgAAAAEAwAAKqYDAAAAAAAIAAAABAMAADGmAwAAAAAACAAAAAQDAAD+zQMAAAAAAAgAAAAEAwAATM4DAAAAAAAIAAAABAMAAJ7f5AEAAAAACAAAAAQDAACc7igCAAAAABwAAAABBgAABAEAABQBAAAkAQAANAEAAEQBAABUAQAAZAEAAAgAAAAABAAAQcPrkJuAAAAYAAAAAQIAAAEAAAAAAAAADwAAAAAAAAAAAAAAAAAAAAgAAAAEAwAABQAAAAAAAAAEAAAAAwMAAPUBAAAIAAAAAQkAAGZpbGU6Ly8vDAAAAAEBAABNYWNpbnRvc2ggSEQIAAAABAMAAABQoRtzAAAACAAAAAAEAABBxnOGYYAAACQAAAABAQAAQzhENzAxNzEtMEZCRC00QTQzLTk5RDktRENENzA1QkEwMTNBGAAAAAECAACBAAAAAQAAAO8TAAABAAAAAAAAAAAAAAABAAAAAQEAAC8AAAAAAAAAAQUAABoAAAABAQAATlNVUkxEb2N1bWVudElkZW50aWZpZXJLZXkAAAQAAAADAwAAHRoAAFUBAAABAgAANzZiMTdiNmY2OTcwOTNjYzM1ZjQ0ZDk1NzFhYmM1MWE2NzMxYjAxMzA4NjVjMGE4N2UyZTI0M2VjMDRkZGMzOTswMDswMDAwMDAwMDswMDAwMDAwMDswMDAwMDAwMDswMDAwMDAwMDAwMDAwMDIwO2NvbS5hcHBsZS5hcHAtc2FuZGJveC5yZWFkLXdyaXRlOzAxOzAxMDAwMDBmOzAwMDAwMDAwMDIyOGVlOWM7Njc7L3VzZXJzL2RvbmdneXVraW0vbGlicmFyeS9tb2JpbGUgZG9jdW1lbnRzL2NvbX5hcHBsZX5jbG91ZGRvY3MvcGFwZXJzKHJlYWQgb25seSwgb3JkcmVkIGJ5IHRoZSBhdXRob3JzKS9iYWtlciBhbmQgYm93bGVyLCBtYXRvcmlkcyBvdmVyIHBhcnRpYWwgaHlwZXJzdHJ1Y3R1cmVzLnBkZgAAAADYAAAA/v///wEAAAAAAAAAEQAAAAQQAADgAAAAAAAAAAUQAAB0AQAAAAAAABAQAACoAQAAAAAAAEAQAACYAQAAAAAAAAIgAAB0AgAAAAAAAAUgAADkAQAAAAAAABAgAAD0AQAAAAAAABEgAAAoAgAAAAAAABIgAAAIAgAAAAAAABMgAAAYAgAAAAAAACAgAABUAgAAAAAAADAgAACAAgAAAAAAAAHAAADIAQAAAAAAABHAAAAUAAAAAAAAABLAAADYAQAAAAAAAIDwAAC4AgAAAAAAAIgCAICsAgAAAAAAAAAIAA0AGgAjAI0AAAAAAAACAQAAAAAAAAAFAAAAAAAAAAAAAAAAAAAFuQ==},
	bdsk-file-2 = {YnBsaXN0MDDSAQIDBFxyZWxhdGl2ZVBhdGhYYm9va21hcmtfEFsuLi9QYXBlcnMvbWF0cm9pZC9SZXByZXNlbnRhdGlvbi9jYXRlZ29yaWNhbCB2aWV3L01hdG9yaWRzIG92ZXIgcGFydGlhbCBoeXBlcnN0cnVjdHVyZXMucGRmTxEFdGJvb2t0BQAAAAAEEDAAAAAAAAAAAAAAAAAAAAAAAAAAAAAAAAAAAAAAAAAAAAAAAGQEAAAFAAAAAQEAAFVzZXJzAAAACgAAAAEBAABkb25nZ3l1a2ltAAAHAAAAAQEAAExpYnJhcnkAEAAAAAEBAABNb2JpbGUgRG9jdW1lbnRzEwAAAAEBAABjb21+YXBwbGV+Q2xvdWREb2NzAAYAAAABAQAAUGFwZXJzAAAHAAAAAQEAAG1hdHJvaWQADgAAAAEBAABSZXByZXNlbnRhdGlvbgAAEAAAAAEBAABjYXRlZ29yaWNhbCB2aWV3KQAAAAEBAABNYXRvcmlkcyBvdmVyIHBhcnRpYWwgaHlwZXJzdHJ1Y3R1cmVzLnBkZgAAACgAAAABBgAABAAAABQAAAAoAAAAOAAAAFAAAABsAAAAfAAAAIwAAACkAAAAvAAAAAgAAAAEAwAA2loAAAAAAAAIAAAABAMAACqmAwAAAAAACAAAAAQDAAAxpgMAAAAAAAgAAAAEAwAA/s0DAAAAAAAIAAAABAMAAEzOAwAAAAAACAAAAAQDAACvFgQAAAAAAAgAAAAEAwAA3BYEAAAAAAAIAAAABAMAALqiMAAAAAAACAAAAAQDAAByBFAAAAAAAAgAAAAEAwAA8ecnAgAAAAAoAAAAAQYAACABAAAwAQAAQAEAAFABAABgAQAAcAEAAIABAACQAQAAoAEAALABAAAIAAAAAAQAAEHD65CbgAAAGAAAAAECAAABAAAAAAAAAA8AAAAAAAAAAAAAAAAAAAAIAAAABAMAAAgAAAAAAAAABAAAAAMDAAD1AQAACAAAAAEJAABmaWxlOi8vLwwAAAABAQAATWFjaW50b3NoIEhECAAAAAQDAAAAUKEbcwAAAAgAAAAABAAAQcZzhmGAAAAkAAAAAQEAAEM4RDcwMTcxLTBGQkQtNEE0My05OUQ5LURDRDcwNUJBMDEzQRgAAAABAgAAgQAAAAEAAADvEwAAAQAAAAAAAAAAAAAAAQAAAAEBAAAvAAAAAAAAAAEFAAAaAAAAAQEAAE5TVVJMRG9jdW1lbnRJZGVudGlmaWVyS2V5AAAEAAAAAwMAAE8QAABJAQAAAQIAADlhY2YwZTk1Yzc1ZjJiNTJiZWNkYTU0YzE4MmYzMGFlOTNkYjlmODBkZTA0ZmEyMWUxN2FlMmFhNTJjODZjY2I7MDA7MDAwMDAwMDA7MDAwMDAwMDA7MDAwMDAwMDA7MDAwMDAwMDAwMDAwMDAyMDtjb20uYXBwbGUuYXBwLXNhbmRib3gucmVhZC13cml0ZTswMTswMTAwMDAwZjswMDAwMDAwMDAyMjdlN2YxOzY3Oy91c2Vycy9kb25nZ3l1a2ltL2xpYnJhcnkvbW9iaWxlIGRvY3VtZW50cy9jb21+YXBwbGV+Y2xvdWRkb2NzL3BhcGVycy9tYXRyb2lkL3JlcHJlc2VudGF0aW9uL2NhdGVnb3JpY2FsIHZpZXcvbWF0b3JpZHMgb3ZlciBwYXJ0aWFsIGh5cGVyc3RydWN0dXJlcy5wZGYAAAAA2AAAAP7///8BAAAAAAAAABEAAAAEEAAA8AAAAAAAAAAFEAAAwAEAAAAAAAAQEAAAAAIAAAAAAABAEAAA8AEAAAAAAAACIAAAzAIAAAAAAAAFIAAAPAIAAAAAAAAQIAAATAIAAAAAAAARIAAAgAIAAAAAAAASIAAAYAIAAAAAAAATIAAAcAIAAAAAAAAgIAAArAIAAAAAAAAwIAAA2AIAAAAAAAABwAAAIAIAAAAAAAARwAAAFAAAAAAAAAASwAAAMAIAAAAAAACA8AAAEAMAAAAAAADgAgCABAMAAAAAAAAACAANABoAIwCBAAAAAAAAAgEAAAAAAAAABQAAAAAAAAAAAAAAAAAABfk=},
	bdsk-url-1 = {https://mathscinet.ams.org/mathscinet-getitem?mr=3891757},
	bdsk-url-2 = {https://doi.org/10.1016/j.aim.2018.12.004}}

@article{Tutte1958,
	author = {Tutte, William T.},
	date-added = {2023-11-22 16:57:40 +0900},
	date-modified = {2024-03-03 19:57:38 +0900},
	Xdoi = {10.2307/1993243},
	fjournal = {Transactions of the American Mathematical Society},
	issn = {0002-9947},
	journal = {Trans. Amer. Math. Soc.},
	keywords = {matroid},
	mrclass = {55.00 (06.00)},
	mrnumber = {101526},
	mrreviewer = {J. R. Isbell},
	pages = {144--174},
	title = {A homotopy theorem for matroids. {I}, {II}},
	Xurl = {https://doi.org/10.2307/1993243},
	volume = {88},
	year = {1958},
	bdsk-file-1 = {YnBsaXN0MDDSAQIDBFxyZWxhdGl2ZVBhdGhYYm9va21hcmtfED4uLi9QYXBlcnMvbWF0cm9pZC9UdXR0ZS9BIGhvbW90b3B5IHRoZW9yZW0gZm9yIG1hdHJvaWRzLiBJLnBkZk8RBRhib29rGAUAAAAABBAwAAAAAAAAAAAAAAAAAAAAAAAAAAAAAAAAAAAAAAAAAAAAAAAIBAAABQAAAAEBAABVc2VycwAAAAoAAAABAQAAZG9uZ2d5dWtpbQAABwAAAAEBAABMaWJyYXJ5ABAAAAABAQAATW9iaWxlIERvY3VtZW50cxMAAAABAQAAY29tfmFwcGxlfkNsb3VkRG9jcwAGAAAAAQEAAFBhcGVycwAABwAAAAEBAABtYXRyb2lkAAUAAAABAQAAVHV0dGUAAAAmAAAAAQEAAEEgaG9tb3RvcHkgdGhlb3JlbSBmb3IgbWF0cm9pZHMuIEkucGRmAAAkAAAAAQYAAAQAAAAUAAAAKAAAADgAAABQAAAAbAAAAHwAAACMAAAAnAAAAAgAAAAEAwAA2loAAAAAAAAIAAAABAMAACqmAwAAAAAACAAAAAQDAAAxpgMAAAAAAAgAAAAEAwAA/s0DAAAAAAAIAAAABAMAAEzOAwAAAAAACAAAAAQDAACvFgQAAAAAAAgAAAAEAwAA3BYEAAAAAAAIAAAABAMAAGAYBAAAAAAACAAAAAQDAABdjDAAAAAAACQAAAABBgAA+AAAAAgBAAAYAQAAKAEAADgBAABIAQAAWAEAAGgBAAB4AQAACAAAAAAEAABBw8R9aYAAABgAAAABAgAAAQAAAAAAAAAPAAAAAAAAAAAAAAAAAAAACAAAAAQDAAAHAAAAAAAAAAQAAAADAwAA9QEAAAgAAAABCQAAZmlsZTovLy8MAAAAAQEAAE1hY2ludG9zaCBIRAgAAAAEAwAAAFChG3MAAAAIAAAAAAQAAEHGc4ZhgAAAJAAAAAEBAABDOEQ3MDE3MS0wRkJELTRBNDMtOTlEOS1EQ0Q3MDVCQTAxM0EYAAAAAQIAAIEAAAABAAAA7xMAAAEAAAAAAAAAAAAAAAEAAAABAQAALwAAAAAAAAABBQAAGgAAAAEBAABOU1VSTERvY3VtZW50SWRlbnRpZmllcktleQAABAAAAAMDAADADwAALAEAAAECAABlYzhiZGRiY2VjZjM2ZWJiOGFkYjNhMzAwZDk5YWI4MDY4Yjc3OWQxMDgzMzMwOTAzMzlkNGE5ZmY4MTRiNzlkOzAwOzAwMDAwMDAwOzAwMDAwMDAwOzAwMDAwMDAwOzAwMDAwMDAwMDAwMDAwMjA7Y29tLmFwcGxlLmFwcC1zYW5kYm94LnJlYWQtd3JpdGU7MDE7MDEwMDAwMGY7MDAwMDAwMDAwMDMwOGM1ZDs2NzsvdXNlcnMvZG9uZ2d5dWtpbS9saWJyYXJ5L21vYmlsZSBkb2N1bWVudHMvY29tfmFwcGxlfmNsb3VkZG9jcy9wYXBlcnMvbWF0cm9pZC90dXR0ZS9hIGhvbW90b3B5IHRoZW9yZW0gZm9yIG1hdHJvaWRzLiBpLnBkZgDYAAAA/v///wEAAAAAAAAAEQAAAAQQAADMAAAAAAAAAAUQAACIAQAAAAAAABAQAADEAQAAAAAAAEAQAAC0AQAAAAAAAAIgAACQAgAAAAAAAAUgAAAAAgAAAAAAABAgAAAQAgAAAAAAABEgAABEAgAAAAAAABIgAAAkAgAAAAAAABMgAAA0AgAAAAAAACAgAABwAgAAAAAAADAgAACcAgAAAAAAAAHAAADkAQAAAAAAABHAAAAUAAAAAAAAABLAAAD0AQAAAAAAAIDwAADUAgAAAAAAAKQCAIDIAgAAAAAAAAAIAA0AGgAjAGQAAAAAAAACAQAAAAAAAAAFAAAAAAAAAAAAAAAAAAAFgA==},
	bdsk-file-2 = {YnBsaXN0MDDSAQIDBFxyZWxhdGl2ZVBhdGhYYm9va21hcmtfED8uLi9QYXBlcnMvbWF0cm9pZC9UdXR0ZS9BIGhvbW90b3B5IHRoZW9yZW0gZm9yIG1hdHJvaWRzLiBJSS5wZGZPEQUcYm9vaxwFAAAAAAQQMAAAAAAAAAAAAAAAAAAAAAAAAAAAAAAAAAAAAAAAAAAAAAAADAQAAAUAAAABAQAAVXNlcnMAAAAKAAAAAQEAAGRvbmdneXVraW0AAAcAAAABAQAATGlicmFyeQAQAAAAAQEAAE1vYmlsZSBEb2N1bWVudHMTAAAAAQEAAGNvbX5hcHBsZX5DbG91ZERvY3MABgAAAAEBAABQYXBlcnMAAAcAAAABAQAAbWF0cm9pZAAFAAAAAQEAAFR1dHRlAAAAJwAAAAEBAABBIGhvbW90b3B5IHRoZW9yZW0gZm9yIG1hdHJvaWRzLiBJSS5wZGYAJAAAAAEGAAAEAAAAFAAAACgAAAA4AAAAUAAAAGwAAAB8AAAAjAAAAJwAAAAIAAAABAMAANpaAAAAAAAACAAAAAQDAAAqpgMAAAAAAAgAAAAEAwAAMaYDAAAAAAAIAAAABAMAAP7NAwAAAAAACAAAAAQDAABMzgMAAAAAAAgAAAAEAwAArxYEAAAAAAAIAAAABAMAANwWBAAAAAAACAAAAAQDAABgGAQAAAAAAAgAAAAEAwAAqCskAgAAAAAkAAAAAQYAAPgAAAAIAQAAGAEAACgBAAA4AQAASAEAAFgBAABoAQAAeAEAAAgAAAAABAAAQcPEfW+AAAAYAAAAAQIAAAEAAAAAAAAADwAAAAAAAAAAAAAAAAAAAAgAAAAEAwAABwAAAAAAAAAEAAAAAwMAAPUBAAAIAAAAAQkAAGZpbGU6Ly8vDAAAAAEBAABNYWNpbnRvc2ggSEQIAAAABAMAAABQoRtzAAAACAAAAAAEAABBxnOGYYAAACQAAAABAQAAQzhENzAxNzEtMEZCRC00QTQzLTk5RDktRENENzA1QkEwMTNBGAAAAAECAACBAAAAAQAAAO8TAAABAAAAAAAAAAAAAAABAAAAAQEAAC8AAAAAAAAAAQUAABoAAAABAQAATlNVUkxEb2N1bWVudElkZW50aWZpZXJLZXkAAAQAAAADAwAAvw8AAC0BAAABAgAAY2UyOTMyNjA1YmExZDhhNWIwNGQwMmY2ZTE1NDg1YWRhMDMxYzNmMmJhYTBhYjEwMGM5MGRlYWY0NWEwZDczZjswMDswMDAwMDAwMDswMDAwMDAwMDswMDAwMDAwMDswMDAwMDAwMDAwMDAwMDIwO2NvbS5hcHBsZS5hcHAtc2FuZGJveC5yZWFkLXdyaXRlOzAxOzAxMDAwMDBmOzAwMDAwMDAwMDIyNDJiYTg7Njc7L3VzZXJzL2RvbmdneXVraW0vbGlicmFyeS9tb2JpbGUgZG9jdW1lbnRzL2NvbX5hcHBsZX5jbG91ZGRvY3MvcGFwZXJzL21hdHJvaWQvdHV0dGUvYSBob21vdG9weSB0aGVvcmVtIGZvciBtYXRyb2lkcy4gaWkucGRmAAAAANgAAAD+////AQAAAAAAAAARAAAABBAAAMwAAAAAAAAABRAAAIgBAAAAAAAAEBAAAMQBAAAAAAAAQBAAALQBAAAAAAAAAiAAAJACAAAAAAAABSAAAAACAAAAAAAAECAAABACAAAAAAAAESAAAEQCAAAAAAAAEiAAACQCAAAAAAAAEyAAADQCAAAAAAAAICAAAHACAAAAAAAAMCAAAJwCAAAAAAAAAcAAAOQBAAAAAAAAEcAAABQAAAAAAAAAEsAAAPQBAAAAAAAAgPAAANQCAAAAAAAApAIAgMgCAAAAAAAAAAgADQAaACMAZQAAAAAAAAIBAAAAAAAAAAUAAAAAAAAAAAAAAAAAAAWF},
	bdsk-file-3 = {YnBsaXN0MDDSAQIDBFxyZWxhdGl2ZVBhdGhYYm9va21hcmtfED4uLi9QYXBlcnMvbWF0cm9pZC9UdXR0ZS9MZWN0dXJlcyBvbiBtYXRyb29pZHMsIFcuIFQuIFR1dHRlLnBkZk8RBRhib29rGAUAAAAABBAwAAAAAAAAAAAAAAAAAAAAAAAAAAAAAAAAAAAAAAAAAAAAAAAIBAAABQAAAAEBAABVc2VycwAAAAoAAAABAQAAZG9uZ2d5dWtpbQAABwAAAAEBAABMaWJyYXJ5ABAAAAABAQAATW9iaWxlIERvY3VtZW50cxMAAAABAQAAY29tfmFwcGxlfkNsb3VkRG9jcwAGAAAAAQEAAFBhcGVycwAABwAAAAEBAABtYXRyb2lkAAUAAAABAQAAVHV0dGUAAAAmAAAAAQEAAExlY3R1cmVzIG9uIG1hdHJvb2lkcywgVy4gVC4gVHV0dGUucGRmAAAkAAAAAQYAAAQAAAAUAAAAKAAAADgAAABQAAAAbAAAAHwAAACMAAAAnAAAAAgAAAAEAwAA2loAAAAAAAAIAAAABAMAACqmAwAAAAAACAAAAAQDAAAxpgMAAAAAAAgAAAAEAwAA/s0DAAAAAAAIAAAABAMAAEzOAwAAAAAACAAAAAQDAACvFgQAAAAAAAgAAAAEAwAA3BYEAAAAAAAIAAAABAMAAGAYBAAAAAAACAAAAAQDAACgG5cCAAAAACQAAAABBgAA+AAAAAgBAAAYAQAAKAEAADgBAABIAQAAWAEAAGgBAAB4AQAACAAAAAAEAABBwuHKoIAAABgAAAABAgAAAQAAAAAAAAAPAAAAAAAAAAAAAAAAAAAACAAAAAQDAAAHAAAAAAAAAAQAAAADAwAA9QEAAAgAAAABCQAAZmlsZTovLy8MAAAAAQEAAE1hY2ludG9zaCBIRAgAAAAEAwAAAFChG3MAAAAIAAAAAAQAAEHGc4ZhgAAAJAAAAAEBAABDOEQ3MDE3MS0wRkJELTRBNDMtOTlEOS1EQ0Q3MDVCQTAxM0EYAAAAAQIAAIEAAAABAAAA7xMAAAEAAAAAAAAAAAAAAAEAAAABAQAALwAAAAAAAAABBQAAGgAAAAEBAABOU1VSTERvY3VtZW50SWRlbnRpZmllcktleQAABAAAAAMDAAArAAAALAEAAAECAABlNGZmNjZkZGY2Y2UyMGU1NWM1YmQ1YzFhYzYyYzc5YjZjNWVkZGUyMzk4YmUyZDMwNjg3MzI1YWE0OWUyNjAwOzAwOzAwMDAwMDAwOzAwMDAwMDAwOzAwMDAwMDAwOzAwMDAwMDAwMDAwMDAwMjA7Y29tLmFwcGxlLmFwcC1zYW5kYm94LnJlYWQtd3JpdGU7MDE7MDEwMDAwMGY7MDAwMDAwMDAwMjk3MWJhMDs2NzsvdXNlcnMvZG9uZ2d5dWtpbS9saWJyYXJ5L21vYmlsZSBkb2N1bWVudHMvY29tfmFwcGxlfmNsb3VkZG9jcy9wYXBlcnMvbWF0cm9pZC90dXR0ZS9sZWN0dXJlcyBvbiBtYXRyb29pZHMsIHcuIHQuIHR1dHRlLnBkZgDYAAAA/v///wEAAAAAAAAAEQAAAAQQAADMAAAAAAAAAAUQAACIAQAAAAAAABAQAADEAQAAAAAAAEAQAAC0AQAAAAAAAAIgAACQAgAAAAAAAAUgAAAAAgAAAAAAABAgAAAQAgAAAAAAABEgAABEAgAAAAAAABIgAAAkAgAAAAAAABMgAAA0AgAAAAAAACAgAABwAgAAAAAAADAgAACcAgAAAAAAAAHAAADkAQAAAAAAABHAAAAUAAAAAAAAABLAAAD0AQAAAAAAAIDwAADUAgAAAAAAAKQCAIDIAgAAAAAAAAAIAA0AGgAjAGQAAAAAAAACAQAAAAAAAAAFAAAAAAAAAAAAAAAAAAAFgA==},
	bdsk-url-1 = {https://mathscinet.ams.org/mathscinet-getitem?mr=101526}}

@article{Bouchet1987b,
	author = {Bouchet, Andr\'{e}},
	date-added = {2023-12-20 18:17:25 +0900},
	date-modified = {2024-04-16 15:10:25 +0900},
	Xdoi = {10.1016/0012-365X(89)90161-1},
	fjournal = {Discrete Mathematics},
	issn = {0012-365X},
	journal = {Discrete Math.},
	keywords = {delta-matroid, ribbon graph},
	mrclass = {05B35 (05C10)},
	mrnumber = {1020647},
	mrreviewer = {Ulrich Faigle},
	number = {1-2},
	pages = {59--71},
	title = {Maps and {$\Delta$}-matroids},
	Xurl = {https://doi.org/10.1016/0012-365X(89)90161-1},
	volume = {78},
	year = {1989},
	bdsk-file-1 = {YnBsaXN0MDDSAQIDBFxyZWxhdGl2ZVBhdGhYYm9va21hcmtfED4uLi9QYXBlcnMvbWF0cm9pZC9Cb3VjaGV0L01hcHMgYW5kIGRlbHRhLW1hdHJvaWRzLCBCb3VjaGV0LnBkZk8RBRRib29rFAUAAAAABBAwAAAAAAAAAAAAAAAAAAAAAAAAAAAAAAAAAAAAAAAAAAAAAAAEBAAABQAAAAEBAABVc2VycwAAAAoAAAABAQAAZG9uZ2d5dWtpbQAABwAAAAEBAABMaWJyYXJ5ABAAAAABAQAATW9iaWxlIERvY3VtZW50cxMAAAABAQAAY29tfmFwcGxlfkNsb3VkRG9jcwAGAAAAAQEAAFBhcGVycwAABwAAAAEBAABtYXRyb2lkAAcAAAABAQAAQm91Y2hldAAkAAAAAQEAAE1hcHMgYW5kIGRlbHRhLW1hdHJvaWRzLCBCb3VjaGV0LnBkZiQAAAABBgAABAAAABQAAAAoAAAAOAAAAFAAAABsAAAAfAAAAIwAAACcAAAACAAAAAQDAADaWgAAAAAAAAgAAAAEAwAAKqYDAAAAAAAIAAAABAMAADGmAwAAAAAACAAAAAQDAAD+zQMAAAAAAAgAAAAEAwAATM4DAAAAAAAIAAAABAMAAK8WBAAAAAAACAAAAAQDAADcFgQAAAAAAAgAAAAEAwAAaRgEAAAAAAAIAAAABAMAAN6qXwIAAAAAJAAAAAEGAAD0AAAABAEAABQBAAAkAQAANAEAAEQBAABUAQAAZAEAAHQBAAAIAAAAAAQAAEHC9GeGAAAAGAAAAAECAAABAAAAAAAAAA8AAAAAAAAAAAAAAAAAAAAIAAAABAMAAAcAAAAAAAAABAAAAAMDAAD1AQAACAAAAAEJAABmaWxlOi8vLwwAAAABAQAATWFjaW50b3NoIEhECAAAAAQDAAAAUKEbcwAAAAgAAAAABAAAQcZjyEAAAAAkAAAAAQEAAEM4RDcwMTcxLTBGQkQtNEE0My05OUQ5LURDRDcwNUJBMDEzQRgAAAABAgAAgQAAAAEAAADvEwAAAQAAAAAAAAAAAAAAAQAAAAEBAAAvAAAAAAAAAAEFAAAaAAAAAQEAAE5TVVJMRG9jdW1lbnRJZGVudGlmaWVyS2V5AAAEAAAAAwMAAMYBAAAsAQAAAQIAADFiYTEyYTVlNzM5MjQwZTk3ZGM1YzBmMWVkYWJmN2MzZTdhOThhNDcxMTlhMzRlYTI4Y2IzZDQwNDZiNTI1ODQ7MDA7MDAwMDAwMDA7MDAwMDAwMDA7MDAwMDAwMDA7MDAwMDAwMDAwMDAwMDAyMDtjb20uYXBwbGUuYXBwLXNhbmRib3gucmVhZC13cml0ZTswMTswMTAwMDAwZjswMDAwMDAwMDAyNWZhYWRlOzUxOy91c2Vycy9kb25nZ3l1a2ltL2xpYnJhcnkvbW9iaWxlIGRvY3VtZW50cy9jb21+YXBwbGV+Y2xvdWRkb2NzL3BhcGVycy9tYXRyb2lkL2JvdWNoZXQvbWFwcyBhbmQgZGVsdGEtbWF0cm9pZHMsIGJvdWNoZXQucGRmANgAAAD+////AQAAAAAAAAARAAAABBAAAMgAAAAAAAAABRAAAIQBAAAAAAAAEBAAAMABAAAAAAAAQBAAALABAAAAAAAAAiAAAIwCAAAAAAAABSAAAPwBAAAAAAAAECAAAAwCAAAAAAAAESAAAEACAAAAAAAAEiAAACACAAAAAAAAEyAAADACAAAAAAAAICAAAGwCAAAAAAAAMCAAAJgCAAAAAAAAAcAAAOABAAAAAAAAEcAAABQAAAAAAAAAEsAAAPABAAAAAAAAgPAAANACAAAAAAAAoAIAgMQCAAAAAAAAAAgADQAaACMAZAAAAAAAAAIBAAAAAAAAAAUAAAAAAAAAAAAAAAAAAAV8},
	bdsk-file-2 = {YnBsaXN0MDDSAQIDBFxyZWxhdGl2ZVBhdGhYYm9va21hcmtfEFAuLi9QYXBlcnMocmVhZCBvbmx5LCBvcmRyZWQgYnkgdGhlIGF1dGhvcnMpL0JvdWNoZXQsIE1hcHMgYW5kIGRlbHRhLW1hdHJvaWRzLnBkZk8RBPhib29r+AQAAAAABBAwAAAAAAAAAAAAAAAAAAAAAAAAAAAAAAAAAAAAAAAAAAAAAADoAwAABQAAAAEBAABVc2VycwAAAAoAAAABAQAAZG9uZ2d5dWtpbQAABwAAAAEBAABMaWJyYXJ5ABAAAAABAQAATW9iaWxlIERvY3VtZW50cxMAAAABAQAAY29tfmFwcGxlfkNsb3VkRG9jcwAoAAAAAQEAAFBhcGVycyhyZWFkIG9ubHksIG9yZHJlZCBieSB0aGUgYXV0aG9ycykkAAAAAQEAAEJvdWNoZXQsIE1hcHMgYW5kIGRlbHRhLW1hdHJvaWRzLnBkZhwAAAABBgAABAAAABQAAAAoAAAAOAAAAFAAAABsAAAAnAAAAAgAAAAEAwAA2loAAAAAAAAIAAAABAMAACqmAwAAAAAACAAAAAQDAAAxpgMAAAAAAAgAAAAEAwAA/s0DAAAAAAAIAAAABAMAAEzOAwAAAAAACAAAAAQDAACe3+QBAAAAAAgAAAAEAwAA3QOuAgAAAAAcAAAAAQYAAOwAAAD8AAAADAEAABwBAAAsAQAAPAEAAEwBAAAIAAAAAAQAAEHGSN2CgAAAGAAAAAECAAABAAAAAAAAAA8AAAAAAAAAAAAAAAAAAAAIAAAABAMAAAUAAAAAAAAABAAAAAMDAAD1AQAACAAAAAEJAABmaWxlOi8vLwwAAAABAQAATWFjaW50b3NoIEhECAAAAAQDAAAAUKEbcwAAAAgAAAAABAAAQcZjyEAAAAAkAAAAAQEAAEM4RDcwMTcxLTBGQkQtNEE0My05OUQ5LURDRDcwNUJBMDEzQRgAAAABAgAAgQAAAAEAAADvEwAAAQAAAAAAAAAAAAAAAQAAAAEBAAAvAAAAAAAAAAEFAAAaAAAAAQEAAE5TVVJMRG9jdW1lbnRJZGVudGlmaWVyS2V5AAAEAAAAAwMAAJU+AAA+AQAAAQIAAGY3YTBhZTgwNDk4ZDY3YzMwMGNmZDZmYjA4MDJhYzZlZGQzYWU5YTVmNzc3ZWI0NWZjNWU4ZmM4Yzc3MzE0ZjE7MDA7MDAwMDAwMDA7MDAwMDAwMDA7MDAwMDAwMDA7MDAwMDAwMDAwMDAwMDAyMDtjb20uYXBwbGUuYXBwLXNhbmRib3gucmVhZC13cml0ZTswMTswMTAwMDAwZjswMDAwMDAwMDAyYWUwM2RkOzUxOy91c2Vycy9kb25nZ3l1a2ltL2xpYnJhcnkvbW9iaWxlIGRvY3VtZW50cy9jb21+YXBwbGV+Y2xvdWRkb2NzL3BhcGVycyhyZWFkIG9ubHksIG9yZHJlZCBieSB0aGUgYXV0aG9ycykvYm91Y2hldCwgbWFwcyBhbmQgZGVsdGEtbWF0cm9pZHMucGRmAAAA2AAAAP7///8BAAAAAAAAABEAAAAEEAAAyAAAAAAAAAAFEAAAXAEAAAAAAAAQEAAAkAEAAAAAAABAEAAAgAEAAAAAAAACIAAAXAIAAAAAAAAFIAAAzAEAAAAAAAAQIAAA3AEAAAAAAAARIAAAEAIAAAAAAAASIAAA8AEAAAAAAAATIAAAAAIAAAAAAAAgIAAAPAIAAAAAAAAwIAAAaAIAAAAAAAABwAAAsAEAAAAAAAARwAAAFAAAAAAAAAASwAAAwAEAAAAAAACA8AAAoAIAAAAAAABwAgCAlAIAAAAAAAAACAANABoAIwB2AAAAAAAAAgEAAAAAAAAABQAAAAAAAAAAAAAAAAAABXI=},
	bdsk-url-1 = {https://mathscinet.ams.org/mathscinet-getitem?mr=1020647}}

@article{BBGS2000,
	author = {Booth, Richard F. and Borovik, Alexandre V. and Gelfand, Israel M. and Stone, David A.},
	date-added = {2024-09-27 07:33:12 +0900},
	date-modified = {2024-09-27 07:33:59 +0900},
	Xdoi = {10.1007/s000260050004},
	fjournal = {Annals of Combinatorics},
	issn = {0218-0006},
	journal = {Ann. Comb.},
	keywords = {delta-matroid, ribbon graph},
	mrclass = {05B35},
	mrnumber = {1770687},
	mrreviewer = {Neil L. White},
	number = {2},
	pages = {171--182},
	title = {Lagrangian matroids and cohomology},
	Xurl = {https://doi.org/10.1007/s000260050004},
	volume = {4},
	year = {2000},
	bdsk-file-1 = {YnBsaXN0MDDSAQIDBFxyZWxhdGl2ZVBhdGhYYm9va21hcmtfEFguLi9QYXBlcnMocmVhZCBvbmx5LCBvcmRyZWQgYnkgdGhlIGF1dGhvcnMpL0JCR1MsIExhZ3JhbmdpYW4gbWF0cm9pZHMgYW5kIGNvaG9tb2xvZ3kucGRmTxEFCGJvb2sIBQAAAAAEEDAAAAAAAAAAAAAAAAAAAAAAAAAAAAAAAAAAAAAAAAAAAAAAAPgDAAAFAAAAAQEAAFVzZXJzAAAACgAAAAEBAABkb25nZ3l1a2ltAAAHAAAAAQEAAExpYnJhcnkAEAAAAAEBAABNb2JpbGUgRG9jdW1lbnRzEwAAAAEBAABjb21+YXBwbGV+Q2xvdWREb2NzACgAAAABAQAAUGFwZXJzKHJlYWQgb25seSwgb3JkcmVkIGJ5IHRoZSBhdXRob3JzKSwAAAABAQAAQkJHUywgTGFncmFuZ2lhbiBtYXRyb2lkcyBhbmQgY29ob21vbG9neS5wZGYcAAAAAQYAAAQAAAAUAAAAKAAAADgAAABQAAAAbAAAAJwAAAAIAAAABAMAANpaAAAAAAAACAAAAAQDAAAqpgMAAAAAAAgAAAAEAwAAMaYDAAAAAAAIAAAABAMAAP7NAwAAAAAACAAAAAQDAABMzgMAAAAAAAgAAAAEAwAAnt/kAQAAAAAIAAAABAMAAPH91AIAAAAAHAAAAAEGAAD0AAAABAEAABQBAAAkAQAANAEAAEQBAABUAQAACAAAAAAEAABBxlFARQAAABgAAAABAgAAAQAAAAAAAAAPAAAAAAAAAAAAAAAAAAAACAAAAAQDAAAFAAAAAAAAAAQAAAADAwAA9QEAAAgAAAABCQAAZmlsZTovLy8MAAAAAQEAAE1hY2ludG9zaCBIRAgAAAAEAwAAAFChG3MAAAAIAAAAAAQAAEHF9QSsAAAAJAAAAAEBAABDOEQ3MDE3MS0wRkJELTRBNDMtOTlEOS1EQ0Q3MDVCQTAxM0EYAAAAAQIAAIEAAAABAAAA7xMAAAEAAAAAAAAAAAAAAAEAAAABAQAALwAAAAAAAAABBQAAGgAAAAEBAABOU1VSTERvY3VtZW50SWRlbnRpZmllcktleQAABAAAAAMDAADlPgAARgEAAAECAAAwZjgxODU5MDdkMWQ5YmZjNmQxM2YzZGRmYzhlMDA4ZjY4M2UyMmMyMTI3YjY3NzI4YjQ4MTVhNTQxODMzYjBhOzAwOzAwMDAwMDAwOzAwMDAwMDAwOzAwMDAwMDAwOzAwMDAwMDAwMDAwMDAwMjA7Y29tLmFwcGxlLmFwcC1zYW5kYm94LnJlYWQtd3JpdGU7MDE7MDEwMDAwMTE7MDAwMDAwMDAwMmQ0ZmRmMTs2NDsvdXNlcnMvZG9uZ2d5dWtpbS9saWJyYXJ5L21vYmlsZSBkb2N1bWVudHMvY29tfmFwcGxlfmNsb3VkZG9jcy9wYXBlcnMocmVhZCBvbmx5LCBvcmRyZWQgYnkgdGhlIGF1dGhvcnMpL2JiZ3MsIGxhZ3JhbmdpYW4gbWF0cm9pZHMgYW5kIGNvaG9tb2xvZ3kucGRmAAAA2AAAAP7///8BAAAAAAAAABEAAAAEEAAA0AAAAAAAAAAFEAAAZAEAAAAAAAAQEAAAmAEAAAAAAABAEAAAiAEAAAAAAAACIAAAZAIAAAAAAAAFIAAA1AEAAAAAAAAQIAAA5AEAAAAAAAARIAAAGAIAAAAAAAASIAAA+AEAAAAAAAATIAAACAIAAAAAAAAgIAAARAIAAAAAAAAwIAAAcAIAAAAAAAABwAAAuAEAAAAAAAARwAAAFAAAAAAAAAASwAAAyAEAAAAAAACA8AAAqAIAAAAAAAB4AgCAnAIAAAAAAAAACAANABoAIwB+AAAAAAAAAgEAAAAAAAAABQAAAAAAAAAAAAAAAAAABYo=},
	bdsk-url-1 = {https://mathscinet.ams.org/mathscinet-getitem?mr=1770687}}

@article{BW2018,
    author = {Baker, Matthew and Wang, Yao},
    title = {The {B}ernardi Process and Torsor Structures on Spanning Trees},
    journal = {Int. Math. Res. Not.},
    volume = {2018},
    number = {16},
    pages = {5120-5147},
    year = {2017},
    month = {03},
    abstract = {Let \$G\$ be a ribbon graph, that is, a connected finite graph \$G\$ together with a cyclic ordering of the edges around each vertex. By adapting a construction due to Olivier Bernardi, we associate to any pair \$(v,e)\$ consisting of a vertex \$v\$ and an edge \$e\$ adjacent to \$v\$ a bijection \$\\beta\_\{(v,e)\}\$ between spanning trees of \$G\$ and elements of the set \$\\operatorname\{Pic\}^g(G)\$ of degree \$g\$ divisor classes on \$G\$, where \$g\$ is the genus of \$G\$ in the sense of Baker–Norine. We give a new proof that the map \$\\beta\_\{(v,e)\}\$ is bijective by explicitly constructing an inverse. Using the natural action of the Picard group \$\\operatorname\{Pic\}^0(G)\$ on \$\\operatorname\{Pic\}^g(G)\$, we show that the Bernardi bijection \$\\beta\_\{(v,e)\}\$ gives rise to a simply transitive action \$\\beta\_v\$ of \$\\operatorname\{Pic\}^0(G)\$ on the set of spanning trees which does not depend on the choice of \$e\$. A plane graph has a natural ribbon structure (coming from the counterclockwise orientation of the plane), and in this case we show that \$\\beta\_v\$ is independent of \$v\$ as well. Thus for plane graphs, the set of spanning trees is naturally a torsor for the Picard group. Conversely, we show that if \$\\beta\_v\$ is independent of \$v\$ then \$G\$ together with its ribbon structure is planar. We also show that the natural action of \$\\operatorname\{Pic\}^0(G)\$ on spanning trees of a plane graph is compatible with planar duality.These findings are formally quite similar to the results of Chan et al.[8] and Holroyd et al.[11], who used rotor-routing to construct an action \$r\_v\$ of \$\\operatorname\{Pic\}^0(G)\$ on the spanning trees of a ribbon graph \$G\$, which they show is independent of \$v\$ if and only if \$G\$ is planar. It is therefore natural to ask how the two constructions are related.},
    issn = {1073-7928},
    Xdoi = {10.1093/imrn/rnx037},
    Xurl = {https://doi.org/10.1093/imrn/rnx037},
}

@article{Bernardi2008,
	author = {Bernardi, Olivier},
	date-added = {2024-11-01 10:46:10 -0400},
	date-modified = {2024-11-01 10:46:25 -0400},
	Xdoi = {10.37236/833},
	fjournal = {Electronic Journal of Combinatorics},
	journal = {Electron. J. Combin.},
	keywords = {critical group},
	mrclass = {05C20},
	mrnumber = {2438581},
	mrreviewer = {Gary Gordon},
	number = {1},
	pages = {Research Paper 109, 53},
	title = {Tutte polynomial, subgraphs, orientations and sandpile model: new connections via embeddings},
	Xurl = {https://doi.org/10.37236/833},
	volume = {15},
	year = {2008},
	bdsk-url-1 = {https://mathscinet.ams.org/mathscinet-getitem?mr=2438581}}

@misc{Ding2024,
	archiveprefix = {arXiv},
	author = {Changxin Ding},
	date-added = {2024-08-27 09:02:13 +0900},
	date-modified = {2024-08-27 09:02:39 +0900},
	eprint = {2306.07376},
	keywords = {matroid, critical group},
	primaryclass = {math.CO},
	title = {A framework unifying some bijections for graphs and its connection to {L}awrence polytopes},
	year = {2024},
	}

@article{BBY2019,
	author = {Backman, Spencer and Baker, Matthew and Yuen, Chi Ho},
	date-added = {2023-04-12 10:40:59 +0900},
	date-modified = {2024-07-19 22:18:11 +0900},
	Xdoi = {10.1017/fms.2019.40},
	fjournal = {Forum of Mathematics. Sigma},
	journal = {Forum Math. Sigma},
	mrclass = {52C40 (05C31 05E18 52B20 52B40)},
	mrnumber = {4061968},
	mrreviewer = {Hery Randriamaro},
	pages = {Paper No. e45, 37},
	title = {Geometric bijections for regular matroids, zonotopes, and {E}hrhart theory},
	Xurl = {https://doi.org/10.1017/fms.2019.40},
	volume = {7},
	year = {2019},
	bdsk-file-1 = {YnBsaXN0MDDSAQIDBFxyZWxhdGl2ZVBhdGhYYm9va21hcmtfEJMuLi9QYXBlcnMocmVhZCBvbmx5LCBvcmRyZWQgYnkgdGhlIGF1dGhvcnMpL0JhY2ttYW4sIEJha2VyLCBhbmQgWXVlbiwgR2VvbWV0cmljIGJpamVjdGlvbnMgZm9yIHJlZ3VsYXIgbWF0cm9pZHMsIHpvbm90b3BlcywgYW5kIEVoYXJoYXJ0IHRoZW9yeS5wZGZPEQWAYm9va4AFAAAAAAQQMAAAAAAAAAAAAAAAAAAAAAAAAAAAAAAAAAAAAAAAAAAAAAAAcAQAAAUAAAABAQAAVXNlcnMAAAAKAAAAAQEAAGRvbmdneXVraW0AAAcAAAABAQAATGlicmFyeQAQAAAAAQEAAE1vYmlsZSBEb2N1bWVudHMTAAAAAQEAAGNvbX5hcHBsZX5DbG91ZERvY3MAKAAAAAEBAABQYXBlcnMocmVhZCBvbmx5LCBvcmRyZWQgYnkgdGhlIGF1dGhvcnMpZwAAAAEBAABCYWNrbWFuLCBCYWtlciwgYW5kIFl1ZW4sIEdlb21ldHJpYyBiaWplY3Rpb25zIGZvciByZWd1bGFyIG1hdHJvaWRzLCB6b25vdG9wZXMsIGFuZCBFaGFyaGFydCB0aGVvcnkucGRmABwAAAABBgAABAAAABQAAAAoAAAAOAAAAFAAAABsAAAAnAAAAAgAAAAEAwAAaLcKAAAAAAAIAAAABAMAAIHFEAAAAAAACAAAAAQDAACCxRAAAAAAAAgAAAAEAwAABOQQAAAAAAAIAAAABAMAAMg5SQAAAAAACAAAAAQDAABSJSEFAAAAAAgAAAAEAwAALZ9bBwAAAAAcAAAAAQYAADABAABAAQAAUAEAAGABAABwAQAAgAEAAJABAAAIAAAAAAQAAEHGHcuQgAAAGAAAAAECAAABAAAAAAAAAA8AAAAAAAAAAAAAAAAAAAAIAAAABAMAAAUAAAAAAAAABAAAAAMDAAD1AQAACAAAAAEJAABmaWxlOi8vLwwAAAABAQAATWFjaW50b3NoIEhECAAAAAQDAAAA4AHj6AAAAAgAAAAABAAAQcYvyAaAAAAkAAAAAQEAAEUyN0QwMkQxLTBENEUtNEVDRC1BOUY1LTBGQUNDQzZERThBQxgAAAABAgAAgQAAAAEAAADvEwAAAQAAAAAAAAAAAAAAAQAAAAEBAAAvAAAAAAAAAAEFAAAaAAAAAQEAAE5TVVJMRG9jdW1lbnRJZGVudGlmaWVyS2V5AAAEAAAAAwMAALI5AACBAQAAAQIAADcxYmZkMzdhODNkNTlhOTE2OTY3ODk0MDFiODk4MTNmMGM1ODM1Y2Y0MDM2Yjg4MjUyZGE3ZTgzMzUyODEyYWM7MDA7MDAwMDAwMDA7MDAwMDAwMDA7MDAwMDAwMDA7MDAwMDAwMDAwMDAwMDAyMDtjb20uYXBwbGUuYXBwLXNhbmRib3gucmVhZC13cml0ZTswMTswMTAwMDAwNDswMDAwMDAwMDA3NWI5ZjJkOzYyOy91c2Vycy9kb25nZ3l1a2ltL2xpYnJhcnkvbW9iaWxlIGRvY3VtZW50cy9jb21+YXBwbGV+Y2xvdWRkb2NzL3BhcGVycyhyZWFkIG9ubHksIG9yZHJlZCBieSB0aGUgYXV0aG9ycykvYmFja21hbiwgYmFrZXIsIGFuZCB5dWVuLCBnZW9tZXRyaWMgYmlqZWN0aW9ucyBmb3IgcmVndWxhciBtYXRyb2lkcywgem9ub3RvcGVzLCBhbmQgZWhhcmhhcnQgdGhlb3J5LnBkZgAAAADYAAAA/v///wEAAAAAAAAAEQAAAAQQAAAMAQAAAAAAAAUQAACgAQAAAAAAABAQAADUAQAAAAAAAEAQAADEAQAAAAAAAAIgAACgAgAAAAAAAAUgAAAQAgAAAAAAABAgAAAgAgAAAAAAABEgAABUAgAAAAAAABIgAAA0AgAAAAAAABMgAABEAgAAAAAAACAgAACAAgAAAAAAADAgAACsAgAAAAAAAAHAAAD0AQAAAAAAABHAAAAUAAAAAAAAABLAAAAEAgAAAAAAAIDwAADkAgAAAAAAALQCAIDYAgAAAAAAAAAIAA0AGgAjALkAAAAAAAACAQAAAAAAAAAFAAAAAAAAAAAAAAAAAAAGPQ==},
	bdsk-url-1 = {https://mathscinet.ams.org/mathscinet-getitem?mr=4061968},
	bdsk-url-2 = {https://doi.org/10.1017/fms.2019.40}}

@article{JK2023,
	author = {Jin, Tong and Kim, Donggyu},
	date-added = {2025-11-01 19:34:41 -0400},
	date-modified = {2026-01-06 09:48:48 +0900},
	doi = {10.1017/fms.2025.10085},
	fjournal = {Forum of Mathematics. Sigma},
	journal = {Forum Math. Sigma},
	keywords = {matroid representation, delta-matroid, _myself},
	mrclass = {05B35 (15A63 52B40)},
	mrnumber = {4941601},
	pages = {Paper No. e130, 34},
	title = {Orthogonal matroids over tracts},
	url = {https://doi.org/10.1017/fms.2025.10085},
	volume = {13},
	year = {2025},
	}

@article{MMN2023,
	author = {Merino, Criel and Moffatt, Iain and Noble, Steven},
	date-added = {2025-12-03 16:54:08 -0500},
	date-modified = {2026-03-04 13:06:15 -0500},
	doi = {10.5070/C65365550},
	fjournal = {Combinatorial Theory},
	journal = {Comb. Theory},
	keywords = {critical group, ribbon graph, delta-matroid},
	mrclass = {05C25 (05C10 05C50 05C57 20K01 91A43)},
	mrnumber = {4962116},
	number = {3},
	pages = {Paper No. 2, 41},
	title = {The critical group of a combinatorial map},
	volume = {5},
	year = {2025},
	}

@book {Lee,
    AUTHOR = {Lee, John M.},
     TITLE = {Introduction to smooth manifolds},
    SERIES = {Graduate Texts in Mathematics},
    VOLUME = {218},
   EDITION = {Second},
 PUBLISHER = {Springer, New York},
      YEAR = {2013},
     PAGES = {xvi+708},
      ISBN = {978-1-4419-9981-8},
   MRCLASS = {58-01 (53-01 57-01)},
  MRNUMBER = {2954043},
       Xdoi = {10.1007/978-1-4419-9982-5},
}

@article{CCG15,
    author = {Chan, Melody and Church, Thomas and Grochow, Joshua A.},
    title = {Rotor-Routing and Spanning Trees on Planar Graphs},
    journal = {Int. Math. Res. Not.},
    volume = {2015},
    number = {11},
    pages = {3225-3244},
    year = {2014},
    month = {03},
    abstract = {The sandpile group Pic0(G) of a finite graph G is a discrete analog of the Jacobian of a Riemann surface which was rediscovered several times in the contexts of arithmetic geometry, self-organized criticality, random walks, and algorithms. Given a ribbon graph G, Holroyd et al. used the “rotor-routing” model to define a free and transitive action of Pic0(G) on the set of spanning trees of G. However, their construction depends a priori on a choice of basepoint vertex. Ellenberg asked whether this action does in fact depend on the choice of basepoint. We answer this question by proving that the action of Pic0(G) is independent of the basepoint if and only if G is a planar ribbon graph.},
    issn = {1073-7928},
    Xdoi = {10.1093/imrn/rnu025},
    Xurl = {https://doi.org/10.1093/imrn/rnu025},
}

@article {BH17,
    AUTHOR = {Backman, Spencer and Hopkins, Sam},
     TITLE = {Fourientations and the {T}utte polynomial},
   JOURNAL = {Res. Math. Sci.},
  FJOURNAL = {Research in the Mathematical Sciences},
    VOLUME = {4},
      YEAR = {2017},
     PAGES = {Paper No. 18, 57},
      ISSN = {2522-0144,2197-9847},
   MRCLASS = {05C31},
  MRNUMBER = {3696165},
MRREVIEWER = {Steven\ J.\ Tedford},
       XDOI = {10.1186/s40687-017-0107-z},
       XURL = {https://doi.org/10.1186/s40687-017-0107-z},
}

@article {Gioan08,
    AUTHOR = {Gioan, Emeric},
     TITLE = {Circuit-cocircuit reversing systems in regular matroids},
   JOURNAL = {Ann. Comb.},
  FJOURNAL = {Annals of Combinatorics},
    VOLUME = {12},
      YEAR = {2008},
    NUMBER = {2},
     PAGES = {171--182},
      ISSN = {0218-0006,0219-3094},
   MRCLASS = {05B35 (05A99 52C40)},
  MRNUMBER = {2428903},
MRREVIEWER = {Haidong\ Wu},
       Xdoi = {10.1007/s00026-008-0345-2},
       Xurl = {https://doi.org/10.1007/s00026-008-0345-2},
}

@article {Gioan07,
    AUTHOR = {Gioan, Emeric},
     TITLE = {Enumerating degree sequences in digraphs and a cycle-cocycle
              reversing system},
   JOURNAL = {European J. Combin.},
  FJOURNAL = {European Journal of Combinatorics},
    VOLUME = {28},
      YEAR = {2007},
    NUMBER = {4},
     PAGES = {1351--1366},
      ISSN = {0195-6698,1095-9971},
   MRCLASS = {05C20 (05C07 05C38)},
  MRNUMBER = {2305596},
MRREVIEWER = {Nirupama\ S.\ Bhave},
       Xdoi = {10.1016/j.ejc.2005.11.006},
       Xurl = {https://doi.org/10.1016/j.ejc.2005.11.006},
}

@article {VinceWhite01,
    AUTHOR = {Vince, Andrew and White, Neil},
     TITLE = {Orthogonal matroids},
   JOURNAL = {J. Algebraic Combin.},
  FJOURNAL = {Journal of Algebraic Combinatorics. An International Journal},
    VOLUME = {13},
      YEAR = {2001},
    NUMBER = {3},
     PAGES = {295--315},
      ISSN = {0925-9899,1572-9192},
   MRCLASS = {05B35 (05E15 20F55)},
  MRNUMBER = {1836906},
MRREVIEWER = {Joseph\ Kung},
       Xdoi = {10.1023/A:1011212331779},
       Xurl = {https://doi.org/10.1023/A:1011212331779},
}

@article {Yuen,
    AUTHOR = {Yuen, Chi Ho},
     TITLE = {Geometric bijections between spanning trees and break
              divisors},
   JOURNAL = {J. Combin. Theory Ser. A},
  FJOURNAL = {Journal of Combinatorial Theory. Series A},
    VOLUME = {152},
      YEAR = {2017},
     PAGES = {159--189},
      ISSN = {0097-3165},
   MRCLASS = {14T05 (05C25 05E18 14C20 14H40)},
  MRNUMBER = {3682731},
MRREVIEWER = {Luis Felipe Tabera},
       XDOI = {10.1016/j.jcta.2017.06.004},
       XURL = {https://doi.org/10.1016/j.jcta.2017.06.004},
}

@article {Camion,
    AUTHOR = {Camion, P.},
     TITLE = {Caracterisation des matrices unimodulaires},
   JOURNAL = {Cahiers Centre \'Etudes Rech. Op\'er.},
  FJOURNAL = {Cahiers du Centre d'\'Etudes de Recherche Op\'erationnelle},
    VOLUME = {5},
      YEAR = {1963},
     PAGES = {181--190},
      ISSN = {0008-9737},
   MRCLASS = {05.25},
  MRNUMBER = {179101},
}

\appendix

\section{Bouchet's representations of ribbon graphs}\label{subsec:bouchet}

In this appendix, we review Bouchet's construction~\cite{Bouchet1987c} of principally unimodular matrices associated with ribbon graphs,
with the ultimate goal of proving that the Jacobian group $\J(\vG)$, as defined in this paper, coincides with the critical group of $\vG$, as defined in~\cite{MMN2023}. %

Let $\vG = (G,\Sigma)$ be a ribbon graph, and fix a reference orientation of $G$.
We denote by $v_e$ the intersection of an edge $e$ and the corresponding coedge $e^*$.
The point $v_e$ divides $e$ into the two half-edges $e_{+}$ and $e_{-}$, where we adopt the convention that $e_{+}$ corresponds to the head of $e$ and $e_{-}$ corresponds to the tail.
Similarly, we define the two half-coedges $e^*_{+}$ and $e^*_{-}$.

We define a matrix $A(\vG,Q)$ as follows:

\begin{enumerate}[label=\rm(\roman*)]
    \item Select a spanning quasi-tree $(V,Q)$ of $\vG$.
    We denote by $q$ the boundary of a small $\epsilon$-neighborhood of $(V,Q)$ in $\Sigma$.
    Then, as in \S\ref{subsec:ribbon bernardi}, $q$ intersects $2|E|$ half-edges and half-coedges induced by $Q^* \cup (E\setminus Q)$.
    \item As in \S\ref{subsec:ribbon bernardi}, travel along $q$ according to the induced orientation and list the $2|E|$ half-edges and half-coedges you cross, in order; this yields a cyclic ordering which we denote by $q^\circ$.
    \item Form the skew-symmetric matrix $A(\vG,Q)$, whose rows and columns are indexed by the elements in $Q^* \cup (E\setminus Q)$, using the rule: 
    \begin{align*}
        A(\vG,Q)(e,f)
        =
        \begin{cases}
            +1 & \text{if we have $\cdots f_{+} \cdots e_{+} \cdots f_{-} \cdots e_{-} \cdots $ in $q^\circ$}, \\
            -1 & \text{if we have $\cdots e_{+} \cdots f_{+} \cdots e_{-} \cdots f_{-} \cdots $ in $q^\circ$}, \\
            0 & \text{otherwise}. \\
        \end{cases}
    \end{align*}
\end{enumerate}

\begin{theorem}[\cite{Bouchet1987c}; see~\cite{MMN2023}]
    $A(\vG,Q)$ is principally unimodular.
\end{theorem}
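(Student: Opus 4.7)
My strategy is to identify Bouchet's matrix $A(\vG, Q)$ with the matrix $A$ arising from the standard decomposition $\Lambda(\cC(\vG), B) = \begin{pmatrix} I & A \end{pmatrix}$ of the regular orthogonal representation $\cC(\vG)$, where $B := Q \cup (E \setminus Q)^*$ is the basis of $M(\vG)$ corresponding to $Q$ via Lemma~\ref{lem:ribbon basis}. Once this identification is established (up to a relabeling via the involution $\ast$ and a uniform sign), principal unimodularity of $A(\vG, Q)$ follows from Theorem~\ref{thm:ribbon representation} together with the general PU result for regular orthogonal representations cited in the excerpt as~\cite[Theorem~1.2]{JK2023}. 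Note that a uniform sign change preserves all principal minors of a skew-symmetric matrix (the odd-sized ones vanish automatically, and the even-sized ones absorb the $(-1)^{|S|}$ factor).

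First I set up the matrix from the $\Lambda$-decomposition: $A$ is a skew-symmetric PU matrix indexed by $B \times B$ with entries $A(e, f) = \vC_e(f)$, where $\vC_e \in \cC(\vG)$ is the unique signed ribbon cycle satisfying $\mathrm{supp}(\vC_e) = \FC(B, e^*)$ and $\vC_e(e^*) = +1$. Since $|B| = |B^*| = |E|$, the natural bijection $x \leftrightarrow x^*$ identifies the index set $B$ of $A$ with the index set $B^* = Q^* \cup (E\setminus Q)$ of $A(\vG, Q)$.

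The substantive step is then to show, for $e, f \in B$, that $A(\vG, Q)(e^*, f^*) = \pm \vC_e(f)$ with a single uniform sign. The key tool here is Lemma~\ref{lem:vC and fudamental circuit}, which identifies $\vC_e^*$ (up to sign) with one of the two boundary components of a small $\epsilon$-neighborhood of the spanning subgraph $(V, (E \cap B) \triangle \{\underline{e}\})$, where $\underline{e} \in \{e, e^*\} \cap E$. This modified boundary is obtained from $q = \partial N_\epsilon(Q)$ by a local surgery at the (co)edge $\underline{e}$ (re-splicing the two strands of $q$ that cross $\underline{e}^*$ or $\underline{e}$), and the effect of this surgery on the traversal of $f^*_\pm$ in $q^\circ$ simultaneously controls both Bouchet's combinatorial sign (from the interleaving pattern) and the coefficient $\vC_e(f)$ (via the induced orientation on the modified boundary).

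\textbf{Main obstacle.} The hard part will be reconciling the combinatorial sign rule defining $A(\vG, Q)$ (a four-case interleaving of $e^*_\pm, f^*_\pm$ within $q^\circ$) with the topological sign rule of Definition~\ref{def: ribbon to om} (based on induced orientations on components of $\Sigma \setminus \mathrm{supp}(\vC_e)$). This requires a case analysis splitting on whether $e, f \in B^*$ are edges (in $E \setminus Q$) or coedges (in $Q^*$), and careful bookkeeping of orientation conventions (the right-hand rule for dual orientation, counter-clockwise surface orientation, and the fixed reference orientation of $G$). In the two ``non-interleaved'' combinatorial patterns the corresponding $\vC_e(f)$ must vanish, whereas the two ``interleaved'' patterns give opposite signs $\pm 1$ matching opposite orientations of the boundary component in Lemma~\ref{lem:vC and fudamental circuit}. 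Once this entry-by-entry match is verified, the identification $A(\vG, Q) \equiv \pm A$ (after reindexing) yields the desired principal unimodularity.
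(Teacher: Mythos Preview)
The paper does not prove this theorem; it cites it from Bouchet~\cite{Bouchet1987c} and~\cite{MMN2023}. Your proposal is to derive it internally from Theorem~\ref{thm:ribbon representation} and the general PU statement for regular orthogonal representations, via an identification of $A(\vG,Q)$ with the matrix $A$ in $\Lambda(\cC(\vG),B)=(I\ A)$. That approach is sound, and in fact the paper carries out precisely the identification you need: Proposition~\ref{prop:compatibility} (proved immediately after the cited theorem in the appendix) shows that for each $e\in B^*=Q^*\cup(E\setminus Q)$ the $e$-th row of $\Lambda(\vG,Q)=(I\ A(\vG,Q))$ equals $\vC_e^*$, where $\vC_e$ is the signed ribbon cycle with $\mathrm{supp}(\vC_e)=\FC(B,e)$ and $\vC_e(e)=1$. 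Unwinding the indexing (write $e=f^*$ for $f\in B$), this gives $A(\vG,Q)(f^*,g^*)=\vC_f(g)=A(f,g)$ exactly, so no uniform sign adjustment is needed. The case analysis you anticipate (edge vs.\ coedge for $e^*$) is the same split into Cases~I and~II in the paper's proof of Proposition~\ref{prop:compatibility}, and the key tool is indeed Lemma~\ref{lem:vC and fudamental circuit}.

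So your plan is correct and matches the paper's own machinery; the only difference is that the paper uses Proposition~\ref{prop:compatibility} to identify $\J(\vG)$ with the critical group of~\cite{MMN2023}, rather than to re-derive principal unimodularity (which it takes as given from Bouchet).
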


The skew-symmetric matrix $A(\vG,Q)$ encodes the signed ribbon cycles $\cC(\vG)$ of $\vG$, in the following sense.
Let $\Lambda(\vG,Q)$ be the $n\times 2n$ matrix $\begin{pmatrix}
    I & A(\vG,Q)
\end{pmatrix}$
such that the columns of the identity matrix $I$ are indexed by elements in $Q\cup (E\setminus Q)^*$ and $I(e,e^*) = 1$ for each $e\in Q^*\cup (E\setminus Q)$.

\begin{proposition}\label{prop:compatibility}
    For each $e\in Q^*\cup (E\setminus Q)$, the $e$-th row of $\Lambda(\vG,Q)$ is equal to $\vC^*_e$, where $\vC_e$ is the signed ribbon cycle such that $\mathrm{supp}(\vC_e) = \FC(Q\cup (E\setminus Q)^*, e)$ and $\vC_e(e) = 1$.
\end{proposition}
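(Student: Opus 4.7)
My plan is to match the $e$-th row of $\Lambda(\vG,Q)$ with $\vC_e^*$ column-by-column. Write $B := Q \cup (E \setminus Q)^*$, so the rows of $\Lambda(\vG,Q)$ are indexed by $B^*$. Since $\mathrm{supp}(\vC_e) = \FC(B,e) \subseteq B \triangle \{e,e^*\}$, we have $\mathrm{supp}(\vC_e^*) \subseteq (B^* \setminus \{e\}) \cup \{e^*\}$, which coincides with the set of columns at which the $e$-th row of $\Lambda(\vG,Q)$ can be nonzero. Moreover, $\vC_e^*(e^*) = \vC_e(e) = 1 = I(e,e^*)$, so the two vectors agree at the column indexed by $e^*$. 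It therefore remains to check that
\[
A(\vG,Q)(e,f) \;=\; \vC_e^*(f) \;=\; \vC_e(f^*) \qquad \text{for every } f \in B^* \setminus \{e\}.
\]

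To evaluate $\vC_e(f^*)$ geometrically, I would apply Lemma~\ref{lem:vC and fudamental circuit} to the unique element $\underline{e} \in E \cap \{e,e^*\}$, producing two boundary components $c, c'$ of a small $\epsilon$-neighborhood $N'$ of $(V, Q \triangle \{\underline{e}\})$ in $\Sigma$, with $\vC_e^* = \pm\iota(c)$ for one of the two (the choice pinned down by the normalization $\vC_e(e)=1$). Thus $\vC_e(f^*) = \pm(c,f)$. The key geometric observation is that the passage from $(V,Q)$ to $(V, Q \triangle \{\underline{e}\})$ only modifies the $\epsilon$-neighborhood near the crossing point $v_{\underline{e}}$: it is a local surgery on the single boundary curve $q$ of the $\epsilon$-neighborhood of $(V,Q)$ at the two points where $q$ meets $\underline{e}$. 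Combinatorially, this surgery cuts the cyclic word $q^\circ$ into the two cyclic words obtained by reading $\partial N'$ along $c$ and $c'$. Consequently, for any $f \in B^* \setminus \{e\}$, the half-(co)edges $f_+, f_-$ lie on distinct boundary components if and only if the pairs $(e_+,e_-)$ and $(f_+,f_-)$ interleave in $q^\circ$. In the non-interleaved case, both $A(\vG,Q)(e,f)$ and $(c,f)$ vanish, so the identity above reduces to $0 = 0$.

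The remaining, and main, technical step is the sign comparison in the interleaved case, where both $A(\vG,Q)(e,f)$ and $(c,f)$ are $\pm 1$. By definition of Bouchet's matrix, $A(\vG,Q)(e,f)$ is $+1$ or $-1$ according to which of the two interleaving patterns $\cdots f_+ \cdots e_+ \cdots f_- \cdots e_- \cdots$ or $\cdots e_+ \cdots f_+ \cdots e_- \cdots f_- \cdots$ occurs in $q^\circ$. I would then draw a local picture near $v_f = f \cap f^*$, trace the induced orientation of $c$ as it crosses the two half-(co)edges of $f$ in each pattern, and compare with the reference orientation on $f$ used in Definition~\ref{def: ribbon to om}, together with the right-hand-rule convention for the dual orientation and the counter-clockwise orientation of $\Sigma$. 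The two interleaving patterns produce opposite signs of $(c,f)$, and careful bookkeeping at this single crossing -- including fixing the global choice between $c$ and $c'$ via the normalization $\vC_e(e)=1$ -- will match them with $A(\vG,Q)(e,f)$. I expect this final local orientation check to be the main obstacle.
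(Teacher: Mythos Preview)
Your approach is correct and essentially identical to the paper's: both invoke Lemma~\ref{lem:vC and fudamental circuit} to write $\vC_e^*=\iota(c_1)$ for one boundary component of the $\epsilon$-neighborhood of $(V,Q\triangle\{\underline{e}\})$, observe that the surgery at $v_{\underline{e}}$ cuts the cyclic word $q^\circ$ into the two cyclic words for $c_1,c_2$, and reduce the problem to the sign comparison in the interleaved case. The paper carries out that final check by splitting into the two cases $e^*\in Q$ and $e^*\in (E\setminus Q)^*$, writing $q^\circ$ explicitly as $e_- X e_+ Y$ (resp.\ $e_+ X e_- Y$) with $c_1^\circ = e^*_+ X$, $c_2^\circ = e^*_- Y$ (resp.\ $c_1^\circ = e^*_- X$, $c_2^\circ = e^*_+ Y$), from which $(c_1,f)=A(\vG,Q)(e,f)$ can be read off directly.
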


\begin{proof}
    Let $c_1$ and $c_2$ be the boundary components of a small $\epsilon$-neighborhood of $(V,Q\triangle \{ \underline{e}\})$, where $\underline{e}\in E\cap \{e,e^*\}$.
    Each $c_i$ inherits its orientation from the induced boundary orientation.
    We may assume that $(c_1,e^*) = 1$.
    Then $(c_2,e^*) = -1$, and $\iota(c_1) = -\iota(c_2) = \vC_e^*$ by Lemma~\ref{lem:vC and fudamental circuit}.
    Let $c_i^\circ$ be the associated cyclic ordering of the half-edges and half-coedges intersecting $c_i$.

    \textbf{Case I.}
    $e^*\in Q$.
    Then $c_1$ intersects $e^*_+$ and $c_2$ intersects $e^*_-$.
    Hence, $c_1^\circ$ and $c_2^\circ$ can be enumerated as $e^*_+ X$ and $e^*_- Y$, respectively, where $X$ and $Y$ are subsequences of the cyclic ordering $q^\circ$ with $q^\circ = e_- X e_+ Y$; see Figure~\ref{fig: bouchet}(left).
    For $e\ne f \in Q^*\cup (E\setminus Q)$, we observe that 
    \begin{itemize}
        \item $(c_1,f)=1$ if and only if $f_+ \in X$ and $f_- \in Y$, and 
        \item $(c_1,f)=-1$ if and only if $f_- \in X$ and $f_+ \in X$.
    \end{itemize}
    Therefore, $(c_1,f) = A(\vG,Q)(e,f)$, implying that $\iota(c_1) = \vC_e^*$ is equal to the $e$-th row of $\Lambda(\vG,Q)$.

    \textbf{Case II.}
    $e^* \in (E\setminus Q)^*$.
    Then $c_1$ intersects $e^*_-$ and $c_2$ intersects $e^*_+$.
    Thus, $c_1^\circ$ and $c_2^\circ$ can be written as $e^*_- X$ and $e^*_+ Y$, respectively, where $X$ and $Y$ are subsequences of the cyclic ordering $q^\circ$ with $q^\circ = e_+ X e_- Y$; see Figure~\ref{fig: bouchet}(right).
    We have:
    \begin{itemize}
        \item $(c_1,f)=1$ if and only if $f_- \in X$ and $f_+ \in Y$, and 
        \item $(c_1,f)=-1$ if and only if $f_+ \in X$ and $f_- \in X$.
    \end{itemize}
    Therefore, $(c_1,f) = A(\vG,Q)(e,f)$, implying that $\vC_e^*$ is equal to the $e$-th row of $\Lambda(\vG,Q)$.
\end{proof}

\begin{figure}
    \centering
    \begin{tikzpicture}
        \node at (0,0) {\includegraphics[scale=0.30]{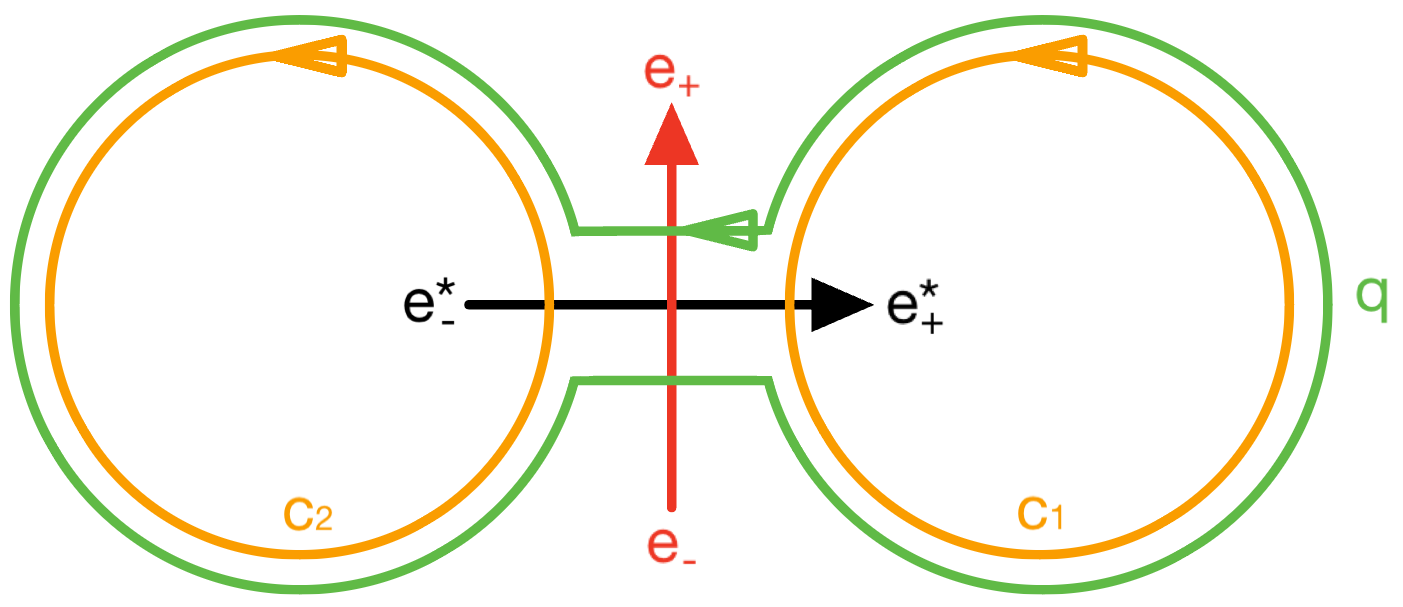}};
        \node at (7,0) {\includegraphics[scale=0.35]{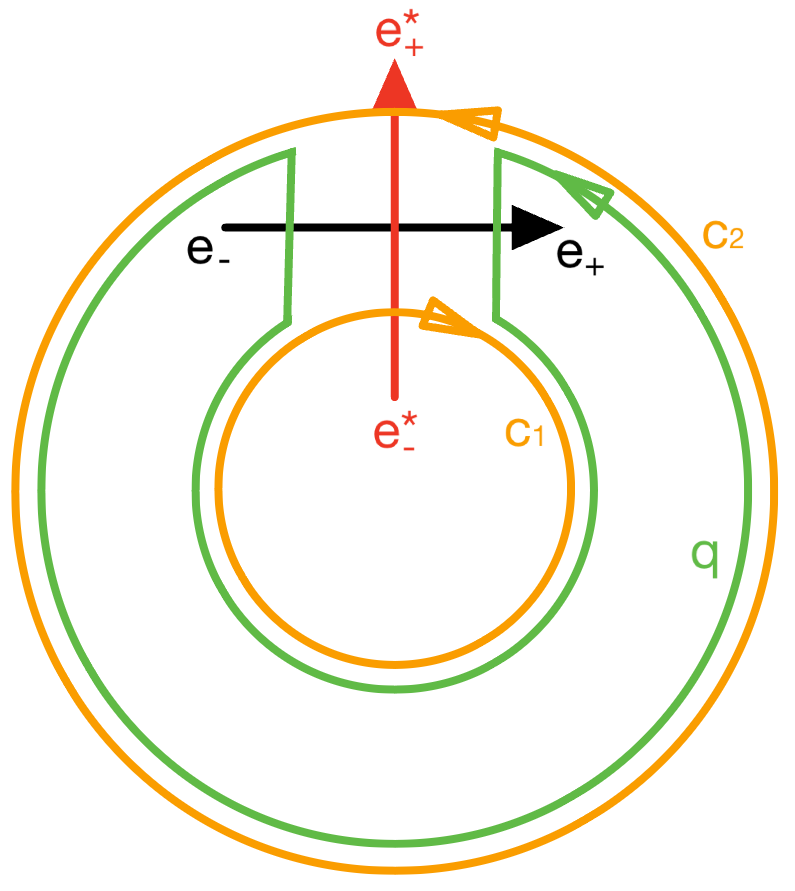}};
    \end{tikzpicture}
    \caption{Descriptions of $q$, $c_1$, and $c_2$ in the proof of Proposition~\ref{prop:compatibility}.
    The left figure illustrates the case $e^* \in Q$, and the right figure illustrates the case $e^* \in (E\setminus Q)^*$.
    In both the left and right illustrations, the region bounded by a green curve~$q$ corresponds to a small $\epsilon$-neighborhood of the spanning quasi-tree $(V,Q)$.
    }
    \label{fig: bouchet}
\end{figure}

Proposition~\ref{prop:compatibility}, combined with Lemma~\ref{lem: equal row span}, shows that 
\begin{align} \label{eq:criticalgroup}
    \J(\vG) = \Z^E / \langle I + A(\vG,Q) \rangle
\end{align}
for any spanning quasi-tree $Q$.
We note that the right-hand side of \eqref{eq:criticalgroup} is the definition of ``critical group'' in~\cite{MMN2023}.

\begin{remark}
    The matrix $A(\vec{\mathbb{G}},Q)$, as defined in~{\cite[Definition~3.1]{MMN2023}}, is the negative of $A(\vG,Q)$.
    However, both matrices yield the same regular orthogonal representation of $M(\vG)$, as the convention for choosing the dual orientation in~\cite{MMN2023} differs from ours.
    A ``combinatorial map'' $\vec{\mathbb{G}} = (\sigma,\alpha,\phi)$, corresponding to our convention, aligns with the definition in~{\cite[Page~4]{MMN2023}}, except that condition~(ii) should be replaced with $\phi = \sigma \alpha$.
\end{remark}

\begin{remark}
    A regular orthogonal matroid $M$ can be represented as $\begin{pmatrix}
        I & A
    \end{pmatrix}$ for some PU skew-symmetric matrix $A$, as we discussed in \S\ref{subsec: regular om}.
    It is easy to see that $\begin{pmatrix}
        I & -A
    \end{pmatrix}$ represents the same regular orthogonal matroid $M$.
    When we denote the corresponding regular representations of $\begin{pmatrix}
        I & A
    \end{pmatrix}$ and $\begin{pmatrix}
        I & -A
    \end{pmatrix}$ by $\cC_1$ and $\cC_2$, respectively, we deduce an isomorphism
    \[
        \J(\cC_1) = \Z^E/\langle I+A \rangle \cong \Z^E/\langle I-A \rangle = \J(\cC_2),
    \]
    because $I-A$ is the transpose of $I+A$.
    However, unlike the reorientation case discussed in \S\ref{subsec: reorientation invariance}, we do not have an automorphism of $\Z^E$ which sends each signed circuit of $\cC_1$ to a signed circuit of $\cC_2$ with the same support, thereby inducing a group isomorphism $\J(\cC_1) \to \J(\cC_2)$ and a bijection $\cR(\cC_1) \to \cR(\cC_2)$. 
    
    For an explicit example, let
    \[
        A = \begin{pmatrix}
            0 & 1 & 1 & 1 \\
            -1 & 0 & 1 & 1 \\
            -1 & -1 & 0 & 0 \\
            -1 & -1 & 0 & 0 \\
        \end{pmatrix}
    \]
    and let $f : \Z^4 \to \Z^4$ be a map sending each $\ve_i + \vr_i$ to $\epsilon_i (\ve_i - \vr_i)$, where $\vr_i$ is the $i$-th row of $A$ and $\epsilon_i \in \{\pm 1\}$.
    Then $f$ is identified with the matrix $(I+A)^{-1} J (I-A)$, where $J = \mathrm{diag}(\epsilon_1,\epsilon_2,\epsilon_3,\epsilon_4)$.
    However, one checks easily that $f$ is not an integral matrix for any choice of $\epsilon_i$'s.
\end{remark}

\end{document}